\def\blfootnote{\xdef\@thefnmark{}\@footnotetext}
\newcommand\ccnote{
    \blfootnote{The research is supported by the National Key R and D Program of China 2020YFA0713100 and NSFC No 11721101.}
}
\numberwithin{equation}{section}
\renewcommand{\le}{\leqslant}
\renewcommand{\leq}{\leqslant}
\renewcommand{\ge}{\geqslant}
\renewcommand{\geq}{\geqslant}
\renewcommand{\mathbb}{\varmathbb}
\newtheorem{theorem}{Theorem}[section]
\newtheorem{lemma}[theorem]{Lemma}
\newtheorem{corollary}[theorem]{Corollary}
\newtheorem{proposition}[theorem]{Proposition}
\newtheorem{definition}[theorem]{Definition}
\newtheorem{remark}[theorem]{Remark}
\newtheorem{lem}[theorem]{Lemma}
\newtheorem{pro}[theorem]{Proposition}
\newtheorem{cor}[theorem]{Corollary}
\newtheorem{defi}[theorem]{Definition}
\newtheorem{rem}[theorem]{Remark}
\newtheorem{claim}[theorem]{Claim}
\def\begfig {
\begin{figure}
\small }
\def\endfig {
\normalsize
\end{figure}
}
\address{
\newline 
Yuchen Bi:Institute of Mathematics, Academy of Mathematics and Systems
  Science, University of Chinese Academy of Sciences, Beijing, 100190, P. R.
China
\newline
{\tt biyuchen15@mails.ucas.ac.cn}
\newline
 Jie Zhou:
School of Mathematical Sciences, Capital Normal University,
Beijing 100048, P.R. China.
\newline
{\tt Email:zhoujiemath@cnu.edu.cn}
}
\begin{document}

\thispagestyle{empty}

\ccnote

\vspace{1cm}


\begin{center}
\begin{huge}
    \textit{Bi-Lipschitz Regularity of 2-Varifolds with the Critical Allard Condition}


\end{huge}
\end{center}

\vspace{1cm}


\begin{center}
\begin{minipage}[t]{.28\textwidth}
\begin{center}
{\large{\bf{Yuchen Bi,  Jie Zhou}}} \\
\vskip0.15cm
\end{center}
\end{minipage}
\end{center}

\vspace{1cm}


\noindent \textbf{Abstract.} \textit{For an integral  $2$-varifold $V=\underline{v}(\Sigma,\theta_{\ge 1})$ in the  unit ball $B_1$ passing through the original point,  assuming the critical Allard condition holds, that is, the area $\mu_V(B_1)$ is close to the area of a unit disk and the  generalized mean curvature has sufficient small $L^2$ norm, we prove $\Sigma$ is  bi-Lipschitz homeomorphic to a flat disk in $\mathbb{R}^2$ locally.}
\vskip0.3cm

\noindent \textbf{Keywords.} Mean curvature, Varifold, Lipschitz parameterization \vspace{0.5cm}


\tableofcontents
\section{Introduction}

Let $V=\underline{v}(M,\theta)$ be a rectifiable $m$-varifold in an open set $U\subset \mathbb{R}^{n}$, denoting $\mu=\theta\mathcal{H}^m\llcorner M$ to be the corresponding Radon measure. Its generalized mean curvature is defined to be a $\mathbb{R}^{n}$ valued $L^1_{loc}(d\mu)$ function $H$ such that
\begin{align*}
\int_{U}div^{M}Xd\mu=-\int_{U}X\cdot Hd\mu, \forall X\in C_c^{1}(U).
\end{align*}
The celebrated Allard regularity theorem\cite{A-1972} says, if $U\supset B_1$, $\theta\ge 1$ and $0\in M$, then for any supercritical index $p>m$, if
\begin{align*}
\mu(B_1)\le (1+\varepsilon)\omega_m \quad \text{ and } \quad (\int_{B_1}|H|^pd\mu)^{\frac{1}{p}}\le \varepsilon
\end{align*}
for some $\varepsilon\ll 1$, then $M$ is a $C^{1,\alpha=1-\frac{m}{p}}$ graph with small norm in a small neighborhood of $0\in M$. Based on Allard's work, Duggan proved\cite{D-1986} the graph function is in fact $W^{2,p}$ and the density function $\theta\in W^{1,p}$ under the same condition. There are  other regularity theorems under supercritical curvature conditions, see Bourni \& Volkman  \cite{BV}, and Kolasi\'{n}ski, Strzelecki \& von der Mosel \cite{KSvdM}. 

In Allard's original work, he also considered the critical case and proved the following almost volume rigidity \cite[Lemma 8.4]{A-1972}:
    \begin{equation}\label{critical allard condition}
    %
    \left\{
\begin{aligned}
\mu(B_1)\le (1+\varepsilon)\omega_m\\
\left(\int_{B_1}|H|^m d\mu\right)^{\frac{1}{m}}\le \varepsilon
\end{aligned}
\right.
    \Rightarrow d_{\mathcal{H}}(M\cap B(0,1-\psi),T\cap B(0,1-\psi))\le \psi(\varepsilon),
    \end{equation}
    where $\psi=\psi(\varepsilon)\to 0$ as $\varepsilon\to 0$, $T$ is an $k$-dimensional  plane and  $d_{\mathcal{H}}$ is the Hausdorff distance defined by
    $$d_\mathcal{H}(A,B)=\inf\{ \delta\in \mathbb{R}^+: \sup_{b\in B} \inf_{a\in A} d(a, b)\leq \delta; \sup_{a\in A} \inf_{b\in B} d(a, b)\leq\delta \}.$$ In
    a recent work, the second author proved\cite{Z22} the bi-H\"older ($C^{\alpha=1-\psi(\varepsilon)}$) regularity for two dimensional varifolds with perpendicular generalized mean curvature satisfying the condition of \eqref{critical allard condition}.

    In this paper, we deal with the Lipschitz regularity  for 2-dimensional integral varifolds under the critical Allard condition in \eqref{critical allard condition}. Our main theorem is:
\begin{theorem}\label{main}
     Let $V=\underline{v}(\Sigma,\theta)$ be an integral $2$-varifold in $U\supset B_1$ with  corresponding Radon measure $\mu=\theta\mathcal{H}^2\llcorner \Sigma$. Assume  $\theta(x)\ge 1$ for $\mu$-a.e. $x\in U$  and $0\in\Sigma=spt\mu$. If $V$ admits generalized mean curvature $H\in L^2(d\mu)$ such that
     \begin{align}\label{density condition}
     \mu(B_1)\le (1+\varepsilon)\pi
     \end{align}
     and
     \begin{equation}\label{mean curvature bound}
          \left(\int_{B_1}|H|^2d\mu\right)^{\frac{1}{2}}\le \varepsilon,
     \end{equation}
     then there exists a bi-Lipschitz conformal parameterization $f: D_1\to f(D_1)\subset \Sigma$ satisfying
     \begin{enumerate}
      \item  $B(0,1-\psi)\cap\Sigma\subset f(D_1)$;
     \item For any $x,y\in D_1$,
     \begin{align*}
     (1-\psi)|x-y|\le |f(x)-f(y)|\le (1+\psi)|x-y|;
     \end{align*}
     \item Let $g=:df\otimes df$, then there exist $w\in W^{1,2}(D_1)\cap L^\infty(D_1)$ such that
             \[g=e^{2w}(dx\otimes dx+dy\otimes dy)\]
            and $$\|w\|_{L^\infty(D_1)}+\|\nabla w\|_{L^2(D_1)} + \|\nabla^2 w\|_{L^1(D_1)}\le \psi;$$
     \item $f\in W^{2,2}(D_1,\mathbb{R}^n)$ and
     \[\|f-\emph{i}_1\|_{W^{2,2}(D_1)}\leq \psi ,\]
   where $\emph{i}_1: D_1\rightarrow \mathbb{R}^n$ is a standard isometric embedding.

     \end{enumerate}
     Here $\psi=\psi(\varepsilon)$ is a positive function such that $\lim_{\varepsilon\to 0}\psi(\varepsilon)=0$.
    \end{theorem}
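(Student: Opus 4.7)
The plan is to argue by contradiction and combine Allard's almost-volume rigidity \eqref{critical allard condition} with the conformal immersion machinery of M\"uller--\v{S}ver\'ak and H\'elein. Suppose the conclusion fails, so that we have a sequence $V_k=\underline{v}(\Sigma_k,\theta_k)$ satisfying \eqref{density condition}--\eqref{mean curvature bound} with $\varepsilon_k\to 0$ and for which no admissible $f$ exists. After a rotation, \eqref{critical allard condition} places $\Sigma_k\cap B_{1-\psi_k}$ at Hausdorff distance $o(1)$ from $D_{1-\psi_k}\times\{0\}$. Since $\theta_k\in\mathbb{Z}_{\ge 1}$ and $\mu_k(B_1)\le(1+\varepsilon_k)\pi$, the monotonicity formula forces $\theta_k=1$ $\mu_k$-a.e.\ and realises $\Sigma_k\cap B_{1-\psi_k}$ as a single-sheeted, topologically disk-like surface whose projection onto the limiting plane is surjective onto $D_{1-\psi_k}$ up to a $\psi_k$-set.

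The key intermediate step is to upgrade the $L^2$ bound \eqref{mean curvature bound} on $H$ to a quantitative bound $\int_{\Sigma\cap B_{1-\psi}}|A|^2\,d\mu\le\psi$ on the full second fundamental form. In two dimensions one has the pointwise identity $|A|^2=|H|^2-2K$, so this reduces to controlling $\int K\,d\mu$. Using a Lipschitz graphical approximation of the varifold over the limiting plane (available since $\Sigma$ is $C^{0,\alpha}$-close to flat by \cite{Z22}), Gauss--Bonnet on the truncated disk gives $\int K\approx 2\pi-\oint k_g$, and the near-flat boundary forces $\oint k_g$ to be close to $2\pi$, closing the estimate.

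With $\int|A|^2\le\psi$ on a topological disk in hand, isothermal coordinates $f\colon D_1\to\Sigma$ exist and the conformal factor $w$ defined by $g=e^{2w}(dx\otimes dx+dy\otimes dy)$ satisfies the Liouville-type equation $-\Delta w=Ke^{2w}$ with $Ke^{2w}$ small in $L^1$, while $f$ satisfies the conformal immersion equation relating $\Delta f$ to $e^{2w}H$. Wente's inequality applied to the Jacobian structure of $Ke^{2w}$, together with standard elliptic theory, then yields $\|w\|_{L^\infty}+\|\nabla w\|_{L^2}+\|\nabla^2 w\|_{L^1}\le\psi$. The $L^\infty$ smallness of $w$ makes $|df|^2=e^{2w}\in[1-\psi,1+\psi]$, hence $f$ is bi-Lipschitz with the desired constants; inserting the $L^2$ bound on $H$ and the $L^\infty$ bound on $w$ into the immersion equation gives the $W^{2,2}$ estimate and the closeness to the standard isometric embedding.

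The main obstacle I expect is the second step: promoting $L^2$ control on $H$ to $L^2$ control on the full second fundamental form $A$ for a possibly singular integral varifold, where Gauss--Bonnet is not directly available and Gaussian curvature could \emph{a priori} concentrate on the singular set. This requires a careful graphical approximation scheme and a precise accounting of boundary geodesic curvature as the truncation radius approaches $1-\psi$. A secondary difficulty is assembling a genuine global conformal parameterization on $D_1$ from local graphical pieces and verifying conclusions (1)--(4) uniformly up to the boundary, which is where H\'elein's compactness for conformally immersed surfaces with small $L^2$ second fundamental form, combined with the Hausdorff-closeness of $\Sigma$ to the flat disk, closes the contradiction argument.
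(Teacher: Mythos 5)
Your proposal has a genuine gap at its central step, and it is precisely the gap the paper is organized around avoiding. You propose to upgrade the $L^2$ bound on $H$ to $\int_{\Sigma\cap B_{1-\psi}}|A|^2\,d\mu\le\psi$ via the identity $|A|^2=|H|^2-2K$ and Gauss--Bonnet on a graphical approximation. But for a general integral $2$-varifold with only $L^2$ generalized mean curvature, neither $A$ nor $K$ is defined a priori (the fact that $V$ is a curvature varifold with $\int|A|^2\le\psi$ is proved in the paper as a \emph{consequence} of Theorem \ref{main}, in Section \ref{Miscellaneous}, so your route is circular). The graphical approximation furnished by the Reifenberg/Allard theory is only Lipschitz (bi-H\"older from \cite{Z22}), which gives no second derivatives, so Gauss--Bonnet cannot be run and there is no mechanism to rule out concentration of curvature; you flag this as "the main obstacle I expect" but offer no way around it. Relatedly, your plan to produce isothermal coordinates presupposes either the Riemann mapping theorem or H\'elein's moving-frame construction, both of which require a smooth (or at least $W^{2,2}$) immersion with small total curvature --- exactly what is unavailable.

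The paper's actual strategy inverts the logic. It first establishes the chord-arc properties (Ahlfors regularity, Reifenberg condition, tilt-excess) and a Semmes-type bi-$W^{1,p}$ parameterization (Theorem \ref{W1p para scaling}); this yields a quadratic isoperimetric inequality for pieces of $\Sigma$, which allows the Lytchak--Wenger theory of energy-minimizing harmonic maps into metric spaces to produce the conformal homeomorphism $f$ \emph{without} any curvature bound. It then proves $f$ is quasi-symmetric, derives a Semmes-type affine approximation at all scales, and concludes that $\log|\nabla f|$ is BMO with small norm, so $e^{-2w}$ is an $A_2$-weight. Only then does the mean curvature equation $\Delta f=He^{2w}$, combined with the Coifman--Fefferman weighted estimate, deliver $\nabla^2 f$ in weighted $L^2$ --- this is the \emph{replacement} for the total curvature bound, not a consequence of it. The frame $(e_1,e_2)=(f_1/|f_1|,f_2/|f_2|)$ and the Wente/CLMS estimate for $w$ come last. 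If you want to salvage a compactness argument in the spirit of your first paragraph, it works only after one has the density result (Theorem \ref{main denstiy}), which itself depends on Theorem \ref{main}.
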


    Our result is inspired by the classical results of  M\"uller \& {\v{S}}ver{\'a}k\cite{MS-1995}. They obtained  the bi-Lipschitz regularity for smooth surfaces in $\mathbb{R}^n$ with small total curvature
    \begin{equation}\label{total curvature bound}
        \int_{\Sigma}|A|^2d\mu\le \varepsilon^2.
    \end{equation}
     Under the conformal parameterization $f:\mathbb{C}\to \Sigma$ with $df\otimes df=e^{2w}(dx\otimes dx+dy\otimes dy)$, the Gaussian curvatue equation read as
    \begin{align}\label{gaussian curvature equation}
        -\Delta w=Ke^{2w}.
    \end{align}
    By using the work of Coifman, Lions, Meyer \& Semmes\cite{CLMS-1993} and M\"uller\cite{M-1990},  they  proved the Hardy estimate for the Gaussian curvature density $Ke^{2w}$. Then they obtained the $C^0$ estimate of $w$ by applying Fefferman and Stein's  result\cite{FS-1972}. In this way, they gave the  bi-Lipschitz estimate of $f$. Notice that Toro\cite{T-1994}  gave a direct construction of  the bi-Lipschitz parameterization for surfaces in $B_1$ with finite volume and small total curvature.
    Later, H\'{e}lein introduced the powerful moving frame method for conformal invariant problems \cite{H-2002} and applied it to give variants of results of M\"uller \& \v{S}ver\'ak. See  \cite{R-2012} for a nice exposition of conformal invariant problems.

    M\"uller \& \v{S}ver\'ak's  result\cite{MS-1995} implies a local compactness result for smooth surfaces with small total curvature \eqref{total curvature bound}, as clarified by Sun and the second author in \cite{SZ-2021}.  In \cite{BZ-2022}, using this compactness, we show that Theorem \ref{main} holds for smooth surfaces and give the bi-Lipschitz quantitative rigidity for $L^2$-almost CMC surfaces. However, since the varifold under consideration is not smooth in general, now we can not apply compactness argument directly.

   Comparing with the smooth setting, we have two main difficulties.

Firstly, it is more hard to find a conformal parameterization in our case. Since we are dealing with non-smooth objects, neither  Riemann mapping theorem (as in \cite{MS-1995}) or the "Coulomb" orthonormal moving frame method (as in \cite{H-2002}) can be apply . To handle this issue, we will make use of the theory on harmonic maps into metric spaces developed by Lytchak \& Wenger in recent years (see \cite{LW-2015},\cite{LW-2015b},\cite{LW-2017},\cite{LW-2018} and \cite{LW-2020}).

Loosely speaking, Lytchak \& Wenger considered energy minimizing harmonic maps form unit disk $D_1\subset \mathbb{R}^2$ into complete metric space $X$ admitting a quadratic isoperimetric inequality:

\begin{definition}\label{quadratic isoperimetric inequality}
	A complete metric space $X$ is said to admit a quadratic isoperimetric inequality if there exists $C>0$ such that every Lipschitz curve $c: S^1\rightarrow X$ is the trace of some $u\in W^{1,2}\left( D_1, X \right) $ such that
	\[
		\emph{Area}\left( u \right) \leq C\cdot l\left( c \right) ^2
	,\]
where $l\left( c \right) $ denotes the length of $c$.
\end{definition}

They show the existence , regularity, infinitesimal quasi-conformality and some topological properties of such harmonic maps. Provided that $\Sigma$ has a nice geometry, we can use Lythcak \& Wenger's theory to obtain a conformal parameterization $f$.

The second difficulty is that the right hand term in the curvature equation \eqref{gaussian curvature equation} is not in the Hardy space a priori, due to the lack of  the total curvature bound ~\eqref{total curvature bound}. Even worse, we do not have a well defined Gaussian curvature $K$. In order to get rid of this, we pay more attention to the mean curvature equation:
\begin{equation}\label{mean curvature equation}
	\Delta f =  H e^{2w}.
\end{equation}
Where $f$ is the conformal parameterization and $w=\log|\det\nabla f|$. Our key observation is  that $e^{-2w}$ is an $A_2$-weight in the sense of Muckenhoupt \cite{M-1972}. Once knowing this, we can use ~\eqref{mean curvature bound} and ~\eqref{mean curvature equation} to obtain an estimate of $\nabla^2f$ as a replacement of ~\eqref{total curvature bound}.

To show $e^{-2w}$ is an $A_2$-weight, we first verify that $f$ is quasi-symmetric (see \cite{HK-1998}), and then adapt Semmes' argument (Section 6  in \cite{S-1991b}). Here we need nice geometrical properties of $\Sigma$ again. Recall the definition of the quasi-symmetry:

\begin{definition}\label{quasi-symmetric def}
Let $X$ and $Y$ be metric spaces and let $\eta: [0, \infty)\to[0, \infty)$ be a homeomorphism.  Then we call a homeomorphism $f: X\to Y$ is $\eta$-quasi-symmetric, if and only if
\[
	\frac{\left| f(a)-f(x) \right| }{\left| f(b)-f(x) \right| }\leq \eta\left( \frac{|a-x|}{|b-x|} \right)
\]
for all $a\neq x\neq b$.
\end{definition}

Now we turn to the geometry of varifolds with critical allard condition \eqref{critical allard condition}.

For any rectifiable $2$-varifold $V=\underline{v}(\Sigma,\theta)$ with perpendicular generalized mean curvature $H$ satisfying \eqref{density condition} and \eqref{mean curvature bound}, to obtain the bi-H\"older regularity,  the second author proved \cite{Z22} some  geometric properties, including  the Ahlfors regularity, the tilt-excess estimate and the Reifenberg's condition. The arguments in \cite{Z22} highly rely on Simon's monotonicity formulae\cite{LS93}\cite{KS} for rectifiable $2$-varifolds with perpendicular generalized mean curvature (i.e., the generalized mean curvature $H$ is perpendicular to the approximating tangent plane $T_x\Sigma$ for $\mu$-almost every $x\in \Sigma$). For higher dimension, Menne warmly reminded us that in the case of integral varifolds with critical Allard condition \eqref{critical allard condition},  the above geometrical properties also hold, as a consequence of  his previous works \cite{M09}, \cite{M10}.   Noting that by Brakke's result\cite{B-1978}(see also \cite[Theorem 1.18]{T-2019}), the generalized mean curvature of an integral varifold is  perpendicular.

Varifolds with the Ahlfors regularity, the tilt-excess estimate and the Reifenberg's condition will appear in this paper repeatedly. Thus for convenience, we introduce the following terminology:
\begin{defi}[Chord-arc varifolds]\label{chord arc varifold}
Assume $V=\underline{v}(\Sigma,\theta)$  is a rectifiable $m$-varifold in $U\supset B(x,r)$ such that for any $B(\xi,\sigma)\subset B(x,r)$ with $\xi\in \Sigma$, there exists a $m$ dimensional subspace $T_{\xi,\sigma}\in G(n,m)$ such that
\begin{enumerate}
\item Ahlfors regularity$$1-\gamma \le \frac{\mu(B(\xi,\sigma))}{\omega_m \sigma^m}\le 1+\gamma;$$
\item Reifenberg condition $$\frac{1}{\sigma}d_{\mathcal{H}}(B(\xi,\sigma)\cap spt\mu, B(\xi,\sigma)\cap (T_{\xi,\sigma}+\xi))\le \gamma;$$
\item Tilt-excess estimate $$E(\xi, \sigma,T_{\xi,\sigma}):=\sigma^{-m}\int_{B(\xi,\sigma)}|p_{T_x\Sigma}-p_{T_{\xi,\sigma}}|^2d\mu\le \gamma^2.$$
\end{enumerate}
Then we call $V$ a  chord-arc $m$-varifold in $B(x,r)$ with constant $\gamma$.
\end{defi}

The term ``chord-arc" in the above Definition\ref{chord arc varifold}  originated from the works of Semmes \cite{S-1991}\cite{S-1991b}\cite{S-1991c}.  When  extending David's\cite{D-1982}  operator-theoretic and function-theoretic characterization for chord-arc curves with small constant    to higher dimension,  Semmes introduced\cite{S-1991} the  conception of smooth chord-arc hypersurfaces, for which items $(1)(2)(3)$  in definition\ref{chord arc varifold} hold for any $\xi\in M$ and $\sigma>0$. In \cite{S-1991b}, he proved  a chord-arc hypersurface has nice geometry properties, in particular, it admits a bi-$W^{1,p}$  parameterization by a hyperplane close to the standard embedding. Later, Blatt \cite{B-2009} extended some conclusions of Semmes to higher codimension setting.  For our purpose, we need to  modify Semmes' parameterization result to the non-smooth, local  and higher codimension setting.  In precise, we prove (for a scaling invariant version, see Theorem \ref{W1p para scaling}):

\begin{theorem}\label{W1p para}
Assume $V=\underline{v}(\Sigma,\theta)$ is a chord-arc $m$-varifold in $B_1$ with small constant $\gamma$ such that $0\in \Sigma$ and  $D_1$ is the unit disk in $T_{0,1}$. Then there exists a map $f:D_1\to \Sigma$ such that
\begin{enumerate}
\item For any $x\in D_1$,
\begin{align*}
|f(x)-x|\le C\gamma^{\frac{1}{4}};
\end{align*}
    \item For any $x,y\in D_1$,
    \begin{align*}
(1-C\gamma^{\frac{1}{4}})|x-y|^{1+\psi(\gamma)}\le |f(x)-f(y)|\le (1+C\gamma^{\frac{1}{4}})|x-y|^{1-\psi(\gamma)};
\end{align*}
 \item $ f( D_1)\supset B(0,1-\psi)\cap \Sigma$;
\item For  $f^*,f_{*}$ defined by
\begin{align*}
f^*(x):&=\sup_{y\in D_1}\frac{|f(x)-f(y)|}{|x-y|},\\
f_{*}(x):&=\inf_{y\in D_1}\frac{|f(x)-f(y)|}{|x-y|},
\end{align*}
 and  $p< p(\gamma)=\frac{\log{4}}{\log{(1+C\gamma^{\frac{1}{4}})}}\to +\infty$, there holds
\begin{align*}
 \int_{D_1}(f^*)^p+(f_{*})^{-p}dx \le C_p,
 \end{align*}
 and
 \begin{align*}
 \int_{D_1}((f-\emph{i}_{1})^*)^pdx&\le C_p\gamma^{\frac{p}{4}},
 \end{align*}
 where  $\emph{i}_{1}$ is the including map of $D_1\subset T_{0,1} \subset \mathbb{R}^{n}$.
\end{enumerate}
Here $\psi=\psi(\gamma)$ is a positive function such that $\lim_{\gamma\to 0}\psi(\gamma)=0$ and the constants $C$ and $C_p$ are independent of $\gamma$.
\end{theorem}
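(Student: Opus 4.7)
The plan is to adapt Semmes' Reifenberg-type iteration from \cite{S-1991b} (cf.\ also Toro \cite{T-1994}) to the present non-smooth, local, higher-codimension setting encoded by the chord-arc hypothesis. For each integer $k\ge 0$, I would cover $\Sigma\cap B_{1-\eta}$ (for a suitable $\eta=\eta(\gamma)$) by a bounded-overlap family of balls $B(\xi_i^k,2^{-k})$ with $\xi_i^k\in\Sigma$, each equipped by Definition~\ref{chord arc varifold} with an approximating $m$-plane $T_i^k:=T_{\xi_i^k,2^{-k}}$. Let $\pi_i^k$ denote orthogonal projection onto the affine plane $\xi_i^k+T_i^k$, fix a smooth partition of unity $\{\phi_i^k\}$ subordinate to the cover with $|\nabla\phi_i^k|\le C\cdot 2^k$, and define the smoothing at scale $k$ by
\[ \sigma_k(x) := \sum_i \phi_i^k(x)\,\pi_i^k(x). \]
The parameterization is then obtained as $f:=\lim_{k\to\infty}F_k|_{D_1}$, where $F_0:D_1\hookrightarrow T_{0,1}\subset\mathbb{R}^n$ is the inclusion and $F_{k+1}:=\sigma_k\circ F_k$.

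The decisive preliminary is a tangent-plane comparison: whenever two balls $B(\xi,\sigma),B(\xi',\sigma')$ of comparable dyadic scale both contain common points of $\Sigma$, the Reifenberg and Ahlfors-regularity pieces of the hypothesis combined with a Chebyshev argument on the $L^2$ tilt-excess should yield $|T_{\xi,\sigma}-T_{\xi',\sigma'}|\le C\gamma^{1/4}$, the fourth-root stemming from upgrading an $L^2$ average to a pointwise angle bound. Granted this, direct computation gives $|\sigma_k-\mathrm{id}|\le C\gamma^{1/4}\cdot 2^{-k}$ on a neighborhood of $\Sigma$ and $\mathrm{Lip}(\sigma_k)\le 1+C\gamma^{1/4}$. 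Telescoping yields both the $C^0$-estimate $\|f-\mathrm{id}\|_\infty\le C\gamma^{1/4}$ of item (1) and, for $x,y\in D_1$ with $2^{-k-1}\le|x-y|\le 2^{-k}$, the bi-H\"older estimate of item (2) with $\psi(\gamma)=\log(1+C\gamma^{1/4})/\log 2$; the upper estimate uses $\mathrm{Lip}(F_k)\le(1+C\gamma^{1/4})^k$, and the symmetric lower estimate comes from the analogous bound on $F_k^{-1}$, well-defined near the image since each $\sigma_k$ is a near-identity smooth map. The covering property (3) then reduces to a degree / invariance-of-domain argument combined with Reifenberg's topological disc theorem: $f(D_1)$ is a topological disc with boundary close to $\partial D_1$, so it fills out $\Sigma\cap B_{1-\psi}$.

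For the $L^p$ estimates in (4), I would decompose $D_1$ into the dyadic preimages of the scale-$k$ cover of $\Sigma$. On each such cell of diameter $\sim 2^{-k}$ the pointwise bound $f^*(x)\le\prod_{j\le k}\mathrm{Lip}(\sigma_j)\le(1+C\gamma^{1/4})^k$ holds, while the cell has Lebesgue measure $\lesssim 4^{-k}$ in $D_1$; summing over scales yields
\[ \int_{D_1}(f^*)^p\,dx \;\lesssim\; \sum_{k\ge 0}(1+C\gamma^{1/4})^{kp}\cdot 4^{-k}, \]
which converges precisely when $(1+C\gamma^{1/4})^p<4$, i.e.\ when $p<p(\gamma)=\log 4/\log(1+C\gamma^{1/4})$. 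The estimate for $(f_*)^{-1}$ is symmetric; the sharpened bound $\int((f-i_1)^*)^p\le C_p\gamma^{p/4}$ follows from the per-scale deviation $|F_{k+1}(x)-F_k(x)|\le C\gamma^{1/4}\cdot 2^{-k}$, which contributes an extra factor of $\gamma^{1/4}$ beyond each Lipschitz step.

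The chief obstacle is the tangent-plane comparison and the subsequent gluing of local projections $\pi_i^k$ into a genuinely Lipschitz smoothing $\sigma_k$: in arbitrary codimension one cannot parameterize $\Sigma$ by projecting onto a single plane, so the consistency of many local tangent planes across overlapping balls must be exploited carefully, and extracting the sharp $\gamma^{1/4}$ (rather than the weaker $\gamma^{1/2}$ one gets from Reifenberg alone) requires a stopping-time argument on the tilt-excess. A secondary difficulty is the initialization step, where $F_1=\sigma_0|_{D_1}$ must already land uniformly near $\Sigma$; this forces one to work on a slightly smaller disc first and then recover the full $D_1$ by a scaling/rigidity argument, which is precisely the role played by the scaling-invariant Theorem~\ref{W1p para scaling}.
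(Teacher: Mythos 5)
Your overall strategy (a Semmes--Reifenberg iteration of near-identity smoothings built from the approximating planes of Definition \ref{chord arc varifold}, telescoped to produce $f$) is the same family of argument the paper uses in Section \ref{proof of w1p parameterization}, and it does plausibly deliver items (1)--(3). But there is a genuine gap in item (4), and it is not a technicality: with uniform dyadic scales $2^{-k}$, \emph{every} point of $D_1$ passes through every smoothing step, so the only pointwise information you have is $\mathrm{Lip}(F_k)\le (1+C\gamma^{1/4})^k\to\infty$. For $y$ with $|x-y|\ll 2^{-k}$ the estimate $|f(x)-f(y)|\le \mathrm{Lip}(F_k)|x-y|+2\|f-F_k\|_{\infty}$ is dominated by the additive error, so your scheme gives no finite bound on $f^*(x)=\sup_y|f(x)-f(y)|/|x-y|$ at any point; a priori $f$ is only bi-H\"older. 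Moreover the layer-cake sum you write is miscounted: a cell of diameter $2^{-k}$ has measure $\sim 4^{-k}$, but there are $\sim 4^{k}$ such cells, so summing the bound $(1+C\gamma^{1/4})^{kp}$ over all cells at scale $k$ and then over $k$ diverges. What is needed -- and what the paper builds -- is a stopping-time structure inside the iteration itself: at each stage one identifies a ``fine set'' $F_k\subset\Sigma$ (where a local maximal function of the tilt-excess is below a threshold $\nu=\sqrt{\gamma}$), the next scale function is taken to be $\delta_{k+1}(x)=\min\{d(x,F_k),d(x)\}$ so that the maps freeze ($\tau_j(x)=x$) on $F_k$ for all $j\ge k$, and a weak-$(1,1)$ maximal function argument (Lemma \ref{fine set large}) yields the geometric decay $\mu(\Sigma\setminus F_k)\lesssim 10^{-k}$ of \eqref{decay of bad measure}. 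Then $f^*\le(1+C\gamma^{1/4})^{k+1}$ holds \emph{against all} $y$ once one endpoint lies in (the preimage of) $F_k$, and the integral $\sum_k(1+C\gamma^{1/4})^{kp}\,\mu(F_{k+1}\setminus F_k)$ converges for $p$ below the stated threshold. Without adapting the scales to the stopped sets, item (4) cannot be extracted from your construction.

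Two secondary points. First, your ``chief obstacle'' is real but understated: in higher codimension the lower bi-Lipschitz bound for each step (needed both for the lower half of item (2) and for $f_*^{-p}$) is not obtained by inverting an ambient near-identity map; the paper must first upgrade $\Sigma$ to a genuine Lipschitz surface $\Sigma_\delta$ at each scale (Proposition \ref{modify}, which requires a delicate gluing of overlapping graph patches in finitely many groups) and then build a Lipschitz approximating normal bundle and exponential map to define and control the step maps $\tau_j$ (Lemma \ref{lem:nearby step}). Second, the exponent $\gamma^{1/4}$ does not come from an $L^2$-to-pointwise upgrade of the tilt as you suggest; in the paper it arises as $\sqrt{\nu}$ with $\nu=\sqrt{\gamma}$, i.e.\ from the threshold in the maximal-function stopping time combined with the choice of radius $\beta=\sqrt{\nu}$ for the normal disk bundle.
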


Such a bi-$W^{1,p}$  parameterization is strong enough to  imply the quadratic isoperimetric inequality  and the uniform quasi-symmetric estimate we need.

 Theorem \ref{main} ensures us to use smooth surfaces with geometric control to approximate a integral $2$-varifolds with critical allard condition \eqref{critical allard condition}.
  In precise, we have the following density result.
    \begin{theorem}\label{main denstiy}
    Assume $V=\underline{v}(\Sigma,\theta)$ is an integral $2$-varifold in $U$ with $U\supset B_1$,  $\theta\ge 1$ and $0\in\Sigma$ such that \eqref{density condition} and \eqref{mean curvature bound} holds for $\varepsilon\ll 1$ , then there exists  a sequence of smooth surfaces $\Sigma_i$ in $B_{1-\psi_i}$  with $\partial\Sigma_i\subset\partial B_{1-\psi_i}$ such that
    \begin{align*}
        \mu_i\left(B_{1-\psi_i}\right)\le (1+\psi_i)\pi\quad \text{  and  }\quad \left(\int_{B_{1-\psi_i}}|H_i|^2d\mu_i\right)^{\frac{1}{2}}\le \psi_i,
    \end{align*}
 and $\Sigma_i$ converges to $\Sigma$ in the varifold topology, where $\mu_i=\mathcal{H}^2\llcorner\Sigma_i$,$H_i$ is the mean curvature of  $\Sigma_i$ and $\psi_i$ is a positive number such that $\lim_{i\to\infty} \psi_i=0$.
    \end{theorem}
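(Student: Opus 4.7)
The strategy is to mollify the bi-Lipschitz conformal $W^{2,2}$ parameterization $f\colon D_1\to f(D_1)\subset \Sigma$ supplied by Theorem \ref{main}. Its conformal factor $e^{2w}$ satisfies $\|w\|_\infty\le \psi$, and the quantitative flatness $\|f-i_1\|_{W^{2,2}(D_1)}\le \psi$ combined with $\Delta i_1=0$ immediately yields $\|\Delta f\|_{L^2(D_1)}\le \psi$. This bound, rather than the varifold mean curvature equation itself, will drive the curvature estimate for the smooth approximations.

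For $\rho_i\downarrow 0$, I set $f_i:= f*\phi_{\rho_i}$ on $D_{1-2\rho_i}$ with $\phi_\rho$ a standard mollifier. Then $f_i\to f$ uniformly, $\Delta f_i\to \Delta f$ in $L^2$, and $\nabla f_i\to\nabla f$ in $L^p_{loc}$ for all $p<\infty$. Since $\nabla f$ is a uniformly small $L^\infty$ perturbation of $\nabla i_1$ by Theorem \ref{main}(2)(3), each $f_i$ is a smooth embedding of $\overline{D_{1-2\rho_i}}$ into $\mathbb{R}^n$ for $i$ large. To obtain the required boundary condition, I use Sard's theorem to select a slicing radius $s_i\in (1-3\psi,1-2\psi)$ so that $\partial B_{s_i}$ is transverse to $f_i(\overline{D_{1-2\rho_i}})$, and let $\Sigma_i$ be the connected component of $f_i(D_{1-2\rho_i})\cap B_{s_i}$ containing $f_i(0)$, setting $\psi_i:=1-s_i$. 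Then $\Sigma_i\subset B_{1-\psi_i}$ and $\partial\Sigma_i\subset \partial B_{1-\psi_i}$.

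Writing $\Sigma_i=f_i(\Omega_i)$, the area formula gives
\[\Hh^2(\Sigma_i)=\int_{\Omega_i}\sqrt{\det(Df_i^TDf_i)}\,dx\longrightarrow \int_{D_1}e^{2w}\,dx\le (1+\psi)^2\pi,\]
so $\mu_i(B_{1-\psi_i})\le (1+\psi_i)\pi$ after enlarging $\psi_i$. For a smooth immersion $H_i=\Delta_{g_i}f_i$, and since $g_i=Df_i^TDf_i$ is close to the conformal metric $e^{2w_i}I$, a short computation yields
\[\int_{\Sigma_i}|H_i|^2\,d\Hh^2\le 2\int_{\Omega_i}|\Delta f_i|^2\,dx\longrightarrow 2\|\Delta f\|_{L^2(D_1)}^2\le 2\psi^2,\]
which can likewise be absorbed into $\psi_i$. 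Varifold convergence $V(\Sigma_i)\to V$ then follows from the uniform convergence $f_i\to f$ together with $L^2$-convergence $\nabla f_i\to\nabla f$, combined with the observation that comparing $\Hh^2(f(D_1))\approx \pi$ with $\mu(B_{1-\psi})\le (1+\varepsilon)\pi$ and $\theta\ge 1$ forces multiplicity $\theta=1$ on $f(D_1)\cap B(0,1-\psi)$.

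The main obstacle is the coordinated choice of the mollification scale $\rho_i$, the slicing radius $s_i$ and the final constant $\psi_i$, ensuring that $\Sigma_i$ is connected, has boundary exactly on $\partial B_{1-\psi_i}$ and satisfies all three quantitative bounds with a common small $\psi_i$; the multiplicity-one reduction needed for varifold convergence also demands some bookkeeping. Once the $W^{2,2}$-bound from Theorem \ref{main}(4) is at hand, each individual estimate becomes a routine consequence of smoothing.
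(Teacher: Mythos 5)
Your proposal follows essentially the same route as the paper: the paper's proof is precisely to mollify the conformal parameterization $f$ from Theorem \ref{main}, set $\Sigma_i=(f\ast\varphi_i)(D_{1-1/i})$, and read off embeddedness from the bi-Lipschitz estimate and the area and mean curvature bounds from items $(2)$ and $(4)$. Your version merely supplies more of the routine details (the Sard slicing for the boundary, the computation $H_i=\Delta_{g_i}f_i$, and the multiplicity-one observation), all of which are consistent with the paper's one-line argument.
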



The organization of the paper is as following. In Section  \ref{basic}, we  collect some basic properties for varifolds. Then we show a rectifiable $2$-varifold  with perpendicular genralized mean curvature satisfying \eqref{density condition} and \eqref{mean curvature bound} is a chord-arc $2$-varifold with small constant.  In section \ref{semmes' parameterization}, we state a scalling invariant form of Theorem \ref{W1p para} and use it to deduce some geometric properties. In particular, we show $\Sigma$ has a large piece which admits a quadratic isoperimetric inequality and looks like a topological disk at any scales.  In Section \ref{existence section}, we  first apply the theory of Lytchak \& Wenger to construct the conformal parameterization $f$. Then we prove some basic estimates of this confromal parameterization. In section \ref{refined estimate section}, we obtain more refine estimates of $f$. In particular, we derive a variant of Semmes' estimate \cite[Theorem 6.1]{S-1991b} and show $f$ is bi-Lipschitz in 'large pieces' at any scales. In section \ref{Lipschitz estimates section}, with the help of estimates in Section \ref{refined estimate section}, we finish the proof Theorem \ref{main} and Theorem \ref{main denstiy}. In section \ref{Miscellaneous}, we show integral $2$-varifolds satisfying critical Allard condition is the curvature varifolds in the sense of Hutchinsion \cite{H-1986},\cite{M-1996}. We also verify that these varifolds satisfies the Jones number condition studied in \cite{DS-1993}, \cite{NV}. In  section \ref{proof of w1p parameterization}, we finish the proof of the most technical Theorem \ref{W1p para scaling}.

In this paper, we adopt the following notations. 
\begin{align*}
&\psi(\varepsilon|\lambda,n,\ldots)=\text{ a positive function such that } \lim_{\varepsilon\to 0}\psi(\varepsilon|\lambda, n,\ldots)=0.\\
&B(x,r)=\{y\in \mathbb{R}^n| |y-x|\le r\}= \text{ the closed ball with radius} r\text{ and center } x.\\
&\mathring{B}(x,r)=\{y\in \mathbb{R}^n| |y-x|< r\}= \text{ the open ball  with radius} r\text{ and center } x.\\
&D(x,r)=\{y\in \mathbb{R}^m| |y-x|\le r\}= \text{ the closed disk  with radius} r \text{ and center } x.\\
&\mathring{D}(x,r)=\{y\in \mathbb{R}^m| |y-x|< r\}= \text{ the open disk  with radius} r\text{ and center } x.\\
&B_r=B(0,r), \qquad D_r=D(0,r).\\
&\omega_{m}: \text{volume of the $m$ dimensional unit ball}.
\end{align*}
\\
For a $m$ dimensional subspace  $T\subset \mathbb{R}^n$, we denote $T\cap B_r=D_r$  when it does not cause ambiguity. We also adopt Einstein's sum convention, for example $A_{ij} B^{jk}:=\sum_{j} A_{ij}B^{jk}$.
\section{Basic Properties}\label{basic}
In  \cite[Section 2,3]{Z22}, the second author proved the Allard-Reifenberg regularity for  rectifiable  $2$-varifolds in $U\supset B_1$ with perpendicular genralized mean curvature satisfying the critical Allard condition. In particular, they are chord-arc varifolds with small constant in  $B(0,2^{-16}\sqrt{\varepsilon})$(which is smaller and smaller as $\varepsilon\to 0$).  In this section, we refine these chord-arc estimates  in \cite{Z22} such that they hold in  $B(0,1-\psi)$, instead of $B(0,2^{-16}\sqrt{\varepsilon})$. For basic conceptions of varifolds, see\cite{A-1972}\cite{LS83}.
We first recall Leon Simon's monotonicity formula.
\begin{lem}~\cite{LS93}\cite{KS}
Assume $V=\underline{v}(\Sigma, \theta)$ is a rectifiable 2-varifold in an open set $U\subset \mathbb{R}^{n}$ with perpendicular generalized mean curvature $H\in L^2(d\mu)$, where $\mu=\mu_V=\theta\mathcal{H}^2\llcorner \Sigma$.
  Then, for any $x\in \mathbb{R}^{n}$, and $0<\sigma<\rho<\infty$ with $B(x,\rho)\subset U$,
\begin{align}\label{monotonicity equality}
\frac{\mu(B(x,\sigma))}{\sigma^2}=
\frac{\mu(B(x,\rho))}{\rho^2}&+\frac{1}{16}\int_{B(x,\rho)\backslash B(x,\sigma)}|H|^2d\mu-\int_{B(x,\rho)\backslash B(x,\sigma)}|\frac{\nabla^\bot r}{r}+\frac{H}{4}|^2d\mu\nonumber\\
&+\frac{1}{2\rho^2}\int_{B(x,\rho)}r\langle\nabla^\bot r,H\rangle d\mu-\frac{1}{2\sigma^2}\int_{B(x,\sigma)}r\langle\nabla^\bot r,H\rangle d\mu,
\end{align}
Where $r=r_x=|\cdot-x|$. Moreover, for any $\delta\le 1$, we have
\begin{align}\label{monotonicity inequality}
\frac{\mu(B(x,\sigma))}{\sigma^2}\le
(1+\delta)\frac{\mu(B(x,\rho))}{\rho^2}+\frac{1}{2\delta}\int_{B(x,\rho)}|H|^2d\mu.
\end{align}
\end{lem}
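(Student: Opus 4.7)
I would derive (2.1) from the first variation formula
\[
\int_\Sigma \operatorname{div}^\Sigma X\,d\mu = -\int_\Sigma \langle X,H\rangle\,d\mu,
\]
applied to the radial test field $X(p) = h(r)(p-x)$, where $r = r_x(p) = |p-x|$ and $h\colon [0,\infty)\to[0,\infty)$ is a smooth cutoff to be chosen below. A direct computation using $p-x = r\,\nabla r$ and the splitting $\nabla r = \nabla^M r + \nabla^\perp r$ gives
\[
\operatorname{div}^\Sigma X \;=\; m\,h(r) + r\,h'(r)\bigl(1-|\nabla^\perp r|^2\bigr),
\]
and, crucially, the perpendicularity $H(p)\in (T_p\Sigma)^\perp$ converts $\langle X,H\rangle$ into $r\,h(r)\langle\nabla^\perp r,H\rangle$. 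Specialising to $m=2$ and inserting the Simon-type cutoff $h(r) = (\rho^{-2}-r^{-2})_+$ (smoothed), the identity becomes
\[
\frac{\mu(B(x,\rho))}{\rho^2} \;=\; \int_{B(x,\rho)}\frac{|\nabla^\perp r|^2}{r^2}\,d\mu - \frac{1}{2\rho^2}\int_{B(x,\rho)} r\langle\nabla^\perp r,H\rangle\,d\mu + \frac12\int_{B(x,\rho)}\frac{\langle\nabla^\perp r,H\rangle}{r}\,d\mu,
\]
and writing the analogous identity at scale $\sigma$ and subtracting yields the ``bare'' monotonicity identity, without any squared term yet.

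To reach the stated form (2.1) I would complete the square by means of the elementary identity
\[
\frac{|\nabla^\perp r|^2}{r^2} + \frac{\langle\nabla^\perp r,H\rangle}{2r} \;=\; \Bigl|\frac{\nabla^\perp r}{r}+\frac{H}{4}\Bigr|^{2} - \frac{|H|^2}{16},
\]
which rewrites the annular integral of $\frac{|\nabla^\perp r|^2}{r^2} + \frac{\langle\nabla^\perp r,H\rangle}{2r}$ as $\int_{B(x,\rho)\setminus B(x,\sigma)}|\cdot|^2\,d\mu - \tfrac{1}{16}\int_{B(x,\rho)\setminus B(x,\sigma)}|H|^2\,d\mu$. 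Substituting this into the raw identity and rearranging produces precisely (2.1); the minus sign in front of the squared term records the monotonicity of $\rho\mapsto \mu(B(x,\rho))/\rho^2$ up to the curvature defect.

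For the inequality (2.2), discard the manifestly non-positive $-\int |\nabla^\perp r/r+H/4|^2\,d\mu$ contribution from (2.1), and then control the two boundary terms $\frac{1}{2R^2}\int_{B(x,R)} r\langle\nabla^\perp r,H\rangle\,d\mu$ at $R = \sigma,\rho$ using $|\nabla^\perp r|\le 1$, Cauchy--Schwarz, and Young's inequality $ab \le \tfrac{\delta}{2}a^2 + \tfrac{1}{2\delta}b^2$ with $a^2 = \mu(B(x,R))/R^2$ and $b^2 = \int_{B(x,R)}|H|^2\,d\mu$; the contribution at scale $\sigma$ is absorbed into the left-hand side, and a suitable relabelling of $\delta$ recovers the form stated in (2.2). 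The only genuinely technical point — the main obstacle in a rigorous write-up — is the passage from a smooth cutoff $h$ to the sharp characteristic functions of $[0,\sigma]$ and $[0,\rho]$ for every pair $\sigma<\rho$; this uses $H\in L^2_{\mathrm{loc}}(d\mu)$ and the fact that $R\mapsto\mu(B(x,R))$ is continuous off an at most countable set of radii, and is identical to the standard argument carried out in Simon's lecture notes \cite{LS83}.
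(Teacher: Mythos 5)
The paper states this lemma without proof, as a direct citation of Simon's monotonicity formula \cite{LS93} and its $L^2$ version in \cite{KS}, so there is no internal proof to compare against; your plan reproduces the standard derivation and is essentially the intended one. The computation of $\operatorname{div}^\Sigma X$ for $X=h(r)(p-x)$, the use of perpendicularity to write $\langle X,H\rangle=rh(r)\langle\nabla^\perp r,H\rangle$, and the completion of the square $\frac{|\nabla^\perp r|^2}{r^2}+\frac{\langle\nabla^\perp r,H\rangle}{2r}=\bigl|\frac{\nabla^\perp r}{r}+\frac H4\bigr|^2-\frac{|H|^2}{16}$ are all correct and yield \eqref{monotonicity equality} exactly.

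Two points need repair. First, the cutoff as written, $h(r)=(\rho^{-2}-r^{-2})_+$, vanishes identically on $B(x,\rho)$; the correct Simon cutoff is $h(r)=\bigl(\max(r,\sigma)^{-2}-\rho^{-2}\bigr)_+$, which produces the two-scale identity directly. Relatedly, your intermediate ``single-scale identity'' at radius $\rho$ contains the terms $\int_{B_\rho}|\nabla^\perp r|^2r^{-2}d\mu$ and $\int_{B_\rho}r^{-1}\langle\nabla^\perp r,H\rangle d\mu$, which need not be finite for a general rectifiable varifold; only the annulus version obtained from the truncated cutoff is meaningful, so the derivation should be phrased at two scales from the start rather than ``subtracting'' two possibly divergent identities. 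Second, for \eqref{monotonicity inequality}: discarding the square term and applying Cauchy--Schwarz plus Young separately to the two boundary terms gives the right structure, but the constants come out as $\frac{1+a/4}{1-b/4}$ and $\frac{1}{1-b/4}\bigl(\frac1{16}+\frac1{4a}+\frac1{4b}\bigr)$, and one checks that no choice of $a,b$ achieves both $1+\delta$ and $\frac{1}{2\delta}$ when $\delta$ is close to $1$; moreover ``relabelling $\delta$'' cannot fix this, since decreasing $\delta$ worsens the coefficient of $\int|H|^2$. The clean way to get the stated constants for all $\delta\le1$ is to keep the good term: writing the boundary contributions as $-\frac12\int_{B_\rho}h(r)r\langle\nabla^\perp r,H\rangle d\mu$ with the truncated $h$, using $h(r)r^2\le1$ and Young's inequality to bound this by $\frac{\lambda}{4}\int_{B_\rho}h|\nabla^\perp r|^2d\mu+\frac{1}{4\lambda}\int_{B_\rho}|H|^2d\mu$, and absorbing $\int_{B_\rho}h|\nabla^\perp r|^2d\mu\le\frac{\mu(B_\sigma)}{\sigma^2}+\int_{B_\rho\setminus B_\sigma}\frac{|\nabla^\perp r|^2}{r^2}d\mu$ into the left-hand side; choosing $\lambda=\frac{4\delta}{1+\delta}$ then gives coefficient $\frac{(1+\delta)^2}{16\delta}\le\frac{1}{2\delta}$ for $\delta\le1$. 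Your version is fine for the small-$\delta$ regime in which the lemma is actually used in the paper, but as a proof of the lemma as stated it needs this adjustment.
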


Recall that a varifold is called a chord-arc varifold (Definition \ref{chord arc varifold}) if it satisfies the Ahlfors regularity, the Reifenberg condition and the tilt-excess estimate. We proves these properties in the following three Lemmas.
\begin{lem}[Ahlfors Regularity]\label{lem:ahlfors regularity}
Assume $V=\underline{v}(\Sigma,\theta)$ is a rectifiable 2-varifold in $B_1\subset \mathbb{R}^n$ with $\theta\ge1$ and perpendicular generalized mean curvature $H$,  and the induced measure $\mu=\mu_V=\theta\mathcal{H}^2\llcorner \Sigma$ satisfies $0\in spt\mu=\Sigma$ and
\begin{align*}
\int_{B_1}|H|^2d\mu\le \varepsilon^2 \text{ and } \frac{\mu(B_1)}{\pi}\le 1+\varepsilon.
\end{align*}
Then, for any $\alpha\in(0,1)$, there exists a positive function $\psi(\varepsilon|\alpha)$ such that $\lim_{\varepsilon\to 0}\psi(\varepsilon|\alpha)=0$ and the following holds:

For any $\xi\in B(0,1-\alpha)$ and $\sigma\le 1-\alpha-|\xi|$,
\begin{align*}
\frac{\mu(B(\xi,\sigma))}{\pi \sigma^2}\le 1+\psi(\varepsilon|\alpha).
\end{align*}
Moreover, if $\xi\in spt\mu$, then
\begin{align*}
\frac{\mu(B(\xi,\sigma))}{\pi \sigma^2}\ge 1-2\varepsilon.
\end{align*}
\end{lem}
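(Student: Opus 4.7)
The plan is to first establish the upper bound at the origin via a direct application of Simon's monotonicity inequality \eqref{monotonicity inequality}, then propagate it to an arbitrary center $\xi\in B(0,1-\alpha)$ using a planar approximation of $\Sigma$ that one extracts from the near-equality in \eqref{monotonicity equality} at $0$. The lower bound at $\xi\in\spt\mu$ will come from letting the inner radius in \eqref{monotonicity equality} tend to $0$.

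For the upper bound at $0$, I would apply \eqref{monotonicity inequality} with $x=0$, $\rho=1$, and $\delta=\sqrt\varepsilon$; combined with the hypotheses $\mu(B_1)\le(1+\varepsilon)\pi$ and $\int_{B_1}|H|^2 d\mu\le\varepsilon^2$, this immediately yields $\mu(B(0,\sigma))/(\pi\sigma^2)\le 1+\psi(\varepsilon)$ for every $\sigma\in(0,1]$. For a general center $\xi$, however, naive monotonicity centered at $\xi$ with $\rho=1-|\xi|$ only controls $\mu(B(\xi,\rho))$ by $\mu(B_1)\le(1+\varepsilon)\pi$, so it gives $\mu(B(\xi,\rho))/(\pi\rho^2)\le(1+\varepsilon)/\alpha^2$, which is off by the wrong order of magnitude. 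To do better, I would use \eqref{monotonicity equality} at $0$ with $\sigma\to 0^+$ (so the left-hand side tends to $\pi\theta(0)\ge\pi$) and $\rho=1$: since $\mu(B_1)\le(1+\varepsilon)\pi$ and the boundary term at $\rho=1$ is $O(\varepsilon)$ by Cauchy--Schwarz, the identity forces the defect
\[
\int_{B_1}\left|\frac{\nabla^\perp r}{r}+\frac{H}{4}\right|^2 d\mu\le\psi(\varepsilon),
\]
whence, after absorbing $H$ by a triangle inequality, $\int_{B_1}|\nabla^\perp r|^2/r^2\,d\mu\le\psi(\varepsilon)$. This near-conical behavior at $0$, together with the chord-arc structure in $B(0,2^{-16}\sqrt\varepsilon)$ furnished by \cite{Z22} and the multiplicity bound $\theta\ge 1$, should produce a $2$-plane $T$ through the origin with Hausdorff and tilt-excess estimates
\[
d_{\mathcal H}(\Sigma\cap B_{1-\alpha/2},T\cap B_{1-\alpha/2})+\left(\int_{B_{1-\alpha/2}}|p_{T_x\Sigma}-p_T|^2 d\mu\right)^{1/2}\le\psi(\varepsilon|\alpha).
\]

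With $T$ in hand, for $\xi\in\spt\mu\cap B_{1-\alpha}$ I set $\rho:=1-\alpha/2-|\xi|\ge\alpha/2$. Projecting $\spt\mu\cap B(\xi,\rho)$ onto $T$ via $p_T$ and applying the area formula together with the tilt-excess control gives $\mu(B(\xi,\rho))\le\pi\rho^2(1+\psi(\varepsilon|\alpha))$; a second use of \eqref{monotonicity inequality} at $\xi$ with $\delta=\sqrt\varepsilon$ then propagates the bound down to every $\sigma\le 1-\alpha-|\xi|\le\rho-\alpha/2$. The case $\xi\notin\spt\mu$ reduces to the preceding one by comparing with a nearby point of $\spt\mu$ within distance $\sigma$ (or is vacuous if $B(\xi,\sigma)\cap\spt\mu=\emptyset$). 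Finally, for the lower bound at $\xi\in\spt\mu$, sending the inner radius to $0$ in \eqref{monotonicity equality} centered at $\xi$ and using $\theta(\xi)\ge 1$ gives
\[
\frac{\mu(B(\xi,\sigma))}{\sigma^2}\ge\pi-\frac{1}{16}\int_{B(\xi,\sigma)}|H|^2 d\mu-\frac{1}{2\sigma^2}\int_{B(\xi,\sigma)}r\langle\nabla^\perp r,H\rangle d\mu,
\]
and estimating the last term by Cauchy--Schwarz against the just-established Ahlfors upper bound yields $\mu(B(\xi,\sigma))/(\pi\sigma^2)\ge 1-2\varepsilon$.

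The main obstacle is the planar-approximation step: one must bootstrap the small-ball chord-arc structure of \cite{Z22} into a genuine \emph{global} planar approximation on $B_{1-\alpha/2}$ with quantitative Hausdorff and tilt-excess control depending on $\alpha$, and then justify the area-by-projection estimate $\mu(B(\xi,\rho))\le\pi\rho^2(1+\psi)$. The projection argument is where the tilt-excess bound is genuinely needed, since one must carefully track the measure of the `bad set' on which $T_x\Sigma$ deviates significantly from $T$.
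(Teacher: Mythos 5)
Your opening steps are fine: the origin-centered upper bound via \eqref{monotonicity inequality}, the smallness of $\int_{B_1}|\tfrac{\nabla^\perp r}{r}+\tfrac H4|^2d\mu$ from the near-equality in \eqref{monotonicity equality}, and the lower bound $\ge 1-2\varepsilon$ (modulo replacing ``$\theta(\xi)\ge1$'' by ``$\Theta(\mu,\xi)\ge1$ via upper semicontinuity of the density,'' since $\theta\ge1$ only holds $\mu$-a.e.) all match what can be done directly, and the last of these is essentially the paper's own argument. But the crucial step — $\mu(B(\xi,\rho))\le\pi\rho^2(1+\psi)$ for an off-center $\xi$ via ``projecting onto $T$ and applying the area formula together with the tilt-excess control'' — has a genuine gap. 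The area formula for the $1$-Lipschitz map $p_T$ gives
\[
\mathcal H^2\bigl(p_T(\Sigma\cap B(\xi,\rho))\bigr)\ \le\ \int_{T}\#\bigl(p_T^{-1}(y)\cap\Sigma\cap B(\xi,\rho)\bigr)\,d\mathcal H^2(y)\ =\ \int_{\Sigma\cap B(\xi,\rho)}J\bigl(p_T|_{T_x\Sigma}\bigr)\,d\mathcal H^2 ,
\]
i.e.\ it bounds the \emph{projected} area from above by the surface area — the wrong direction for an upper bound on $\mu(B(\xi,\rho))$. To reverse it you would need the projection to be essentially injective and $\theta\equiv1$; but multiplicity one is precisely a \emph{consequence} of the Ahlfors regularity you are trying to prove (see Remark \ref{theta=1}), so assuming it here is circular. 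Nothing in the hypotheses rules out, a priori, that $\Sigma$ doubles back over $B(\xi,\rho)$ while the mass bound $\mu(B_1)\le(1+\varepsilon)\pi$ is preserved by $\Sigma$ under-covering some other region.

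The natural repair — write $\mu(B(\xi,\rho))\le\mu(B_1)-\mu(B_{1-\alpha/4}\setminus B(\xi,\rho))$ and bound the complement's mass from below by the area of the corresponding annulus in $T$ — needs the projection to be essentially \emph{surjective} onto that annulus (a ``no holes'' statement, cf.\ Proposition \ref{pro:conclude}), which in the paper is obtained from Reifenberg's topological disk theorem and hence from the Reifenberg condition at \emph{all} points and scales (Lemma \ref{lem:height bound}) — again downstream of the Ahlfors regularity in the paper's logical order. Allard's Lemma 8.4 gives only Hausdorff density of $\operatorname{spt}\mu$ near $T$ at the top scale, which does not imply that $p_T(\Sigma)$ has full measure in the annulus; and a Vitali-type covering using the lower density bound loses a fixed packing factor, so it cannot produce the sharp constant $1+\psi$. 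This off-center upper bound is exactly the point where the paper abandons direct estimates and runs a compactness argument instead: Allard compactness produces a stationary limit with $\theta\ge1$ and mass $\le\pi$, Allard regularity plus the cone rigidity from the monotonicity formula force the limit to be a multiplicity-one plane, and semicontinuity of mass under varifold convergence yields the contradiction. I would either adopt that compactness route or find a genuinely new quantitative substitute for the injectivity/surjectivity of $p_T$; as written, your projection step does not close.
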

\begin{proof}
We argue by contradiction. Otherwise, there exists a sequence of varifolds $V_i=\underline{v}(\Sigma_i,\theta_i)$ such that
$\theta_i\ge 1$, $\mu_i$-a.e.,$0\in spt\mu_i$,
\begin{align*}
\int_{B_1}|H_i|^2d\mu_i\le \varepsilon_i^2\to 0 \quad \text{ and } \quad \frac{\mu_i(B_1)}{\pi}\le 1+\varepsilon_i\to 1,
\end{align*}
but there exists $\alpha\in (0,1), a>0$, $\xi_i\in B(0,1-\alpha)$ and $\sigma_i\le 1-\alpha-|\xi_i|$ such that
\begin{align*}
\frac{\mu_i(B(\xi_i,\sigma_i))}{\pi \sigma_i^2}\ge 1+a.
\end{align*}
On the one hand, by Allard's compactness theorem\cite{A-1972}, we know after passing to a subsequence, $V_i$ converges to a rectifiable varifold $V=\underline{v}(\Sigma,\theta)$ such that $\theta\ge 1$ and $\|\delta V\|\le \lim_{i\to \infty}\|\delta V_i\|=0$. Moreover, we know $\mu(B_1)=1$ and $0\in spt V$. Thus by Allard's regularity theorem\cite{A-1972}, we know $spt V\cap B_\gamma(0)$ is smooth for some $\gamma\in (0,1)$. By the monotonicity formula again, we know $spt V$ is a cone with $0$ as the cone vertex.  So, we know $\Sigma=spt V$ is a flat disk and $\theta\equiv 1$ and hence for any $|\xi|<1$ and $\sigma<1-|\xi|$,
$$\frac{\mu(B(\xi,\sigma))}{\pi \sigma^2}= 1.$$
On the other hand, assume $\xi_i\to \xi_0\in B(0,1-\alpha)$ and $\sigma_i\to \sigma_0\le 1-\alpha-|\xi_0|$.

In the case $\sigma_0>0$, for any $a'>0$, we know
\begin{align*}
1+a\le\lim_{i\to \infty}\frac{\mu_i(B(\xi_i,\sigma_i))}{\pi \sigma_i^2}\le \frac{\mu(B(\xi_0,\sigma_0+a')}{\pi \sigma_0^2}.
\end{align*}
Letting $a'\to 0$, we get a contradiction.

In the  case $\sigma_0=0$, by the monotonicity inequality, we know
\begin{align*}
1+a\le \lim_{i\to\infty}\frac{\mu_i(B(\xi_i,\sigma_i))}{\pi \sigma_i^2}&\le \lim_{i\to \infty}[(1+\varepsilon_i)\frac{\mu_i(B(\xi_i,\frac{\alpha}{2}))}{\pi(\frac{\alpha}{2})^2}
+\frac{1}{2\pi\varepsilon_i}\int_{B_1}|H_i|^2d\mu_i]\\
&\le \frac{\mu(B(\xi_0,\frac{\alpha}{2}))}{\pi(\frac{\alpha}{2})^2}\le 1.
\end{align*}
A contradiction again.

For the lower bound, by the monotonicity formula again, we know
\begin{align*}
\Theta(\mu,\xi)\le (1+\varepsilon)\frac{\mu(B(\xi,\sigma))}{\pi \sigma^2}+\frac{1}{2\pi\varepsilon}\int_{B_1}|H|^2d\mu.
\end{align*}
Thus by $\Theta=\theta$ for $\mu-a.e.$ and the upper semi-continuity of $\Theta$, we know
\begin{align*}
\frac{\mu(B(\xi,\sigma))}{\pi \sigma^2}\ge \frac{1-\frac{\varepsilon}{2\pi}}{1+\varepsilon}\ge 1-2\varepsilon.
\end{align*}
\end{proof}
\begin{rem}\label{theta=1}
The above Ahlfors regularity implies $$\theta(\xi)=\Theta(\xi)=\lim_{\sigma\to 0}\frac{\mu(B(\xi,\sigma))}{\pi\sigma^2}\le 1+\psi, \mu-a.e. $$ So, when the varifold $\underline{v}(\Sigma,\theta_{\ge 1})$ is integral, we have  $\theta\equiv 1$ and $\mu=\mathcal{H}^2\llcorner \Sigma$.
\end{rem}

\begin{lem}[Reifenberg condition]\label{lem:height bound}Under the same assumption of Lemma \ref{lem:ahlfors regularity}, for any $\alpha\in(0,1)$, there exists a positive function $\psi(\varepsilon|\alpha)$ such that $\lim_{\varepsilon\to 0}\psi(\varepsilon|\alpha)=0$ and the following holds:

For any $\xi\in B(0,1-\alpha)\cap spt\mu$ and $\sigma\le 1-\alpha-|\xi|$, there exists a plane $T_{\xi, \sigma}\in G(n,2)$ such that
\begin{align*}
\frac{1}{\sigma}d_{\mathcal{H}}(B(\xi,\sigma)\cap spt\mu, B(\xi,\sigma)\cap (T_{\xi,\sigma}+\xi))\le \psi(\varepsilon|\alpha),
\end{align*}
where  $d_{\mathcal{H}}$ is the Hausdorff distance.
\end{lem}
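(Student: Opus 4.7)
The plan is to mirror the compactness-and-blowup scheme used in the proof of Lemma~\ref{lem:ahlfors regularity}. Suppose the conclusion fails: there exist $\alpha \in (0,1)$, $\eta > 0$, and a sequence of varifolds $V_i = \underline v(\Sigma_i,\theta_i)$ verifying the hypotheses of Lemma~\ref{lem:ahlfors regularity} with $\varepsilon_i \to 0$, together with base points $\xi_i \in B(0,1-\alpha) \cap \spt\mu_i$ and scales $\sigma_i \le 1-\alpha-|\xi_i|$, such that
\[
    \inf_{T \in G(n,2)} \frac{1}{\sigma_i}\, d_{\mathcal H}\bigl(B(\xi_i,\sigma_i) \cap \spt\mu_i,\; B(\xi_i,\sigma_i) \cap (T+\xi_i)\bigr) > \eta.
\]

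Rescale by $\varphi_i(x) := (x-\xi_i)/\sigma_i$ and set $\tilde V_i := (\varphi_i)_\# V_i$, a rectifiable $2$-varifold defined on a ball containing $B_1$. Lemma~\ref{lem:ahlfors regularity} applied at $(\xi_i,\sigma_i)$, together with the scale-invariance of the mass ratio and of the integral $\int |H|^2\,d\mu$ in dimension $2$, yields
\[
    (1-2\varepsilon_i)\pi \le \tilde\mu_i(B_1) \le (1+\psi(\varepsilon_i|\alpha))\pi, \qquad \int_{B_1} |\tilde H_i|^2\,d\tilde\mu_i \le \varepsilon_i^2,
\]
with $0 \in \spt\tilde\mu_i$ and $\tilde\theta_i \ge 1$. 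By Allard's compactness, after passing to a subsequence $\tilde V_i \to \tilde V = \underline v(\tilde\Sigma,\tilde\theta)$ with $\tilde\theta \ge 1$ and $\|\delta \tilde V\| = 0$. The two-sided Ahlfors estimates promote the weak-$*$ convergence of Radon measures to $\tilde\mu(B_r) = \pi r^2$ for every $r<1$, so $\Theta(\tilde\mu,0)=1$. Allard's regularity then makes $\spt\tilde V$ a smooth graph in a neighborhood of $0$, and the monotonicity equality \eqref{monotonicity equality} with $H\equiv 0$ has no slack, forcing $\tilde V$ to be a cone at $0$; combined with density one, $\tilde V = \underline v(T_\infty \cap B_1, 1)$ for some $T_\infty \in G(n,2)$, exactly as in the proof of Lemma~\ref{lem:ahlfors regularity}.

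It remains to upgrade varifold convergence to Hausdorff convergence of supports. The uniform lower Ahlfors bound forces every $z \in T_\infty \cap B_{1-\delta}$ to lie within $o_i(1)$ of $\spt\tilde\mu_i$, since otherwise a fixed ball about $z$ would carry vanishing $\tilde\mu_i$-mass in the limit while $\tilde\mu(B(z,\rho))>0$. Conversely, the uniform upper Ahlfors bound and the lower semicontinuity of mass on open sets prevent $\spt\tilde\mu_i$ from accumulating away from $T_\infty$. Therefore
\[
    d_{\mathcal H}\bigl(\spt\tilde\mu_i \cap B_{1-\delta},\; T_\infty \cap B_{1-\delta}\bigr) \to 0
\]
for every fixed $\delta>0$, and choosing $T=T_\infty$ and rescaling back contradicts $\eta>0$ for $\delta$ small and $i$ large. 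The main technical step, and the only place demanding care, is that the Hausdorff comparison in the assumption is on the full ball $B(\xi_i,\sigma_i)$ while the blowup naturally delivers convergence on $B_{1-\delta}$; the missing annulus is controlled by the two-sided Ahlfors estimates, which imply that no point on either side can sit farther than $\psi(\varepsilon|\alpha)$ from the other inside the entire ball.
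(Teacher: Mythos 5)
Your proposal is correct and follows essentially the same route as the paper: a contradiction/compactness argument that identifies the blow-up limit as a multiplicity-one plane via Allard's compactness and regularity theorems plus the monotonicity formula, and then upgrades varifold convergence to Hausdorff convergence of supports using the two-sided Ahlfors bounds (the paper splits into the cases $\sigma_i\to\sigma_0>0$ and $\sigma_i\to 0$ instead of rescaling uniformly, and handles the full-ball versus slightly-smaller-ball issue by nudging near-boundary points of the plane inward, but the content is identical). One small slip worth fixing when you write the details: the inclusion ``$\spt\tilde\mu_i$ stays near $T_\infty$'' uses the \emph{lower} Ahlfors bound for $\tilde\mu_i$ together with upper semicontinuity of mass on closed balls, while ``every point of $T_\infty$ is near $\spt\tilde\mu_i$'' uses lower semicontinuity of mass on open sets together with positivity of $\tilde\mu$ near points of its support --- you have these two ingredients attributed to the wrong directions, though both are present in your argument.
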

\begin{proof}
We argue by contradiction.  If the conclusion does not hold, then there are a sequence of rectifiable  2-varifolds $V_i=\underline{v}(\Sigma_i,\theta_i)$ with $\theta_i\ge 1$, $0\in spt\mu_i$ and perpendicular generalized mean curvature $H_i\in L^2(d\mu_i)$ satisfying
\begin{align*}
\int_{B_1}|H_i|^2d\mu_i\le \varepsilon_i^2\to 0 \text{ and } \frac{\mu_i(B_1)}{\pi}\le 1+\varepsilon_i\to 1,
\end{align*}
but there exists $\alpha\in (0,1), a>0$, $\xi_i\in B(0,1-\alpha)$ and $\sigma_i\le 1-\alpha-|\xi_i|$ such that for any affine plane $T\in G(n,2)$,
\begin{align}\label{no reifenberg}
\frac{1}{\sigma_i} d_{\mathcal{H}}(B(\xi_i,\sigma_i)\cap spt\mu_i, B(\xi_i,\sigma_i)\cap (T+\xi))\ge a.
\end{align}
In the case $\sigma_i\to \sigma_0>0$, we can assume $\xi_i\to \xi_0\in B(0,1-\alpha)$ after passing to a subsequence.  By Allard's compactness theorem and Allard's regularity theorem, we know $V_i$ converges to a multiplicity one affine plane $V_0=\underline{v}(\Sigma=P,\theta=1)$ such that $\xi_0\in P$. Now, we are going to show
\begin{align}\label{semi-hausdorff convergence}
\lim_{i\to \infty}d_{\mathcal{H}}(B(\xi_i,\sigma_i)\cap spt\mu_i, B(\xi_0,\sigma_0)\cap P)=0.
\end{align}
If (\ref{semi-hausdorff convergence}) does not hold, then there exists $a_1>0$ such that either there are  $y_i\in B(\xi_i,\sigma_i)\cap spt\mu_i\backslash \mathcal{N}_{a_1}(B(\xi_0,\sigma_0)\cap P)$ or there are $\hat{y}_i\in B(\xi_0,\sigma_0)\cap P\backslash \mathcal{N}_{a_1}(B(\xi_i,\sigma_i)\cap spt\mu_i)$, where the notation $\mathcal{N}_a(A):=\{x\in \mathbb{R}^{n}| d(x,A)\le a\}$ means the $a$-neighborhood of a set $A$.

 In the former case, after passing to a subsequence, we know $y_i\to y_0\notin \mathcal{N}_{\frac{a_1}{2}}(B(\xi_0,\sigma_0)\cap P)$. But by Lemma \ref{lem:ahlfors regularity}, we know
$$\mu_i(B(y_i,a_1))\ge (1-2\varepsilon_i)\pi a_1^2.$$
Letting $i\to \infty$, we get
$$\mu_0(B(y_0,a_1))\ge \pi a_1^2,$$
which means $y_0\in P$ and contradicts to the fact $y_0\notin \mathcal{N}_{\frac{a_1}{2}}(B(\xi_0,\sigma_0)\cap P)$.

In the latter case, we can assume  $\hat{y}_i\to \hat{y}_0\in B(\xi_0,\sigma_0)\cap P$ such that
\begin{align*}
B(\hat{y}_0,\frac{a_1}{2})\cap B(\xi_i,\sigma_i)\cap spt\mu_i=\emptyset.
\end{align*}
Taking $\tilde{y}_0=t\xi_0+(1-t)\hat{y}_0\in B(\xi_0,\sigma_0)\cap P$ for $t=\frac{a_1}{3\sigma_0}$, we know
\begin{align*}
|\tilde{y}_0-\hat{y}_0|\le t\sigma_0
\text{
and
}
|\tilde{y}_0-\xi_i|\le (1-t)\sigma_0+|\xi_i-\xi_0|.
\end{align*}
By choosing $a_2=\frac{t\sigma_0}{2}=\frac{a_1}{6}$, we get
\begin{align*}
B(\tilde{y}_0,a_2)\subset B(\hat{y}_0,a_2+t\sigma_0)\subset B(\hat{y}_0,a_1)
\end{align*}
and
\begin{align*}
B(\tilde{y}_0,a_2)\subset B(\xi_i,a_2+(1-t)\sigma_0+|\xi_i-\xi_0|)\subset B(\xi_i,\sigma_i) \text{ for } i \text{ sufficient large}.
\end{align*}
Thus  for $i$ large, we have
\begin{align*}
B(\tilde{y}_0,a_2)\cap spt\mu_i\subset B(\hat{y}_0,a_1)\cap B(\xi_i,\sigma_i)\cap spt\mu_i=\emptyset,
\end{align*}
which also implies a contradiction:
\begin{align*}
\frac{\pi a_2^2}{4}=\mu_0(B(\tilde{y}_0,\frac{a_2}{2}))\le\liminf_{i\to \infty}\mu_i(B(\tilde{y}_0,a_2))=0.
\end{align*}
By the discussion above, we know (\ref{semi-hausdorff convergence}) holds when $\sigma_0>0$, which contradicts to (\ref{no reifenberg}) immediately since  $d_{\mathcal{H}}(B(\xi_i,\sigma_i)\cap (P-\xi_0+\xi_i), B(\xi_0,\sigma_0)\cap P)\to 0$.

Next we discuss the case $\sigma_0=0$. In this case, let $\varphi_{\sigma_i,\xi_i}(y)=\frac{y-\xi_i}{\sigma_i}$ and denote the push forward varifold by $\tilde{V}_i=\varphi_{\sigma_i,\xi_i\sharp}V_i$. By the scaling invariance and Lemma \ref{lem:ahlfors regularity}, we know
\begin{align*}
\int_{B(0,\sigma_i^{-1})}|\tilde{H}_i|^2d\tilde{\mu}_{i}\le \varepsilon_i^2\to 0 \text{ and } \frac{\tilde{\mu}_i(B(0,\sigma))}{\pi \sigma^2}\le 1+\varepsilon_i\to 1, \text{ for any } \sigma\le \sigma_i^{-1}.
\end{align*}
Thus by the same discussion as in the case $\sigma_0>0$, we know $\tilde{V}_i$ converges to $V=\underline{v}(P,1)$ with $0\in P$  in the varifold topology  and
\begin{align*}
\sigma_i^{-1}d_{\mathcal{H}}(B(\xi_i,\sigma_i)\cap spt\mu_i,B(\xi,\sigma_i)\cap (\xi_i+P))=d_{\mathcal{H}}( B_1\cap spt\tilde{\mu}_i,P\cap B_1)\to 0,
\end{align*}
where $P$ is a two dimensional subspace in $\mathbb{R}^{n}$. This contradicts to (\ref{no reifenberg}) again.  The proof is complete.
\end{proof}

Up to now, we obtained the Ahlfors regularity and the Reifenberg condition, below we give the tilt-excess estimate. Recall the definition of tilt-excess\cite[Section 22, Chapter 5]{LS83}.
\begin{defi}Assume $V=\underline{v}(\Sigma,\theta)$ is a rectifiable 2-varifold in $B_1$. For any plane $T\in G(n,2)$ and $\xi\in B_1$ and $\sigma<1-|\xi|$, the tilt-excess is defined by
\begin{align*}
E(\xi, \sigma,T)=\sigma^{-2}\int_{B(\xi,\sigma)}|p_{T_x\Sigma}-p_T|^2d\mu,
\end{align*}
where $p_T$ is the orthogonal projection corresponding to the plane $T$ and  $p_{T_x\Sigma}$ is the orthogonal projection corresponding to the approximate tangent plane $T_x\Sigma$ of $V$.
\end{defi}
\begin{lem}\label{lem:Cacciopolli}$($Cacciopolli type inequality \cite[Lemma 22.2]{LS83}$)$Let $V=\underline{v}(\Sigma,\theta)$ be a rectifiable $2$-varifold in $B_1\subset \mathbb{R}^{n}$ with generalized mean curvature $H\in L^2(d\mu)$. Then for any $\alpha\in (0,1)$, $\xi\in B(0,1-2\alpha)$ and $\sigma\le 1-|\xi|-2\alpha$, there holds
\begin{align*}
E(\xi, \sigma,T)\le C(\int_{B(\xi,(1+\alpha)\sigma)}|H|^2d\mu
+\sigma^{-2}(1+(\frac{1}{\alpha}))^2\int_{B(\xi,(1+\alpha)\sigma)}(\frac{d(x-\xi,T)}{\sigma})^2d\mu),
\end{align*}
where  $C$ is a universal constant.
\end{lem}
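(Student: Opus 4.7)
The plan is a direct first variation argument with a carefully chosen vector field. Let $\eta(x):=p_{T^{\perp}}(x-\xi)$ denote the projection of $x-\xi$ onto the orthogonal complement of $T$, and fix a cutoff $\phi\in C^1_c(B(\xi,(1+\alpha)\sigma))$ with $\phi\equiv 1$ on $B(\xi,\sigma)$ and $|\nabla\phi|\le C/(\alpha\sigma)$. The plan is to test the first variation identity $\int \ddiv^\Sigma X\,d\mu = -\int X\cdot H\,d\mu$ against $X=\phi^2\eta$.

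Two algebraic observations drive the estimate. First, for $\mu$-a.e.\ $x$, choosing an orthonormal basis $\{e_i\}$ of the approximate tangent plane $T_x\Sigma$,
\begin{equation*}
\ddiv^\Sigma \eta \;=\; \sum_i \langle e_i,p_{T^{\perp}}(e_i)\rangle \;=\; \sum_i |p_{T^{\perp}}(e_i)|^2 \;=\; \tfrac{1}{2}|p_{T_x\Sigma}-p_T|^2,
\end{equation*}
where the last equality follows from the trace identity $|p_{T_x\Sigma}-p_T|^2 = 2m - 2\,\mathrm{tr}(p_{T_x\Sigma}p_T) = 2\sum_i |p_{T^{\perp}}e_i|^2$. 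Second, since $\eta$ takes values in $T^{\perp}$ while $p_T(\nabla\phi)\in T$, there is the cancellation $p_T(\nabla\phi)\cdot \eta=0$, giving
\begin{equation*}
\nabla^\Sigma\phi\cdot\eta \;=\; (p_{T_x\Sigma}-p_T)(\nabla\phi)\cdot\eta,\qquad |\nabla^\Sigma\phi\cdot\eta|\;\le\; |p_{T_x\Sigma}-p_T|\,|\nabla\phi|\,|\eta|.
\end{equation*}

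Expanding $\ddiv^\Sigma(\phi^2\eta)=\phi^2\ddiv^\Sigma\eta+2\phi\,\nabla^\Sigma\phi\cdot\eta$ and substituting into the first variation yields
\begin{equation*}
\tfrac{1}{2}\int\phi^2|p_{T_x\Sigma}-p_T|^2\,d\mu \;=\; -2\int\phi\,(\nabla^\Sigma\phi\cdot\eta)\,d\mu \;-\; \int\phi^2\,\eta\cdot H\,d\mu.
\end{equation*}
Now apply Young's inequality to the two error terms in the forms $2\phi|\nabla\phi||p_{T_x\Sigma}-p_T||\eta|\le \tfrac14\phi^2|p_{T_x\Sigma}-p_T|^2+4|\nabla\phi|^2|\eta|^2$ and $\phi^2|\eta||H|\le \tfrac{\sigma^2}{2}\phi^2|H|^2+\tfrac{1}{2\sigma^2}\phi^2|\eta|^2$, and absorb the tilt term on the right into the left. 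Inserting $\phi\equiv 1$ on $B(\xi,\sigma)$, $\spt\phi\subset B(\xi,(1+\alpha)\sigma)$ and $|\nabla\phi|\le C/(\alpha\sigma)$, then dividing by $\sigma^2$, converts $\sigma^{-4}(1+\alpha^{-2})\int|\eta|^2$ into the desired $\sigma^{-2}(1+1/\alpha)^2\int(|\eta|/\sigma)^2\,d\mu$ factor, yielding the stated inequality.

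The computation is a textbook Caccioppoli argument (essentially \cite[Lemma~22.2]{LS83}); the only points requiring care are the two algebraic identities above -- the trace identity expressing $\ddiv^\Sigma\eta$ as a tilt density, and the orthogonality cancellation $p_T(\nabla\phi)\cdot\eta=0$ -- which together guarantee that the right-hand side of the first variation carries the factor $|p_{T_x\Sigma}-p_T|$ needed for absorption. Note that neither perpendicularity of $H$ nor a lower density bound is used; only the defining identity for the generalized mean curvature is invoked.
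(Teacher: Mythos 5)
Your argument is correct and is exactly the standard Caccioppoli computation that the paper invokes by citing \cite[Lemma 22.2]{LS83}: testing the first variation with $X=\phi^2 p_{T^\perp}(x-\xi)$, using the trace identity $\ddiv^\Sigma\eta=\tfrac12|p_{T_x\Sigma}-p_T|^2$ and the orthogonality cancellation to gain the absorbable factor, then Young's inequality. The constants and the domain check $B(\xi,(1+\alpha)\sigma)\subset B_1$ all work out, so nothing further is needed.
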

This lemma is a Cacciopolli type inequality for the generalized mean curvature equation. Combing it with the height bound estimate, we can get the excess estimate.
\begin{lem}[Excess estimate]\label{cor:excess estimate}
Under the same assumption of Lemma \ref{lem:ahlfors regularity}, for any $\alpha\in(0,1)$, there exists a positive function $\psi(\varepsilon|\alpha)$ such that $\lim_{\varepsilon\to 0}\psi(\varepsilon|\alpha)=0$ and the following holds:

For any $\xi\in B(0,1-2\alpha)\cap spt\mu$ and $\sigma\le 1-2\alpha-|\xi|$ and the plane $T_{\xi, \sigma}$ occurred in Lemma \ref{lem:height bound}, there holds
$$E(\xi, \sigma,\tilde{T}_{\xi,\sigma})\le \psi(\varepsilon|\alpha).$$
\end{lem}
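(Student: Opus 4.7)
The plan is to apply the Caccioppoli-type inequality of Lemma~\ref{lem:Cacciopolli} and convert the resulting height integral into a small quantity using the height bound already produced in Lemma~\ref{lem:height bound} together with the Ahlfors regularity of Lemma~\ref{lem:ahlfors regularity}. Fix $\xi\in B(0,1-2\alpha)\cap\spt\mu$ and $\sigma\le 1-2\alpha-|\xi|$, and set $T:=T_{\xi,\sigma}$. Applying Lemma~\ref{lem:Cacciopolli} with its internal parameter equal to $\alpha/2$ (which is legal because the enlarged ball $B(\xi,\rho)$, $\rho:=(1+\alpha/2)\sigma$, still satisfies $\rho<1-\alpha-|\xi|$ so Lemma~\ref{lem:height bound} applies at that scale), I obtain
\begin{equation*}
E(\xi,\sigma,T)\le C\Bigl(\int_{B(\xi,\rho)}|H|^2\,d\mu \;+\; C_\alpha\,\sigma^{-2}\int_{B(\xi,\rho)}\Bigl(\tfrac{d(x-\xi,T)}{\sigma}\Bigr)^{\!2} d\mu\Bigr).
\end{equation*}
The curvature term is bounded by $C\varepsilon^2$ by hypothesis \eqref{mean curvature bound}.

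For the height integral the subtlety is that the Reifenberg bound attached to $T=T_{\xi,\sigma}$ only controls heights on $B(\xi,\sigma)$, whereas the integration is over the enlarged ball $B(\xi,\rho)$. I would bridge this gap by applying Lemma~\ref{lem:height bound} once more at the scale $\rho$, producing a second plane $T':=T_{\xi,\rho}$ with the corresponding Hausdorff bound on $B(\xi,\rho)$. A short triangle-inequality argument in Hausdorff distance shows that $T+\xi$ and $T'+\xi$ are mutually $O(\psi\sigma)$-close throughout $B(\xi,\sigma)$ (any point of $T+\xi$ in that ball is near a point of $\spt\mu$ by Reifenberg at scale $\sigma$, which is in turn near $T'+\xi$ by Reifenberg at scale $\rho$, and vice versa), and since both affine planes pass through $\xi$ this forces the subspaces $T,T'$ to form an angle of order $\psi(\varepsilon|\alpha)$; in particular $\|p_T-p_{T'}\|\le C_\alpha\psi$. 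Consequently, for every $x\in B(\xi,\rho)\cap \spt\mu$,
\begin{equation*}
d(x-\xi,T)\le d(x-\xi,T')+\|p_T-p_{T'}\|\cdot|x-\xi|\le \psi\rho + C_\alpha\psi\rho \le C_\alpha\psi\sigma.
\end{equation*}

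Combined with the measure bound $\mu(B(\xi,\rho))\le C_\alpha\sigma^2$ from Lemma~\ref{lem:ahlfors regularity}, this gives $\sigma^{-4}\int_{B(\xi,\rho)}d(x-\xi,T)^2\,d\mu\le C_\alpha\psi^2$. Substituting back into the Caccioppoli estimate yields $E(\xi,\sigma,T)\le C_\alpha(\varepsilon^2+\psi^2)$, which is a function of the form $\psi(\varepsilon|\alpha)$ tending to $0$ as $\varepsilon\to 0$. The only delicate step is the angle comparison between $T_{\xi,\sigma}$ and $T_{\xi,\rho}$; everything else is bookkeeping with $\alpha$-dependent constants.
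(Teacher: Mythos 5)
Your proposal is correct and follows essentially the same route as the paper's proof: a Caccioppoli-type estimate plus a comparison of the Reifenberg planes $T_{\xi,\sigma}$ and $T_{\xi,(1+\cdot)\sigma}$ via a triangle inequality in Hausdorff distance. The only (immaterial) difference is that the paper runs Caccioppoli with the larger-scale plane and converts to $T_{\xi,\sigma}$ at the end, whereas you run it with $T_{\xi,\sigma}$ and insert the plane comparison inside the height integral.
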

\begin{proof}
Since $|\xi|+\sigma\le 1-2\alpha$ implies $|\xi|+(1+\alpha)\sigma\le 1-\alpha$. By Lemma \ref{lem:Cacciopolli}, Lemma \ref{lem:height bound} and Lemma \ref{lem:ahlfors regularity}, we know for the plane $T=T_{\xi, (1+\alpha)\sigma}$ in Lemma \ref{lem:height bound},
\begin{align}\label{T1+}
E(\xi,\sigma, T)&\le C(\int_{B_1}|H|^2d\mu +\sigma^{-2}(1+(\frac{1}{\alpha}))^2\int_{B(\xi,(1+\alpha)\sigma)}(\frac{d(x-\xi,T)}{\sigma})^2d\mu)\nonumber\\
&\le C(\varepsilon^2+\psi^2(\varepsilon|\alpha)(1+\alpha)^2\frac{\mu(B(\xi,(1+\alpha)\sigma))}{(\alpha\sigma)^{2}})\nonumber\\
&\le C(\varepsilon^2+(1+\alpha)^4\frac{\psi^2(\varepsilon|\alpha)}{\alpha^2}\pi(1+\psi(\varepsilon|\alpha)))\nonumber\\
&=\psi_1(\varepsilon|\alpha).
\end{align}
Next, we are going to estimate $|p_{T_{\xi,\sigma}}-p_{T_{\xi,(1+\alpha)\sigma}}|$.

 Denote $p:T_{\xi,\sigma}\to T_{\xi,(1+\alpha)\sigma}$. Then,  For any $x\in T_{\xi,\sigma}\cap B(0,\sigma)$, applying Lemma\ref{lem:height bound} twice in $B(\xi,\sigma)$ and $B(\xi,(1+\alpha)\sigma)$, we know there exists $\tilde{y}\in B(0,(1+\alpha)\sigma)$ such that $|x-\tilde{y}|\le \psi(\varepsilon|\alpha)\sigma$. Letting $y=p(x)$, then we know
 \begin{align}\label{one side}
 |y-x|=d(x,T_{\xi,(1+\alpha)\sigma})\le |x-\tilde{y}|\le \psi(\varepsilon|\alpha)\sigma.
 \end{align}
 Especially, for $x\in \partial B(0,\sigma)\cap T_{\xi,\sigma}$, we know $|p(x)-x|=|y-x|\le \psi(\varepsilon|\alpha)|x|$. This implies $p:T_{\xi,\sigma}\to T_{\xi,(1+\alpha)\sigma}$ is a linear isomorphism and
 $$p(T_{\xi,\sigma}\cap B(0,\sigma))\supset T_{\xi,(1+\alpha)\sigma}\cap B(0,(1-\psi(\varepsilon|\alpha)\sigma)).$$
 Thus for any $y_1\in T_{\xi,(1+\alpha)\sigma}\cap B(0,\sigma))$, there exists $\tilde{y}_1=p(x_1)\in T_{\xi,(1+\alpha)\sigma}\cap B(0,(1-\psi(\varepsilon|\alpha)\sigma))$ such that $|\tilde{y}_1-y_1|\le \psi(\varepsilon|\alpha)\sigma$ and $x_1\in T_{\xi,\sigma}\cap B(0,\sigma)$. This implies
 \begin{align}\label{other side}
 |y_1-x_1|\le |y_1-\tilde{y}_1|+|x_1-p(x_1)|\le \psi(\varepsilon|\alpha)\sigma.
 \end{align}
 Combining \eqref{one side} and \eqref{other side} together, we know
 \begin{align*}
 |p_{T_{\xi,\sigma}}-p_{T_{\xi,(1+\alpha)\sigma}}|\le
 \frac{C}{\sigma}d_{\mathcal{H}}(T_{\xi,\sigma}\cap B(0,\sigma), T_{\xi,(1+\alpha)\sigma}\cap B(0,\sigma))\le \psi(\varepsilon|\alpha).
 \end{align*}
 So,  by \eqref{T1+} and Lemma \ref{lem:ahlfors regularity}, we have
 $$E(\xi,\sigma,T_{\xi,\sigma})\le \psi(\varepsilon|\alpha).$$
\end{proof}

\begin{proposition}Assume $V=\underline{v}(\Sigma,\theta)$ is a rectifiable 2-varifold in $B_1\subset \mathbb{R}^n$ with $\theta\ge1$ a.e. and perpendicular generalized mean curvature $H\in L^2(d\mu)$ such that $0\in spt\mu$ and
\begin{align*}
\mu(B_1)\le (1+\varepsilon)\pi \text{ and } \int_{B_1}|H|^2d\mu\le \varepsilon^2.
\end{align*}
Then,  for any $\alpha>0$, $V$ is a chord-arc varifold in $B_{1-\alpha}(0)$ with constant $\gamma=\psi(\varepsilon|\alpha)$.
\end{proposition}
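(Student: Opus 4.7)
The statement is essentially a direct repackaging of the three preceding lemmas, which is why I expect the argument to be short. The plan is to notice that the three clauses in Definition \ref{chord arc varifold} correspond exactly to Lemma \ref{lem:ahlfors regularity}, Lemma \ref{lem:height bound}, and Lemma \ref{cor:excess estimate}, and then to match the scales so that all three lemmas can be invoked simultaneously on $B_{1-\alpha}(0)$.

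More concretely, given $\alpha>0$, I would apply each of the three lemmas with their internal parameter taken to be $\alpha'=\alpha/2$. The bookkeeping is as follows: for any $\xi\in\Sigma\cap B_{1-\alpha}(0)$ and any $\sigma>0$ with $B(\xi,\sigma)\subset B_{1-\alpha}(0)$, one has
\[
|\xi|+\sigma\le 1-\alpha=1-2\alpha',
\]
which is the hypothesis required by Lemma \ref{cor:excess estimate} (and a fortiori by Lemmas \ref{lem:ahlfors regularity} and \ref{lem:height bound}, which only need $|\xi|+\sigma\le 1-\alpha'$). Therefore, choosing the plane $T_{\xi,\sigma}\in G(n,2)$ produced by Lemma \ref{lem:height bound}, all three conclusions
\[
\bigl|\tfrac{\mu(B(\xi,\sigma))}{\pi\sigma^{2}}-1\bigr|\le\psi(\varepsilon|\alpha'),\qquad
\tfrac{1}{\sigma}d_{\mathcal{H}}\!\left(B(\xi,\sigma)\cap\Sigma,\,(T_{\xi,\sigma}+\xi)\cap B(\xi,\sigma)\right)\le\psi(\varepsilon|\alpha'),
\]
\[
E(\xi,\sigma,T_{\xi,\sigma})\le\psi(\varepsilon|\alpha')
\]
hold with a common error function $\psi(\varepsilon|\alpha)$ depending only on $\varepsilon$ and $\alpha$. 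Setting $\gamma:=\psi(\varepsilon|\alpha)$ verifies all three items of Definition \ref{chord arc varifold} at $B_{1-\alpha}(0)$ with the same constant.

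There is essentially no obstacle beyond this scale‑matching. The main point worth emphasizing in the write‑up is that the same subspace $T_{\xi,\sigma}$ is used in both the Reifenberg clause and the tilt‑excess clause; this is guaranteed since Lemma \ref{cor:excess estimate} is proved using precisely the plane produced by Lemma \ref{lem:height bound}. Once this is observed, the proof reduces to a single sentence citing the three lemmas with $\alpha'=\alpha/2$ and taking $\gamma=\psi(\varepsilon|\alpha)$ to be the maximum of the three resulting error functions.
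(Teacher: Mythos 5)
Your proposal is correct and matches the paper's own proof, which simply cites Lemma \ref{lem:ahlfors regularity}, Lemma \ref{lem:height bound} and Lemma \ref{cor:excess estimate}; your extra bookkeeping with $\alpha'=\alpha/2$ and the remark that the tilt-excess lemma uses the same plane as the Reifenberg lemma just makes explicit what the paper leaves implicit.
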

\begin{proof} It follows from Lemma \ref{lem:ahlfors regularity}, Lemma \ref{lem:height bound} and Lemma \ref{cor:excess estimate}.
\end{proof}

Let's recall the celebrated Reifenberg's topological theorem.
\begin{theorem}[\bf{Reifenberg's topological disk theorem}]\label{reifenberg theorem}\cite{R60}\cite{M-2009}\cite{LS96}\cite{HW-2010} For integers $n,k>0$ and $0<\alpha<1$, there exists $\gamma=\gamma(n,k,\alpha)>0$ such that for any closed set $S\subset \mathbb{R}^{n}$ with $0\in S$, if for any $y\in S\cap B_1$ and $\sigma\in (0,1]$, there exists an $k$-dimensional plane $T_{y,\sigma}\subset \mathbb{R}^{n}$ passing through $y$ such that
 \begin{align}\label{reifenberg con}
 d_{\mathcal{H}}(S\cap B(y,\sigma), T_{y,\sigma}\cap B(y,\sigma))\le \gamma \sigma,
 \end{align}
 then, there exist closed set $M\subset \mathbb{R}^{n}$ and  a homeomorphism $\tau: T_{0,1}\to M$ such that $M\cap B_1=S\cap B_1$
 \begin{align*}
 (1-\psi(\gamma))|x-y|^{2-\alpha}\le |\tau(x)-\tau(y)|\le (1+\psi(\gamma))|x-y|^{\alpha},\forall x,y\in T_{0,1}.
 \end{align*}
 and
 \begin{align*}
 |\tau(x)-x|\le C(n,k)\gamma, \forall x\in T_{0,1} \ \ \ \text{ and } \ \ \ \ \tau(x)=x, \forall x\in T_{0,1}\backslash B_2.
 \end{align*}
 \end{theorem}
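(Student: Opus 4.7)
The plan is to follow Reifenberg's original iterative construction, as refined in the references cited in the statement. I would fix a small geometric ratio $a \in (0, 1/2)$ (for concreteness $a = 1/8$) and work on dyadic scales $\sigma_i = a^i$, $i \ge 0$. For each $i$ I would take a maximal $\sigma_i$-separated subset $\{y_j^i\}_j$ of $S \cap B_1$; by maximality the balls $\{B(y_j^i, 2\sigma_i)\}_j$ cover $S \cap B_1$ and have controlled overlap. The Reifenberg hypothesis supplies an affine $k$-plane $P_j^i := T_{y_j^i, \sigma_i} + y_j^i$ with $d_{\mathcal H}(S \cap B(y_j^i, \sigma_i), P_j^i \cap B(y_j^i, \sigma_i)) \le \gamma \sigma_i$, and from this one deduces that consecutive planes over nearby centers and at consecutive scales differ by an angle $O(\gamma)$.

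Next I would build the homeomorphism $\tau$ as the limit of a sequence $\tau_i := \sigma_i \circ \sigma_{i-1} \circ \cdots \circ \sigma_1$, starting from $\tau_0 = \mathrm{id}_{T_{0,1}}$, where each $\sigma_i : \mathbb{R}^n \to \mathbb{R}^n$ is a diffeomorphism isotopic to the identity, equal to the identity outside a neighborhood of $\tau_{i-1}(T_{0,1}) \cap B_2$, and defined through a partition of unity $\{\varphi_j^i\}$ subordinate to $\{B(y_j^i, 2\sigma_i)\}$ by
\[
\sigma_i(x) \;=\; x + \sum_j \varphi_j^i(x)\bigl(\pi_j^i(x) - x\bigr),
\]
with $\pi_j^i$ the orthogonal projection onto $P_j^i$. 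A smooth cutoff between $B_1$ and $B_2$ ensures $\sigma_i = \mathrm{id}$ outside $B_2$. The key pointwise bounds, established inductively, are
\[
\|\sigma_i - \mathrm{id}\|_\infty \le C(n,k)\,\gamma\,\sigma_i, \qquad \|D\sigma_i - I\|_\infty \le C(n,k)\,\gamma,
\]
where the first uses the Reifenberg height bound and the second uses the angular bound between nearby planes at scale $\sigma_i$.

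Summing the geometric series in $\sigma_i = a^i$ gives uniform convergence $\tau_i \to \tau$ with $\|\tau - \mathrm{id}\|_\infty \le C(n,k)\gamma$, so $\tau$ is a continuous map equal to the identity outside $B_2$; a symmetric inductive argument applied to the inverses $\sigma_i^{-1}$ yields continuity of $\tau^{-1}$, hence $\tau$ is a homeomorphism. The image $M := \tau(T_{0,1})$ agrees with $S$ inside $B_1$ because at every scale $\sigma_i$ the iterate $\tau_i$ maps $T_{0,1}$ to a set Hausdorff-close to $S$ (again by the Reifenberg condition), and passing to the limit gives $M \cap B_1 = S \cap B_1$. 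For the bi-Hölder estimate, given $x,y \in T_{0,1}$ let $i_0$ be the largest integer with $a^{i_0} \ge |x-y|$; then at scales $i \le i_0$ the maps $\sigma_i$ act on $x$ and $y$ by essentially the same affine isometry, contributing a multiplicative factor $(1 + C\gamma)^{i_0} \le |x-y|^{-\psi(\gamma)}$ to $|\tau(x)-\tau(y)|/|x-y|$, while at scales $i > i_0$ the estimate $\|D\sigma_i - I\|_\infty \le C\gamma$ contributes a similar factor. Choosing $\gamma$ small relative to $1 - \alpha$ converts the $\psi(\gamma)$ into an exponent loss below $1 - \alpha$, producing the bounds $(1-\psi(\gamma))|x-y|^{2-\alpha} \le |\tau(x)-\tau(y)| \le (1+\psi(\gamma))|x-y|^{\alpha}$.

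The main obstacle is the quantitative bookkeeping across the iteration, particularly verifying that the partition-of-unity construction at scale $\sigma_i$ interacts correctly with the previously built $\tau_{i-1}$ (so that $\tau_{i-1}(T_{0,1})$ stays within the union of the balls $B(y_j^i, 2\sigma_i)$ where $\sigma_i$ is nontrivial) and that the bi-Hölder exponents, while degenerating as $\gamma \to 0$, do so in a controlled fashion matching $\psi(\gamma)$. Since this is a classical result with self-contained proofs in \cite{R60}, \cite{M-2009}, \cite{LS96} and \cite{HW-2010}, rather than reproducing the full iteration I would simply invoke the relevant version from those papers, as the statement is used only as a black box in what follows.
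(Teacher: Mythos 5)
The paper gives no proof of this theorem: it is stated as a classical black-box result with citations to \cite{R60}, \cite{M-2009}, \cite{LS96}, \cite{HW-2010}, which is exactly what you do in your final paragraph. Your sketch of the iterative construction (dyadic scales, partitions of unity gluing projections onto the approximating planes, geometric-series convergence, and the bi-H\"older estimate from the multiplicative accumulation of $(1+C\gamma)$ factors down to the scale $|x-y|$) is the standard argument from those references, so the proposal is consistent with the paper's treatment.
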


 As  an application, we have the following proposition, which will be used in section \ref{proof of w1p parameterization}.

\begin{pro}\label{pro:conclude}Assume $V=\underline{v}(\Sigma,\theta)$ is a chord-arc varifold in $B_1\subset \mathbb{R}^n$ with constant $\gamma\ll 1$ and $0\in\Sigma$. For any $\xi\in \Sigma$ and $\sigma>0$ such that $B(\xi,2\sigma)\subset B_1$, we denote the orthogonal projection from $\mathbb{R}^n$ to $T_{\xi,\sigma}$ and $T^{\bot}_{\xi,\sigma}$ by  $p$ and $p^{\bot}$ respectively.For any $x\in \mathbb{R}^{n}$, let $\zeta=p(x-\xi)$ and $v=p^{\bot}(x-\xi)$.
Define
    $$\mathcal{C}_{\xi,\sigma}=\{x-\xi=\zeta+v : |\zeta|\le \sigma, |v|\le \sigma\}.$$
Then,
    \begin{align*}
    p(\mathcal{C}_{\xi,\sigma}\cap (\Sigma-\xi))=\{\zeta\in T_{\xi,\sigma}:|\zeta|\le \sigma\}.
    \end{align*}
\end{pro}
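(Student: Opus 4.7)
The plan is to parametrize $\Sigma$ near $\xi$ by Reifenberg's topological disk theorem (Theorem \ref{reifenberg theorem}) and to conclude surjectivity of $p$ via a Brouwer-degree argument. After translating, I take $\xi = 0$ and write $T := T_{\xi,\sigma}$. Since $B(0, 2\sigma) \subset B_1$, for every $y \in \Sigma \cap B(0, \sigma)$ and $s \le \sigma$ one has $B(y, s) \subset B(0, 2\sigma) \subset B_1$; hence the chord-arc hypothesis gives the Reifenberg $\gamma$-flatness at each such $(y, s)$, which is precisely what is needed to apply Reifenberg's theorem at scale $\sigma$ around $0$. This produces a homeomorphism $\tau : T \to M \subset \mathbb{R}^n$ satisfying $|\tau(\zeta) - \zeta| \le C(n, 2)\gamma\sigma$ for all $\zeta \in T$ and $M \cap B_\sigma = \Sigma \cap B_\sigma$.

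I next consider $h := p \circ \tau : T \to T$, which obeys $|h(\zeta) - \zeta| = |p(\tau(\zeta) - \zeta)| \le C\gamma\sigma$. The standard Brouwer-degree homotopy $h_t(\zeta) = \zeta + t(h(\zeta) - \zeta)$ stays non-vanishing at any $\zeta_0$ with $|\zeta_0| < R - C\gamma\sigma$ as $\zeta$ ranges over $\partial D_R$, so $h(D_R) \supset D_{R - C\gamma\sigma}^{\mathrm{open}}$. Choosing $R = \sigma(1 - C\gamma)$ forces $|\tau(\zeta)| \le R + C\gamma\sigma = \sigma$ throughout $D_R$, so that $\tau(\zeta) \in M \cap B_\sigma = \Sigma \cap B_\sigma$; moreover $|p^\perp(\tau(\zeta))| \le |\tau(\zeta) - \zeta| \le C\gamma\sigma \le \sigma$, placing $\tau(\zeta)$ inside $\mathcal{C}_{\xi,\sigma} \cap (\Sigma - \xi)$. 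This establishes the inclusion
\[
D_{\sigma(1 - 2C\gamma)}^{\mathrm{open}} \subset p\bigl(\mathcal{C}_{\xi,\sigma} \cap (\Sigma - \xi)\bigr).
\]

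The reverse inclusion $\subset D_\sigma$ is immediate from the definition of $\mathcal{C}_{\xi,\sigma}$. To promote the degree-theoretic coverage to the full closed disk $D_\sigma$, I would exploit that $\mathcal{C}_{\xi,\sigma} \subset B(\xi, \sqrt{2}\sigma) \subset B_1$ is compact, so $\mathcal{C}_{\xi,\sigma} \cap (\Sigma - \xi)$ is compact and its $p$-image is a closed subset of $D_\sigma$; any remaining $\zeta_0$ with $\sigma(1 - 2C\gamma) \le |\zeta_0| \le \sigma$ is reached by rerunning the Reifenberg-plus-degree argument at a slightly shifted base point $\xi' \in \Sigma$ obtained as a Reifenberg lift of $\xi + \zeta_0$, using the buffer $B(\xi, 2\sigma) \subset B_1$ to fit the relocated Reifenberg ball inside $B_1$, and then passing to the limit using closedness. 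The main technical obstacle is precisely this last boundary step: the naive degree argument loses an $O(\gamma)$-thick shell near $\partial D_\sigma$, and closing that gap without exhausting the $B_1$-buffer is the delicate part of the proof.
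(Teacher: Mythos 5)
Your first two steps are sound, but the proof as written does not establish the statement: the degree argument only yields
\[
\mathring{D}(0,\sigma(1-2C\gamma))\subset p\bigl(\mathcal{C}_{\xi,\sigma}\cap(\Sigma-\xi)\bigr),
\]
and the annulus $\{\sigma(1-2C\gamma)\le|\zeta_0|\le\sigma\}$ is left uncovered. The repair you sketch (re-centering Reifenberg at lifted base points and ``passing to the limit using closedness'') is not carried out and does not obviously close the gap: closedness of the image only upgrades the open disk to the closed disk of the \emph{same} radius $\sigma(1-2C\gamma)$, and to reach a boundary point $\zeta_0$ with $|\zeta_0|=\sigma$ by a limit you would already need image points arbitrarily close to $\zeta_0$, i.e.\ you would need to have covered the shell. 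So as it stands this is a genuine gap, which you yourself flag as the delicate part.

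The missing idea is that you applied Reifenberg at too small a scale. The hypothesis $B(\xi,2\sigma)\subset B_1$ gives you room to run Reifenberg in the ball of radius $2\sigma$, producing $f:T_{\xi,\sigma}\to\mathbb{R}^n$ with $|f(x)-\xi-x|\le\psi(\gamma)\sigma$ and $f(D(0,\tfrac{3\sigma}{2}))\subset\Sigma$. Running your homotopy $\varphi_t(x)=x+t\bigl(p(f(x)-\xi)-x\bigr)$ on the \emph{larger} disk, $\varphi_t(\partial D(0,\tfrac{3\sigma}{2}))$ stays outside $D(0,\tfrac{5\sigma}{4})$, so the degree argument gives $p\bigl(f(D(0,\tfrac{3\sigma}{2}))-\xi\bigr)\supset\mathring{D}(0,\tfrac{5\sigma}{4})$, which strictly contains the target disk $D(0,\sigma)$ --- no boundary shell survives. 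The point you are implicitly fighting is the constraint $\tau(\zeta)\in M\cap B_\sigma=\Sigma\cap B_\sigma$; but membership in $\mathcal{C}_{\xi,\sigma}$ only requires $|p(z)|\le\sigma$ and $|p^{\perp}(z)|\le\sigma$, not $z\in B(\xi,\sigma)-\xi$, so a preimage $z=f(x)-\xi$ with $x\in D(0,\tfrac{3\sigma}{2})$, $p(z)\in D(0,\sigma)$ and $|p^{\perp}(z)|\le\psi(\gamma)\sigma$ is automatically in the cylinder. This is exactly the route the paper takes, and with that one change your argument goes through.
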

\begin{proof}
For any $r>0$, denote the disk in $T_{\xi,\sigma}$ with radius $r$ by $D(0,r)$. By the definition of chord-arc varifold and Reifenberg's topological disk theorem, there exists a map $f:T_{\xi,\sigma}\to \mathbb{R}^n$ such that $f(D(0,\frac{3\sigma}{2}))\subset \Sigma$ and
\begin{align*}
|f(x)-\xi-x|\le \psi(\gamma)\sigma, \qquad \forall x\in T_{\xi,\sigma}.
\end{align*}
Considering the map $\varphi(x)=p(f(x)-\xi): T_{\xi,\sigma}\to T_{\xi,\sigma}$, denoting $\varphi_t(x)=t\varphi(x)+(1-t)x$, then for any $t\in [0,1]$ and $x\in T_{\xi,\sigma}$, there holds
\begin{align*}
|\varphi_t(x)-x|\le |\varphi(x)-x|=|p(f(x)-\xi)-p(x)|\le |f(x)-\xi-x|\le \psi(\gamma)\sigma.
\end{align*}
This implies, $\varphi_t(\partial D(0,\frac{3\sigma}{2}))\subset T_{\xi,\sigma}\backslash D(0,\frac{5\sigma}{4})$.

So,  $\varphi_t: (D(0,\frac{3\sigma}{2}),\partial D(0,\frac{3\sigma}{2}))\to (T_{\xi,\sigma}, T_{\xi,\sigma}\backslash D(0,\frac{5\sigma}{4}))$ is a relative homotopy between $\varphi_0(x)=x$ and $\varphi_1(x)=\varphi(x)=p(f(x)-\xi)$ as maps of space pairs.  This implies $deg \varphi=deg\varphi_0=1$. Especially,  $p(f(D(0,\frac{3\sigma}{2}))-\xi)\supset \mathring{D}(0, \frac{5\sigma}{4})$. So, for any $y\in D(0,\sigma)$, there exists $x\in D(0,\frac{3\sigma}{2})$ such that $p(f(x)-\xi)=y$. Letting $z=f(x)-\xi$, then we know $z\in (\Sigma-\xi)\cap\mathcal{C}_{\xi,\sigma}$ and $y=p(z)$.  This completes the proof.
\end{proof}

\section{Geometrical properties of $\Sigma$ }\label{semmes' parameterization}

In this section, we first state our modification of Semmes' $W^{1,p}$ parameterization (Theorem \ref{W1p para}) for chord-arc varifold with small constant in a scaling variant version, and then apply it to obtain the geometric properties we need in the following sections.
\begin{theorem}\label{W1p para scaling}
Assume $V=\underline{v}(\Sigma,\theta)$ is a chord-arc $m$-varifold in $B_1$ with small constant $\gamma$ such that $0\in \Sigma$. Then for any $B(\xi,\sigma)\subset B_1$ with $\xi\in \Sigma$ and the disk $D(0,\sigma)$ in $T_{\xi,\sigma}$,  there exists a map $f_{\xi,\sigma}:D(0,\sigma)\to \Sigma$ such that
\begin{enumerate}
\item For any $x\in D(0,\sigma)$,
\begin{align*}
|f_{\xi,\sigma}(x)-\xi-x|\le C\gamma^{\frac{1}{4}}\sigma;
\end{align*}
    \item For any $x,y\in D(0,\sigma)$,
    \begin{align*}
(1-C\gamma^{\frac{1}{4}})\sigma^{-\psi(\gamma)}|x-y|^{1+\psi(\gamma)}\le |f_{\xi,\sigma}(x)-f_{\xi,\sigma}(y)|\le (1+C\gamma^{\frac{1}{4}})\sigma^{\psi(\gamma)}|x-y|^{1-\psi(\gamma)};
\end{align*}
 \item $ B(\xi,(1-\psi)\sigma)\cap \Sigma\subset f_{\xi,\sigma}( D(0,\sigma))\subset B(\xi,\sigma)\cap \Sigma$;
\item For  $f_{\xi,\sigma}^*,f_{\xi,\sigma,*}$ defined by
\begin{align*}
f_{\xi,\sigma}^*(x):&=\sup_{y\in D(0,\sigma)}\frac{|f_{\xi,\sigma}(x)-f_{\xi,\sigma}(y)|}{|x-y|},\\
f_{\xi,\sigma,*}(x):&=\inf_{y\in D(0,\sigma)}\frac{|f_{\xi,\sigma}(x)-f_{\xi,\sigma}(y)|}{|x-y|},
\end{align*}
 and  $p< p(\gamma)=\frac{\log{4}}{\log{(1+C\gamma^{\frac{1}{4}})}}\to +\infty$, there holds
\begin{align*}
 \int_{D(0,\sigma)}(f_{\xi,\sigma}^*)^p+(f_{\xi,\sigma,*})^{-p}dx \le C_p\sigma^{m},
 \end{align*}
 and
 \begin{align}\label{w1p*}
 \int_{D(0,\sigma)}((f_{\xi,\sigma}-\emph{i}_{\xi,\sigma})^*)^pdx&\le C_p\gamma^{\frac{p}{4}}\sigma^{m},
 \end{align}
 where  $\emph{i}_{\xi,\sigma}$ is the including map of $D(0,\sigma)\subset T(\xi,\sigma) \subset \mathbb{R}^{n}$.
\end{enumerate}
Here $\psi=\psi(\gamma)$ is a positive function such that $\lim_{\gamma\to 0}\psi(\gamma)=0$.
\end{theorem}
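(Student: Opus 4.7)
The plan is to reduce Theorem \ref{W1p para scaling} to the unit-scale version Theorem \ref{W1p para} by a direct rescaling, and then to prove the unit-scale version by combining Reifenberg's topological disk theorem with a Semmes-style stopping-time argument to upgrade a bi-Hölder parameterization to the $W^{1,p}$-type integrability of property (4).

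For the rescaling step, given $B(\xi,\sigma)\subset B_1$ with $\xi\in\Sigma$, the affine dilation $\varphi_{\xi,\sigma}(y)=(y-\xi)/\sigma$ pushes $V$ forward to a chord-arc $m$-varifold $\tilde V$ in $B_1$ with the same small constant $\gamma$ and $0\in\tilde\Sigma$, since each of the three items of Definition \ref{chord arc varifold} is scale invariant. Given $\tilde f:D_1\to\tilde\Sigma$ from Theorem \ref{W1p para}, I would set $f_{\xi,\sigma}(x):=\xi+\sigma\tilde f(x/\sigma)$. All four conclusions then follow mechanically from the identities $|f_{\xi,\sigma}(x)-f_{\xi,\sigma}(y)|=\sigma|\tilde f(x/\sigma)-\tilde f(y/\sigma)|$, $f^*_{\xi,\sigma}(x)=\tilde f^*(x/\sigma)$, $f_{\xi,\sigma,*}(x)=\tilde f_*(x/\sigma)$, $(f_{\xi,\sigma}-i_{\xi,\sigma})^*(x)=(\tilde f-i_1)^*(x/\sigma)$, together with the change of variables $\int_{D(0,\sigma)}(\cdot)\,dx=\sigma^m\int_{D_1}(\cdot)\,dy$; the $\sigma^{\mp\psi(\gamma)}$ factors in (2) arise from $\sigma|x/\sigma-y/\sigma|^{1\pm\psi}=\sigma^{\mp\psi}|x-y|^{1\pm\psi}$.

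For the unit-scale statement itself, the Reifenberg condition lets me apply Theorem \ref{reifenberg theorem} to $\tilde\Sigma\cap B_1$, producing a homeomorphism $\tau:T_{0,1}\to M$ with $M\cap B_1=\tilde\Sigma\cap B_1$, pointwise distortion $|\tau(x)-x|\leq C\gamma$, and bi-Hölder constants of the claimed form. Restricting to the unit disk, $\tilde f:=\tau|_{D_1}$ gives properties (1) and (2); property (3) then follows by combining (1) with Proposition \ref{pro:conclude}, which guarantees the orthogonal projection $\tilde\Sigma\cap\mathcal{C}_{0,1}\to D_1$ is surjective, ensuring every point of $B(0,1-\psi)\cap\tilde\Sigma$ lies in $\tilde f(D_1)$.

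The main obstacle is property (4), the $L^p$ integrability of $\tilde f^*$, $\tilde f_*^{-1}$ and $(\tilde f-i_1)^*$ for all $p$ up to $p(\gamma)=\log 4/\log(1+C\gamma^{1/4})$. Following the stopping-time strategy of Semmes \cite{S-1991b}, I would attach to each $x\in D_1$ the largest scale $\sigma(x)$ at which the tilt-excess-controlled Lipschitz approximation presents $\tilde\Sigma$ as a small-Lipschitz graph over the plane $T_{\tilde f(x),\sigma(x)}$. Above this scale $\tilde f$ is bi-Lipschitz with constants close to $1$ quantitatively controlled by the local $L^2$ tilt-excess, while on the bad set $\{\sigma(x)<t\}$ a Chebyshev-type estimate against the integrated tilt-excess bounds the measure by a power of $t$, and iterating Reifenberg's bi-Hölder bound over $\log_2(1/t)$ dyadic scales contributes the multiplicative growth $(1+C\gamma^{1/4})^{\log_2(1/t)}$; summing over dyadic shells then yields integrability up to the stated threshold $p(\gamma)$. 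The extra factor $\gamma^{p/4}$ in the $(\tilde f-i_1)^*$ bound comes from the smallness $|\tilde f(x)-x|\leq C\gamma^{1/4}$ already established in (1). The substantive technical work—carrying out the Lipschitz approximation and stopping-time covering rigorously in the non-smooth, higher-codimension varifold setting by replacing Semmes' smooth unit-normal with the plane-valued assignment $\zeta\mapsto T_{\zeta,\rho}$—is what Section \ref{proof of w1p parameterization} is devoted to.
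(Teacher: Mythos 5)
Your rescaling reduction to the unit-ball case is correct and is essentially how the paper begins: the three defining conditions of a chord-arc varifold are scale invariant, so one may assume $B(\xi,\sigma)=B_1$, and your bookkeeping of how items (1)--(4) transform under $x\mapsto \xi+\sigma x$ (in particular the origin of the $\sigma^{\pm\psi}$ factors in (2) and of $\sigma^{m}$ in (4)) is right. Items (1)--(3) at unit scale via Reifenberg's theorem together with the degree argument of Proposition \ref{pro:conclude} are also fine in outline.

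The gap is item (4), which is the entire content of the theorem. You take $\tilde f$ to be the Reifenberg homeomorphism $\tau|_{D_1}$ and then assert that above the stopping scale $\sigma(x)$ the map $\tilde f$ is bi-Lipschitz with constants close to $1$ controlled by the tilt excess. Nothing in Reifenberg's construction gives this: that iteration only tracks Hausdorff closeness to planes and yields bi-H\"older bounds, so the resulting map carries no pointwise Lipschitz information tied to the tilt excess, and there is no reason it satisfies the $L^p$ bounds of (4). The paper accordingly does not prove (4) for the Reifenberg map; in Section \ref{proof of w1p parameterization} it builds a different map. It defines the $(\nu,\delta)$-fine sets $F_k$ on which the maximal function of the tilt is small, shows each is a large Lipschitz piece (Lemmas \ref{fine set large} and \ref{lem:lip graph}), glues extended graphs to produce smoothed surfaces $\Sigma_k$ (Proposition \ref{modify}, where Reifenberg's theorem is used only for the no-hole/surjectivity step), constructs approximate normal bundles and bi-Lipschitz approximate normal exponential maps to replace Semmes' unit normal in higher codimension, and obtains $f$ as the limit of the successive projections $\tau_{0,k}$ composed with the graph representation of $\Sigma_0$. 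The $L^p$ bound is then the pairing of the pointwise estimate $(1+C\sqrt{\nu})^{k}$ on $F_{k}\setminus F_{k-1}$ with the geometric decay $\mu(\Sigma\setminus F_k)\le C\,10^{-k}$, which is exactly where the threshold $p(\gamma)$ comes from. Your closing sentence concedes that this construction ``is what Section \ref{proof of w1p parameterization} is devoted to''; that is, the substantive proof of (4) is deferred rather than given, and the one quantitative claim you do make about your chosen map is unjustified. As written, the proposal establishes (1)--(3) but not (4).
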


\begin{rem} The estimate \eqref{w1p*} implies the distributional gradient of $f_{\xi,\sigma}-\emph{i}_{\xi,\sigma}$ is well-defined and
\begin{align*}
 \int_{D(0,\sigma)}|\nabla(f_{\xi,\sigma}-\emph{i}_{\xi,\sigma})|^pdx\le C_p\gamma^{\frac{p}{4}}\sigma^{m}.
\end{align*}
\end{rem}

Since the proof Theorem \ref{W1p para scaling} is involved, we postpone it to Section \ref{proof of w1p parameterization}.

There are two natural metrics in $\Sigma$. One is the induced metric $|\cdot|$ from the Euclidean space $\mathbb{R}^n$. The other is the length metric $d$ defined by
\[
    d(x,y) = \inf \{\mathcal{H}^1(c): c \text{ is a curve in $\Sigma$ connecting $x$ and $y$}\}
.\]

Using Theorem \ref{W1p para scaling}, we can compare the two metric $|\cdot|$ and $d$, which justifies the name chord-arc varifold.

\begin{proposition}\label{metric equiv}
 Assume $V=\underline{v}(\Sigma,\theta)$ is a chord-arc $m$-varifold in $B_1$ with small constant $\gamma$ such that $0\in \Sigma$. Let $\xi\in \Sigma$ and  $\sigma>0$ such that $B(\xi,\sigma)\subset B_1$. Then for all $x, y\in B(\xi, \frac{\sigma}{4})\cap \Sigma$, we have
 \begin{equation}\label{metric equiv formula}
     |x-y|\leq d(x, y) \leq (1+\psi) |x-y|,
 \end{equation}
where $\psi=\psi(\gamma)$ is a positive function such that $\lim_{\gamma\to 0}\psi(\gamma)=0$.
\end{proposition}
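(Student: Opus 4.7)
The lower bound $|x-y|\le d(x,y)$ is immediate: $\Sigma\subset\mathbb{R}^n$, so any rectifiable curve in $\Sigma$ joining $x$ and $y$ has length at least $|x-y|$. For the upper bound, the plan is to construct explicitly, for arbitrary $x,y\in B(\xi,\sigma/4)\cap\Sigma$ with $r:=|x-y|$, a curve in $\Sigma$ from $x$ to $y$ of length $\le(1+\psi(\gamma))r$. First I would set the scale $\sigma':=\min\{4r,\,3\sigma/4\}$: then $B(x,\sigma')\subset B(\xi,\sigma)\subset B_1$, $\sigma'/r\le 4$, and $r\le (1-\psi)\sigma'$ for $\gamma$ small. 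Since $x\in\Sigma$, Theorem~\ref{W1p para scaling} supplies $f:=f_{x,\sigma'}\colon D(0,\sigma')\to\Sigma$; $f$ is injective by the lower bi-Hölder bound in (2), so there exist unique $\tilde x,\tilde y\in D(0,\sigma')$ with $f(\tilde x)=x$, $f(\tilde y)=y$, and property (1) gives $|\tilde x|,\,|\tilde y-(y-x)|\le C\gamma^{1/4}\sigma'$, so $w:=\tilde y-\tilde x$ satisfies $|w|=r(1+O(\gamma^{1/4}))$.

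Writing $f=\emph{i}_{x,\sigma'}+h$ and $h^*:=(f-\emph{i}_{x,\sigma'})^*$, a partition estimate (equivalently the ACL characterization of $W^{1,p}$) gives, for any segment $[a,b]\subset D(0,\sigma')$ along which $h$ is absolutely continuous,
\[\mathcal{H}^1\bigl(f([a,b])\bigr)\le |b-a|+\int_{[a,b]}h^*\,dl.\]
Fix $p$ large but $\gamma$-independent (so $p<p(\gamma)$ for small $\gamma$) and a constant $c\in(0,p/(4(m-1)))$; set $\delta:=\gamma^c r$ and let $\Delta\subset w^\perp$ be the $(m-1)$-disk of radius $\delta$. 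Parametrizing the tube $T:=[\tilde x,\tilde y]+\Delta\subset D(0,\sigma')$ by $(s,v)\mapsto \tilde x+s\,w/|w|+v$ (with Jacobian $1$) and combining Fubini with Hölder and the estimate $\|h^*\|_{L^p(D(0,\sigma'))}^p\le C_p\gamma^{p/4}(\sigma')^m$ from (4), I obtain
\[\frac{1}{|\Delta|}\int_\Delta\int_{[\tilde x+v,\,\tilde y+v]}h^*\,dl\,dv\;\le\;C_p^{1/p}\gamma^{1/4-c(m-1)/p}|w|\;=:\;\eta(\gamma)|w|,\]
with $\eta(\gamma)\to 0$ as $\gamma\to 0$ by the choice of $c$. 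Hence some $v_0\in\Delta$ with $|v_0|\le\gamma^c r$ produces a curve $f([\tilde x+v_0,\tilde y+v_0])\subset\Sigma$ of length $\le(1+\eta)|w|\le(1+\eta'(\gamma))r$ joining $a_0:=f(\tilde x+v_0)$ and $b_0:=f(\tilde y+v_0)$, and estimate (2) yields $|a_0-x|+|b_0-y|\le 2(1+C\gamma^{1/4})(\sigma')^{\psi(\gamma)}|v_0|^{1-\psi(\gamma)}\le\gamma^{c'}r$ for some $c'>0$.

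To close the endpoint gaps, apply the same construction recursively on the pairs $(x,a_0)$ and $(b_0,y)$ at scales $\le\gamma^{c'}r$; iterates remain in $B(\xi,\sigma)\cap\Sigma$ since the cumulative displacement from $x$ or $y$ accumulates as a geometric series of ratio $\gamma^{c'}$. Letting $L(\rho)$ denote the length of the curve produced when the endpoints are at distance $\rho$, the recursion $L(\rho)\le(1+\eta')\rho+2L(\gamma^{c'}\rho)$ telescopes (for $\gamma$ small enough that $2\gamma^{c'}<1$) to
\[L(\rho)\le(1+\eta')\rho\sum_{k\ge 0}(2\gamma^{c'})^k=\frac{1+\eta'}{1-2\gamma^{c'}}\rho=(1+\psi(\gamma))\rho,\]
giving the desired $d(x,y)\le L(r)\le(1+\psi(\gamma))r$.

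The principal obstacle is the balancing in the Fubini/Hölder step: the cross-section radius $\delta$ must be small enough (a positive power of $\gamma$ times $r$) that the bi-Hölder estimate (2) forces the endpoint mismatch $|a_0-x|$ to be of strictly lower order than $r$, so that the iteration converges geometrically; yet $\delta$ must be large enough that the factor $|T|^{1/p}$ appearing in Hölder's inequality does not swallow the $\gamma^{p/4}$ gain coming from $\|h^*\|_{L^p}^p$. These two constraints can be reconciled by taking $p$ sufficiently large (for example $p=2m$ when $m=2$), which is legitimate for small $\gamma$ precisely because the admissible exponent $p(\gamma)$ in Theorem~\ref{W1p para scaling} tends to $\infty$ as $\gamma\to 0$.
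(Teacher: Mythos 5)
Your proposal is correct and follows essentially the same route as the paper: pull $x,y$ back through the $W^{1,p}$ parameterization of Theorem \ref{W1p para scaling}, use Fubini together with the $L^p$ bound \eqref{w1p*} to select a nearby segment whose image is a curve of length $(1+\psi)|x-y|$, and then close the two endpoint gaps by a geometrically convergent iteration. Your tube-and-H\"older selection of $v_0$ is a more explicit version of the paper's Fubini step, and your recursion $L(\rho)\le(1+\eta')\rho+2L(\gamma^{c'}\rho)$ is the same binary iteration the paper carries out by induction before extending the countable union of arcs to a continuous curve.
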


\begin{proof}
The first step is to show there exists $x_1,y_1\in \Sigma$ such that
\begin{align*}
|x-x_1|+|y-y_1|\le \psi |x-y|
\end{align*}
and there exists a curve $c_{x_1,y_1}: I_1\to \Sigma$ joining $x_1$ and $y_1$ such that
\begin{align}\label{approximate segement}
\mathcal{H}^1(c_{x_1,y_1})\le (1+\psi(\gamma))|x_1-y_1|,
\end{align}
where $I_1=[\psi\delta,(1-\psi)\delta]$.

To check this, denoting $\delta = (1-\psi)^{-1}|x-y|$, we have $B(x, \delta)\subset B(\xi, \frac{3}{4}(1-\psi)^{-1}\sigma)\subset B(\xi, \sigma)$. By item $(3)$ in Theorem \ref{W1p para scaling},
\[
 y\in B(x, (1-\psi)\delta)\cap\Sigma\subset f_{x, \delta}(D(0, \delta)).
\]

Let $z=f_{x,\delta}^{-1}(x)$ and  $w=f_{x,\delta}^{-1}(y)$. From item $(2)$ in Theorem \ref{W1p para scaling}, we know
\[
(1-\psi)|z-w|\leq |x-y|\leq (1+\psi)|z-w|.
\]
Then by the Fubini Theorem and the inequality \eqref{w1p*}, we can find $\tilde{z}, \tilde{w}\in D(0, \delta)$ such that
\[ |z-\tilde{z}|\leq \psi\delta ,
\qquad  |w-\tilde{w}|\leq \psi\delta\]
and
\begin{align}\label{Fubini's result on bands}
\int_{c_{\tilde{z},\tilde{w}}} |\nabla(f_{x,\delta}-\emph{i}_{x,\delta})|^pd\mathcal{H}^1\leq \psi(\gamma) \delta,
\end{align}
where $c_{\tilde{z},\tilde{w}}: [0,l_0:=|\tilde{z}-\tilde{w}|]\to D(0,\delta)$ is the segment joint $\tilde{z}$ and $\tilde{w}$. Let $x_1=f_{x,\delta}(\tilde{z})$ and $y_1=f_{x,\delta}(\tilde{w})$. Then, by item $(2)$ of Theorem \ref{W1p para scaling}, we know
\begin{align*}
|x-x_1|=|f_{x,\delta}(z)-f_{x,\delta}(\tilde{z})|\le (1+\psi)\delta^{\psi}|z-\tilde{z}|^{1-\psi}\le \psi \delta=\psi|x-y|.
\end{align*}
For the same reason, there holds $|y-y_1|\le \psi|x-y|$. Putting  $\hat{c}_{x_1,y_1}=f_{x,\delta}\circ c_{\tilde{z},\tilde{w}}$ and noting
$l_0=|\tilde{z}-\tilde{w}|\le |z-w|+|\tilde{z}-z|+|\tilde{w}-w|\le (1+\psi)\delta$, by \eqref{Fubini's result on bands},  we know
\begin{align*}
|\mathcal{H}^1(\hat{c}_{x_1,y_1})-l_0|&\le\int_{0}^{l_0}|\nabla(f_{x,\delta}-\emph{i}_{x,\sigma}) |(c_{\tilde{z},\tilde{w}}(t))dt\\
&\le (\int_0^{l_0}|\nabla(f_{x,\delta}-\emph{i}_{x,\sigma})|^pdt)^{\frac{1}{p}}l_0^{1-\frac{1}{p}}\\
&\le \psi\delta^{\frac{1}{p}} (1+\psi)^{\frac{1}{p}}\delta^{1-\frac{1}{p}}=\psi(\gamma|p)\delta.
\end{align*}
Fixing $p=4$ and we get
\begin{align*}
\mathcal{H}^1(\hat{c}_{x_1,y_1})\le l_0+\psi(\gamma)\delta\le (1+\psi(\gamma))|x-y|.
\end{align*}
Taking a constant times of the arc-length parameterization of $\hat{c}_{x_1,y_1}$, we get a curve $c_{x_1,y_1}: [\psi\delta, (1-\psi)\delta]\to \Sigma$ such that  \eqref{approximate segement} holds.

For the second step,  by applying Theorem \ref{W1p para scaling} in the ball $B(x,(1-\psi)^{-1}|x-x_1|)$ and $B(y,(1-\psi)^{-1}|y-y_1|)$ and argue similarly as in the first step, we know there exists $\{x_{2}^{i},y_2^{i}\}_{i=1,2}$ such that
\begin{align*}
\max\{|x-x_2^1|,|y_2^1-x_1|, |y_1-x_2^2|, |y_2^2-y|\}\le \psi^2\delta
\end{align*}
and there exist curves $\{c_2^{i}: I_2^i\to \Sigma\}_{i=1,2}$ joining $x_2^{i}$ and $y_2^i$ in $\Sigma$ such that
\begin{align*}
\mathcal{H}^1(c_{2}^i)\le (1+\psi)\psi\delta,
\end{align*}
where $I_2^1=\psi[\psi\delta, (1-\psi)\delta]$ and $I_2^2=\psi[\psi\delta,(1-\psi)\delta]+(1-\psi)\delta$.

By induction argument, we know for each $l\ge 1$ and $1\le j\le 2^{l-1}$, there are two points $x_l^{j},y_l^j$ such that there are only two points $\tilde{x}, \tilde{y}\in \cup_{k\le l-1}\cup_{1\le i\le 2^{k-1}}\{x_k^i,y_k^i\}$ such that
\begin{align}\label{approximate points}
|x_l^j-\tilde{x}|\le \psi^l\delta, |y_l^j-\tilde{y}|\le \psi^l\delta.
\end{align}
Moreover, if we denote  the $j$-th  connected component of $[0,\delta]\backslash (\cup_{k\le {l-1}}\cup_{1\le i\le 2^{k-1}}I_k^i)$ by $[a_l^j,b_l^j]$ and  $I_l^j=[a_l^j+\psi^l,b_l^j-\psi^l]$ , then there exists  a curve $c^{j}_{l}:I_l^j\to \Sigma$ joining $x_l^j$ and $y_l^j$ such that
\begin{align}\label{lenth estimate}
\mathcal{H}^1(c^j_l)\le (1+\psi)\psi^{l-1}\delta.
\end{align}
Noting $[0,\delta]\backslash (\cup_{l\ge 1}\cup_{1\le j\le 2^{l-1}})$ is a countable set, by the estimates \eqref{approximate points} and \eqref{lenth estimate}, we know $\cup_{l\ge 1}\cup_{j\le 2^{l-1}}c_l^j$ extends to a continuous curve $c:[0,\delta]\to \Sigma$  joining $x$ and $y$ such that
\begin{align*}
\mathcal{H}^1(c)\le \sum_{l\ge 1}\sum_{j\le 2^{l-1}}\mathcal{H}^1(c_l^j)\le \sum_{l\ge 1}2^{l-1}\psi^{l-1}(1+\psi)\delta\le \frac{1+\psi}{1-2\psi}\delta.
\end{align*}
This implies
$$d(x,y)\le (1+\psi(\gamma))|x-y|.$$
\end{proof}

For the two dimensional case,  we can show a chord-arc varifold looks like a topological disk with nice boundary at any scale.
\begin{proposition}\label{top fractal}
Assume $V=\underline{v}(\Sigma,\theta)$ is a chord-arc $2$-varifold in $B_1$ with small constant $\gamma\leq\gamma_0$ such that $0\in \Sigma$. Let $\xi\in \Sigma$, $\sigma>0$ and $\lambda\in(0,1)$ such that $B(\xi,\lambda^{-1}\sigma)\subset B_1$, we have a topological disk $\Omega_{\xi,\sigma}$ such that
	\begin{equation}\label{disk piece}
	B\left( \xi, (1-\psi)\sigma \right)\cap\Sigma \subset \Omega_{\xi, \sigma}\subset B\left( \xi, \sigma \right) \cap\Sigma.
	\end{equation}
 Moreover, we can take $\Omega_{\xi,\sigma}$ such that $\partial \Omega_{\xi,\sigma}$ is a  bi-Lipschitz Jordan curve, and for any $x,y\in\partial \Omega_{\xi,\sigma}$  we have
 \begin{align}\label{regular bdry}
 l(x,y)\leq C\sigma^{\frac{1}{2}}|x-y|^{\frac{1}{2}}+\psi\sigma,
\end{align}
where $l(x, y)$ is the length of minor arc in $\partial \Omega_{\xi, \sigma}$ connecting $x$ and $y$, and $C=C(n,\gamma_0)$ is an universal constant independent of $\gamma$.\\
Here $=\psi=\psi( \gamma|\lambda ) $ satisfies that $\lim_{\gamma\to0}\psi( \gamma |\lambda) =0. $
\end{proposition}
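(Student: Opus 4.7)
My plan is to construct $\Omega_{\xi,\sigma}$ as the image under the bi-Hölder parametrization from Theorem \ref{W1p para scaling} of a sub-disk of carefully chosen radius. Since $B(\xi,\sigma)\subset B(\xi,\lambda^{-1}\sigma)\subset B_1$, I would first apply Theorem \ref{W1p para scaling} at scale $\sigma$ to obtain the injective map $f:=f_{\xi,\sigma}:D(0,\sigma)\to\Sigma$ with the quantitative controls (1)--(4). Specializing item (4) to $p=2$ and invoking Fubini on the radial decomposition of the corresponding $L^2$ integrals over the annulus $D(0,\sigma)\setminus D(0,(1-\psi)\sigma)$, one extracts a radius $r\in((1-\psi)\sigma,(1-C\gamma^{1/4})\sigma)$ satisfying
\[\int_{\partial D(0,r)}\bigl((f^*)^2+(f_*^{-1})^2\bigr)d\mathcal{H}^1\le C\sigma\quad\text{and}\quad \int_{\partial D(0,r)}((f-i_{\xi,\sigma})^*)^2\,d\mathcal{H}^1\le \psi^2\sigma,\]
where the smaller bound on the second integral exploits the $\gamma^{p/4}$ gain in \eqref{w1p*}.

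Setting $\Omega_{\xi,\sigma}:=f(D(0,r))$, the inclusions in \eqref{disk piece} are immediate from items (1) and (3): for $y\in B(\xi,(1-\psi)\sigma)\cap\Sigma$ its $f$-preimage $x$ satisfies $|x|\le|y-\xi|+C\gamma^{1/4}\sigma\le r$, giving the lower inclusion, while $|x|\le r\le (1-C\gamma^{1/4})\sigma$ forces $|f(x)-\xi|\le\sigma$, giving the upper inclusion. Injectivity of $f$, coming from the positive lower Hölder bound in item (2), ensures $\Omega_{\xi,\sigma}$ is a topological disk with Jordan boundary $\partial\Omega_{\xi,\sigma}=f(\partial D(0,r))$.

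For the length estimate, write $x=f(x')$, $y=f(y')$ and let $\alpha\subset\partial D(0,r)$ be the minor arc joining $x'$ to $y'$; then $l(x,y)\le\int_\alpha|\nabla f|\,d\mathcal{H}^1$. Decomposing $|\nabla f|\le 1+|\nabla(f-i_{\xi,\sigma})|$ and applying Cauchy-Schwarz with the second integral bound above gives
\[l(x,y)\le\mathcal{H}^1(\alpha)+\psi\sigma^{1/2}\mathcal{H}^1(\alpha)^{1/2}\le 2\mathcal{H}^1(\alpha)+\tfrac{1}{2}\psi\sigma.\]
Combining the standard arc-chord estimate $\mathcal{H}^1(\alpha)\le\pi|x'-y'|/2$ with the inversion of the bi-Hölder bound in item (2), which yields $|x'-y'|\le C|x-y|^{1-\psi}\sigma^\psi$, and invoking the elementary identity
\[|x-y|^{1-\psi}\sigma^\psi=\sqrt{\sigma|x-y|}\,(|x-y|/\sigma)^{1/2-\psi}\le\sqrt{\sigma|x-y|}\qquad\text{for }|x-y|\le\sigma,\ \psi\le 1/2,\]
produces $l(x,y)\le C\sigma^{1/2}|x-y|^{1/2}+\psi\sigma$ as claimed.

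The main technical obstacle I anticipate is the bi-Lipschitz character of the Jordan curve $\partial\Omega_{\xi,\sigma}$, since the $W^{1,p}$ theory provides only integral, not pointwise, control on $|\nabla f|$ along $\partial D(0,r)$. I expect to resolve this by coupling the $L^2$ bound on $f_*^{-1}$ (which handles the reverse Lipschitz direction) with the closeness-to-identity $\|(f-i_{\xi,\sigma})^*\|_{L^2(\partial D(0,r))}\le\psi\sigma^{1/2}$, so that $\partial\Omega_{\xi,\sigma}$ is rigidified near the round circle $\partial D(0,r)+\xi$ and admits an arc-length reparametrization bi-Lipschitz to $\partial D(0,r)$ with constants uniform in $\gamma$ small.
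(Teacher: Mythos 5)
Your construction of the preliminary domain is essentially the paper's first step: the paper also uses Fubini together with \eqref{w1p*} to select a good radius $\tilde{\sigma}$, sets $\tilde{\Omega}_{\xi,\sigma}=f_{\xi,\sigma}(D_{\tilde\sigma})$, and derives $l(x,y)\le C\sigma^{1/2}|x-y|^{1/2}$ for the minor arc on $f(\partial D_{\tilde\sigma})$ by exactly the computation you give (H\"older on $\int_{\partial D_{\tilde\sigma}}|\nabla(f-i_{\xi,\sigma})|^p$, the arc--chord bound on the circle, and item $(2)$ of Theorem \ref{W1p para scaling}). The inclusions \eqref{disk piece} for this preliminary domain also follow as you say. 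Up to this point your argument is correct and matches the paper.

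The genuine gap is the bi-Lipschitz property of the boundary, which you correctly flag as the main obstacle but then dispose of with an argument that does not work. The $L^p$ bounds on $f^*$ and $f_*^{-1}$ restricted to $\partial D(0,r)$, together with $\|(f-i_{\xi,\sigma})^*\|_{L^p(\partial D(0,r))}\le\psi\sigma^{1/p}$, control the total length of $f(\partial D(0,r))$ and give H\"older (Morrey-type) continuity, but they cannot rule out near-self-tangencies: two boundary points $x',y'$ at mutual arc-length distance comparable to $\sigma$ whose images satisfy $|f(x')-f(y')|\ll\sigma$. The lower bound $|f(x')-f(y')|\ge f_*(x')|x'-y'|$ is useless here because only $\int (f_*)^{-p}$ is controlled, not a pointwise lower bound on $f_*$ along the circle; indeed the whole architecture of Sections \ref{refined estimate section}--\ref{Lipschitz estimates section} exists precisely because $f$ is a priori bi-Lipschitz only on large pieces. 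A curve with such a pinch has no bi-Lipschitz arc-length parametrization at all (the chord-arc constant blows up), so no reparametrization trick can rescue $f(\partial D(0,r))$ as stated. This is why the paper, after constructing $\tilde\Omega_{\xi,\sigma}$, carries out a second, substantive construction in the style of \cite[Lemma 4.2]{LW-2020}: it places $N=N(\gamma)$ points $p_j=f(z_j)$ on $\partial\tilde\Omega_{\xi,\sigma}$, joins consecutive ones by geodesics of the length metric $d$ (which are automatically chord-arc curves since $d\le(1+\psi)|\cdot|$ locally by Proposition \ref{metric equiv}), and smooths the corners via the auxiliary points $p_{j+1}^{\pm}$ and arcs $c_{j+1}$ to obtain an embedded Lipschitz Jordan curve $\Gamma$. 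Because $\Omega_{\xi,\sigma}$ is then the Jordan domain of $\Gamma$ rather than $f(D(0,r))$ itself, the inclusion $B(\xi,(1-\psi)\sigma)\cap\Sigma\subset\Omega_{\xi,\sigma}$ must be re-established by a homotopy/winding-number argument, and \eqref{regular bdry} must be re-derived for $\Gamma$ (this is where the additive $\psi\sigma$ term originates). These steps are absent from your proposal and are not routine; you would need to supply them, or an equivalent mechanism producing a genuinely embedded chord-arc boundary curve.
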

\begin{proof} For a small constant  $a>0$ to be determined later, by the Fubini theorem  and \eqref{w1p*} in Theorem \ref{W1p para scaling}, we can choose $\tilde{\sigma}\in [ (1-a-\gamma^{\frac{p}{8}})\sigma, (1-a)\sigma ]$ such that
	\begin{equation}\label{bdry estimate}
		\int_{\partial D_{\tilde{\sigma}}  } \left|\nabla\left( f_{\xi,\sigma}-\emph{i}_{\xi, \sigma} \right)\right|^p d\mathcal{H}^1\leq C_p \gamma^{\frac{p}{8}}\sigma.
	\end{equation}

We define
\[
\tilde{\Omega}_{\xi,\sigma}:= f_{\xi,\sigma}(D_{\tilde{\sigma}}).
\]
So $\partial\tilde{\Omega}_{\xi, \sigma}= f_{\xi, \sigma}(\partial D(0, \sigma))$ is a Jordan curve. 

 For $x,y\in \partial \tilde{\Omega}_{\xi,\sigma}$, choose $z,w\in \partial D(0,\tilde{\sigma})$ such that $x=f_{\xi,\sigma}(z)$ and $y=f_{\xi,\sigma}(w)$. Set the minor arc on $\partial D(0,\tilde{\sigma})$ joining $z$ and $w$ to be $c(\theta)=\tilde{\sigma}e^{i\theta}, \theta\in [\theta_0,\theta_0+\frac{l_0}{\tilde{\sigma}}]$ and $\hat{c}(\theta)=f_{\xi,\sigma}(c(\theta))$. Then
 \begin{align*}
 |\dot{\hat{c}}(\theta)-\dot{c}(\theta)|\le\tilde{\sigma}|\nabla(f_{\xi,\sigma}-\emph{i}_{\xi,\sigma})|(\tilde{\sigma}e^{i\theta}).
 \end{align*}
So,if we denote the length of minor arc in $\partial \tilde{\Omega}_{\xi, \sigma}$ connecting $x$ and $y$  by $l_{\tilde{\partial\Omega}_{\xi,\sigma}}(x,y)$, then, by  \eqref{bdry estimate}, we have
\begin{align*}
|l_{\partial\tilde{\Omega}_{\xi,\sigma}}(x,y)-l_0|&=\left|\int_{\theta_0}^{\theta_0+\frac{l_0}{\tilde{\sigma}}}(|\dot{\hat{c}}(\theta)|-|\dot{c}(\theta)|)d\theta\right|\\
&\le \int_{\theta_0}^{\theta_0+\frac{l_0}{\tilde{\sigma}}}|\nabla(f_{\xi,\sigma}-\emph{i}_{\xi,\sigma})|(\tilde{\sigma}e^{i\theta})\tilde{\sigma}d\theta\\
&\le  \big(\int_{\theta_0}^{\theta_0+\frac{l_0}{\tilde{\sigma}}}|\nabla(f_{\xi,\sigma}-\emph{i}_{\xi,\sigma})|^p(\tilde{\sigma}e^{i\theta})\tilde{\sigma}d\theta\big)^{\frac{1}{p}}l_0^{1-\frac{1}{p}}\\
&\le \big(\int_{\partial D\left( 0, \tilde{\sigma} \right)} \left|\nabla\left( f_{\xi,\sigma}-\emph{i}_{\xi, \sigma} \right)\right|^p d\mathcal{H}^1\big)^{\frac{1}{p}}l_0^{1-\frac{1}{p}}\\
&\le C_p\gamma^{\frac{1}{8}}\sigma^{\frac{1}{p}}l_0^{1-\frac{1}{p}}.
\end{align*}
Noting $l_0\le \pi\tilde{\sigma}$ and $|z-w|=2\tilde{\sigma}\sin{\frac{l_0}{2\tilde{\sigma}}}\ge \frac{2}{\pi}l_0$, by item $(2)$ of Theorem \ref{W1p para scaling}, we know
\begin{align*}
l_{\partial\tilde{\Omega}_{\xi,\sigma}}(x,y)\le\big( (\frac{\pi}{2})^{\frac{1}{p}}+C_p\gamma^{\frac{1}{p}}\big)\sigma^{\frac{1}{p}}l_0^{1-\frac{1}{p}}\le C_p \sigma^{\frac{1}{p}+\psi}|x-y|^{(1-\frac{1}{p}-\psi)}.
\end{align*}
Choosing $p$ such that $\frac{1}{p}+\psi=\frac{1}{2}$ and $\gamma\ll 1$, we get
\begin{align*}
l_{\partial\tilde{\Omega}_{\xi,\sigma}}(x,y)\le C \sigma^{\frac{1}{2}}|x-y|^{\frac{1}{2}}.
\end{align*}
\\
Then we can modify $\partial\tilde{\Omega}_{\xi,\sigma}$ to a bi-Lipschitz Jordan curve  $\partial\Omega_{\xi,\sigma}$  as in \cite{LW-2020}, Lemma 4.2. To realize this, we argue as following.

For an integer $N>0$ to be determined, choose $z_j=\tilde{\sigma} e^{\frac{2\pi j\sqrt{-1}}{N}}\in \partial D(0,\tilde{\sigma})$ and $p_j=f_{\xi,\sigma}(z_j)\in \tilde{\Omega}_{\xi,\sigma}$. Noting that for any $j\neq l$,
\begin{align*}
|z_j-z_l|\ge \tilde{\sigma}\cdot 2\sin\frac{\pi}{N}\ge \frac{4\tilde{\sigma}}{N}\ge \frac{2\tilde{\sigma}}{N},
\end{align*}
by item $(2)$ of Theorem \ref{W1p para scaling}, we know
\begin{align*}
|p_j-p_l|=|f_{\xi,\sigma}(z_j)-f_{\xi,\sigma}(z_l)|\le (1+C\gamma^{\frac{1}{4}})|z_j-z_l|^{1-\psi}\sigma^{\psi}
\le (1+\psi)\big(\frac{N}{2}\big)^{\psi}|z_j-z_l|
\end{align*}
and
\begin{align*}
|p_j-p_l|\ge (1-C\gamma^{\frac{1}{4}})\sigma^{-\psi}|z_j-z_l|^{1+\psi}
\ge (1-\psi)\big(\frac{2}{N}\big)^{\psi}|z_j-z_l|.
\end{align*}
Now, by taking $N=N(\gamma)=2e^{\frac{1}{\sqrt{\psi}}}\to \infty$ as $\gamma\to 0$, we know $(\frac{N}{2})^{\psi}=e^{\sqrt{\psi}}\le (1+2\sqrt{\psi})$ as $\gamma\ll 1$. This implies
\begin{align*}
(1-\psi)|z_j-z_l|\le |p_j-p_l|\le (1+\psi)|z_j-z_l|,\forall j\neq l.
\end{align*}
Moreover, by item $(1)$ and $(2)$ of Theorem \ref{W1p para scaling}, we know
\begin{align}\label{distance to center}
|p_j-\xi|\le |f_{\xi,\sigma}(z_j)-f_{\xi,\sigma}(0)|+|f_{\xi,\sigma}(0)-\xi|\le (1+C\gamma^{\frac{1}{4}})\sigma^{\psi}|z_j|^{1-\psi}+C\gamma^{\frac{1}{4}}\sigma\le (1+\psi)\tilde{\sigma}.
\end{align}
Combining this with $|p_j-p_{j+1}|\le (1+\psi)\frac{2\pi}{N}\tilde{\sigma}$, we know when $\gamma$ is small enough,
\begin{align*}
B(p_j, 4|p_j-p_{j+1}|)\subset B(\xi, (\frac{8\pi(1+\psi)}{N}+1+\psi)\tilde{\sigma})\subset B(\xi, \lambda^{-1}\sigma)\subset B_1.
\end{align*}
So, by Proposition \ref{metric equiv}, we know
\begin{align*}
|p_j-p_{j+1}|\le d(p_j,p_{j+1})\le (1+\psi(\gamma))|p_j-p_{j+1}|,
\end{align*}
which implies the geodesic $\gamma_j$ in $(\Sigma,d)$ joining $p_j$ and $p_{j+1}$ satisfies
\begin{align}\label{curve close to point}
|\gamma_j(t)-p_j|\le d(\gamma_j(t),p_j)\le d(p_j,p_{j+1})\le (1+\psi)|p_j-p_{j+1}|,
\end{align}
i.e., $\gamma_j(t)\subset B(p_j, (1+\psi)|p_j-p_{j+1}|), \forall t\in [0,d(p_j,p_{j+1})]$.

Now, considering $\gamma_{p_j,p_{j+2}}:[0,d(p_j,p_{j+1})+d(p_{j+1},p_{j+2})]\to \Sigma$ defined by
\begin{align*}
\gamma_{p_j,p_{j+2}}(t)= \begin{cases}
		\gamma_j(t), \qquad\qquad\qquad\qquad   &t\in [0,d(p_j,p_{j+1})],\\
		\gamma_{j+1}(t-d(p_j,p_{j+1})), &t\in [d(p_{j},p_{j+1}), d(p_j,p_{j+1})+d(p_{j+1},p_{j+2})].
	\end{cases}
\end{align*}

By similar argument as in \cite[Claim4.3]{LW-2020}, there exist two points $p_{j+1}^{+}=\gamma_{j+1}(t_{j+1}^+)$ and $p_{j+1}^{-}=\gamma_j(d(p_j,p_{j+1})-t_{j+1}^{-})$ such that $|t_{j+1}^{\pm}|\le t_0:=\frac{\pi\tilde{\sigma}}{10N}$  and there exists a geodesic $c_{j+1}: [0,l_{j+1}]\to (\Sigma,d)$ joining $p_{j+1}^{-}$ and $p_{j+1}^{+}$ and $\delta>0$ such that for $r\ll 1$, there hold
\begin{align}\label{LW-replace}
|\gamma_{j}(t_{j+1}^{-}-r)-c_{j+1}(r))|&\ge \frac{\delta}{3}r,
\end{align}
and
\begin{align}\label{LW-replacee}
 |\gamma_{j+1}(t_{j+1}^{+}+r-c_{j+1}(l_j-r))|&\ge\frac{\delta}{3} r.
\end{align}
In fact, since $f(t)=d(\gamma_j(d(p_j,p_{j+1}-t)), \gamma_{j+1}(t)):[0,t_0]$ satisfies $f(0)=1$ and $f(t_0)>0$, there exists $\tau\in (0,t_0)$ such that $\delta:=f'(\tau)>0$. Choose $p_{j+1}^-=\gamma_j(d(p_j,p_{j+1})-\tau)$ and $p_{j+1}^+=\gamma_{j+1}(\tau)$. Set $l_{j+1}=d(p_{j+1}^{-},p_{j+1}^{+})$ and let $c_{j+1}:[0,l_{j+1}]\to (\Sigma,d)$ be the geodesic in $\Sigma$ joining $p_{j+1}^{\pm}$. Then, by triangle inequality and $f'(\tau)=\delta>0$, we know there exists $r_0>0$ such that
\begin{align*}
d(\gamma_{j}(t_{j+1}^{-}-r),c_{j+1}(r))\ge \frac{\delta}{2}r \qquad  \text{ and } \qquad  d(\gamma_{j+1}(t_{j+1}^{+}+r,c_{j+1}(l_j-r)))\ge \frac{\delta}{2}r, \forall r\le r_0.
\end{align*}
By Proposition \ref{metric equiv}, we get \eqref{LW-replace} and \eqref{LW-replacee}. This further implies
$$\overline{p_jp_{j+1}^{-}}\cup\overline{p_{j+1}^{-}p_{j+1}^{+}}\cup\overline{p_{j+1}^{+}p_{j+2}}=\gamma_{j}|_{[0,d(p_j,p_{j+1}-\tau)]}\cup c_{j+1}\cup \gamma_{j+1}|_{[\tau, d(p_{j+1},p_{j+2})]}$$is a Lipschitz curve in $(\Sigma, |\cdot|)$.
Denote $\hat{\gamma}_j=\gamma_{j}|_{[0,d(p_j,p_{j+1}-\tau)]}\cup c_{j+1}$ and adopt the convention $N+1=1$. Repeating the above process  $N$ times and by choosing an appropriate sub-curve of $\cup_{1\le j\le N}\hat{\gamma}_{j}$, we get a closed embedded Lipschitz curve $\Gamma\subset \Sigma$.
By \eqref{distance to center}, we know for any $x\in \Gamma$, there holds
\begin{align*}
|x-\xi|\le (1+\psi)\tilde{\sigma}+t_0\le (1+\psi+\frac{1}{N})\tilde{\sigma}\le (1+\psi+e^{-\frac{1}{\sqrt{\psi}}})(1-a)\sigma.
\end{align*}
So, we know $\Gamma\subset B(\xi,(1+\psi+e^{-\frac{1}{\psi}}-a)\sigma)\cap \Sigma$. Now, to guarantee $\Gamma\subset B(\xi,(1-\psi)\sigma)$, we take $a=a(\gamma)=C(\psi+e^{-\frac{1}{\psi}})\to 0$ as $\gamma\to 0$. Then, by item $(3)$ in Theorem \ref{W1p para scaling}, we know
$$
\Gamma\subset f_{\xi,\sigma}(D_\sigma)\subset \Sigma\cap B(\xi,\sigma).
$$
Since $f_{\xi,\sigma}(D_\sigma)$ is a topological disk, by the  Jordan curve theorem,  there exists a topological disk $\Omega_{\xi,\sigma}\subset f_{\xi,\sigma(D_\sigma)}\subset\Sigma\cap B(\xi,\sigma)$ such that $\partial \Omega_{\xi,\sigma}=\Gamma$.

To show that $\Omega_{\xi,\sigma}$ contains  $\Sigma\cap B(\xi,(1-\psi)\sigma)$, we are going to show $\Gamma$ is homotopic to $\partial\tilde{\Omega}_{\xi,\sigma}$ in a  small neighborhood of $\partial\tilde{\Omega}_{\xi,\sigma}$. For this, noting that for each $1\le j\le N$, by \eqref{curve close to point}, we know
\begin{align*}
\hat{\gamma}_{j}=\gamma_{j}|_{[0,d(p_j,p_{j+1}-\tau)]}\cup c_{j+1}\subset B(p_{j+1}, ((1+\psi)\frac{2\pi}{N}+\frac{\pi}{10N})\tilde{\sigma}).
\end{align*}
Similarly, denote $\tilde{\gamma}_j$ be the minor arc on $\tilde{\Omega}_{\xi,\sigma}$ joining $p_j$ and $p_{j+1}$, then by Theorem \ref{W1p para scaling},  we know
$$\tilde{\gamma}_{j}\subset B(p_{j+1}, (1+\psi)\frac{2\pi}{N}).$$
Denote $r_N:=((1+\psi)\frac{2\pi}{N}+\frac{\pi}{10N})\tilde{\sigma}$ and apply Theorem \ref{W1p para scaling} on $\Sigma\cap B(p_{j+1},(1-\psi)^{-1}r_N)$, we know there exists a topological disk $\tilde{\Omega}_{p_{j+1},r_N}$ such that
$$B(p_{j+1},r_N)\cap \Sigma\subset \tilde{\Omega}_{p_{j+1}, r_N}\subset B(p_{j+1},(1+\psi)r_N)\cap \Sigma.$$
Thus $\hat{\gamma}_{j}$ and $\tilde{\gamma}_j$ are homotopic to each other in $\Sigma\cap B(p_{j},(1+\psi)r_N)$. Since $\Gamma=\cup_{1\le j\le N}\hat{\gamma}_j$ and $\partial \tilde{\Omega}_{\xi,\sigma}=\cup_{1\le j\le N}\tilde{\gamma}_j$, we know that $\Gamma$ and $\partial\tilde{\Omega}_{\xi,\sigma}$ are homotopic to each other in   $\Sigma\cap (\cup_{1\le j\le N}B(p_{j},(1+\psi)r_N))$.  By the choosing of $a=a(\gamma)$, we know
\begin{align*}
\Sigma\cap (\cup_{1\le j\le N}B(p_{j},(1+\psi)r_N))\subset f_{\xi,\sigma}(D_\sigma).
\end{align*}
Thus for any $y\in \Sigma\cap (\cup_{1\le j\le N}B(p_{j},(1+\psi)r_N))$, there exist $z\in D_\sigma$ and $1\le j\le N$ such that $y=f_{\xi,\sigma}(z)=y$ and $|y-p_{j}|\le (1+\psi)r_N$. Thus by item $(2)$ of Theorem \ref{W1p para scaling}, we know
\begin{align*}
(1-C\gamma^{\frac{1}{4}})\sigma^{-\psi}|z-z_j|^{1+\psi}\le |f(z)-f(z_j)|=|y-p_j|\le (1+\psi)r_N.
\end{align*}
Noting $r_N\le \frac{5\pi}{N}\tilde{\sigma}$ and $N=N(\gamma)\to \infty$ as $\gamma\to 0$, this implies
$$|z-z_j|\le \big(\frac{10}{N}\big)^{\frac{1}{1+\psi}}\sigma\le \psi\tilde{\sigma},$$
and hence
\begin{align*}
\Sigma\cap (\cup_{1\le j\le N}B(p_{j},(1+\psi)r_N))&\subset f_{\xi,\sigma}(D_\sigma\backslash (\cup_{1\le j\le N}D(z_j,\psi\tilde{\sigma})))\\
&\subset f_{\xi,\sigma}(D_\sigma\backslash D_{(1-\psi)\tilde{\sigma}})\\
&\subset f_{\xi,\sigma}(D_\sigma\backslash D_{(1-\psi)\sigma}),
\end{align*}
where in the last inequality we use $(1-\psi)\tilde{\sigma}\ge (1-\psi)(1-\gamma^{\frac{p}{8}}-a)\sigma\ge (1-\psi)\sigma.$  So, we know $\Gamma=\partial\Omega_{\xi,\sigma}$ and $\partial \tilde{\Omega}_{\xi,\sigma}$ are homotopic to each other in $f_{\xi,\sigma}(D_\sigma\backslash D_{(1-\psi)\sigma})$.

By $a=a(\gamma)\to 0$, item $(2)(3)$ of Theorem \ref{W1p para scaling} and  the homotopic invariance of the winding number, we know
\begin{align*}
\Omega_{\xi,\sigma}\supset f_{\xi,\sigma}(D_{(1-\psi)\sigma})\supset B(\xi,(1-\psi)\sigma)\cap \Sigma.
\end{align*}

Finally, we verify \eqref{regular bdry}. For $x,y\in \Gamma$, by the construction of $\Gamma$, we know there exists $1\le j,l\le N$ such that $x\in \hat{\gamma}_{j}$ and $y\in \hat{\gamma}_{l+1}$. Note that for any $j\le k\le l$, we have
\begin{align*}
\mathcal{H}^1(c_{k+1})=l_{k+1}\le 2t_0\le \frac{\pi \tilde{\sigma}}{5N}\le d(p_k,p_{k+1}).
\end{align*}
So, by
$d(x,p_j)\le 2d(p_j,p_{j+1})\le 3|z_j-z_{j+1}|\le \frac{6\pi}{N}\tilde{\sigma}\le \psi\sigma$ and $d(y,p_{l+1})\le \psi\sigma$,
we get
\begin{align*}
l_{\Gamma}(x,y)&\le \sum_{k=j}^{l}(l_{\Gamma}(p_k,p_{k+1}^{-})+\mathcal{H}^1(c_{k+1}))\le \sum_{k=j}^{l}2d(p_k,p_{k+1})\\
&\le 2\sum_{k=j}l_{\partial \tilde{\Omega}_{\xi,\sigma}}(p_k,p_{k+1})\le 2l_{\partial\tilde{\Omega}_{\xi,\sigma}}(p_j,p_{l+1})\\
&\le C\sigma^{\frac{1}{2}}|p_j-p_{l+1}|^{\frac{1}{2}}\le  C\sigma^{\frac{1}{2}}|x-y|^{\frac{1}{2}}+\psi\sigma.
\end{align*}
This concludes the proof.
\end{proof}

In later section, we often regard the topological disk $\Omega_{\xi,\sigma}$ itself as a metric space. There are three natural metrics in $\Omega_{\xi,\sigma}$, i.e. the induced metric $|\cdot|$, $d$ and the length metric $d_{\xi,\sigma}$ defined by

\[
    d_{\xi,\sigma}(x,y) = \inf \{\mathcal{H}^1(c): c \text{ is a curve in $\Omega_{\xi,\sigma}$ connecting $x$ and $y$}\}
.\]

From our viewpoint, $(\Omega_{\xi,\sigma}, |\cdot|)$ is the most preferable metric space. However, it is not a geodesic metric space, which may not convenient in some situations. Fortunately, it is bi-Lipschitz equivalent to the geodesic metric space $(\Omega_{\xi,\sigma}, d_{\xi,\sigma})$.

\begin{proposition}\label{equiv of three metric}
  Assume $V=\underline{v}(\Sigma,\theta)$ is a chord-arc $2$-varifold in $B_1$ with constant $\gamma\le \gamma_0$ for some small $\gamma_0\ll 1$ such that $0\in \Sigma$. Let $\xi\in \Sigma$,  $\sigma>0$ and $ \lambda \in(0,1)$ such that $B(\xi,\lambda^{-1}\sigma)\subset B_1$.  Then for any $x,y\in \Omega_{\xi,\sigma}$, we have
  \begin{equation}\label{trivial}
      |x-y|\leq d(x,y)\leq d_{\xi,\sigma}(x,y),
  \end{equation}
  and
  \begin{equation}\label{three metric compare}
   C_1^{-1}   d_{\xi,\sigma}(x,y)\leq d(x,y)\leq C|x-y|,
  \end{equation}
  where $C=C(\gamma_0, \lambda)$ and $C_1=C_1(\gamma_0, \lambda, \Omega_{\xi,\sigma})$ is a constant.
\end{proposition}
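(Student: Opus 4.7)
The chain $|x-y|\le d(x,y)\le d_{\xi,\sigma}(x,y)$ is essentially definitional: a rectifiable curve in $\Sigma\subset\mathbb{R}^n$ has length at least the Euclidean chord length of its endpoints, and every curve in $\Omega_{\xi,\sigma}$ is a fortiori a curve in $\Sigma$.

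For the upper bound $d(x,y)\le C|x-y|$, the plan is to bootstrap Proposition \ref{metric equiv} by a chaining argument. Set $r_0:=(\lambda^{-1}-1)\sigma/8$, so that $B(z,4r_0)\subset B(\xi,\lambda^{-1}\sigma)\subset B_1$ for every $z\in B(\xi,\sigma)$. When $|x-y|\le r_0$, Proposition \ref{metric equiv} applied in $B(x,4r_0)$ immediately gives $d(x,y)\le(1+\psi)|x-y|$. When $|x-y|>r_0$, I subdivide the Euclidean segment $[x,y]$ into $N\sim|x-y|/r_0$ equal pieces and use the Reifenberg condition from the chord-arc definition to select a chain $x=p_0,p_1,\ldots,p_N=y$ in $\Sigma$ lying close to the subdivision points, with $|p_i-p_{i+1}|\le r_0$ and $\sum_i|p_i-p_{i+1}|\le 2|x-y|$. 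Applying Proposition \ref{metric equiv} pairwise and summing yields $d(x,y)\le C(\gamma_0,\lambda)|x-y|$.

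The remaining inequality $d_{\xi,\sigma}(x,y)\le C_1 d(x,y)$, combined with $|x-y|\le d(x,y)$, reduces to the quasiconvexity estimate $d_{\xi,\sigma}(x,y)\le C_1|x-y|$ on $\Omega_{\xi,\sigma}$. Since the constant is allowed to depend on $\Omega_{\xi,\sigma}$, I argue by compactness: for each $p\in\Omega_{\xi,\sigma}$ I produce a relative neighborhood $N_p$ of $p$ in $\Omega_{\xi,\sigma}$ and a constant $C_p$ with $d_{\xi,\sigma}(x,y)\le C_p|x-y|$ for all $x,y\in N_p$. If $p$ lies in the manifold interior of $\Omega_{\xi,\sigma}$ then a small $\Sigma$-ball around $p$ sits inside $\Omega_{\xi,\sigma}$, and the curve produced by Proposition \ref{metric equiv} is automatically contained in this ball. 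If $p\in\partial\Omega_{\xi,\sigma}$, I pull back by the parametrization $f_{\xi,\sigma'}$ (for some $\sigma'$ slightly larger than $\sigma$) so that $f_{\xi,\sigma'}^{-1}(\Omega_{\xi,\sigma})$ is locally bounded by a bi-Hölder arc, the preimage of the bi-Lipschitz Jordan curve $\partial\Omega_{\xi,\sigma}$ from Proposition \ref{top fractal}; any two nearby points of $\Omega_{\xi,\sigma}$ near $p$ are joined by a short polygonal path inside $f_{\xi,\sigma'}^{-1}(\Omega_{\xi,\sigma})$, and a Fubini argument on the bound $\int|\nabla(f_{\xi,\sigma'}-\emph{i}_{\xi,\sigma'})|^p\le C_p\gamma^{p/4}\sigma^m$ from Theorem \ref{W1p para scaling} controls the $f_{\xi,\sigma'}$-image length of such a generic polygonal path. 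A finite subcover $\{N_{p_k}\}$ of $\Omega_{\xi,\sigma}$, together with the finiteness of the intrinsic diameter of $\Omega_{\xi,\sigma}$ (also a consequence of the $W^{1,p}$-parametrization), then upgrades the local estimates to the global constant $C_1=C_1(\gamma_0,\lambda,\Omega_{\xi,\sigma})$.

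The main obstacle I expect is the boundary case in the last step: a curve in $\Sigma$ realizing $d(x,y)$ for $x,y$ near $\partial\Omega_{\xi,\sigma}$ can briefly leave $\Omega_{\xi,\sigma}$, and producing a replacement inside $\Omega_{\xi,\sigma}$ of comparable length requires the bi-Lipschitz regularity of $\partial\Omega_{\xi,\sigma}$ from Proposition \ref{top fractal} together with the $W^{1,p}$-controlled parametrization $f_{\xi,\sigma'}$ from Theorem \ref{W1p para scaling}, so as to push a Euclidean polygonal path slightly into the preimage of $\Omega_{\xi,\sigma}$ without losing control on its image length.
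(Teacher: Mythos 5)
Your handling of \eqref{trivial} and of the upper bound $d(x,y)\le C|x-y|$ is sound. For the latter, your chaining argument (subdividing the chord, using the two-sided Hausdorff closeness in the Reifenberg condition to select intermediate points of $\Sigma$, and summing Proposition \ref{metric equiv} over consecutive pairs) is a legitimate alternative to what the paper does; the paper instead moves $x,y$ to nearby points $x_1,y_1$ joined by the $f_{\xi,\sigma}$-image of a segment and uses the crude bound $d(x_1,y_1)\le C(\gamma_0,\lambda)\sigma\le C|x-y|$, which is available because in this regime $|x-y|$ is comparable to $\sigma$. Your route is slightly more delicate only in that you must check the Reifenberg condition is invoked on balls contained in $B_1$ (e.g.\ by working with the plane $T_{\xi,\lambda^{-1}\sigma}$ rather than $T_{x,|x-y|}$ when $\lambda$ is close to $1$), but this is routine.

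The genuine gap is in the boundary case of $d_{\xi,\sigma}(x,y)\le C_1 d(x,y)$. Your plan rests on the claim that two nearby points of $\Omega_{\xi,\sigma}$ near $p\in\partial\Omega_{\xi,\sigma}$ can be joined by a short polygonal path inside $f_{\xi,\sigma'}^{-1}(\Omega_{\xi,\sigma})$. This does not follow from what you have: the only control on the preimage domain is that its boundary is the preimage of $\Gamma$ under a bi-H\"older map, hence merely a bi-H\"older arc, and a Jordan domain with bi-H\"older boundary need not be even locally quasiconvex (two points separated by a thin inward tongue of the complement can be Euclidean-close yet joined only by long paths inside the domain). The bi-Lipschitz property of $\Gamma$ from Proposition \ref{top fractal} lives in the image, not in the parameter disk, so it does not transfer to quasiconvexity of the preimage. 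Moreover, the Fubini argument on $\int|\nabla(f_{\xi,\sigma'}-\emph{i}_{\xi,\sigma'})|^p$ controls the image length only of a.e.\ segment in a free family; once the path is constrained to lie in the rough set $f_{\xi,\sigma'}^{-1}(\Omega_{\xi,\sigma})$ you lose the freedom to perturb to a generic segment.

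The repair is to abandon the parameter domain and argue directly in $\Sigma$, which also removes the need for your compactness/local-to-global step. Given $x,y\in\Omega_{\xi,\sigma}$, take a curve $c$ in $\Sigma$ with $\mathcal{H}^1(c)=d(x,y)$. If $c$ stays in $\Omega_{\xi,\sigma}$ you are done. Otherwise let $t_1,t_2$ be the first and last parameters with $c(t_i)\in\Gamma$ and set $x_1=c(t_1)$, $y_1=c(t_2)$; by connectedness the initial and final sub-arcs remain in $\Omega_{\xi,\sigma}$, and the excursion is replaced by the arc of $\Gamma$ from $x_1$ to $y_1$, whose length is at most $C(\Omega_{\xi,\sigma})|x_1-y_1|\le C(\Omega_{\xi,\sigma})\,d(x,y)$ because $\Gamma$ is a bi-Lipschitz Jordan curve. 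This yields $d_{\xi,\sigma}(x,y)\le d(x,y)+C(\Omega_{\xi,\sigma})d(x,y)$ in one stroke, which is exactly the paper's argument and explains why $C_1$, unlike $C$, is allowed to depend on $\Omega_{\xi,\sigma}$.
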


\begin{proof} The inequality \eqref{trivial} follows directly by the definition of $d$ and $d_{\xi,\sigma}$. So, we only need to prove \eqref{three metric compare}.
For any $x,y\in \Omega_{\xi, \sigma}$, if $|x-y|\le \frac{1}{4}(\lambda^{-1}-1)\sigma$, then we know $B(x,4|x-y|)\subset B(\xi,\lambda^{-1}\sigma)\subset B_1$. Thus by Proposition \ref{metric equiv} we know that
\begin{align}\label{short comparison}
d(x,y)\le (1+\psi)|x-y|\le 4\sigma,\qquad  \forall x,y\in \Omega_{\xi,\sigma}, \quad|x-y|\le \frac{1}{4}(\lambda^{-1}-1)\sigma.
\end{align}
If $|x-y|\ge \frac{1}{4}(\lambda^{-1}-1)\sigma$, since $x,y\in \Omega_{\xi,\sigma}\subset \tilde{\Omega}_{\xi,\sigma}\subset f_{\xi,\sigma}(D(0,\tilde{\sigma}))$, similar to the argument as in the proof of Proposition \ref{metric equiv}, by the Fubini theorem and Theorem \ref{W1p para scaling}, we know there exists $x_1,y_1\in \tilde{\Omega}_{\xi,\sigma}$, such that
$
\max\{|x-x_1|, |y-y_1|\}\le \frac{1}{4}(\lambda^{-1}-1)\sigma
$
and
\begin{align*}
d(x_1,y_1)\le C(\gamma_0,\lambda)\sigma.
\end{align*}
So, by \eqref{short comparison}, we know
\begin{align}\label{far away comparison}
d(x,y)&\le d(x,x_1)+d(y,y_1)+d(x_1,y_1)\nonumber\\
&\le (1+\psi)(|x-x_1|+|y-y_1|)+ C(\gamma_0,\lambda)\sigma\nonumber\\
&\le C(\gamma_0,\lambda)|x-y|,
\end{align}

for any $x,y\in \Omega_{\xi,\sigma}$ with
$|x-y|\ge \frac{1}{4}(\lambda^{-1}-1)\sigma.$

Combining \eqref{short comparison} and \eqref{far away comparison}, we know
\begin{align*}
d(x,y)\le C(\gamma_0,\lambda)|x-y|,\quad \forall x,y\in \Omega_{\xi,\sigma}.
\end{align*}
 Let $c_{x,y}:[0,d(x,y)]\to \Sigma$ be the curve such that $\mathcal{H}^1(c_{x,y})=d(x,y)$.

 If $c_{x,y}([0,d(x,y)])\subset \Omega_{\xi,\sigma}$, then $$d_{\xi,\sigma}(x,y)\le \mathcal{H}^{1}(c_{x,y})=d(x,y).$$

 Otherwise, we know there exists a smallest and largest  $t_1,t_2\in [0,d(x,y)]$ such that $$c_{x,y}(t_i)\in \Gamma=\partial \Omega_{\xi,\sigma},\quad i=1,2.$$  Letting $x_1=c_{x,y}(t_1)$ and $y_1=c_{x,y}(t_2)$, then we know $d(x,x_1)+d(y_1,y)\le d(x,y)$ and
$$d_{\xi,\sigma}(x,y)\le d(x,x_1)+d(y_1,y)+l_{\Gamma}(x_1,y_1).$$
By Proposition\ref{top fractal}, $\Gamma=\partial \Omega_{\xi,\sigma}$ is a Lipschitz curve in $(\Omega_{\xi,\sigma},|\cdot|)$. So we know
$$l_{\Gamma}(x_1,y_1)\le C(\Omega_{\xi,\sigma})|x_1-y_1|.$$It follows that
\begin{align*}
 d_{\xi,\sigma}(x,y)\le C(\gamma_0,\lambda,\Omega_{\xi,\sigma})(|x-y|+d(x,y))\le C(\gamma_0,\lambda,\Omega_{\xi,\sigma})d(x,y).
\end{align*}
This concludes the proof.
\end{proof}
To conclude this section, we give a metric geometrical property of $(\Omega_{\xi,\sigma},|\cdot|)$, and use it to show $(\Omega_{\xi,\sigma},|\cdot|)$ admit a quadratic isoperimetric inequality (Definition \ref{quadratic isoperimetric inequality}).

\begin{proposition}\label{bdry inn control}
    Assume $V=\underline{v}(\Sigma,\theta)$ is a chord-arc $2$-varifold in $B_1$ with constant $\gamma\leq \gamma_0$ for some small $\gamma_0\ll 1$ such that $0\in \Sigma$. Let $\xi\in \Sigma$,  $\sigma>0$ and $ \lambda \in(0,1)$ such that $B(\xi,\lambda^{-1}\sigma)\subset B_1$. Let $\Gamma$ be a Jordan curve in $\Omega_{\xi,\sigma}$, and using $U$ denote the domain bounded by $\Gamma$. Then there exists a constant $C=C(\gamma_0,\lambda)$ such that

\begin{equation}
    diam(U)\leq C \mathcal{H}^1(\Gamma).
\end{equation}
\end{proposition}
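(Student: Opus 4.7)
My plan is to transfer the elementary planar isoperimetric inequality (that for a planar Jordan domain, diameter is at most half the boundary length) to $\Sigma$ via the bi-H\"older $W^{1,p}$-parameterization of Theorem~\ref{W1p para scaling}. Only the $L^{\infty}$ closeness to the standard inclusion is needed; this closeness is enough to transport both the diameter and the length estimate up to an additive error $O(\gamma^{1/4}\rho)$ at the working scale $\rho$. To keep this error absorbable, I re-scale via the scale invariance of the chord-arc hypothesis when $D:=\operatorname{diam}(U)$ is much smaller than $\sigma$.

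Concretely: set $D:=\operatorname{diam}(U)$ and pick $x,y\in\overline U$ with $|x-y|=D$, so $\Gamma\subset\overline{U}\subset B(x,D)$. I select a working pair $(\eta,\rho)$: either $(\xi,\sigma)$ when $D\ge\gamma^{1/8}\sigma$, or $(x,4D)$ when $D<\gamma^{1/8}\sigma$ (the latter choice is admissible because $B(x,(\lambda^{-1}-1)\sigma)\subset B_{1}$ and $\gamma\le\gamma_{0}$ is taken small depending on $\lambda$). Setting $f:=f_{\eta,\rho}$ from Theorem~\ref{W1p para scaling}, the containment $\Gamma\cup\overline{U}\subset f(D(0,\rho))$ is automatic from item (3), and the $L^{\infty}$ estimate $|f(z)-z-\eta|\le C\gamma^{1/4}\rho$ from item (1) is the only feature of $f$ I use.

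Pulling back via $f^{-1}$ gives a Jordan curve $\Gamma':=f^{-1}(\Gamma)$ and planar Jordan domain $U':=f^{-1}(U)$ in the tangent plane $T_{\eta,\rho}$ (here $U'$ is bounded, open, connected, with $\partial U'=\Gamma'$, hence the bounded component of $T_{\eta,\rho}\setminus\Gamma'$). The elementary planar fact $\operatorname{diam}(\Gamma')=\operatorname{diam}(\overline{U'})$ follows by extending the line through any two points of $\overline{U'}$ until it exits $\overline{U'}$ (once if one endpoint is already in $\Gamma'$, twice otherwise). Combined with $|f^{-1}(x)-f^{-1}(y)|\ge D-2C\gamma^{1/4}\rho$ from the $L^{\infty}$ estimate, this yields $\operatorname{diam}(\Gamma')\ge D-2C\gamma^{1/4}\rho$. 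Picking $P',Q'\in\Gamma'$ realizing this diameter, their images $P:=f(P'),Q:=f(Q')\in\Gamma$ satisfy $|P-Q|\ge D-4C\gamma^{1/4}\rho$ by the same closeness, and the standard fact that each of the two complementary sub-arcs of a rectifiable Jordan curve has length at least the chord between its endpoints gives $\mathcal{H}^{1}(\Gamma)\ge 2|P-Q|\ge 2D-8C\gamma^{1/4}\rho$. In either scale regime one has $\gamma^{1/4}\rho\lesssim\gamma^{1/8}D$, so the error is absorbed and $\mathcal{H}^{1}(\Gamma)\ge D$ for $\gamma\le\gamma_{0}$ sufficiently small.

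\textbf{Main obstacle.} The key conceptual point is recognizing that the planar pull-back $\Gamma'\subset T_{\eta,\rho}$ reduces everything to elementary planar geometry, and that only the $L^{\infty}$ closeness of $f$ to the inclusion is needed to transfer back. A natural first attempt is to project $\Gamma$ directly onto the tangent plane and argue via winding numbers of $p_{T}\circ f$, but that is substantially more delicate because $p_{T}\circ f$ is not a homeomorphism and one must verify a degree-$\pm1$ wrapping property over the interior of $p_{T}(U)$. The only remaining bookkeeping issue is the scale choice in the $D\ll\sigma$ case, which is what forces the $\lambda$-dependence of $\gamma_{0}$.
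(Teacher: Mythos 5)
Your argument is correct, but it takes a genuinely different route from the paper's. The paper splits on the size of $L:=\mathcal{H}^1(\Gamma)$ relative to $\sigma$: when $L\le\frac{1-\lambda}{2}\sigma$ it observes that $\Gamma\subset B(x,\frac{L}{2})$ for any $x\in\Gamma$ (one of the two subarcs joins $x$ to any other point of $\Gamma$ and has length at most $\frac{L}{2}$), hence $\Gamma$ lies in the topological disk $\Omega_{x,L}$ of Proposition \ref{top fractal}, and the Jordan curve theorem traps $U$ inside $\Omega_{x,L}\subset B(x,L)$; the large-$L$ case is trivial since $diam(U)\le\sigma$. So the paper bounds $diam(U)$ from above topologically, whereas you bound $\mathcal{H}^1(\Gamma)$ from below metrically: pull back to the plane, use that the boundary of a planar Jordan domain realizes the diameter of its closure, convert the resulting long chord of $\Gamma'$ into a long chord of $\Gamma$ via the $L^\infty$ estimate, and invoke $\mathcal{H}^1(\Gamma)\ge 2|P-Q|$. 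Your version uses only item $(1)$ of Theorem \ref{W1p para scaling} together with injectivity, rather than Proposition \ref{top fractal}, and yields the sharper constant $C=1$; the price is the rescaling bookkeeping in the regime $diam(U)\ll\sigma$, which the paper handles by a case split on the length side instead. One small imprecision: in the regime $(\eta,\rho)=(\xi,\sigma)$ the containment $\overline{U}\subset f_{\xi,\sigma}(D(0,\sigma))$ is not literally given by item $(3)$, since $\Omega_{\xi,\sigma}$ may meet the annulus $B(\xi,\sigma)\setminus B(\xi,(1-\psi)\sigma)$; you should either quote the construction in Proposition \ref{top fractal} (which builds $\Omega_{\xi,\sigma}$ inside $f_{\xi,\sigma}(D_{\tilde{\sigma}})$) or run the argument with $f_{\xi,\sigma'}$ for a slightly larger admissible $\sigma'$. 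This is a one-line repair, not a gap.
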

\begin{proof}
We distinguish two cases and first assume that $\mathcal{H}^1(\Gamma)\leq \frac{1-\lambda}{2}\sigma$. Fixing a point $x\in\Gamma$, we have
\[
B(x, \lambda^{-1}\mathcal{H}^1(\Gamma))\subset B\left(x, \frac{\lambda^{-1}-1}{2}\sigma\right)\subset B(\xi,\lambda^{-1}\sigma)\subset B_1
.\]
So we can apply Proposition \ref{top fractal} to choose the subset $\Omega_{x, \mathcal{H}^1(\Gamma)}$, then
   \[
   \Gamma\subset
B\left(x, \frac{1}{2}\mathcal{H}^1(\Gamma)\right)\cap\Sigma\subset \Omega_{x,\mathcal{H}^1(\Gamma)}
 . \]
 Since $\Omega_{x,\mathcal{H}^1(\Gamma)}$ is a topological disk, by the Jordan curve theorem, we know
$U\subset\Omega_{x,\mathcal{H}^1(\Gamma)}$.
Thus
\[
diam(U)\leq diam(\Omega_{x,\mathcal{H}^1(\Gamma)})\leq \mathcal{H}^1(\Gamma)
.\]
This concludes the proof of the first case. Now assuming that $\mathcal{H}^1(\Gamma)\geq \frac{1-\lambda}{2}\sigma$, we have
\[
diam(U)\leq diam(\Omega_{\xi,\sigma})\leq\sigma\leq \frac{2}{1-\lambda}\mathcal{H}^1(\Gamma)
.\]
This concludes the proof.
\end{proof}

Then we can show the metric space $(\Omega_{\xi,\sigma},|\cdot|)$ admits a quadratic isoperimetric inequality. We first recall a result of Lytchak \& Wenger.

\begin{theorem}[Theorem 1.4 in \cite{LW-2020}]\label{geo isoperi to analytic isoper}
	Let $X$ be a compete, geodesic metric space homeomorphic to $D$, $S^2$, or $\mathbb{R}^2$. Suppose that there exists $C>0$ such that every Jordan curve in $X$ bounds a Jordan domain $U\subset X$ with
	\begin{equation}\label{isoperimetry}
		\mathcal{H}^2 \left( U \right) \leq C\cdot l\left( \partial U \right) ^2.
	\end{equation}
Then $X$ admits a quadratic isoperimetric inequality with constant $C$.
\end{theorem}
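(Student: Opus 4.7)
The plan is to show that every Lipschitz loop $c : S^1 \to X$ is the trace of some $u \in W^{1,2}(D_1, X)$ with $\operatorname{Area}(u) \le C \cdot l(c)^2$, where $C$ is exactly the constant in the geometric hypothesis. The strategy has three parts: reduce to a Jordan curve, extract a Jordan-domain filling from the hypothesis, and then parametrize that filling by a Sobolev map with controlled parametric area.

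First I would reduce to the case of a Jordan loop. A Lipschitz loop can be uniformly approximated by piecewise-geodesic polygonal loops, using the geodesic structure of $X$ and the Lipschitz regularity of $c$. Each such polygonal loop decomposes into finitely many Jordan sub-loops meeting at finitely many vertices, with total traversed length equal to $l(c)$. Using a gluing property for $W^{1,2}(D_1, X)$-maps, one can paste together Sobolev fillings of the Jordan sub-loops into a single filling of the whole loop, with areas adding. The elementary inequality $\sum_i l(c_i)^2 \le \bigl(\sum_i l(c_i)\bigr)^2 = l(c)^2$ then preserves the quadratic bound, and a closing approximation argument in $W^{1,2}(D_1,X)$ passes to the original $c$; the task is reduced to filling a single Jordan Lipschitz curve.

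For a Jordan curve $c$, the geometric hypothesis yields a Jordan domain $U$ with $c = \partial U$ and $\mathcal{H}^2(U) \le C \cdot l(c)^2$. Since $X$ is homeomorphic to $D_1$, $S^2$, or $\mathbb{R}^2$, the closure $\overline{U}$ is a topological closed disk. It remains to produce $u : \overline{D_1} \to \overline{U}$ in $W^{1,2}$ with boundary trace $c$ and parametric area essentially bounded by $\mathcal{H}^2(U)$.

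The main obstacle is precisely this parametrization step: $X$ carries only a geodesic metric, so classical uniformization (Riemann mapping, conformal coordinates) is unavailable, and one must exploit geometry rather than analysis. My plan is to approximate $c$ by piecewise-geodesic polygonal curves $c_n$, pick compatible geodesic triangulations of the Jordan domains $U_n$ bounded by $c_n$ and of the model disk $D_1$, and on each small disk triangle define a Lipschitz extension into the matching geodesic triangle of $U_n$, built from the geodesic structure of $X$. Each small triangle is itself a Jordan domain, so the geometric hypothesis can be applied inductively at smaller scales to control the parametric area of each piece by its Hausdorff area, preventing cancelling folds from wasting energy. The area formula for Lipschitz maps into metric spaces then gives a uniform $W^{1,2}$-bound for the piecewise-Lipschitz maps $u_n$, and a compactness and lower-semicontinuity argument in $W^{1,2}(D_1, X)$ extracts a limit $u$ with trace $c$ and $\operatorname{Area}(u) \le \mathcal{H}^2(U) \le C \cdot l(c)^2$. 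The delicate technical point throughout is to control parametric area rather than merely Hausdorff area, which is exactly what the inductive use of the geometric isoperimetric hypothesis at all scales is designed to deliver.
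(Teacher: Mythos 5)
This statement is quoted in the paper as an external result (Theorem 1.4 of Lytchak--Wenger \cite{LW-2020}); the paper supplies no proof, so the only meaningful comparison is with the original argument. Your outline does follow the same skeleton as Lytchak--Wenger: reduce to Jordan curves, invoke the hypothesis to obtain a Jordan domain $U$ with $\mathcal{H}^2(U)\le C\, l(c)^2$, and then produce a Sobolev filling of $\partial U$ whose parametrized area is bounded by $\mathcal{H}^2(\overline{U})$. You also correctly identify that this last step is where all the difficulty lives.

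As written, however, the parametrization step has two genuine gaps. First, the ``Lipschitz extension into the matching geodesic triangle'' is not available: in a general geodesic metric disk a short Jordan curve need not be Lipschitz-contractible with any controlled constant, and the hypothesis only bounds the Hausdorff measure of the Jordan domain it encloses, not the Lipschitz constant of any filling; assuming uniform Lipschitz contractibility of small loops would essentially be assuming a form of the conclusion. (The reduction step has a milder version of the same issue: geodesic segments in a metric space can overlap along subarcs, so a piecewise-geodesic loop need not decompose into finitely many Jordan sub-loops.) Second, and more fundamentally, bounding the parametrized area of your approximating maps $u_n$ does not yield a uniform $W^{1,2}$ bound: the inequality $\mathrm{Area}(u)\le \tfrac{1}{2}E_+^2(u)$ goes the wrong way, so ``the area formula gives a uniform $W^{1,2}$-bound'' is false as stated, and without an energy bound the compactness and lower-semicontinuity argument cannot start. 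Lytchak--Wenger circumvent both problems by decomposing $\overline{U}$ into finitely many Jordan subdomains of small diameter with controlled total boundary length, mapping the $1$-skeleton of a triangulated disk onto the boundaries of these subdomains by constant-speed parametrizations (which controls energy, not merely area, on the skeleton), and iterating the decomposition inside each cell, so that energy and area are bounded simultaneously and the limit map is extracted from this explicit construction rather than from soft compactness. To complete your proof you would need to replace the Lipschitz-extension-plus-area-formula step by such a construction, or by some other mechanism producing fillings with bounded \emph{energy}.
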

Now we have:
\begin{proposition}\label{analytic isoper prop}
    Assume $V=\underline{v}(\Sigma,\theta)$ is a chord-arc $2$-varifold in $B_1$ with constant $\gamma\leq \gamma_0$ for some small $\gamma_0\ll 1$ such that $0\in \Sigma$. Let $\xi\in \Sigma$,  $\sigma>0$ and $ \lambda \in(0,1)$ such that $B(\xi,\lambda^{-1}\sigma)\subset B_1$.
    Then $(\Omega_{\xi,\sigma},|\cdot|)$ admits a quadratic isomperimetric inequality with constant $C=C(\lambda,\gamma_0,\Omega_{\xi,\sigma})$.
\end{proposition}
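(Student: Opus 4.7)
The plan is to apply Theorem \ref{geo isoperi to analytic isoper} to the geodesic companion $(\Omega_{\xi,\sigma}, d_{\xi,\sigma})$ and then transfer the resulting inequality back to $(\Omega_{\xi,\sigma}, |\cdot|)$ via the bi-Lipschitz equivalence of the two metrics provided by Proposition \ref{equiv of three metric}. First I would verify that $(\Omega_{\xi,\sigma}, d_{\xi,\sigma})$ meets the hypotheses of Theorem \ref{geo isoperi to analytic isoper}: it is a topological disk by Proposition \ref{top fractal}, it is compact and complete since it is bi-Lipschitz equivalent to the closed bounded set $\Omega_{\xi,\sigma}\subset\mathbb{R}^n$, and as a complete, locally compact length space it is geodesic by Hopf--Rinow.

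The crux is the geometric isoperimetric inequality \eqref{isoperimetry} for Jordan curves in $(\Omega_{\xi,\sigma}, d_{\xi,\sigma})$. Let $\Gamma\subset\Omega_{\xi,\sigma}$ be such a curve, enclosing a Jordan domain $U$. Since $d_{\xi,\sigma}\ge |\cdot|$, the length $l_{d_{\xi,\sigma}}(\Gamma)$ dominates the Euclidean length $\mathcal{H}^1_{|\cdot|}(\Gamma)$, so Proposition \ref{bdry inn control} gives
\[
\operatorname{diam}_{|\cdot|}(U)\le C(\gamma_0,\lambda)\,\mathcal{H}^1_{|\cdot|}(\Gamma)\le C(\gamma_0,\lambda)\,l_{d_{\xi,\sigma}}(\Gamma).
\]
To estimate $\mathcal{H}^2_{|\cdot|}(U)$ I would split into two cases. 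If $\operatorname{diam}_{|\cdot|}(U)\le \tfrac12(\lambda^{-1}-1)\sigma$, then picking any $x\in U$ places $B(x,\operatorname{diam}_{|\cdot|}(U))\subset B(\xi,\lambda^{-1}\sigma)\subset B_1$, so the Ahlfors upper bound built into the chord-arc condition gives $\mathcal{H}^2_{|\cdot|}(U)\le \mu(U)\le (1+\psi)\pi\operatorname{diam}_{|\cdot|}(U)^2$. Otherwise $\operatorname{diam}_{|\cdot|}(U)$ is comparable to $\sigma$, and the combined lower bound on $\mathcal{H}^1_{|\cdot|}(\Gamma)$ together with $\mathcal{H}^2_{|\cdot|}(U)\le \mu(\Omega_{\xi,\sigma})\le(1+\psi)\pi\sigma^2$ yields the same conclusion. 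Either way $\mathcal{H}^2_{|\cdot|}(U)\le C\,l_{d_{\xi,\sigma}}(\Gamma)^2$. The bi-Lipschitz equivalence in Proposition \ref{equiv of three metric} then upgrades this to $\mathcal{H}^2_{d_{\xi,\sigma}}(U)\le C_1^2 C\,l_{d_{\xi,\sigma}}(\Gamma)^2$. Applying Theorem \ref{geo isoperi to analytic isoper} then yields a quadratic isoperimetric inequality for $(\Omega_{\xi,\sigma}, d_{\xi,\sigma})$ with constant depending on $\gamma_0$, $\lambda$ and $\Omega_{\xi,\sigma}$.

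The final step is the transfer back to $(\Omega_{\xi,\sigma}, |\cdot|)$. A Lipschitz curve $c\colon S^1\to\Omega_{\xi,\sigma}$ has comparable Lipschitz constants in the two metrics, and the Sobolev classes $W^{1,2}(D_1, \Omega_{\xi,\sigma})$ and the parametrized area (as defined in the Lytchak--Wenger framework) coincide as sets and are comparable as functionals under bi-Lipschitz change of the target metric. Consequently, the filling $u$ produced by the quadratic isoperimetric inequality in $(\Omega_{\xi,\sigma}, d_{\xi,\sigma})$ is still a valid filling in $(\Omega_{\xi,\sigma}, |\cdot|)$, with the length of $c$ and the area of $u$ each distorted by a power of $C_1$. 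This gives the claimed inequality with $C=C(\gamma_0,\lambda,\Omega_{\xi,\sigma})$. The main obstacle, conceptually, is this last translation: one has to invoke the precise definitions of metric-space valued Sobolev maps and parametrized area to check bi-Lipschitz invariance with a quantified constant. Once this is in place, the conclusion follows directly from the geometric work in Steps 1 and 2.
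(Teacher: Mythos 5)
Your proposal is correct and follows essentially the same route as the paper: verify the hypotheses of Theorem \ref{geo isoperi to analytic isoper} for $(\Omega_{\xi,\sigma},d_{\xi,\sigma})$ using Proposition \ref{bdry inn control} together with the Ahlfors regularity, and then transfer the resulting quadratic isoperimetric inequality to $(\Omega_{\xi,\sigma},|\cdot|)$ via the bi-Lipschitz equivalence of Proposition \ref{equiv of three metric}. The paper states these steps more tersely, while you spell out the diameter-to-area estimate and the bi-Lipschitz invariance of the isoperimetric inequality; the substance is the same.
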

\begin{proof}
From Proposition \ref{bdry inn control} and the Ahlfors regularity in definition of  chord-arc varifolds, we know $(\Omega_{\xi, \sigma},d_{\xi,\sigma})$ satisfying the condition of Theorem \ref{geo isoperi to analytic isoper}. So $(\Omega_{\xi, \sigma},d_{\xi,\sigma})$ admits a quadratic isoperimetric inequality. Since quadratic isoperimetric inequality is stable under bi-Lipschitz equivalence from the definition, $(\Omega_{\xi, \sigma},|\cdot|)$ also admits a quadratic isoperimetric inequality due to Proposition \ref{equiv of three metric}.

\end{proof}

\section{Existence of conformal parameterizations }\label{existence section}
Recently, In a series of elegant works ~\cite{LW-2015},\cite{LW-2015b},\cite{LW-2017},\cite{LW-2018},\cite{LW-2020}, A. Lythcak and S. Wenger developed the theory of harmonic maps from disks to metric spaces. One application of their theory is a conceptually simple proof of a well-known theorem of M. Bonk and B. Kleiner \cite{BK-2002},\cite{R-2014} on the existence of quasi-symmetric parameterizations.

In this section, we apply Lytchak and Wenger's results to obtain a conformal parameterization of $\Omega$. At first, we recall some definitions  about metric space valued Sobolev mappings. See \cite{HKST-2015} for details.
\begin{definition}
	Assume that $U\in\mathbb{R}^k$ is an open set, and $(X, d)$ is a metric space.
	A measurable and essentially separably valued map $f:U\rightarrow X$ belongs to the Sobolev space $W^{1,p}\left( U, X \right) $ if the following properties holds:
	\begin{itemize}
		\item[(i)] for every $x \in X$ the function $f_x\left( z \right) := d\left( x, f\left( z \right)  \right) $ belongs to the classical Sobolev space $W^{1,p}\left( U \right) $
		\item[(ii)] there exists $h\in L^p\left( U \right) $  such that for every $x\in X$ we have $\left| \nabla f_x\right|\leq h$ almost everywhere on $U$.
\end{itemize}
\end{definition}
Let  $f\in W^{1,p}\left( U, X \right) $. Then for almost every $z\in U$, there exists a unique semi-norm on $\mathbb{R}^k$, denoted by $\emph{ap }\emph{md} f_z$ and called the approximate metric derivative of $f$ at $z$, such that
\[
	\emph{ap } \lim_{y\to z} \frac{d\left( f(y) ,  f(z)   \right)- \emph{ap }\emph{md} f_z \left( y-z \right)  }{\left| y-z \right| }=0
.\]
For details, see \cite{Ka-2007} and \cite{LW-2015b}.
\begin{definition}
	The Reshetnyak $p$-enery $E_+^p\left( f \right) $ of a map $f\in W^{1,p}\left( U, X \right) $ is defined by
	\[
		E_+^p\left( f \right) =\int_U I_+^p\left( \emph{ap } \emph{md} f_z \right) dz
	,\]
	where , for a semi-norm $s$ on $\mathbb{R}^k$, we have denote $I_+^p\left( s \right) := \left\{ s\left( v \right)^p: v\in S^{k-1} \right\}$.
\end{definition}

Now we choose $U=\mathring{D_1}\in\mathbb{R}^2$. If $\Gamma\subset X$ is a Jordan curve,  then we let $\Lambda\left( \Gamma, X \right) $ be the possible empty family of maps $u\in W^{1,2}\left( \mathring{D_1}, X \right)$ whose trace has a continuous representative which is a weakly monotone parameterization of $\Gamma$.

Concerning existence and regularity of minimizing harmonic maps, Lythcak and Wenger prove (see Theorem 3.4 in \cite{LW-2020}): 	
\begin{theorem}\label{LW1}
	Let $X$ be a proper metric space admitting a quadratic isoperimetric inequality. Let $\Gamma\subset X$ be a Jordan curve such that $\Lambda\left( \Gamma, X \right) $ is not empty. Then there exists $ f\in \Lambda\left( \Gamma, X \right)  $ satisfying
	\begin{equation}\label{energy minimizing problem}
	E_+^2\left( f \right) = \inf \left\{ E_+^2\left( \tilde{f} \right) : \forall \tilde{f}\in \Lambda \left( \Gamma, X \right)  \right\}
	\end{equation}
	Any such $f$ has a representative which is continuous on $\mathring{D_1}$ and belongs to $W_{loc}^{1,p}(\mathring{D_1})$ for some $p>2$. This $f$ also extends continuously to $D_1$.
\end{theorem}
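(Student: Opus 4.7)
The plan is to prove this by the direct method of calculus of variations, combined with a Courant–Lebesgue type oscillation estimate that exploits the quadratic isoperimetric hypothesis, and then to upgrade to $W^{1,p}_{\mathrm{loc}}$ via a reverse Hölder argument.

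\textbf{Step 1 (Existence of a minimizer).} Take a minimizing sequence $f_n \in \Lambda(\Gamma, X)$ with $E_+^2(f_n)\to \inf$. Since $E_+^2$ is invariant under pre-composition with conformal self-maps of $\mathring{D_1}$, the conformal group is non-compact and the sequence could degenerate at the boundary. To break this, fix three equidistant points $p_1,p_2,p_3 \in \partial D_1$ and three well-separated points $q_1,q_2,q_3 \in \Gamma$, and replace each $f_n$ by $f_n\circ\varphi_n$ where $\varphi_n$ is the unique Möbius transformation mapping the trace of $f_n$ to satisfy the three-point condition $f_n(p_i)=q_i$. To extract a limit, use a Kuratowski embedding $\iota\colon X\hookrightarrow \ell^\infty$ and view $\iota\circ f_n$ as a sequence of $\ell^\infty$-valued $W^{1,2}$ maps with bounded energy; alternatively, apply Korevaar–Schoen-type compactness to the scalar-valued Sobolev functions $z\mapsto d(x, f_n(z))$ for $x$ in a countable dense subset of $X$. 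This yields a subsequence converging almost everywhere to some $f\in W^{1,2}(\mathring{D_1},X)$.

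\textbf{Step 2 (Lower semicontinuity and boundary trace).} The Reshetnyak $2$-energy is lower semicontinuous along such a convergent sequence, so $E_+^2(f)\le \liminf E_+^2(f_n)$. One must then verify that $f\in\Lambda(\Gamma,X)$: the three-point condition together with a Courant–Lebesgue lemma prevents boundary loss, and a standard monotonicity argument (using the limit of weakly monotone boundary parameterizations) gives a weakly monotone trace onto $\Gamma$. Combined with the minimality inherited from the sequence, this produces the required minimizer.

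\textbf{Step 3 (Continuity via the isoperimetric inequality).} For a minimizer $f$, apply the Courant–Lebesgue lemma: by Fubini, for every $z\in \mathring{D_1}$ and every $0<r<R\ll 1$, there is $\rho\in (r, R)$ such that the trace $f|_{\partial B(z,\rho)}$ is a rectifiable curve of length at most $C\sqrt{E_+^2(f; B(z,R))\cdot\log(R/r)^{-1}}$. Since this is a small Lipschitz curve in $X$, the quadratic isoperimetric inequality produces a filling $g\in W^{1,2}(B(z,\rho),X)$ whose area and $2$-energy are controlled by the length squared. Replacing $f|_{B(z,\rho)}$ with $g$ is admissible (it does not change the boundary trace on $\partial D_1$), and minimality of $f$ forces $E_+^2(f; B(z,\rho))\le E_+^2(g)$. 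Iterating this comparison on dyadic annuli yields a logarithmic oscillation estimate that gives equicontinuity on compact subsets and, via the three-point condition, continuity up to $\partial D_1$.

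\textbf{Step 4 ($W^{1,p}_{\mathrm{loc}}$ regularity for some $p>2$).} The same comparison scheme yields a Caccioppoli-type inequality: on each ball $B(z,r)\Subset \mathring{D_1}$ one can combine the filling construction with the good radius $\rho$ to get $\int_{B(z,r/2)}I_+^2\le C\bigl(\fint_{B(z,r)} (\mathrm{ap\,md}\, f)\bigr)^2 \cdot r^2$, i.e.\ a reverse Hölder inequality for $|\mathrm{ap\,md}\,f|$. Gehring's lemma then upgrades this to $|\mathrm{ap\,md}\,f|\in L^p_{\mathrm{loc}}$ for some $p>2$, which means $f\in W^{1,p}_{\mathrm{loc}}(\mathring{D_1},X)$.

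The principal obstacle is Step 3: producing the filling $g$ cleanly and ensuring that the replacement is admissible in the Sobolev class $W^{1,2}(\mathring{D_1},X)$ without disturbing the boundary trace. This is precisely the role of the quadratic isoperimetric hypothesis, which makes the oscillation estimate and the Caccioppoli-type bound possible in the purely metric setting where no linear structure is available on the target.
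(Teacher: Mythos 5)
The paper does not actually prove this statement: it is quoted from Lytchak--Wenger (Theorem 3.4 in \cite{LW-2020}, resting on \cite{LW-2015} and \cite{LW-2015b}), so there is no in-paper argument to compare against. Your sketch does reproduce the architecture of their proof --- three-point normalization to kill the M\"obius group, a Rellich--Kondrachov type compactness for metric-space-valued Sobolev maps (which additionally needs the essential images $f_n(D_1)$ to lie in a single compact subset of the proper space $X$; this is where properness and a uniform diameter bound derived from the isoperimetric inequality enter), lower semicontinuity of the Reshetnyak energy, Courant--Lebesgue plus isoperimetric filling for interior continuity, and a Caccioppoli/reverse-H\"older/Gehring argument for $W^{1,p}_{loc}$.

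There is, however, one genuine gap as written. Definition \ref{quadratic isoperimetric inequality} controls only the \emph{area} of a filling of a short Lipschitz circle, whereas your comparisons in Steps 3 and 4 require a competitor $g$ on $B(z,\rho)$ whose \emph{Reshetnyak energy} is bounded by $C\,\ell(f|_{\partial B(z,\rho)})^2$: minimality only gives $E_+^2(f;B(z,\rho))\le E_+^2(g)$, and a Sobolev disc of tiny area can have arbitrarily large energy, so an area bound on $g$ alone yields nothing. Closing this requires the nontrivial Lytchak--Wenger lemma that a quadratic isoperimetric inequality for area implies one for energy with a comparable constant (obtained by taking $g$ to be an energy minimizer among fillings of the good circle and invoking the infinitesimal $\sqrt{2}$-quasi-conformality of such minimizers, which makes energy and area comparable for $g$). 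You should either prove or explicitly cite that lemma before running the hole-filling iteration; with it in place, the rest of your scheme (oscillation decay for continuity, weak reverse H\"older inequality and Gehring for $p>2$, and Courant--Lebesgue near $\partial D_1$ together with weak monotonicity of the trace for continuity up to the boundary) is the correct route.
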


It is well-known that a harmonic map from disks to smooth manifolds must be infinitesimally conformal (see \cite{CM-2011}, \cite{R-2012}).  Interestingly, when the target is  metric spaces, Lythcak and Wenger show this property also holds:
\begin{theorem}[Theorem 11.3 in ~\cite{LW-2015b}]\label{LW2}
	Let $X$ be a complete metric space. Suppose that $f\in W^{1,2}\left( \mathring{D_1}, X \right) $ such that
	\[
	E^2_+\left( f \right) \leq E^2_+\left( f\circ \psi \right)
	\]
	for every bi-Lipschitz homeomorphism $\psi: \mathring{D_1}\rightarrow \mathring{D_1}$.
Assume further that for every $f\in W^{1,2}\left( \mathring{D_1}, X \right) $ the  approximate metric derivative $\emph{ap} \emph{md} f_z$ is induced by a possibly degenerate inner product at almost every $z\in \mathring{D_1}$.  Then $f$ is infinitesimally conformal.
\end{theorem}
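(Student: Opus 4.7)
The plan is to argue by contradiction: if $f$ is not infinitesimally conformal, I will construct a bi-Lipschitz homeomorphism $\psi:\mathring{D_1}\to \mathring{D_1}$ that strictly decreases the Reshetnyak energy, violating the minimization hypothesis. By assumption, for a.e.\ $z\in \mathring{D_1}$ the approximate metric derivative $\emph{ap }\emph{md} f_z$ is induced by a symmetric positive semidefinite inner product on $\mathbb{R}^2$ with matrix $A(z)$ and eigenvalues $\lambda_1(z)^2\ge \lambda_2(z)^2\ge 0$; hence $I_+^2(\emph{ap }\emph{md} f_z)=\lambda_1(z)^2$. Infinitesimal conformality amounts to $\lambda_1(z)=\lambda_2(z)$ a.e., so its failure gives the set $E=\{\lambda_1>\lambda_2\}$ with $|E|>0$.

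The key pointwise fact is $I_+^2(s)=\lambda_1^2 \ge \lambda_1\lambda_2 = \sqrt{\det A}=:J(z)$, with equality iff $\lambda_1=\lambda_2$. The parametrized area $\mathrm{Area}(f):=\int_{\mathring{D_1}} J(z)\,dz$ is invariant under bi-Lipschitz reparameterization: the chain rule for metric derivatives gives the matrix identity $A_{f\circ \psi}(z)=D\psi(z)^T A(\psi(z))D\psi(z)$, hence $J_{f\circ \psi}(z)=|\det D\psi(z)|\,J_f(\psi(z))$, and change of variables closes the loop. Thus $E_+^2(f\circ \psi)\ge \mathrm{Area}(f\circ \psi)=\mathrm{Area}(f)$ for every bi-Lipschitz $\psi$, with equality iff $f\circ \psi$ is infinitesimally conformal. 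If $E_+^2(f)>\mathrm{Area}(f)$, the aim is to produce a single $\psi$ lowering this gap.

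To exhibit such a $\psi$, fix a Lebesgue density point $z_0\in E\cap\{\lambda_2>0\}$ (the case where $\lambda_2=0$ on a positive-measure subset requires a separate pinching construction). Apply Lusin's theorem to the matrix-valued map $z\mapsto A(z)$ to obtain a compact $K\subset E$ on which $A$ is continuous and $I_+^2(A)$ is uniformly bounded, with $z_0$ still a density point of $K$. Set $\Lambda_0 := \sqrt{\lambda_1(z_0)\lambda_2(z_0)}\,A(z_0)^{-1/2}$, so that $\det \Lambda_0=1$ and $\Lambda_0^T A(z_0)\Lambda_0 = \lambda_1(z_0)\lambda_2(z_0)\,I$. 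For small $r>0$, define $\psi_r$ to equal the affine map $z\mapsto z_0+\Lambda_0(z-z_0)$ on $B(z_0,r/2)$, the identity outside $B(z_0,r)$, and a smooth bi-Lipschitz radial interpolation on the intermediate annulus. On the ``good'' part of $B(z_0,r/2)$ the new integrand $I_+^2\bigl(D\psi_r^T A(\psi_r(z))D\psi_r\bigr)$ is close to $\lambda_1(z_0)\lambda_2(z_0)$, strictly below the original density $\lambda_1(z_0)^2$; the cost on the annulus is $O(r^2)$ via the $L^\infty$ bound on $A|_K$ together with the bounded gradient of the interpolation. Lebesgue differentiation on $K$ makes the net change negative for $r$ small enough, yielding the desired contradiction.

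The main obstacle is engineering $\psi_r$ so that three things hold simultaneously: (a) $\psi_r$ is genuinely bi-Lipschitz, not merely Lipschitz, so that it lies in the admissible class of competitors; (b) the annular interpolation contributes only lower-order loss compared to the interior gain (of order $r^2$); and (c) the contributions from the complement of $K$ inside $B(z_0,r)$, where $A$ is only measurable and possibly unbounded, do not swamp the gain. Point (c) is handled by the absolute continuity of $\int I_+^2\,dz$ combined with a careful choice of $K$ so that $|B(z_0,r)\setminus K|=o(r^2)$, which follows from refining Lusin's set at the density point; point (b) forces the annulus width to be proportional to $r$ and the operator norm of $\Lambda_0$ to be absorbed into the constant, so the construction must genuinely exploit the pointwise strict inequality $\lambda_1(z_0)^2>\lambda_1(z_0)\lambda_2(z_0)$ at $z_0$.
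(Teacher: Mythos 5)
This statement is quoted by the paper as Theorem 11.3 of Lytchak--Wenger \cite{LW-2015b} and is used as a black box, so there is no in-paper proof to compare against; I am therefore assessing your construction on its own terms. Your reduction is the right one: under the inner-product hypothesis the pointwise target is indeed $I_+^2(s_{z_0})\le I_+^2(s_{z_0}\circ T)$ for $T\in SL_2(\mathbb{R})$, your $\Lambda_0=\sqrt{\lambda_1\lambda_2}\,A(z_0)^{-1/2}$ is exactly the competitor that violates it when $\lambda_1>\lambda_2>0$, and the measure-theoretic bookkeeping in your point (c) is repairable (choose $z_0$ to be simultaneously a density point of $K$ and a Lebesgue point of $I_+^2(s_\cdot)\chi_{K^c}$, whose value there is $0$, so the bad set contributes $o(r^2)$).

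The genuine gap is your point (b), and it is the crux of the whole theorem. The interior gain is $\bigl(\lambda_1(z_0)^2-\lambda_1(z_0)\lambda_2(z_0)\bigr)\pi(r/2)^2+o(r^2)$, a \emph{specific} multiple of $r^2$. But the annulus $B(z_0,r)\setminus B(z_0,r/2)$ also has area $\tfrac34\pi r^2$, and on it the derivative of any bi-Lipschitz interpolation between $\Lambda_0$ and $I$ is a fixed-size perturbation of the identity independent of $r$: besides the convex combination $(1-\eta)I+\eta\Lambda_0$ it contains a rank-one term $\eta'(\rho)(\Lambda_0-I)(z-z_0)\otimes \hat n$ of norm comparable to $\|\Lambda_0-I\|$, since $|\eta'|\sim 1/r$ while $|z-z_0|\sim r$. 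The only estimate you invoke, namely $I_+^2(s\circ D\psi_r)\le\|D\psi_r\|^2 I_+^2(s)$ together with the $L^\infty$ bound on $A|_K$, therefore bounds the annulus cost only by $C(\Lambda_0)\,\lambda_1(z_0)^2\,r^2$ with an uncontrolled constant $C(\Lambda_0)$ that can vastly exceed the gain constant. Shrinking $r$ does not help because gain and loss scale identically in $r$; widening the annulus shrinks the rank-one term but enlarges the region where you pay; replacing $\Lambda_0$ by a small perturbation $T_\tau$ of the identity makes both gain and loss first order in $\tau$ with the same $r^2$ factor. So ``Lebesgue differentiation makes the net change negative for $r$ small'' is not true as stated: nothing in the construction forces the annulus cost to be a \emph{smaller} multiple of $r^2$ than the interior gain. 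Closing this requires a genuinely different mechanism --- e.g.\ exploiting that the convex combinations $(1-\eta)I+\eta\Lambda_0$ themselves strictly decrease $I_+^2(s_{z_0}\circ\cdot)$ and showing that the first-order contribution of the rank-one term cancels on average over circles (a delicate computation you do not attempt), or, as in Lytchak--Wenger's own treatment, a variational argument built on the convexity of $M\mapsto I_+^2(s\circ M)$ and inner variations rather than on a single cut-off affine competitor. As written, the proof does not close.
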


Further, when the target is a topological disk, Lytchak and Wenger also provides topological information on energy minimizers. We state a variant of their theorem here.
\begin{theorem}[Theorem 1.2 and 3.6 in \cite{LW-2020}]\label{LW3}
Let $X$ be a metric space homeomorphic to $D_1$ and bi-Lipschitz equivalent to a geodesic metric space and let $u: D_1 \rightarrow X$ be a continuous map. If $u$ is in $\Lambda(\partial X, X)$ and minimizes the Reshetnyak energy $E^2_+$ among all maps in $\Lambda\left( \partial X, X \right) $, then $u$ is a uniform limit of homeomorphisms from $D_1$ to $X$.

	Assume further that there exists $L>0$ such that for all $x\in X$ and $r>0$ we have
	\[
		\mathcal{H}^2\left( B\left( x, r \right)  \right) \leq L r^2
	\]
and $u$ is nonconstant and infinitesimally quasi-conformal, then $u$ is a homeomorphism onto its image.
\end{theorem}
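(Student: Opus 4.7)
The plan is to reduce to the cited Lytchak--Wenger setting and then combine their Theorems~1.2 and~3.6. By the bi-Lipschitz equivalence hypothesis we may replace $X$ by its geodesic model: Reshetnyak $2$-energy, continuity, weak monotonicity of traces, and the two target conclusions are all preserved (up to bi-Lipschitz constants) under a bi-Lipschitz change of target, so it suffices to treat the case in which $X$ itself is geodesic. For the first conclusion, the core analytic input is that an energy minimizer in $\Lambda(\partial X, X)$ cannot ``waste area'': for any Jordan domain $V\subset \mathring{D}_1$ such that $u(\partial V)$ bounds a Jordan subdomain $W\subset X$, minimality together with the quadratic isoperimetric filling of $W$ forces $u(V)\supseteq W$. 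Combined with the weak monotonicity of the trace of $u$ on $\partial X$, a degree argument shows $u$ is surjective and cellularly monotone in the plane-topological sense. One can then either invoke the classical Youngs--Moore theorem on monotone maps between planar disks, or construct $h_\varepsilon$ directly: triangulate $X$ at scale $\varepsilon$, use the no-wrap property to certify connected preimages of closed $2$-cells, and homotope $u$ cellwise to a piecewise-affine homeomorphism whose sup-distance from $u$ is $O(\varepsilon)$.

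For the second conclusion, I would apply the area formula for metric-valued Sobolev maps,
\[
 \int_{\mathring{D}_1} J_u(z)\,dz \;=\; \int_X \#u^{-1}(y)\,d\mathcal{H}^2(y),
\]
together with the infinitesimal quasi-conformality hypothesis, which turns $E_+^2(u)$ into a constant multiple of the left-hand side. The upper bound $\mathcal{H}^2(B(x,r))\le Lr^2$ makes $\mathcal{H}^2(X)$ finite, while the first conclusion, through the weakly monotone trace, gives $\#u^{-1}(y)\ge 1$ for $\mathcal{H}^2$-a.e.\ $y\in X$. If $\#u^{-1}>1$ on a set of positive $\mathcal{H}^2$-measure, a surgery that ``peels off'' one sheet and replaces it by the isoperimetric filling would strictly decrease $E_+^2(u)$, contradicting minimality. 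Hence $\#u^{-1}(y)=1$ a.e., which upgrades, via continuity and the uniform homeomorphism approximation from the first conclusion, to a genuine homeomorphism from $\mathring{D}_1$ onto its image.

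The main obstacle is the first conclusion, namely promoting ``energy-minimizer with weakly monotone trace'' to ``uniform limit of homeomorphisms.'' The no-wrap step is a clean cut-and-paste energy comparison using the quadratic isoperimetric inequality, but converting a monotone continuous surjection between planar disks into a uniform limit of homeomorphisms in a quantitatively controlled way is a genuinely planar-topological construction requiring a careful interplay between the cellular structure of $X$ and its isoperimetric filling; this is the harder, more technical part and is essentially the content of Theorem~3.6 of \cite{LW-2020}.
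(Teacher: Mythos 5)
This statement is not proved in the paper: it is quoted verbatim from Lytchak--Wenger (Theorems 1.2 and 3.6 of \cite{LW-2020}), and the only original content is the remark following it, namely that their proof, written for geodesic targets, goes through unchanged when $X$ is merely bi-Lipschitz equivalent to a geodesic space, because the proof only uses properties of $X$ (properness, the quadratic isoperimetric inequality, the Jordan-domain structure) that are bi-Lipschitz invariant. Your first reduction step takes a different, and in fact broken, route: you propose to transport $u$ to the geodesic model and ``treat the case in which $X$ itself is geodesic.'' But minimality of the Reshetnyak energy is \emph{not} preserved under a bi-Lipschitz change of target --- the transported map is only within a multiplicative constant of the infimum, and every subsequent step (the cut-and-paste/no-wrap comparison, the peeling surgery) uses exact minimality. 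So you cannot reduce to the geodesic case this way; you must instead argue, as the paper's remark does, that the hypotheses actually used in the Lytchak--Wenger proof are themselves bi-Lipschitz invariant.

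Beyond that, your sketch correctly identifies the main ingredients of the cited proofs (the energy-comparison argument forcing cellular monotonicity, Youngs' theorem on monotone maps of disks, the area formula combined with infinitesimal quasi-conformality), but the last step of the second part has a genuine gap: knowing $\#u^{-1}(y)=1$ for $\mathcal{H}^2$-a.e.\ $y$ does not upgrade to injectivity, even for a continuous monotone map that is a uniform limit of homeomorphisms --- such a map can still collapse an arc to a point without affecting any a.e.\ multiplicity count. The actual argument must show that \emph{every} fiber is a single point, which Lytchak--Wenger obtain by combining monotonicity (fibers are continua), the upper Ahlfors bound $\mathcal{H}^2(B(x,r))\le Lr^2$, and quasi-conformality to rule out nondegenerate continuum fibers. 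As written, your proposal establishes injectivity only off a null set.
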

\begin{remark}
Lytchak \& Wenger prove Theorem \ref{LW3} with the assumption that $X$ is a geodesic metric space. But their proof also go through when $X$ is bi-Lipschitz equivalent to a geodesic space.
\end{remark}

Now we choose  $X=(\Omega_{0,\lambda}, |\cdot|)$ (see Proposition \ref{top fractal}) for some fixed $\lambda$ near to 1. From now on, we will mainly play with $(\Omega_{0,\lambda}, |\cdot|)$, thus for simplicity, we  denote it by $\Omega_{\lambda}$. In this case, we have $\emph{ap }\emph{md}f_z (v) = \left|  \nabla_v f(z)   \right|$. So
\begin{equation}\label{energy functional}
E_+^2 \left( f \right) =\int_{D_1} \left| \nabla f \right|^2 dx,
\end{equation}
where $D_1=D\left( 0, 1 \right) $ is the standard unit disk in $\mathbb{R}^2$.

 In the following, We identify $\mathbb{R}^2$ with the sub-plane $T_{0,\lambda}$ in $\mathbb{R}^n$, and denote the corresponding embedding by

\[
	\emph{i}_{\lambda}: \mathbb{R}^2 \longrightarrow \mathbb{R}^n
.\]
We also denote
\[
x_j = \left( \cos \frac{2j\pi}{3}, \sin \frac{2j\pi}{3} \right) \in \partial D_1
\]
for $j= 1,2,3$.

Then we can find a conformal homeomorphism of $\Omega_{\lambda}$.
\begin{proposition}\label{existence of conformal parameterization}
 Assume $V=\underline{v}(\Sigma,\theta)$ is a chord-arc $2$-varifold in $B_1$ with constant $\gamma\leq \gamma_0$ for some small $\gamma_0\ll 1$ such that $0\in \Sigma$. Then there exists an infinitesimally conformal homeomorphism
	\begin{equation}\label{high inter f}
		f:  D\longrightarrow \Omega_{\lambda},   \qquad f\in W^{1,p}_{loc}(D_1,\Omega_{\lambda})
	\end{equation}
	for some $p>2$ with
	\begin{equation}\label{energy bound}
		E_+^2(f)=\int_D \left| \nabla f \right| ^2 dx\leq C\left( n, \gamma_0 \right) ,
	\end{equation}
	and
	\begin{equation}\label{three point}
	\left| f(x_j)-\lambda\emph{i}_{\lambda}(x_j) \right|\leq  \psi\lambda,
\end{equation}
where $\psi = \psi(\gamma)$ is a positive function such that $\lim_{\gamma\to 0} \psi(\gamma)=0$
\end{proposition}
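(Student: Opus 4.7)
The plan is to apply Lytchak--Wenger's variational theory of energy-minimizing maps into metric spaces, with target $X = (\Omega_\lambda, |\cdot|)$. First I verify the hypotheses: by Proposition \ref{analytic isoper prop}, $\Omega_\lambda$ admits a quadratic isoperimetric inequality; by Proposition \ref{equiv of three metric} it is bi-Lipschitz equivalent to the geodesic metric space $(\Omega_\lambda, d_{0,\lambda})$; the Ahlfors regularity gives $\mathcal{H}^2(B(x,r)\cap\Omega_\lambda)\le Lr^2$ with $L=(1+\gamma_0)\pi$; $\Omega_\lambda$ is closed in $\mathbb{R}^n$ hence proper; and $\partial\Omega_\lambda$ is a bi-Lipschitz Jordan curve by Proposition \ref{top fractal}. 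Since the metric on $\Omega_\lambda$ is the restriction of the Euclidean one, the approximate metric derivative of any $u\in W^{1,2}(\mathring{D}_1,\Omega_\lambda)$ is induced by the restriction of the Euclidean inner product, fulfilling the hypothesis of Theorem \ref{LW2}.

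Next I produce a competitor showing $\Lambda(\partial\Omega_\lambda,\Omega_\lambda)\ne\emptyset$ with controlled energy. Rescaling the Semmes parameterization $f_{0,\lambda}:D(0,\lambda)\to\Sigma$ of Theorem \ref{W1p para scaling} by $x\mapsto\lambda x$ yields $\tilde f\in W^{1,2}(D_1,\Sigma)$. Using item $(4)$ of Theorem \ref{W1p para scaling} with $p=2$ together with conformal invariance of the Dirichlet energy in dimension two,
\[
\int_{D_1}|\nabla\tilde f|^2\,dx=\int_{D(0,\lambda)}|\nabla f_{0,\lambda}|^2\,dy\le C_2\lambda^2\le C(n,\gamma_0).
\]
By items $(2)$ and $(3)$ of Theorem \ref{W1p para scaling}, $\tilde f$ is a bi-Hölder homeomorphism onto its image, which contains $\Omega_\lambda$. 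Restricting to $\tilde f^{-1}(\Omega_\lambda)$ (a topological disk close to $D_1$) and reparameterizing by a bi-Lipschitz homeomorphism $D_1\to\tilde f^{-1}(\Omega_\lambda)$, I obtain a map in $\Lambda(\partial\Omega_\lambda,\Omega_\lambda)$ whose energy is still bounded by $C(n,\gamma_0)$.

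By Theorem \ref{LW1}, there exists $f_0\in\Lambda(\partial\Omega_\lambda,\Omega_\lambda)$ minimizing $E^2_+$, continuous on $D_1$, lying in $W^{1,p}_{loc}(\mathring{D}_1)$ for some $p>2$, with $E^2_+(f_0)\le C(n,\gamma_0)$. For any bi-Lipschitz homeomorphism $\psi:\mathring{D}_1\to\mathring{D}_1$, the composition $f_0\circ\psi$ still lies in $\Lambda(\partial\Omega_\lambda,\Omega_\lambda)$, so by minimality $E^2_+(f_0)\le E^2_+(f_0\circ\psi)$. Theorem \ref{LW2} then yields infinitesimal conformality, and Theorem \ref{LW3} (applied to the nonconstant minimizer $f_0$, using the upper bound $\mathcal{H}^2(B(x,r)\cap\Omega_\lambda)\le Lr^2$) promotes $f_0$ to a homeomorphism $f_0:D_1\to\Omega_\lambda$.

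Finally, to arrange the three-point condition, I use Proposition \ref{top fractal} to pick $p_1,p_2,p_3\in\partial\Omega_\lambda$ with $|p_j-\lambda\emph{i}_\lambda(x_j)|\le\psi\lambda$, cyclically ordered on $\partial\Omega_\lambda$ as $\lambda\emph{i}_\lambda(x_j)$ are on $\partial(\lambda D_1)$. Let $y_j=f_0^{-1}(p_j)\in\partial D_1$; after pre-composing $f_0$ with $z\mapsto\bar z$ if necessary, $f_0|_{\partial D_1}$ preserves orientation, so $(y_1,y_2,y_3)$ has the same cyclic order as $(x_1,x_2,x_3)$. By the triple-transitivity of $\mathrm{Aut}(D_1)$ on $\partial D_1$ among orientation-matched triples, there exists a Möbius automorphism $\mu$ of $D_1$ with $\mu(x_j)=y_j$, and since $\mu$ is conformal the map $f:=f_0\circ\mu$ retains all properties of $f_0$ while satisfying $|f(x_j)-\lambda\emph{i}_\lambda(x_j)|=|p_j-\lambda\emph{i}_\lambda(x_j)|\le\psi\lambda$. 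The main obstacle is producing the initial competitor in $\Lambda(\partial\Omega_\lambda,\Omega_\lambda)$ with boundary trace a monotone parameterization of $\partial\Omega_\lambda$; everything else is then a direct application of the Lytchak--Wenger machinery together with the Möbius normalization.
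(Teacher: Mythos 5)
Your proposal follows the paper's proof essentially verbatim: verify the Lytchak--Wenger hypotheses via Propositions \ref{analytic isoper prop}, \ref{equiv of three metric} and \ref{top fractal}, bound $\inf_{\Lambda(\partial\Omega_\lambda,\Omega_\lambda)}E^2_+$ using the Semmes parameterization of Theorem \ref{W1p para scaling}, apply Theorems \ref{LW1}, \ref{LW2} and \ref{LW3}, and normalize the three points with a M\"obius transformation. The one place you are more explicit than the paper is the competitor construction, where the claimed bi-Lipschitz reparameterization of $\tilde f^{-1}(\Omega_\lambda)$ is not obviously available (its boundary is only a bi-H\"older image of a Lipschitz curve); a conformal Riemann map $D_1\to\tilde f^{-1}(\Omega_\lambda)$, which preserves the Dirichlet energy exactly and extends homeomorphically to the closures, does the job instead.
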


\begin{proof}
By item $(4)$ in Theorem \ref{W1p para scaling}, we know
\begin{equation}\label{energy value bound}
\inf\{ E_+^2(\tilde{f}): \forall \tilde{f}\in \Lambda (\partial\Omega_{\lambda}, \Omega_{\lambda})\}\leq C(n,\gamma_0)
\end{equation}
From Theorem \ref{LW1} and Proposition \ref{analytic isoper prop}, we know there is  $f: D_1\to \Omega_{\lambda}$ satisfying \eqref{energy minimizing problem} with \eqref{high inter f} holds.
So the bound \eqref{energy value bound} implies the bound \eqref{energy bound}.

Theorem \ref{LW2} and the form of energy functional \eqref{energy functional} tell us $f$ is infiniteisimally conformal. By the Proposition \ref{equiv of three metric} and Theorem \ref{LW3}, we know $f$ is also a homeomorphism.

It is remaining to clarify the last issue.  From the Reifenberg condition (item $(2)$) in Definition \ref{chord arc varifold}, we can find three point $p_1, p_2, p_3\in \partial\Omega$ such that
	\[
		\left| \lambda\emph{i}_{\lambda}(x_j) - p_j \right|\leq \psi \lambda
	.\]
     Then after composing a M\"{o}bius transformation of $D_1$(which does not change the energy), we can always assume
      \[
	      f(x_j) = p_j
      .\]
      So ~\eqref{three point} follows.
\end{proof}

In the rest of this section, we show $f$ has uniform $C^0\left( D_1 \right) $ estimate only depending on $\gamma$ and $\lambda$.

At first, recall the Courant-Lebesgue Lemma (see for example \cite{CM-2011} or \cite{R-2012}):
\begin{lemma}[Courant-Lebesgue]\label{Courant Lebesgue}
	Let $z\in D_1$ and $\delta\in \left( 0, 1 \right) $ and $f\in W^{1,2}(\mathring{D}_1)$. For each $ r\in \left( 0,1 \right) $ let $\gamma_{z,r}$ be an arc-length parameterization of $\left\{ y\in D_1: \left|u-z \right|=r  \right\} $. Then there exists $A\subset ( \delta, \sqrt\delta ) $ of strictly positive measure such that $f\circ \gamma_{z,r}$ has an absolutely continuous representative of length
	\begin{equation}
		\mathcal{H}^1\left( f\circ\gamma_{z,r} \right) \leq  \left(- \frac{4\pi  }{\log\delta }E_+^2\left( f \right)\right)^{\frac{1}{2}}
	\end{equation}
for every $r\in A$.
\end{lemma}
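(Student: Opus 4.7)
The plan is to apply the classical polar-coordinate argument of Courant, adapted to the metric-space-valued setting. Working in polar coordinates $(r,\theta)$ centered at $z$ with $e_\theta = (-\sin\theta,\cos\theta)$, let $I_r \subset [0,2\pi)$ parameterize the arc $\{y \in D_1 : |y-z| = r\}$, which is a full circle when $B(z,r)\subset D_1$ and a proper arc otherwise, so that $|I_r|\leq 2\pi$ always. Because $I^2_+(\emph{ap }\emph{md}\, f_w) \geq (\emph{ap }\emph{md}\, f_w(e_\theta))^2$ pointwise, the Reshetnyak energy dominates the tangential part,
\[
E^2_+(f) \geq \int_\delta^{\sqrt{\delta}}\!\int_{I_r} \bigl(\emph{ap }\emph{md}\, f_{z+re^{i\theta}}(e_\theta)\bigr)^2\, r\, d\theta\, dr = \int_\delta^{\sqrt{\delta}} \frac{\phi(r)}{r}\, dr,
\]
where $\phi(r) := r^2 \int_{I_r} (\emph{ap }\emph{md}\, f_{z+re^{i\theta}}(e_\theta))^2\, d\theta$.

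Since $\int_\delta^{\sqrt{\delta}} r^{-1}\, dr = -\tfrac{1}{2}\log\delta > 0$, a Chebyshev-type argument forces the set
\[
A := \Bigl\{ r \in (\delta, \sqrt{\delta}) : \phi(r) \leq -\tfrac{2 E^2_+(f)}{\log\delta}\Bigr\}
\]
to have strictly positive Lebesgue measure; otherwise the displayed integral would strictly exceed $E^2_+(f)$, a contradiction. For each $r \in A$, the standard slicing theory for Sobolev maps into metric spaces produces an absolutely continuous representative of $f\circ\gamma_{z,r}$ on the arc, whose length satisfies
\[
\mathcal{H}^1(f\circ\gamma_{z,r}) = \int_{I_r} r\,\emph{ap }\emph{md}\, f_{z+re^{i\theta}}(e_\theta)\, d\theta \leq \sqrt{|I_r|\,\phi(r)} \leq \sqrt{-\tfrac{4\pi E^2_+(f)}{\log\delta}}
\]
by Cauchy-Schwarz (applied on $I_r$ to the product $1\cdot r\,\emph{ap }\emph{md}\, f(e_\theta)$), which is exactly the claimed bound.

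The only mildly subtle bookkeeping is that the target is a metric space, so ``partial derivatives'' must be read through the approximate metric derivative $\emph{ap }\emph{md}\, f$ rather than through a classical gradient; however the Reshetnyak energy $E^2_+(f)$ is designed precisely to dominate squared directional metric derivatives in an $L^2$ sense, so the two displayed estimates go through verbatim. No serious obstacle is present: the proof is essentially a one-shot Fubini-plus-Cauchy-Schwarz estimate once one writes the energy in polar coordinates.
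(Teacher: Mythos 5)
Your argument is correct and is exactly the standard Courant--Lebesgue proof (polar coordinates, Chebyshev on the logarithmically divergent weight $1/r$, then Cauchy--Schwarz on the circle), which is precisely the argument in the references the paper cites for this lemma rather than proving it itself. The only point to make explicit is that $A$ should be taken as the intersection of your sublevel set of $\phi$ with the full-measure set of radii $r$ for which the restriction of $f$ to the circle is absolutely continuous with metric speed $r\,\emph{ap}\,\emph{md}f(e_\theta)$ (the Fubini/slicing property of $W^{1,2}$ maps), since membership in your $A$ alone does not guarantee this for every such $r$.
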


Then we can deduce:
\begin{proposition}\label{continuous modulus}
Assume $V=\underline{v}(\Sigma,\theta)$ is a chord-arc $2$-varifold in $B_1$ with constant $\gamma\leq \gamma_0$ for some small $\gamma_0\ll 1$ such that $0\in \Sigma$. Let $f$ be the conformal parameterization chosen in Proposition \ref{existence of conformal parameterization}. Then we have the following estimate of continuous modulus:
	\begin{equation}
		\left| f(z)- f(y) \right| \leq \psi(  | z-y ||\lambda) +\psi(\gamma|\lambda),
	\end{equation}
where $\psi=\psi(t|\lambda)$ is a positive function such that $\lim_{t\to0}\psi(t|\lambda)=0$.
\end{proposition}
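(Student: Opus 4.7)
The plan is to reduce the continuity estimate to a diameter bound on the image of a small subdisk, which in turn is controlled by the length of its boundary via Proposition \ref{bdry inn control}. The length of that boundary will be controlled by the Courant-Lebesgue Lemma \ref{Courant Lebesgue} together with the boundary-arc length estimate of Proposition \ref{top fractal}.

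Fix $z,y\in D_1$ with $\delta := |z-y|$ small. By Lemma \ref{Courant Lebesgue} applied to $f$ and the energy bound \eqref{energy bound}, there exists $r\in(\delta,\sqrt\delta)$ such that the arc $\eta_{z,r} := \{w\in D_1 : |w-z|=r\}$ has image of length $\mathcal{H}^1(f(\eta_{z,r})) \leq L(\delta) := \bigl(-4\pi C(n,\gamma_0)/\log\delta\bigr)^{1/2}$. Since $f:D_1\to\Omega_\lambda$ is a homeomorphism by Proposition \ref{existence of conformal parameterization}, the closed subdisk $D(z,r)\cap D_1 \subset D_1$ maps to a Jordan domain in $\Omega_\lambda$ bounded by the curve $f(\eta_{z,r})\cup f(\tau)$, where $\tau := \partial D_1\cap D(z,r)$. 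Because $y\in D(z,\delta)\cap D_1\subset D(z,r)\cap D_1$, it suffices to bound the diameter of this image.

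If $\tau=\emptyset$, then $f(\eta_{z,r})$ is a Jordan curve in $\Omega_\lambda$ of length at most $L(\delta)$, and Proposition \ref{bdry inn control} immediately yields $\operatorname{diam}(f(D(z,r)))\leq C(\gamma_0,\lambda)L(\delta) = \psi(\delta|\lambda)$. Otherwise $\tau$ is an arc of $\partial D_1$ with endpoints $a_1,a_2$ and diameter at most $2r<2\sqrt\delta$. For $\delta$ small enough that $2\sqrt\delta < \sqrt 3$, at most one of the normalization points $x_1,x_2,x_3$ can lie in $\tau$; by \eqref{three point} the remaining two images $p_{j_1}, p_{j_2}$ satisfy $|p_{j_1}-p_{j_2}|\geq \sqrt 3\, \lambda - 2\psi\lambda \geq \lambda$ and both lie on the arc of $\partial\Omega_\lambda$ complementary to $f(\tau)$. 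Therefore $f(\tau)$ must be the minor arc between $f(a_1)$ and $f(a_2)$, and Proposition \ref{top fractal} (applied at scale $\lambda$) gives $\mathcal{H}^1(f(\tau))\leq C\lambda^{1/2}|f(a_1)-f(a_2)|^{1/2}+\psi(\gamma)\lambda \leq C\lambda^{1/2}L(\delta)^{1/2}+\psi(\gamma)\lambda$. Applying Proposition \ref{bdry inn control} to the full Jordan curve $f(\eta_{z,r})\cup f(\tau)$ then produces $\operatorname{diam}(f(D(z,r)\cap D_1))\leq \psi(\delta|\lambda)+\psi(\gamma|\lambda)$, completing the proof.

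The main obstacle is the boundary case: one must single out the minor boundary arc cut off by $f(a_1),f(a_2)$, and it is precisely the three-point normalization \eqref{three point} combined with the H\"older control on $\partial\Omega_\lambda$ from Proposition \ref{top fractal} that allows this identification. A minor technical point is checking that the Jordan curve under consideration is short enough (of length much less than $\lambda$) for Proposition \ref{bdry inn control} to be applicable, which holds once $\delta$ and $\gamma$ are sufficiently small.
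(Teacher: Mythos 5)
Your proof is correct and follows essentially the same route as the paper: Courant--Lebesgue to select a radius $r\in(\delta,\sqrt{\delta})$ with short image arc, the boundary-arc estimate \eqref{regular bdry} to control $\mathcal{H}^1(f(\partial D_1\cap D(z,r)))$ by $\mathcal{H}^1(f\circ\gamma_{z,r})^{1/2}$, and Proposition \ref{bdry inn control} to pass from the length of the resulting Jordan curve to the diameter of the enclosed domain. The only difference is that you spell out the identification of $f(\tau)$ as the \emph{minor} arc via the three-point normalization \eqref{three point}, a detail the paper's chain of inequalities leaves implicit.
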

\begin{proof}
	Denote that $\delta=|y-z|$ and choose
	$\gamma_{z,r}$ with $r\in(\delta,\sqrt{\delta})$ as in Lemma \ref{Courant Lebesgue}. From Proposition \ref{bdry inn control}, and the equation \eqref{regular bdry} ,  we have
\begin{equation}
	\begin{split}
	    |f(z)-f(y)|
    		\leq &diam\left( f\left(  D_1\cap D\left( z, \delta \right)  \right)  \right) \\
		\leq &diam\left( f\left( D_1 \cap D\left( z, r \right)  \right)  \right) \\
		\leq&   C(\lambda,\gamma_0) \mathcal{H}^1 \left( f\left( \partial \left( D_1\cap D\left( z, r \right)  \right)  \right)  \right)\\
		\leq& C(\lambda,\gamma_0) \mathcal{H}^1 \left( f\circ \gamma_{z,r} \right) + C(\lambda,\gamma_0)  \mathcal{H}^1\left( f\circ \gamma_{z,r} \right)^{\frac{1}{2}} +\psi(\gamma|\lambda)\\
    		\leq & \psi\left(  \delta|\lambda \right) +\psi(\gamma|\lambda).
	\end{split}
\end{equation}
This concludes the proof.

\end{proof}

A direct consequence of above estimate of continuous modulus  is

\begin{corollary}\label{uniform app}
 Assume $V=\underline{v}(\Sigma,\theta)$ is a chord-arc $2$-varifold in $B_1$ with constant $\gamma\leq \gamma_0$ for some small $\gamma_0\ll 1$ such that $0\in \Sigma$. Let $f$ be the conformal parameterization chosen in Proposition \ref{existence of conformal parameterization}, then
	\begin{equation}\label{uniform app formula}
		\sup_{x\in D_1} \left| f(x)- \lambda\emph{i}_{\lambda}(x) \right|  + \left(\int_{D_1} |\nabla(f-\lambda\emph{i}_{\lambda})|^2 dx\right)^{\frac{1}{2}}\leq \psi(\gamma|\lambda)\lambda,
	\end{equation}
    	for some positive function $\psi=\psi(\gamma|\lambda)$ such that $\lim_{\gamma\to 0}\psi(\gamma|\lambda)=0$.
    	
    	Moreover, we have
    \begin{equation}\label{contain large ball}
        f(D_1)\supset B_{(1-\psi)\lambda}\cap \Sigma
    \end{equation}
\end{corollary}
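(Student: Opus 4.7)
The inclusion \eqref{contain large ball} is immediate: Proposition \ref{existence of conformal parameterization} gives that $f:D_1\to\Omega_\lambda$ is a homeomorphism, while Proposition \ref{top fractal} applied at scale $\lambda$ gives $\Omega_\lambda\supset B_{(1-\psi)\lambda}\cap\Sigma$. So the real content is \eqref{uniform app formula}, which I would establish by a compactness-contradiction argument.

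\textbf{Setup and pre-compactness.} Fix $\lambda\in(0,1)$ and suppose \eqref{uniform app formula} fails: there exist chord-arc $2$-varifolds $V_i=\underline{v}(\Sigma_i,\theta_i)$ in $B_1$ with constants $\gamma_i\to 0$ and corresponding conformal parameterizations $f_i:D_1\to\Omega_\lambda^{(i)}$ from Proposition \ref{existence of conformal parameterization} such that the left-hand side of \eqref{uniform app formula} stays bounded away from $0$. By Proposition \ref{continuous modulus}, $\{f_i\}$ is equicontinuous and uniformly bounded in $C^0(D_1,\mathbb{R}^n)$; Arzelà–Ascoli then yields a uniform limit $f_\infty:D_1\to\mathbb{R}^n$ along a subsequence. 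The Reifenberg condition at scale $\lambda$ forces the Hausdorff convergence $\Omega_\lambda^{(i)}\to\lambda\,\emph{i}_\lambda(D_1)$, so $f_\infty(D_1)\subset\lambda\,\emph{i}_\lambda(D_1)$, a flat disk of radius $\lambda$ in $T_{0,\lambda}$.

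\textbf{Identification of the limit.} The uniform energy bound \eqref{energy bound} combined with uniform convergence yields $f_i\rightharpoonup f_\infty$ weakly in $W^{1,2}(D_1,\mathbb{R}^n)$. Since the target of $f_\infty$ is a flat plane and each $f_i$ is an infinitesimally conformal energy-minimizer in $\Lambda(\partial\Omega_\lambda^{(i)},\Omega_\lambda^{(i)})$, lower semicontinuity of the Reshetnyak energy and stability of infinitesimal conformality in the flat limit make $f_\infty$ itself an infinitesimally conformal, energy-minimizing map into the flat disk $\lambda\,\emph{i}_\lambda(D_1)$. The three-point normalization \eqref{three point} passes to the limit, giving $f_\infty(x_j)=\lambda\,\emph{i}_\lambda(x_j)$ for $j=1,2,3$. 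A $W^{1,2}$ infinitesimally conformal energy-minimizer onto a planar disk is a conformal homeomorphism of the disk, and the three-point condition pins it down uniquely: $f_\infty=\lambda\,\emph{i}_\lambda$. This contradicts the assumed failure of the $C^0$ estimate, producing the $C^0$ half of \eqref{uniform app formula}.

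\textbf{Upgrade to strong $W^{1,2}$ convergence.} Infinitesimal conformality yields $|\nabla f_i|^2=2J_{f_i}$ almost everywhere, hence
\[
E_+^2(f_i)=2\,\mathcal{H}^2(\Omega_\lambda^{(i)}).
\]
Ahlfors regularity (Lemma \ref{lem:ahlfors regularity}) forces $\mathcal{H}^2(\Omega_\lambda^{(i)})\to\pi\lambda^2=\tfrac12 E_+^2(\lambda\,\emph{i}_\lambda)$; thus $\|\nabla f_i\|_{L^2}\to\|\nabla(\lambda\,\emph{i}_\lambda)\|_{L^2}$. Combined with the weak $W^{1,2}$ convergence already obtained, this upgrades to strong $W^{1,2}$ convergence, and the gradient term in \eqref{uniform app formula} also tends to $0$. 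The main obstacle is the identification step: one must know that the limit map is a non-degenerate conformal biholomorphism of disks rather than a collapsed or branched map. This is guaranteed by applying Theorem \ref{LW3} in the flat limit (where its hypotheses are trivially met) together with the classical fact that a conformal homeomorphism between disks is determined by three boundary values.
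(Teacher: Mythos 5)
Your strategy for \eqref{uniform app formula} is the same as the paper's (contradiction, equicontinuity from Proposition \ref{continuous modulus} plus Arzel\`a--Ascoli, the Reifenberg condition flattening the limit target, identification of the limit with $\lambda\emph{i}_{\lambda}$ via \eqref{three point}, and the energy--area identity to get strong $W^{1,2}$ convergence), but the identification step as written is circular. You justify conformality of $f_\infty$ by ``stability of infinitesimal conformality in the flat limit'' under weak $W^{1,2}$ convergence, which is not a valid principle: weak limits of conformal maps need not be conformal. Your later upgrade paragraph cannot rescue this, because concluding strong convergence from weak convergence plus $\|\nabla f_i\|_{L^2}\to\sqrt{2\pi}\,\lambda$ requires knowing that $\|\nabla f_\infty\|_{L^2}$ equals this limit rather than being strictly smaller --- i.e.\ it presupposes the identification $f_\infty=\lambda\emph{i}_{\lambda}$ you are trying to prove. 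The missing ingredient is the one the paper supplies: each $f_i|_{\partial D_1}$ is monotone, so the boundary trace of $f_\infty$ is weakly monotone of degree one onto $\partial B_\lambda\cap T_{0,\lambda}$, hence $f_\infty$ is \emph{surjective} onto the flat disk. This gives the lower bound $\pi\lambda^2=\mathcal{H}^2(f_\infty(D_1))\le\int_{D_1}|\det\nabla f_\infty|\,dx\le\tfrac12\int_{D_1}|\nabla f_\infty|^2dx\le\liminf_i\tfrac12\int_{D_1}|\nabla f_i|^2dx=\liminf_i\mathcal{H}^2(\Omega_{\lambda,i})\le\pi\lambda^2$, and the forced equalities yield both conformality of $f_\infty$ and strong convergence; the M\"obius rigidity plus \eqref{three point} then finishes the contradiction. (Note also that Theorem \ref{LW3} applies to the maps $f_i$ into $\Omega_{\lambda,i}$, not to the limit, so it cannot be invoked ``in the flat limit'' to get surjectivity.)

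Your treatment of \eqref{contain large ball} is fine and actually more direct than the paper's: since $f$ is a homeomorphism of $D_1$ onto $\Omega_{\lambda}$ and Proposition \ref{top fractal} gives $\Omega_{\lambda}\supset B_{(1-\psi)\lambda}\cap\Sigma$, the inclusion is immediate, whereas the paper re-derives it from \eqref{uniform app formula}, item $(1)$ of Theorem \ref{W1p para scaling} and the winding-number argument of Proposition \ref{top fractal}.
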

\begin{proof}
	We argue by contradiction. Assume there is a real number $c>0$ and a sequence $f_i: D_1\rightarrow \Omega_{\lambda, i} \subset \Sigma_i$ such that
	\[
		\sup_{x\in D_1}\left| f_i(x)- \lambda\emph{i}_{\lambda}(x) \right| + \left(\int_{D_1}|\nabla(f_i -\lambda\emph{i}_{\lambda})|^2dx\right)^{\frac{1}{2}}\geq c\lambda
	.\]
	Here $\Sigma_i$ is the support of a chord-arc $2$-varifold $V_i=\underline{v}_i(\Sigma_i,\theta_i)$ in $B_1$ with constant $\gamma_i$ tends to $0$ as $i$ tends to infinity.
	
	By Proposition ~\ref{continuous modulus} , we know from Arzel\'{a}-Ascoli Theorem that $f_i$ has a limit $f$ in $C^0(D_1)$ after passing to a sub-sequence. We denote this sub-sequence also by $f_i$. Then the Reifenberg condition implies that
	\[
		f\left( \partial D_1 \right) \subset \partial B_{\lambda}\cap T_{0,\lambda}.
	\]
	On the one hand, since  $f_i$ is a homeomorphism, we know $f_i|_{\partial D}$ is monotone ,  so $f|_{\partial D}$ is weakly monotone , hence is degree 1 and homotopy  non-trivial.
So we know
\[
f: D_1\longrightarrow B_{\lambda}\cap T_{0,\lambda}
\]
is surjective, and  using thus
\[
	\limsup_{i\to\infty}\mathcal{H}^2 \left( f_i\left( D_1 \right) \right) \leq \pi\lambda^2 = \mathcal{H}^2\left( f\left( D_1\right) \right)
.\]
One the other hand, $f_i\in W^{1,p}_{loc}(\mathring{D}_1)$ for some $p_i>2$. So we can apply the area formula to get
\[
	\mathcal{H}^2\left( f\left( D_1 \right)  \right) \leq \int_{D_1} \left| \det \nabla f\right|dx \leq \frac{1}{2}\int_{D_1} \left| \nabla f \right| ^2 dx\leq \liminf_{i\to\infty}\frac{1}{2}\int_{D_1} \left| \nabla f_i \right| ^2 dx=\liminf_{i\to\infty}\mathcal{H}^2\left(  f_i\left( D_1\right)\right)
.\]
So we have
\[
\int_{D_1} \left| \det \nabla f \right|dx =\frac{1}{2}\int_{D_1} \left| \nabla f \right| ^2dx
.\]
Hence $f$ is infinitesimally conformal  and $f_i$ converges strongly to $f$ in $W^{1,2}(D_1, \mathbb{R}^n)$. Now we can regard $f$ as a holomorphic map from $D_1$ to $D_1$, which has degree 1 when restrict to $\partial D_1$. This implies $f$ is a M\"{o}bius transformation. However, ~\eqref{three point} gives
\[
	f(x_j)=\lambda\emph{i}_{\lambda}(x_j)
.\]
We conclude that $f\equiv \lambda\emph{i}_{\lambda}$, a contradiction.

For the last claim, we just notice that by \eqref{uniform app formula} and item $(1)$ in Theorem \ref{W1p para scaling},
    \[\sup_{x\in D_1} |f(x)-f_{0,\lambda}(\lambda^{-1}x)|\leq \psi(\gamma|\lambda)\lambda.\]
Then the winding number argument used in  the proof Proposition \ref{top fractal}  tells \eqref{contain large ball} holds.
\end{proof}

\section{More refined estimate of $f$}\label{refined estimate section}

In this section, we will give more refine estimates of the  parameterization $f$. In precise, we first prove a quasi-symmetric estimate of $f$, then we use it to give a variant of Semmes's estimate (Theorem 6.1 in \cite{S-1991b}). Finally, we derive a $BMO$ estimate of  $\log|\nabla f|$.

Since  $f$ is an infinitesimally conformal homeomeorphism, the
classical work of Heinonen \& Koskela \cite{HK-1998} would tell us $f$ is quasi-symmetric. However, for our propose, estimates need to be only depending on the chord-arc constant $\gamma$ but not on specific $\Omega_{\lambda}$. So we need to be careful.

We need the following form of Poincar\'{e} inequality. (see Theorem 2.12 in \cite{K-2009}).
\begin{theorem}\label{Poincare inequality}
	Let $u\in W^{1,2}\left( D\left( z, 3r \right)  \right) $ be continuous, assume that $u\leq 0$ on $E$, $u\geq 1$ on $F$ where $E, F\subset D\left( z, r \right)$ are connected compact subset with $\min \left\{ diam(E), diam(F) \right\}\geq \delta r >0$. Then
	\[
		\int_{D\left( z, 3r \right)}\left| \nabla u \right|^2 dx \geq c(\delta)>0
	.\]
\end{theorem}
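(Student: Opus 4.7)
The statement is a classical capacity-type Poincar\'e inequality: it asserts that the $2$-condenser capacity between two connected compacta of positive relative diameter inside a bounded domain is bounded below. The plan is to first normalize, then prove the bound by a compactness argument.

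By the rescaling $u(x)\mapsto u(z+rx)$, which preserves $\int|\nabla u|^2$, one reduces to the case $z=0$, $r=1$. Replacing $u$ by $\max(0,\min(u,1))$ only decreases the Dirichlet integral, so we may assume $0\le u\le 1$ with $u=0$ on $E$ and $u=1$ on $F$. Arguing by contradiction, suppose there is a sequence $(u_i,E_i,F_i)$ satisfying the hypotheses with $\int_{D_3}|\nabla u_i|^2\,dx\to 0$. Blaschke's selection theorem produces a Hausdorff-convergent subsequence $E_i\to E_\infty$, $F_i\to F_\infty$ inside $\overline{D_1}$; continuity of diameter and connectedness under Hausdorff limits keep $E_\infty,F_\infty$ connected compact of diameter $\ge \delta$. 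Rellich--Kondrachov extracts a further subsequence with $u_i\to u_\infty$ strongly in $L^2(D_3)$ and weakly in $W^{1,2}$, and the lower semicontinuity of the Dirichlet integral forces $u_\infty$ to be constant.

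The contradiction comes from a direct capacity lower bound. Fix $p\in E_i$ and observe that, since $F_i\subset\overline{D_1}$ is connected with diameter $\ge\delta$ while $|p-q|\le 2$ for every $q\in F_i$, the angular measure of directions $\omega\in S^1$ for which the ray $\{p+t\omega:t\ge 0\}$ meets $F_i$ is bounded below by a constant $c_1(\delta)>0$. For almost every such $\omega$, the ACL characterization of Sobolev functions together with the continuity of $u_i$ gives
\[
1=|u_i(\text{hit point})-u_i(p)|\le\int_{\text{ray}}|\partial_t u_i|\,dt,
\]
hence $\int_{\text{ray}}|\partial_t u_i|^2\,dt\ge 1/2$ by Cauchy--Schwarz applied on a segment of length $\le 2$. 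Integrating in polar coordinates centered at $p$ and handling the weight $t$ by a further averaging of $p$ over a small fattening of $E_i$ (or, equivalently, by discarding the contribution from the $\delta/4$-neighborhood of $p$ using a truncation argument, noting that on the remaining part of the ray $u_i$ must still jump by at least $1/2$ outside a set of small angular measure controlled by the Dirichlet energy itself) produces the uniform lower bound $\int_{D_3}|\nabla u_i|^2\,dx\ge c(\delta)>0$, contradicting our assumption.

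The principal technical difficulty is controlling the polar weight $t\,dt$ at the cone apex, which is where the ray integral is not directly comparable to the planar one; the averaging trick above is the standard way around it. As this is precisely the inequality recorded as Theorem~2.12 of~\cite{K-2009}, the detailed argument can be imported from there rather than rewritten in full.
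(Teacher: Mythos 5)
The paper does not prove this statement: it is quoted verbatim from Theorem 2.12 of \cite{K-2009}, so there is no internal proof to compare against, and your closing remark that the argument ``can be imported from there'' is in fact exactly what the authors do. Deferring to the citation is fine; the difficulty is that the sketch you offer in its place has two genuine gaps, so it cannot stand on its own as an alternative proof.

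First, the ray argument. The estimate $\int_{\mathrm{ray}}|\partial_t u_i|^2\,dt\ge 1/2$ on a set of directions of angular measure $c_1(\delta)$ is correct, but converting it into a bound on $\int_{D_3}|\nabla u_i|^2\,dx$ requires the polar Jacobian $t\,dt\,d\omega$, and $\int_0^L|\partial_t u|^2\,t\,dt$ admits no lower bound in terms of $\int_0^L|\partial_t u|^2\,dt$: the energy can concentrate arbitrarily close to the apex $p$. Neither proposed repair is an argument. Averaging $p$ over a fattening of $E_i$ fails because $u_i$ is not controlled on the fattening, only on $E_i$ itself; and the truncation variant presupposes that $u_i$ is still close to $0$ at distance $\delta/4$ from $p$ along most rays, which is essentially the conclusion you are trying to prove. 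The standard way out is to integrate over \emph{circles} rather than rays: after replacing $u$ by the truncation $4\max(u-3/4,0)$ and using the ordinary mean-value Poincar\'e inequality to reduce to a single continuum, one picks $p\in F$ and notes that for a.e.\ $s\in(0,\delta r/2)$ the circle $\partial D(p,s)$ meets $F$ (by connectedness), so either the mean of $v$ on that circle exceeds $1/2$ (contributing to $\|v\|_{L^2}^2$) or $\mathrm{osc}_{\partial D(p,s)}v\ge 1/2$, whence $\int_{\partial D(p,s)}|\nabla v|\,d\mathcal{H}^1\ge 1/2$ and, by Cauchy--Schwarz, $\int_{\partial D(p,s)}|\nabla v|^2\,d\mathcal{H}^1\ge c/s$. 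Integrating in $s$ produces the weight $ds/s$ instead of $t\,dt$, and the apex problem disappears.

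Second, the compactness reduction does not close. Hausdorff limits $E_\infty,F_\infty$ of the constraint sets may well be Lebesgue-null (segments of length $\delta$, say), so strong $L^2$ convergence of $u_i$ carries no information about the values of $u_\infty$ ``on'' $E_\infty$ or $F_\infty$, and the conclusion that $u_\infty$ is constant contradicts nothing. Making that step quantitative is precisely the capacity lower bound in question, so this half of the argument is circular and, given that you then give a direct bound anyway, also redundant.
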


Now we can prove the  following quasi-symmetric estimate:
\begin{proposition}\label{quasisymmetric-estimate}
     Assume $\lambda\in(0,1)$ and  $V=\underline{v}(\Sigma,\theta)$ is a chord-arc $2$-varifold in $B_1$ with constant $\gamma\leq \gamma_0$ for some small $\gamma_0\ll 1$ such that $0\in \Sigma$. Let $f$ be the conformal parameterization of $\Omega_{\lambda}$ chosen in Proposition \ref{existence of conformal parameterization}, then
 there exists a homeomorphism $\eta: \mathbb{R}^+\to \mathbb{R}^+$  only depending on $\lambda$, $\gamma_0$ such that :

	For any $x\in D_{1}$, $r\leq \frac{\lambda^{-1}-1}{2}$ , if  $D(x,8r)\subset D_{1-\psi}$,  then   we have $f|_{D(x,r)}$ is $\eta$-quasi-symmetric, that is for any $y, z\in D\left( x, r \right)$ ,
	\begin{equation}
		\frac{\left| f(y)-f(x) \right| }{\left| f(z)-f(x) \right| }\leq \eta\left( \frac{|y-x|}{|z-x|} \right).
	\end{equation}
	Here  $\psi=\psi(\gamma|\lambda)$ is a positive function such that $\lim_{\gamma\to 0}\psi(\gamma|\lambda)=0$.
\end{proposition}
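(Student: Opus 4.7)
The strategy is a capacity/modulus argument: I will exhibit a test function on $\Sigma$ with small Dirichlet energy (exploiting Ahlfors regularity), which, via the infinitesimal conformality of $f$ from Theorem \ref{LW2}, pulls back to a function on $D_1$ whose energy is bounded below by the planar Poincar\'e inequality (Theorem \ref{Poincare inequality}). Setting $a:=|y-x|$, $b:=|z-x|$, $R:=|f(y)-f(x)|$ and $\rho:=|f(z)-f(x)|$ with (WLOG) $a\geq b$ and $R\geq 2\rho$, this will yield $\log(R/\rho)\leq C\log(a/b)+O(1)$, hence $R/\rho\leq \eta(a/b)$ for some homeomorphism $\eta$ depending only on $\gamma_0$ and $\lambda$; the opposite case $a\leq b$ follows by swapping the roles of $y$ and $z$.

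\textbf{Key steps.} Fix a small $\tau\in(0,1/4)$ and set $\rho_*=(1+\tau)\rho$, $R_*=(1-\tau)R$; consider the logarithmic cutoff
\[ v(w)=\min\left\{1,\max\left\{0,\frac{\log(|w-f(x)|/\rho_*)}{\log(R_*/\rho_*)}\right\}\right\}. \]
By the Ahlfors regularity of Definition \ref{chord arc varifold}(1), a direct radial integration gives $\int_\Sigma|\nabla^\Sigma v|^2\,d\mathcal{H}^2\leq C(\gamma_0)/\log(R_*/\rho_*)$. The infinitesimal conformality $\nabla f(\nabla f)^T=|\det \nabla f|\,I$ a.e.\ combined with the area formula yields the conformal pullback identity $\int_{D_1}|\nabla(v\circ f)|^2\,dx=\int_{f(D_1)}|\nabla^\Sigma v|^2\,d\mathcal{H}^2$. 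Next I will exhibit, inside $D(x,3a)\subset D_1$ (ensured by $D(x,8r)\subset D_{1-\psi}$), two compact connected sets $E,F\subset D(x,a)$ of diameter $\geq \delta a$ on which $u:=v\circ f$ satisfies $u\leq 0$ on $E$ and $u\geq 1$ on $F$. For $E$, take the closure of the connected component of $x$ in $f^{-1}(B(f(x),\rho_*))\cap D(x,a)$: by Proposition \ref{metric equiv} there is a curve in $\Sigma$ from $f(x)$ to $f(z)$ of length $\leq(1+\psi)\rho<\rho_*$ lying inside $B(f(x),\rho_*)$, whose $f^{-1}$-image connects $x$ to $z$, so $z\in E$ and $\operatorname{diam}(E)\geq b$. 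For $F$, invoke the continuous modulus (Proposition \ref{continuous modulus}) to find a Euclidean neighborhood $D(y,\tau_1)\cap D(x,a)$ on which $|f(w)-f(y)|<\tau R$, whence $|f(w)-f(x)|\geq R_*$ and $u\geq 1$, giving $\operatorname{diam}(F)\geq c\tau_1$. Applying Theorem \ref{Poincare inequality} with $\delta=\min(b,\tau_1)/a$ produces $\int|\nabla u|^2\geq c(\delta)$, of order $1/\log(1/\delta)$ by the annular capacity. Combining with the pullback identity yields $\log(R_*/\rho_*)\leq C\log(a/\min(b,\tau_1))$, and absorbing the $\tau$- and $\tau_1$-dependences produces the desired $\eta$.

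\textbf{Main obstacle.} The most delicate point is producing the outer set $F$ with a quantitative diameter lower bound: Proposition \ref{continuous modulus} only provides $\psi$-type continuity without an explicit power-law rate, so one must revisit its Courant--Lebesgue-based proof to extract a rate of the form $\tau_1\geq c(\tau R)^\beta$ for some $\beta=\beta(\gamma_0,\lambda)>0$. This quantification is what ultimately fixes the modulus function $\eta$. A secondary subtlety is the rigorous justification of the conformal pullback identity for the non-smooth test $v\circ f$, which requires approximating the Lipschitz $v$ by smooth functions and invoking the Lytchak--Wenger inner-product structure of the approximate metric derivative for the infinitesimally conformal $f\in W^{1,p}_{loc}$.
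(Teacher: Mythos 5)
Your core mechanism — a logarithmic cutoff between two spheres centered at a point of $f(D_1)$, the conformal pullback identity for the Dirichlet energy, the Ahlfors‑regularity upper bound $\int_\Sigma|\nabla^\Sigma v|^2\lesssim 1/\log(R_*/\rho_*)$, and the planar Poincar\'e inequality of Theorem \ref{Poincare inequality} for the lower bound — is exactly the engine of the paper's proof. But your attempt to extract the full $\eta$‑quasi‑symmetry in one capacity estimate has two genuine gaps. First, the outer continuum $F$: you correctly identify that Proposition \ref{continuous modulus} gives no rate, but even a power‑law rate $\tau_1\gtrsim(\tau R)^\beta$ does not save the argument, because then $\delta=\min(b,\tau_1)/a$ depends on the \emph{image} quantity $R$, and the resulting bound $\log(R/\rho)\lesssim\log(a/b)+\log(a/\tau_1(R))$ is not of the form $\eta(a/b)$ — the unknown $R$ reappears on the right. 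Second, the case $|y-x|\leq|z-x|$ is not obtained "by swapping $y$ and $z$": swapping gives $\rho/R\leq\eta_1(b/a)$, i.e.\ a \emph{lower} bound on $R/\rho$, whereas quasi‑symmetry demands the upper bound $R/\rho\leq\eta(a/b)$ with $\eta(t)\to0$ as $t\to0$. (A smaller, fixable issue: the $f$‑preimage of the short curve joining $f(x)$ to $f(z)$ need not stay in $D(x,a)$, so $z\in E$ is not automatic; one must argue that the component of $x$ either contains $z$ or reaches $\partial D(x,a)$.)

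The paper sidesteps both problems by splitting the proof in two. It first proves only the single‑scale dilatation bound $H_f(z,s)=L_f(z,s)/l_f(z,s)\leq H(\lambda,\gamma_0)$. There the continua are manufactured from the geometry of $\Sigma$ rather than from any modulus of continuity of $f$: $E=f^{-1}\bigl(\Omega_{f(z),(1-\psi)^{-1}l_f(z,s)}\bigr)$ uses the topological disk of Proposition \ref{top fractal}, which contains $B(f(z),l_f(z,s))\cap\Sigma$ and hence pulls back to a connected set meeting $\partial D(z,s)$, while $F$ is a curve joining $\partial D(z,s)$ to $\partial D(z,2s)$ inside $f^{-1}(\Omega_\lambda\setminus\mathring\Omega_{f(z),L_f(z,s)})$; both have diameter $\geq s$ by construction, so $\delta$ in Theorem \ref{Poincare inequality} is an absolute constant and the capacity comparison directly yields $\log H_f(z,s)\leq C$. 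The passage from this uniform weak quasi‑symmetry to genuine $\eta$‑quasi‑symmetry — including your problematic case $t\leq1$ and all the chaining over scales — is then delegated to the cited metric‑space theorem (Theorem 3.2 in \cite{K-2009}), which applies because $\Sigma$ is Ahlfors regular. To repair your proposal you would either have to reproduce that black box or restructure the argument along the paper's two‑step lines.
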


\begin{proof}
Denote that
	\[
		L_f (z, s):= \sup\left\{ \left| f(y)-f(z) \right| : \left| y-z \right| \leq s \right\}
	,\]

	\[
		l_f(z, s): =\inf\left\{ \left| f(y)-f(z) \right| : \left| y-z \right| \geq s\right\}
	,\]
and
\[
	H_f(z, s): =\frac{L_f(z,s)}{l_f (z, s)}
.\]

We first show that when $s\leq \frac{\lambda^{-1}-1}{2}$ and $D(z, 6s)\subset D_{1-\psi}$ , we have
	\begin{equation}\label{quasi-conformal constant control}
	    H(z,s)\leq H,
	\end{equation}
	for some constant $H=H(\lambda,\gamma_0)$ only depending on $ \lambda$ and $\gamma_0$.
	
In fact, by Corollary \ref{uniform app}, we know that
\begin{equation}\label{out radius bound}
    L_{f}(z, s)\leq \lambda s +\psi(\gamma|\lambda)\lambda.
\end{equation}
So when $\gamma_0$ small enough such that $\psi(\gamma|\lambda)\leq\frac{1-\lambda}{2}$ for any $\gamma\leq \gamma_0$, and provided that $s\leq\frac{\lambda^{-1}-1}{2}$,  we have
\[
  B(f(z),\lambda^{-1}l_{f}(z,s))\subset B(f(z),\lambda^{-1}L_{f}(z,s))\subset B_1
.\]
Now we can apply Proposition \ref{top fractal} to choose two topological disks

\begin{equation}\label{linear connected 1}
	\Omega_{f(z), (1+\psi)^{-1}l_f(z,s)}\supset B\left( f(z), l_f(z, s) \right)\cap\Sigma,
\end{equation}
and
\begin{equation}\label{linear connected 2}
	\Omega_{f(z), L_f(z,s)}\subset B\left( f(z), L_f(z, s) \right)\cap \Sigma .
\end{equation}
For a constant $a$ to be determined,
 if $D(z,6s)\subset D_{1-a}$, then $|z|\le 1-a-6s$. So, by Corollary \ref{uniform app} again, we have
 \begin{align*}
 |f(z)|\le \lambda|z|+\lambda\psi\le \lambda(1-a-6s+\psi),
 \end{align*}
 and hence
    \[
    B(f(z), L_{f}(z, s))\subset B(f(z),\lambda s+\lambda \psi)\subset B_{(1-a-6s+2\psi+s)\lambda}.
    \]
Taking $a=3\psi$,   we know  if $D(z,6s)\subset D_{1-3\psi}$,
\begin{align}\label{linear connected 3}
B(f(z),L_{f}(z,s))\cap \Sigma\subset B_{(1-\psi)\lambda}\cap \Sigma\subset \Omega_\lambda.
\end{align}

Then we select
$	E= f^{-1}\left(\Omega_{f(z), (1-\psi)^{-1}l_f(z,s)} \right) , $
and $F\subset f^{-1}\left(\Omega_{\lambda}\backslash\mathring{\Omega}_{f(z), L_f(z,s)} \right )$ be a curve that joins $\partial D(z,s)$ and  $\partial D(z,2s)$ in $D(z, 2s)$.

 By the definition of $l_f$, $L_f$ and the equation \eqref{linear connected 1},\eqref{linear connected 2},\eqref{linear connected 3},  both $E$ and $F$ are  connected compact set with diameter larger than $s$.

We define
\begin{equation}
	u(x)= \begin{cases}
		1 \qquad\qquad\qquad\qquad   &|x-f(z)|\leq (1-\psi)^{-1}l_f(z, s),\\
		0 &|x-f(z)|\geq L_f(z,s),\\
		\frac{\log L_f(z ,s)  - \log \left| x-f(z) \right| }{\log  L_f(z ,s)  - \log \left( (1-\psi)^{-1}l_f(z ,s) \right) } & \text{otherwise.}
	\end{cases}
\end{equation}

Then by applying the Poincar\'{e} inequality (Theorem \ref{Poincare inequality}) to $u\circ f$ in $D(z, 6s)\subset D_1$, we get
\begin{equation}
	\int_{D_1} \left| \nabla\left( u\circ f \right)  \right| ^2\geq c>0,
\end{equation}
for some uniform constant $c>0$. However, since $f$ is a infinitesimally conformal homeomorphism, by the Ahlfors regularity of $\Sigma$, we can check that
\begin{equation}
	\begin{split}
		\int_{D_1} \left| \nabla(u\circ f)\right|^2 dx = &\int_{\Omega_\lambda} \left| \nabla_{\Omega_\lambda} u \right|^2 d\mathcal{H}^2\\
		\leq & C(\lambda, \gamma_0) \left( \log H_f(z,s) -\log(1-\psi)\right)^{-1}
	\end{split}
\end{equation}
So  we know that \eqref{quasi-conformal constant control} holds.

 Then by Theorem 3.2 in \cite{K-2009} and the Ahlfors regularity of $\Sigma$, we have that if $f: D\left( z, 3r \right) \rightarrow\Omega_{\lambda}$  satisfying $H_f(x,s)\leq H$ for all $z\in D(x, r)$ and $0< s< 2r$, then $f|_{D(x,r)}$ is $\eta$-quasi-symmetric for some $\eta$ only depending on $H$. Combing this fact and the estimate \eqref{quasi-conformal constant control},  we find $f|_{D(x,r)}$ is $\eta$-quasi-symmetric once $D(x,8r)\subset D_{1-\psi}$ and this finishes the proof.
 \end{proof}

     Then we can use the quasi-symmetric estimate to obtain a variant of Semmes' estimate (Theorem 6.1 in \cite{S-1991b}), which says $f$ behaves like conformal affine map at any scale. It  is interesting to note the statement of this estimate is similar to the transformation theorem in the seminal work of Cheeger, Jiang \& Naber (\cite{CN-2015},\cite{CJN-2021}) about  Ricci limit space.

\begin{theorem}\label{semmes estimate}
 Assume $\lambda\in(0,1)$ and  $V=\underline{v}(\Sigma,\theta)$ is a chord-arc $2$-varifold in $B_1$ with constant $\gamma\leq \gamma_0$ for some small $\gamma_0\ll 1$ such that $0\in \Sigma$. Let $f$ be the conformal parameterization of $\Omega_{\lambda}$ chosen in Proposition \ref{existence of conformal parameterization}.

Then there exists a positive function $\psi=\psi(\gamma|\lambda)$ satisfying $\lim_{\gamma\to 0}\psi(\gamma|\lambda) =0$ ,   such that for any $D(x,r)\subset D_{1-\psi}$ , there is an affine map $A_{x,r}(y)=a_{x,r}T_{x,r}y+b_{x,r}$ such that

	\begin{equation}\label{semmes estimate formula}
		\sup_{y\in D\left( x, r \right) }\left| f(y)-A_{x,r}(y) \right|+\left(\int_{D(x,r)}|\nabla(f-A_{x,r})|^2 dy\right)^{\frac{1}{2}} \leq\psi  a_{x,r}r,
	\end{equation}
 where $a_{x,r}>0$, $b_{x,r}\in\mathbb{R}^n$, and $T_{x,r}$ is a linear isometry from $\mathbb{R}^2$ into $\mathbb{R}^n$.
 Furthermore, we have
 \begin{equation}\label{area close}
    \left| \mathcal{H}^2\left( f\left( D(x,r) \right)  \right) -a_{x,r}^2\mathcal{H}^2\left(D(x,r)\right) \right| \leq \psi a_{x,r}^2 \mathcal{H}^2\left(D(x,r)\right)  .
 \end{equation}
\end{theorem}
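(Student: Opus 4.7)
The plan is to argue by contradiction and compactness-with-rescaling, in the spirit of Semmes \cite{S-1991b}. Suppose the conclusion fails: there exist $\gamma_i\to 0$, chord-arc $2$-varifolds $V_i$ with constant $\gamma_i$, conformal parameterizations $f_i:D_1\to\Omega_{\lambda,i}$ from Proposition \ref{existence of conformal parameterization}, disks $D(x_i,r_i)\subset D_{1-\psi_i}$, and some $\epsilon_0>0$ so that every affine conformal map $A(y)=aTy+b$ produces an error in the left-hand side of \eqref{semmes estimate formula} of at least $\epsilon_0 a r_i$. Set $\xi_i=f_i(x_i)$ and choose the natural scale $a_i^2:=\frac{1}{2\pi r_i^2}\int_{D(x_i,r_i)}|\nabla f_i|^2$, so that the rescaled maps
\begin{equation*}
    g_i(z):=\frac{f_i(x_i+r_iz)-\xi_i}{a_ir_i},\qquad z\in D_1,
\end{equation*}
satisfy $g_i(0)=0$ and $\int_{D_1}|\nabla g_i|^2=2\pi$. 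The push-forward varifolds $\tilde V_i:=(\tau_i)_\ast V_i$ with $\tau_i(y):=(y-\xi_i)/(a_ir_i)$ remain chord-arc with constant $\gamma_i$ in a ball $B_{R_i}$ where $R_i\to\infty$ (chord-arc conditions being scale-invariant, together with $a_ir_i\to 0$ relative to $1-|\xi_i|$ in the nontrivial regime); Proposition \ref{quasisymmetric-estimate} applied at scale $r_i$ about $x_i$ makes the $g_i$ uniformly $\eta$-quasi-symmetric.

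Next I pass to limits. By Allard's compactness and $\gamma_i\to 0$, the $\tilde V_i$ converge in the varifold topology to a multiplicity-one $2$-plane $T_0$ through the origin, with Hausdorff convergence of supports on every bounded ball. Combining Proposition \ref{continuous modulus} (transferred to the rescaled setting) with the energy normalization, $g_i$ converges, along a subsequence, in $C^0(D_1,\mathbb{R}^n)$ and weakly in $W^{1,2}$ to an infinitesimally conformal, $\eta$-quasi-symmetric homeomorphism $g:D_1\to g(D_1)\subset T_0$ satisfying $g(0)=0$. The decisive rigidity step — which I anticipate as the main obstacle — is to identify $g$ with a linear isometry. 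Identifying $T_0\cong\mathbb{C}$, the map $g$ is a holomorphic injection of $D_1$. Weak lower semicontinuity gives $\int_{D_1}|\nabla g|^2\le 2\pi$, i.e.\ $\mathcal{H}^2(g(D_1))\le \pi$ by infinitesimal conformality. The matching lower bound uses the upper Ahlfors bound $\mathcal{H}^2(\tilde V_i\cap B_\rho)\le (1+o(1))\pi\rho^2$ and the fact that $g_i(D_1)$ is a Jordan domain in $\tilde V_i$ whose boundary $g_i(\partial D_1)$ is forced by the Reifenberg condition and the quasi-symmetric two-sided control to approximate the unit circle in $T_0$; this yields $\mathcal{H}^2(g_i(D_1))\to\pi$ and hence $\mathcal{H}^2(g(D_1))=\pi$. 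Therefore $g(D_1)=D_1\cap T_0$, and being a biholomorphism of $D_1$ with $g(0)=0$, $g$ is a rotation $z\mapsto e^{i\theta}z$. The energy convergence $E_+^2(g_i)\to E_+^2(g)=2\pi$ simultaneously upgrades weak $W^{1,2}$-convergence to strong.

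Finally, translate back: let $T_i:\mathbb{R}^2\to T_{\xi_i,a_ir_i}\subset\mathbb{R}^n$ be the linear isometry obtained by composing the identification of $\mathbb{R}^2$ with $T_0$ (via the limiting rotation $e^{i\theta}$) with an isometric alignment of $T_0$ and $T_{\xi_i,a_ir_i}$ (which are close by the Reifenberg condition), and set $A_i(y):=a_iT_i(y-x_i)+\xi_i$. The $C^0$ and strong $W^{1,2}$ convergence $g_i\to g$ rescale back to
\begin{equation*}
    \sup_{D(x_i,r_i)}|f_i-A_i|+\|\nabla(f_i-A_i)\|_{L^2(D(x_i,r_i))}=o(a_ir_i),
\end{equation*}
contradicting the standing assumption and thereby proving \eqref{semmes estimate formula} with $a_{x,r}:=a_i$, $T_{x,r}:=T_i$. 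The area closeness \eqref{area close} follows at once from \eqref{semmes estimate formula} by applying the area formula to $f$ on $D(x,r)$ and using $|\det\nabla A_{x,r}|=a_{x,r}^2$, which comes from the conformality of $A_{x,r}$.
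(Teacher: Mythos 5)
Your overall strategy (contradiction, rescaling, compactness, conformal rigidity) is the same as the paper's, but your rigidity step has a genuine gap. You restrict the blow-ups $g_i$ to the unit disk and try to conclude that the limit $g$ is a rotation from: $g$ holomorphic and injective on $D_1$, $g(0)=0$, and $\mathcal{H}^2(g(D_1))=\pi$. These facts do not determine $g$: any Riemann map from $D_1$ onto a simply connected domain of area $\pi$ containing $0$ satisfies all of them, and for a conformal map the energy normalization $\int_{D_1}|\nabla g|^2=2\pi$ is \emph{equivalent} to the area being $\pi$, so it adds no information. The inference ``$\mathcal{H}^2(g(D_1))=\pi \Rightarrow g(D_1)=D_1\cap T_0$'' is false, and the supporting claim that $g_i(\partial D_1)$ must approximate the unit circle is unjustified: $f_i(\partial D(x_i,r_i))$ is the image of an \emph{interior} circle, the Reifenberg condition only constrains the support of the varifold (not where this curve sits inside it), and quasi-symmetry with an unquantified modulus $\eta$ only traps $g_i(\partial D_1)$ in some annulus $\{c\le |w|\le C\}$. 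Since the whole point of the theorem is that $f$ is nearly affine on small disks, a blow-up that only retains $f_i|_{D(x_i,r_i)}$ cannot see the mechanism that forces affineness.

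The paper's proof avoids this by first showing $r_i\to 0$ (via Corollary \ref{uniform app}, testing with $A=\lambda\,\emph{i}_\lambda$) and then defining $g_i$ on the \emph{expanding} domains $D\bigl(0,\tfrac{1-|x_i|}{r_i}\bigr)\supset D(0,c/r_i)$, which exhaust $\mathbb{R}^2$. The limit is then an injective, infinitesimally conformal map $g:\mathbb{R}^2\to T$ defined on the whole plane with $g(0)=0$ and $|g(e_1)|=1$ (the paper normalizes by $|f_i(x_i+r_ie_1)-f_i(x_i)|$ precisely to get this pointwise normalization), and the Liouville-type rigidity for \emph{entire} injective conformal maps forces $g$ to be a linear isometry. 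That is the missing ingredient in your argument. Two smaller points: the upper semicontinuity $\limsup_i\mathcal{H}^2(g_i(D_1))\le\mathcal{H}^2(g(D_1))$ that you need for strong $W^{1,2}$ convergence is not automatic under uniform convergence and requires the projection argument via Theorem \ref{W1p para scaling} together with the continuity of $g^{-1}$, as the paper carries out; and your case where $a_ir_i$ is not small compared to $1-|\xi_i|$ is only gestured at, whereas it must be excluded at the outset before rescaling.
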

\begin{proof}
Denote the set of  conformal affine map by $CO(2,n)$ , i.e.,
\[
CO(2, n) = \{A: \mathbb{R}^2\to \mathbb{R}^n | A=aT+b,  a\in\mathbb{R}^+, T \text{ is a linear isometry}, b\in\mathbb{R}^n \}
\]
To prove \eqref{semmes estimate formula}, we argue by contradiction. Assume there is a real number $c>0$ and a sequence conformal parameterization $f_i: D_1\rightarrow \Omega_{\lambda, i} \subset \Sigma_i$ such that
    \begin{equation}\label{contrdict assumption of semmes}
		\inf_{\substack{A\in CO(2,n)\\
  A=aT+b}}a^{-1}\left(\sup_{y\in D(x_i,r_i)}\left| f_i(y) - A(y)\right|+ \left(\int_{D(x_i, r_i)}|\nabla(f_i-A)|^2 dy \right)^{\frac{1}{2}}\right)\geq c r_i>0,
\end{equation}
where $D(x_i,r_i)\subset D_{1-c}$,
and  $\Sigma_i$ is the support of a chord-arc varifold $V_i=\underline{v}_i(\Sigma_i,\theta_i)$ in $B_1$ with constant $\gamma_i$ tends to $0$ as $i$ tends to infinity.
	
	By choosing $A= \lambda\emph{i}_{\lambda}$, Corollary \ref{uniform app} implies that
	\[
		\lim_{i\to\infty} r_i = 0
	.\]
Letting $e_1=(1,0)\in\mathbb{R}^2$, we rescale $f_i$ to
\[
    	g_i(y)=\frac{f_i(x_i+ r_i y) - f_i (x_i)}{|f_i(x_i + r_ie_1)- f_i(x_i)| },
\qquad\qquad y\in D(0,\frac{1-|x_i|}{r_i} )\supset D(0, \frac{c}{r_i})
.\]
Then, \eqref{contrdict assumption of semmes} becomes
 \begin{equation}\label{ rescaled contrdict assumption of semmes}
		\inf_{\substack{A\in CO(2,n)\\
  A=aT+b}}a^{-1}\left(\sup_{y\in D_1}\left| g_i(y) - A(y)\right|+ \left(\int_{D_1}|\nabla(g_i-A)|^2 dy \right)^{\frac{1}{2}}\right)\geq c>0.
\end{equation}
For $y, z\in D (0, \frac{1-|x_i|}{r_i})$ ,without loss of generality, we assume $|y|\geq |z|$. Then, by Proposition \ref{quasisymmetric-estimate}, $g_i$ satisfies
\begin{equation}
	\begin{split}
		\left| g_i(y)-g_i(z) \right| =&\frac{\left| g_i(y)-g_i(z) \right| }{\left| g_i(e_1)- g_i(0) \right| }\\
		=&\frac{\left| g_i(y) - g_i(z) \right| }{\left| g_i(y) - g_i(0) \right| }\cdot \frac{\left| g_i(y) - g_i(0) \right| }{\left|g_i(e_1) - g_i(0)\right|}\\
		\leq &\eta\left( \frac{|y-z|}{|y|} \right) \cdot \eta\left( |y| \right) \\
		  =& \psi(|y-z| |\lambda,\gamma_0).
	\end{split}
\end{equation}

So $g_i$ is equi-continuous.  By Arzel\'{a}-Ascoli Theorem, after passing to subsequence, we have
\[
	g_i\longrightarrow g \qquad \qquad  \text{in } C_{loc}^0 (\mathbb{R}^2,\mathbb{R}^n)
.\]
Denote $a_i=|f(x_i+r_ie_1)-f(x_i)|$ and $\tilde{\Sigma}_i=\frac{1}{a_i}(\Sigma_i-f(x_i))$, by the Reifenberg condition( item $(2)$ in Definition \ref{chord arc varifold}) with $\gamma_i\to 0$, we know for any $R\ge 0$, there holds
$$\frac{1}{R}d_{\mathcal{H}}(\tilde{\Sigma}\cap B_{R},T_{f(\xi_i), Ra_i})\to 0.$$
Noting $g(D_R)\subset \tilde{\Sigma}\cap B_R$, we know there exists $T_R\in G(n,2)$ such that $T_{f(\xi_i), Ra_i}\to T_R$ and $g(D_R)\subset T_{R}$.  So, $T_{R}\cap T_{R'}\supset g(D_R)$ if $R'>R$. This implies $T_R=T_{R'}$ and hence there exists a unique $T\in G(n,2)$ such that
 $g\in C^0(\mathbb{R}^2,T)$. Notice that uniform convergence of $g_i$ to $g$ implies $g$ is also $\eta$-quasi-symmetric with $\left| g(e_1) - g(0) \right|=1$. Thus $g:\mathbb{R}^2\to T$ is  injective. By Brouwer's theorem on the invariance of the domain, we know that $g^{-1}: g(\Omega)\to \mathbb{R}^2$ is also continuous.

 Denote $p_i:\tilde{\Sigma}_i\to T_{f(x_i),a_i}$ be the orthogonal projection. Then, by item $(4)$ of Theorem \ref{W1p para scaling}, we know that for any compact domain $\Omega_i\subset \tilde{\Sigma}_i\cap B_R$, there holds
 \begin{align}\label{area estimate}
 |\mathcal{H}^2(\Omega_i)-\mathcal{H}^2(p_i(\Omega_i))|=\psi(\gamma_i|\lambda,R)\to 0 \text{ as } \gamma_i\to 0.
  \end{align}
 Letting  $\tilde{g}_i=p_i\circ g_i$, by $T_{f(x_i),a_i}\to T_1=T$, we know $\tilde{g}_i\to g$ in $C_{loc}^{0}(\mathbb{R}^2,T)$ and hence $\tilde{g}_i(D_1)\subset\subset g(\mathbb{R}^2)$ for $i\gg 1$.  Moreover, since $g^{-1}$ is also continuous on $g(\mathbb{R}^2)$, we know for $g^{-1}\circ \tilde{g}_{i}|_{D_1}\to id_{D_1}$ and hence for any $\delta>0$, there holds
 \begin{align*}
 D_{1-\delta}\subset g^{-1}\tilde{g}_{i}(D_1)\subset D_{1+\delta}, \forall i\gg 1,
 \end{align*}
 which implies $g(D_{1-\delta})\subset \tilde{g}_i(D_1)\subset g(D_{\delta})$  and hence
 \begin{align*}
 \lim_{i\to \infty}(\mathcal{H}^2(\tilde{g}_i(D_1))-\mathcal{H}^2(g(D_1)))=0.
 \end{align*}
 By \eqref{area estimate}, we know
 \begin{align*}
 \mathcal{H}^2(g(D_1))=\lim_{i\to \infty}\mathcal{H}^2(g_i(D_1)).
 \end{align*}
Now,  as in the proof of Corollary \ref{uniform app}, using the fact $g_i$ is infinitesimally conformal, we can conclude$g_i$ converges strongly to $g$ in $W_{loc}^{1,2}(\mathbb{R}^2,\mathbb{R}^n)$ and  $g$ is also a conformal map from $\mathbb{R}^2$ to $T$. But an injective conformal map defined in whole $\mathbb{R}^2$ to $T$ with $g(0)=0$ and $|g(e_1)|=1$ must be a linear isometry. This  contradicts to \eqref{ rescaled contrdict assumption of semmes}.

Finally, \eqref{area close} follows from \eqref{semmes estimate formula} and the area formula.
\end{proof}

  To show $\log|\nabla f|$ is a $BMO$ function, We first derive the following inverse H\"{o}lder inequality by using Theorem \ref{semmes estimate}.

 \begin{lemma}\label{inverse holder lemma}
 Assume $\lambda\in(0,1)$ and  $V=\underline{v}(\Sigma,\theta)$ is a chord-arc $2$-varifold in $B_1$ with constant $\gamma\leq \gamma_0$ for some small $\gamma_0\ll 1$ such that $0\in \Sigma$. Let $f$ be the conformal parameterization of $\Omega_{\lambda}$ chosen in Proposition \ref{existence of conformal parameterization}.

Then there exists a positive function $\psi=\psi(\gamma|\lambda)$ satisfying $\lim_{\gamma\to 0}\psi(\gamma|\lambda) =0$ , such that for $x=(x_1, x_2)\in\mathbb{R}^2$, $r\in\mathbb{R}^+$, if the square domain \[Q_{x, r}:= \left[x_1-\frac{r}{2}, x_1+\frac{r}{2}\right]\times \left[x_2-\frac{r}{2}, x_2+\frac{r}{2}\right]\subset D(x, r)\subset D_{1-\psi},\] then we have
	
	\begin{equation}\label{inverse holder}
	\fint_{Q_{x,r}} \left| \det \nabla f\right|   dx \leq \left( 1+ \psi \right)\left( \fint_{Q_{x,r}} \left| \det \nabla f \right|  ^\frac{1}{2} dx \right)^2  	
	\end{equation}
\end{lemma}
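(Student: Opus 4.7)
The plan is to exploit Theorem \ref{semmes estimate}, which approximates $f$ on $D(x,r)$ by a conformal affine map $A_{x,r}(y)=a_{x,r}T_{x,r}y+b_{x,r}$ with gradient error controlled in $L^2$. The crucial observation is that since $f$ is infinitesimally conformal, at almost every point $\nabla f$ is a scalar multiple of a linear isometry $\mathbb{R}^2\to\mathbb{R}^n$, so $|\det\nabla f|=\tfrac12|\nabla f|^2$ and $|\det\nabla f|^{1/2}=\tfrac{1}{\sqrt 2}|\nabla f|$ a.e. Consequently, the inequality \eqref{inverse holder} is equivalent to the reverse H\"older inequality
\begin{equation*}
   \fint_{Q_{x,r}}|\nabla f|^2\, dy\leq (1+\psi)\left(\fint_{Q_{x,r}}|\nabla f|\, dy\right)^{2}.
\end{equation*}

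To prove this, I would first invoke Theorem \ref{semmes estimate} on $D(x,r)\subset D_{1-\psi}$ to obtain $A_{x,r}$ with
\begin{equation*}
   \int_{D(x,r)}|\nabla(f-A_{x,r})|^2\, dy\leq \psi^2 a_{x,r}^2 r^2, \qquad |\nabla A_{x,r}|^2\equiv 2a_{x,r}^2.
\end{equation*}
Since $Q_{x,r}\subset D(x,r)$ with $|Q_{x,r}|=r^2$ and $|D(x,r)|=\pi r^2$, one has $\fint_{Q_{x,r}}|\nabla(f-A_{x,r})|^2\, dy\leq \pi\psi^2 a_{x,r}^2$. Expanding $|\nabla f|^2=|\nabla A_{x,r}|^2+2\nabla A_{x,r}\cdot\nabla(f-A_{x,r})+|\nabla(f-A_{x,r})|^2$ and applying the Cauchy-Schwarz inequality to the cross term yields
\begin{equation*}
   \left|\fint_{Q_{x,r}}|\nabla f|^2\, dy-2a_{x,r}^2\right|\leq C\psi\, a_{x,r}^2,
\end{equation*}
and similarly, by the triangle inequality followed by H\"older,
\begin{equation*}
   \left|\fint_{Q_{x,r}}|\nabla f|\, dy-\sqrt 2\, a_{x,r}\right|\leq \left(\fint_{Q_{x,r}}|\nabla(f-A_{x,r})|^2\, dy\right)^{1/2}\leq C\psi\, a_{x,r}.
\end{equation*}

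Combining these, the upper bound for $\fint|\nabla f|^2$ divided by the lower bound for $(\fint|\nabla f|)^2$ is $\frac{2a_{x,r}^2(1+C\psi)}{2a_{x,r}^2(1-C\psi)}=1+\psi'$ for some $\psi'=\psi'(\gamma|\lambda)\to 0$, which gives \eqref{inverse holder}. There is no real obstacle: the only subtlety is that Theorem \ref{semmes estimate} is stated on the disk $D(x,r)$ while the average is on the inscribed square $Q_{x,r}$, but this costs only the benign factor $\pi$ absorbed into $\psi$. The conceptual point is that the reverse H\"older inequality for $|\det\nabla f|$ holds precisely because $f$ is $L^2$-close, at every scale, to a conformal affine map whose Jacobian is the constant $a_{x,r}^2$; Theorem \ref{semmes estimate} delivers exactly this scale-invariant rigidity.
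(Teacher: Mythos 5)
Your proof is correct, and it rests on the same key input as the paper's (Theorem \ref{semmes estimate}), but you extract the conclusion by a different route. The paper uses only the sup-norm part of \eqref{semmes estimate formula}: it gets the lower bound on $\fint_{Q_{x,r}}|\nabla f|$ by writing $|\nabla f|=\sqrt 2\,|f_2|$ (conformality), applying the fundamental theorem of calculus along vertical segments of $Q_{x,r}$, and comparing endpoint values of $f$ with those of $A_{x,r}$; the upper bound on $\fint_{Q_{x,r}}|\nabla f|^2$ then comes from the area formula together with the Ahlfors regularity of $\Sigma$ (equivalently \eqref{area close}). You instead use the $L^2$-gradient part of \eqref{semmes estimate formula}, expand $|\nabla f|^2$ around the constant $|\nabla A_{x,r}|^2=2a_{x,r}^2$, and control both averages by Cauchy--Schwarz and the pointwise triangle inequality $\bigl||\nabla f|-|\nabla A_{x,r}|\bigr|\le|\nabla(f-A_{x,r})|$. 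Given that the paper's Theorem \ref{semmes estimate} does include the $W^{1,2}$ estimate, your argument is legitimate and somewhat shorter; the paper's version has the mild advantage of needing only the $C^0$ closeness (plus the geometry of $\Sigma$), which is the form in which Semmes's original estimate is usually stated. The identity $|\det\nabla f|=\tfrac12|\nabla f|^2$ a.e.\ that you invoke is exactly the one the paper itself uses in this proof, so no gap there.
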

\begin{proof}

By Theorem \ref{semmes estimate}, there exists $A(y) = aTy+b\in CO(2,n)$ such that
\[
	\left| f(y)-A(y) \right| \leq \psi ar\qquad\qquad \forall y\in Q_{x,r}
.\]
So we compute
\begin{equation}
	\begin{split}
		\int_{Q_{x,r}} \left| \nabla f \right| dx
		=&\int_{x_1-\frac{r}{2}}^{x_1+\frac{r}{2}}\left( \int_{x_2-\frac{r}{2}}^{x_2+\frac{r}{2}}\left| \nabla f\right| (s,t)dt \right) ds\\
		=&\sqrt{2}\int_{x_1-\frac{r}{2}}^{x_1+\frac{r}{2}}\left( \int_{x_2-\frac{r}{2}}^{x_2+\frac{r}{2}}\left| f_2 \right|(s, t) dt  \right) ds\\
		\geq&\sqrt{2}\int_{x_1-\frac{r}{2}}^{x_1+\frac{r}{2}}\left| f(s, x_2+\frac{r}{2})-f(s, x_2-\frac{r}{2}) \right| ds\\
		\geq&\sqrt{2}\int_{x_1-\frac{r}{2}}^{x_1+\frac{r}{2}} a\left|T(s, x_2+\frac{r}{2}) - T(s, x_2-\frac{r}{2})  \right| ds - ar^2\psi\\
		=&\left( 1-\psi \right) \sqrt{2}a\mathcal{H}^2(Q_{x,r})
	\end{split}
\end{equation}
However, by the Ahlfors regularity of $\Sigma$ and the area formula, we have
\begin{equation}
	\begin{split}
		\int_{Q_{x,r}} \left| \nabla f \right|^2 dx=& 2\int_{Q_{x,r}} |\det\nabla f| dx
		=2\mathcal{H}^2\left( f\left( Q_{x,r} \right)  \right) \leq \left( 1+\psi \right) 2a^2\mathcal{H}^2(Q_{x,r})
	\end{split}
\end{equation}

Combing together, we finish the proof of this lemma.
\end{proof}

Now we can derive the $BMO$ estimate of $\log|\nabla f|$ as in \cite{S-1991b}.

\begin{proposition}\label{BMO estimate prop}
 Assume $\lambda\in(0,1)$ and  $V=\underline{v}(\Sigma,\theta)$ is a chord-arc $2$-varifold in $B_1$ with constant $\gamma\leq \gamma_0$ for some small $\gamma_0\ll 1$ such that $0\in \Sigma$. Let $f$ be the conformal parameterization of $\Omega_{\lambda}$ chosen in Proposition \ref{existence of conformal parameterization}.

Then there exists a positive function $\psi=\psi(\gamma|\lambda)$ satisfying $\lim_{\gamma\to 0}\psi(\gamma|\lambda) =0$ , such that for $x=(x_1, x_2)\in\mathbb{R}^2$, $r\in\mathbb{R}^+$, if the square domain \[Q_{x, r}:= \left[x_1-\frac{r}{2}, x_1+\frac{r}{2}\right]\times \left[x_2-\frac{r}{2}, x_2+\frac{r}{2}\right]\subset D(x, r)\subset D_{1-\psi},\] then we have
	
	\begin{equation}\label{BMO estimate eq}
		\|\log|\nabla f|\|_{BMO(Q_{x,r})}\leq \psi .
	\end{equation}
\end{proposition}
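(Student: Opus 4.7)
The plan is to upgrade the near-sharp reverse Hölder inequality from Lemma~\ref{inverse holder lemma} into a small-BMO estimate for $\log|\nabla f|$ by applying the lemma at every dyadic sub-scale of $Q_{x,r}$. First I would rewrite Lemma~\ref{inverse holder lemma} in a cleaner form. Because $f$ is infinitesimally conformal (Proposition~\ref{existence of conformal parameterization}), $|\nabla f|^{2}=2|\det\nabla f|$ a.e., so setting $h:=|\nabla f|$, Lemma~\ref{inverse holder lemma} is equivalent to the $L^{2}$--reverse Hölder inequality
\[
\fint_{Q}h^{2}\,dy\le (1+\psi)\Bigl(\fint_{Q}h\,dy\Bigr)^{2}
\]
for every axis-aligned square $Q\subset Q_{x,r}\cap D_{1-\psi}$, with the same constant $1+\psi$ independent of $Q$.

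Next I would compare dyadic averages. Writing $\bar h_{Q}=\fint_{Q}h\,dy$, Cauchy--Schwarz combined with the above gives
\[
\fint_{Q}|h-\bar h_{Q}|\,dy\le \Bigl(\fint_{Q}h^{2}\,dy-\bar h_{Q}^{\,2}\Bigr)^{1/2}\le \sqrt{\psi}\,\bar h_{Q}.
\]
Hence, for any dyadic child $Q'\subset Q$ (with $|Q|/|Q'|=4$), $|\bar h_{Q'}-\bar h_{Q}|\le 4\sqrt{\psi}\,\bar h_{Q}$, so that $|\log \bar h_{Q'}-\log \bar h_{Q}|\le C\sqrt{\psi}$. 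That is, the averages of $\log h$ over nested dyadic squares form a sequence whose successive oscillation is controlled by $C\sqrt{\psi}$, uniformly at every scale.

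To conclude I would run a standard dyadic argument in the spirit of Section~6 of \cite{S-1991b} (alternatively, invoke the Coifman--Fefferman self-improvement together with a John--Nirenberg chain): the uniform-in-scale comparison of averages of $h$, combined with Lebesgue differentiation to identify $h(y)$ as the pointwise limit of $\bar h_{Q_{n}(y)}$ along nested dyadic squares, delivers
\[
\fint_{Q}|\log h-(\log h)_{Q}|\,dy\le \psi(\gamma|\lambda)
\]
for every square $Q\subset Q_{x,r}$, which is exactly \eqref{BMO estimate eq}.

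The principal obstacle is the passage from the $L^{2}$-closeness of $h$ to $\bar h_{Q}$ (what reverse Hölder directly yields) to a genuine uniform BMO-norm bound on $\log h$: the logarithm is unbounded near $0$ and near $\infty$, so the naive linearisation $\log(1+t)\sim t$ fails on a potentially non-negligible set. This is circumvented precisely by iterating the dyadic comparison down to scales where Lebesgue differentiation applies, which is only possible because Lemma~\ref{inverse holder lemma} guarantees the same small constant $1+\psi$ at \emph{every} sub-scale of $Q_{x,r}$.
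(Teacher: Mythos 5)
Your overall route is the same as the paper's: apply Lemma~\ref{inverse holder lemma} on every sub-square of $Q_{x,r}$ (legitimate, since $Q\subset Q_{x,r}\subset D(x,r)\subset D_{1-\psi}$ implies every sub-square again satisfies the hypothesis), read the near-sharp reverse H\"older inequality as saying that $|\det\nabla f|^{1/2}$ is $L^2$-close to its average on each square, and then upgrade this to a small BMO bound on the logarithm. The paper does exactly this: it rewrites \eqref{inverse holder} as $\fint_{Q}\bigl(e^{\frac12(\log|\det\nabla f|-\log a_{Q})}-1\bigr)^2\le\psi$, deduces by Chebyshev that $\bigl|\{\,|\log|\det\nabla f|-\log a_{Q}|\ge A\psi^{1/2}\}\bigr|\le e^2A^{-2}|Q|$ for every admissible square, and concludes by the good-lambda trick.

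The one place where your write-up would not survive scrutiny is the final paragraph. Telescoping the parent-to-child estimate $|\log\bar h_{Q'}-\log\bar h_{Q}|\le C\sqrt{\psi}$ down to Lebesgue points does \emph{not} circumvent the unboundedness of the logarithm: after $n$ generations the accumulated error is $Cn\sqrt{\psi}$, which is unbounded, so this chain alone gives no pointwise or $L^1$ control of $\log h-\log\bar h_{Q}$. What actually closes the argument is not the telescoping but the uniform-in-scale \emph{measure} estimate — from $\fint_{Q}|h-\bar h_{Q}|\le\sqrt{\psi}\,\bar h_{Q}$ one gets, via Chebyshev, that $|\log h-\log\bar h_{Q}|$ exceeds a fixed small threshold only on a fixed small fraction of $Q$, at every scale — combined with the good-lambda (stopping-time) self-improvement, which converts "small deviation off a small exceptional set at every scale" into exponential decay of the distribution function and hence the BMO bound. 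You mention this machinery only as an "alternative", but it is the essential step; the dyadic-average comparison you put front and center is dispensable. With that correction your argument coincides with the paper's proof.
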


\begin{proof}
	Denote that $a_{Q_{x,r}}= \left(  \fint_{Q_{x,r}} \left| \det \nabla f \right|^{\frac{1}{2}} dx\right)^2 $.
	Then the inequality ~\eqref{inverse holder} can be rewrite as
	\[
		\fint_{Q_{x,r}} \left( \left| \det\nabla f\right|^{\frac{1}{2}} - a_{Q_{x,r}}^{\frac{1}{2}} \right) ^2 dx\leq \psi a_{Q_{x,r}}
	.\]
So
	\[
		\fint_{Q_{x,r}} \left( e^{\frac{1}{2}\left( \log\left| \det \nabla f \right|  -\log a_{Q_{x,r}}\right)}- 1  \right)^2 dx \leq \psi
	.\]
Then we choose a fixed $A\in\mathbb{R}^+$ such that $A\psi^{\frac{1}{2}}\leq 2$ for all $\gamma\leq\gamma_0$

	\[
		\left| \left\{ x: \left| \log \left| \det \nabla f \right|  - \log a_{Q_{x,r}} \right| \geq A\psi^{\frac{1}{2}} \right\}  \right|\leq e^2A^{-2}\mathcal{H}^2(Q_{x,r})
	.\]
	Since the above inequality holds for any $x$ and $r$ such that   $Q_{x, r}\subset D(x, r)\subset D_{1-\psi}$, by the `good-lambda' trick (see for example \cite[Section 3]{AHMTT-2002}), it follows the BMO estimate
	\[
		\|\log|\nabla f|\|_{BMO(Q_{x,r})} = \frac{1}{2}\|\log|\det\nabla f|\|_{BMO(Q_{x,r})}\leq C\psi^{\frac{1}{2}}(\varepsilon).
	\]

\end{proof}

As a corollary of  the BMO estimate, we deduce that $f$ is bi-Lipschitz on "large piece".
\begin{corollary}\label{large piece}

 Assume $\lambda\in(0,1)$ and  $V=\underline{v}(\Sigma,\theta)$ is a chord-arc $2$-varifold in $B_1$ with constant $\gamma\leq \gamma_0$ for some small $\gamma_0\ll 1$ such that $0\in \Sigma$. Let $f$ be the conformal parameterization of $\Omega_{\lambda}$ chosen in Proposition \ref{existence of conformal parameterization}, and $a_{x,r}$ be the constant chosen in Theorem \ref{semmes estimate}.

Then there exists a positive function $\psi=\psi(\gamma|\lambda)$ satisfying $\lim_{\gamma\to 0}\psi(\gamma|\lambda) =0$ , such that for any $x=(x_1, x_2)\in\mathbb{R}^2$, $r, t\in\mathbb{R}^+$, if the square domain \[Q_{x, r}:= \left[x_1-\frac{r}{2}, x_1+\frac{r}{2}\right]\times \left[x_2-\frac{r}{2}, x_2+\frac{r}{2}\right]\subset D(x, 8r)\subset D_{1-\psi},\] then  we can find set
	$E_t\subset Q_{x,r}$, such that
	$f$ is $Cta_{x,r}$-Lipschitz in $D\subset E_t$ and $f^{-1}$ is $Cta_{x,r}^{-1}$-Lipschitz in $\Omega\backslash f\left( E_t \right) $ with
\begin{equation}
	\mathcal{H}^2( E_t ) +a_{x,r}^{-2}\mathcal{H}^2\left( f\left( E_t \right)  \right) \leq t^{-q}r^2,
\end{equation}
where $C=(\lambda,\gamma_0)$ is a constant, and $q=q(\gamma, \lambda)$ can be chosen to satisfy
\begin{align*}
	\lim_{\gamma\to0} q\left( \gamma, \lambda \right) =\infty.
\end{align*}
\end{corollary}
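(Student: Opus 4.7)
The plan is to combine the BMO smallness of $\log|\nabla f|$ from Proposition~\ref{BMO estimate prop} with the John--Nirenberg inequality and a Hardy--Littlewood maximal function argument, performed simultaneously on $Q_{x,r}$ and on $f(Q_{x,r})\subset\Sigma$. Set $m := \fint_{Q_{x,r}}\log|\nabla f|$. By John--Nirenberg applied to $\log|\nabla f|-m$ (and to its negative), one obtains
\[
\mathcal{H}^2\bigl(\{y\in Q_{x,r}:\,|\log|\nabla f|(y)-m|>s\}\bigr)\le c_1 r^2 e^{-c_2 s/\psi},\qquad s>0.
\]
Lemma~\ref{inverse holder lemma} together with the conformal identity $|\nabla f|^2=2|\det\nabla f|$ and the area estimate \eqref{area close} of Theorem~\ref{semmes estimate} yields $e^m=(1+O(\psi))\,\sqrt{2}\,a_{x,r}$, so we may replace $e^m$ by $a_{x,r}$ at the cost of innocuous factors. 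Equivalently, this gives the reverse H\"older control
\[
\Bigl(\fint_{Q_{x,r}}|\nabla f|^p\Bigr)^{1/p}+a_{x,r}^{2}\Bigl(\fint_{Q_{x,r}}|\nabla f|^{-p}\Bigr)^{1/p}\le C_p\,a_{x,r}\qquad\text{for every }p<q_0:=c_2/\psi.
\]

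Next, I would construct $E_t$ via the maximal function. On the domain, define
\[
E_t^{(1)}:=\{y\in Q_{x,r}:\,M\bigl(|\nabla f|\chi_{Q_{x,r}}\bigr)(y)> c\,t\,a_{x,r}\},
\]
where $M$ is the Hardy--Littlewood maximal function. On the image, use the Ahlfors regularity of $\Sigma$ (item (1) of Definition~\ref{chord arc varifold}) to define an analogous maximal function $M_\Sigma$ on $f(Q_{x,r})$, and set
\[
E_t^{(2)}:=f^{-1}\!\left(\bigl\{z\in f(Q_{x,r}):\,M_\Sigma\bigl(|\nabla f^{-1}|\chi_{f(Q_{x,r})}\bigr)(z)> c\,t\,a_{x,r}^{-1}\bigr\}\right),
\]
and let $E_t:=E_t^{(1)}\cup E_t^{(2)}$. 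The standard Hedberg--Maz'ya pointwise inequality
$|f(y)-f(z)|\le C|y-z|\bigl(M|\nabla f|(y)+M|\nabla f|(z)\bigr)$, valid since $f\in W^{1,p}_{\mathrm{loc}}$ for some $p>2$ by Proposition~\ref{existence of conformal parameterization}, immediately gives that $f|_{Q_{x,r}\setminus E_t}$ is $C\,t\,a_{x,r}$-Lipschitz; the symmetric argument on $\Sigma$ (using conformality, $|\nabla f^{-1}|\circ f=|\nabla f|^{-1}$) shows that $f^{-1}|_{f(Q_{x,r})\setminus f(E_t)}$ is $C\,t\,a_{x,r}^{-1}$-Lipschitz.

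The measure bounds come from combining the reverse H\"older estimate above with the $L^p$-boundedness of the maximal function: for any $p<q_0$,
\[
\mathcal{H}^2(E_t^{(1)})\le (c\,t\,a_{x,r})^{-p}\,\|M|\nabla f|\|_{L^p(Q_{x,r})}^p\le C_p\,r^2\,t^{-p},
\]
and identically $\mathcal{H}^2(E_t^{(2)})\le C_p r^2 t^{-p}$. For the image, apply the area formula together with H\"older and the reverse H\"older bound on $|\nabla f|$:
\[
\mathcal{H}^2(f(E_t))=\tfrac{1}{2}\int_{E_t}|\nabla f|^2\le \tfrac{1}{2}\bigl\||\nabla f|^2\bigr\|_{L^{p/2}(Q_{x,r})}\,|E_t|^{1-2/p}\le C_p\,a_{x,r}^2\,r^2\,t^{-p(1-2/p)}.
\]
Choosing any $q=q(\gamma,\lambda)$ with $q\le p(1-2/p)/2$ and noting $q_0=c_2/\psi(\gamma|\lambda)\to\infty$ as $\gamma\to 0$ yields the required $\mathcal{H}^2(E_t)+a_{x,r}^{-2}\mathcal{H}^2(f(E_t))\le t^{-q}r^2$ with $q\to\infty$.

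The main technical obstacle is the symmetric treatment on the target side: $f(Q_{x,r})$ lies in the non-smooth varifold $\Sigma$, so the Hedberg--Maz'ya inequality and the maximal-function $L^p$ theory have to be invoked on $\Sigma$. This is resolved by the Ahlfors regularity of $\Sigma$ (which ensures a doubling property and hence a Vitali-type covering lemma on $\Sigma$), combined with the fact that along $f$ the conformal relation $|\nabla f^{-1}|\circ f=|\nabla f|^{-1}$ transfers the BMO and reverse H\"older bounds from $Q_{x,r}$ to $f(Q_{x,r})$ up to constants depending only on $\gamma_0,\lambda$.
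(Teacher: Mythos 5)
Your overall strategy coincides with the paper's: John--Nirenberg applied to the BMO bound of Proposition~\ref{BMO estimate prop} to get reverse H\"older estimates on $|\det\nabla f|^{\pm 1}$ with exponent $q\to\infty$, bad sets defined as superlevel sets of maximal functions on both the domain and the image, Chebyshev plus the $L^p$ bounds for the measure estimates, and a pointwise Hedberg-type inequality to get the Lipschitz bounds off the bad sets. The volume distortion estimate $\mathcal{H}^2(f(E_t))=\tfrac12\int_{E_t}|\nabla f|^2\le C_p a_{x,r}^2 r^2 t^{-p(1-2/p)}$ is also exactly the paper's computation.

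There is, however, one genuine gap on the image side. To conclude that $f^{-1}$ is Lipschitz on $f(Q_{x,r})\setminus f(E_t)$ you invoke the Hedberg--Maz'ya pointwise inequality for $f^{-1}$ on the metric measure space $f(Q_{x,r})\subset\Sigma$, and you justify it by saying that Ahlfors regularity gives a doubling property and a Vitali covering lemma. Doubling is enough for the $L^p$-boundedness of the maximal operator (hence for your measure bound on $E_t^{(2)}$), but it is \emph{not} enough for the pointwise estimate $|u(z)-u(w)|\le C|z-w|\bigl(M(|\nabla u|^s)(z)^{1/s}+M(|\nabla u|^s)(w)^{1/s}\bigr)$: that telescoping argument requires a $(1,s)$-Poincar\'e inequality on the image, which does not follow formally from Ahlfors regularity (a doubling space can fail Poincar\'e badly). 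This is precisely the point where the paper does extra work: following Koskela--Macmanus, it transfers the Euclidean Poincar\'e inequality on $Q_{x,r}$ to $f(Q_{x,r})$ by a change of variables whose Jacobian factors are controlled by the reverse H\"older bounds on $|\det\nabla f|^{\pm 2q}$, arriving at the $(1,\tfrac{4q+2}{4q+1})$-Poincar\'e inequality \eqref{Poincare in image}; only then is the pointwise maximal-function inequality for $f^{-1}$ legitimate. Your proposal has all the ingredients needed to run this derivation (the reverse H\"older bounds in both directions), so the gap is fillable, but as written the image-side Lipschitz claim is unsupported. A secondary, minor point: your identity $e^{m}=(1+O(\psi))\sqrt{2}\,a_{x,r}$ relating the BMO mean to $a_{x,r}$ should be checked against the definition of $a_{x,r}$ in Theorem~\ref{semmes estimate} via \eqref{area close}; the paper avoids this normalization issue by working directly with $a_{Q_{x,r}}=\bigl(\fint_{Q_{x,r}}|\det\nabla f|^{1/2}\bigr)^2$ and \eqref{inverse holder}.
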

\begin{proof}
Our idea of proving this corollary is to use the Chebyshev inequality combing the maximal function argument to choose 'large pieces' where $f$ or $f^{-1}$ is Lipschitz.
	At first, the BMO estimate \eqref{BMO estimate eq} and the John-Nirenberg inequality give us that
	\begin{equation}
	    \fint_{Q_{x,r}}e^{2q|\log|\det\nabla f|-\log a_{x, r}|}dx\leq C,
	\end{equation}
where $q=\frac{c}{\|\log|\det\nabla f|\|_{BMO(Q_{x,r})}}\to \infty$ as $\gamma\to 0$,  and $c$, $C$ are universal constants. Then,
	\begin{equation}\label{inverse holder in large lip}
		\fint_{Q_{x,r}} \left| \det\nabla f \right|^{2q} dx \leq C a_{x,r}^{2q}\leq C(1+\psi)\left(  \fint_{Q_{x,r}}|\det\nabla f|dx \right)^{2q},
		\end{equation}
	and by using the John-Nirenberg inquality again,
	\begin{equation}\label{john-nireberg variant in large lip}
	\begin{split}
		\fint_{f(Q_{x,r})} \left| \nabla_{\Omega_{\lambda}} f^{-1} \right|^{4q}d\mu =&\fint_{Q_{x,r}}\left|  \nabla f\right| ^{2-4q} dx\\
		\leq& C\left( \fint_{Q_{x,r}} \left| \nabla f \right|^{4q-2}dx  \right)^{-1}\\
		\leq & C\left(\fint_{Q_{x,r}} \left|\nabla f\right|^2dx\right)^{1-2q}\\
		\leq& Ca_{x,r}^{2-4q}.
		\end{split}
	\end{equation}
Note that, by \eqref{inverse holder in large lip}, we have for any  $S\subset Q_{x,r}$,
\begin{equation}\label{volume  holder under conforml map}
\begin{split}
	\mathcal{H}^2\left(f(S)\right)=& \int_{S}|\det\nabla f|dx\\
	\leq & C \left(\mathcal{H}^2(S)\right)^{\frac{2q-1}{2q}}\left(\int_{S}|\det\nabla f|^{2q} dx\right)^{\frac{1}{2q}}\\
	\leq & C \left(\mathcal{H}^2(S)\right)^{\frac{2q-1}{2q}}\left(\mathcal{H}^2(Q_{x,r})\right)^{\frac{1}{2q}}\left(\fint_{Q_{x,r}}|\det\nabla f|^{2q} dx\right)^{\frac{1}{2q}}\\
	\leq &C \left(\frac{\mathcal{H}^2(S)}{\mathcal{H}^2(Q_{x,r})}\right)^{\frac{2q-1}{2q}} \mathcal{H}^2\left(f(Q_{x,r})\right)
\end{split}
\end{equation}

To proceed, we need to show there is a Poincar\'{e} inequality which holds on $f(Q_{x,r})$. We following the argument of Koskela and Macmanus \cite{KM-1998}. Let
\[
 v=u\circ f
.\]
We can compute that,
\begin{equation}
	\begin{split}
		\fint_{f(Q_{x,r})}\left| u-a \right|d\mu = &\frac{\mathcal{H}^2( Q_{x,r} ) }{\mathcal{H}^2\left(f(Q_{x,r})\right) }\fint_{Q_{x,r}} \left| v-a \right| |\det \nabla f| dx\\
						  \leq & \frac{\mathcal{H}^2( Q_{x,r} ) }{\mathcal{H}^2\left(f(Q_{x,r})\right) }\left( \fint_{Q_{x,r}}\left|v-a \right|^{\frac{2q}{2q-1}}dx\right)^{\frac{2q-1}{2q}}\left( \fint_{Q_{x,r}} \left| \det\nabla f \right|^{2q} dx \right)^{\frac{1}{2q}}\\
		\leq& C \frac{\mathcal{H}^2( Q_{x,r} ) }{\mathcal{H}^2\left(f(Q_{x,r})\right) } \left( \fint_{Q_{x,r}}\left|v-a \right|^{\frac{2q}{2q-1}}dx\right)^{\frac{2q-1}{2q}} \fint_{Q_{x,r}} \left| \det\nabla f \right| dx  \\
		\leq &C r \fint_{Q_{x,r}} \left| \nabla v \right| dx .
	\end{split}
\end{equation}
However,
\begin{equation}
	\begin{split}
		\fint_{Q_{x,r}} \left| \nabla v \right| dx =& \fint_{Q_{x,r}} \left| \nabla v \right| \left| \det\nabla f \right|^{\frac{2q}{4q+2}}  \left| \det\nabla f \right|^{-\frac{2q}{4q+2} }dx \\
		\leq & \left( \fint_{Q_{x,r}}\left| \nabla v \right| ^{\frac{4q+2}{4q+1}}\left| \det\nabla f \right|^{\frac{2q}{4q+1}} dx \right)^{\frac{4q+1}{4q+2}}
		\left( \fint_{Q_{x,r}} \left| \det\nabla f \right|^{-2q}dx  \right)^{\frac{1}{4q+2}} \\
    		\leq &C\left( \fint_{Q_{x,r}} \left| \nabla v \right| ^{\frac{4q+2}{4q+1}}\left| \det\nabla f \right|^{\frac{2q}{4q+1}} dx \right)^{\frac{4q+1}{4q+2}}
		\left( \fint_{Q_{x,r}} \left| \det\nabla f \right| dx \right)^{-\frac{2q}{4q+2}}\\
	\leq &C\left( \frac{\mathcal{H}^2\left(f(Q_{x,r})\right) }{\mathcal{H}^2(Q_{x,r}) }\fint_{f(Q_{x,r})} \left| \nabla_{\Omega_{\lambda}} u \right|^{\frac{4q+2}{4q+1}}d\mu  \right)^{\frac{4q+1}{4q+2}}\left( \frac{\mathcal{H}^2\left(f(Q_{x,r})\right) }{\mathcal{H}^2(Q_{x,r}) } \right) ^{-\frac{2q}{4q+2}}\\
		=& C \left( \frac{\mathcal{H}^2\left(f(Q_{x,r})\right)}{\mathcal{H}^2(Q_{x,r}) } \right)^{\frac{1}{2}}\left( \fint_{f(Q_{x,r})} \left| \nabla_{\Omega_{\lambda}} u \right| ^{\frac{4q+2}{4q+1}}d\mu  \right)^{\frac{4q+1}{4q+2}}
	\end{split}
\end{equation}
So we conclude that,
\begin{equation}\label{Poincare in image}
	\fint_{f(Q_{x,r})} \left| u-a \right| d\mu \leq  C\left( diam f(Q_{x,r}) \right) \left( \fint_{f(Q_{x,r})} \left| \nabla_{\Omega_{\lambda}} u \right|^{\frac{4q+2}{4q+1}}d\mu \right) ^{\frac{4q+1}{4q+2}}
\end{equation}

We can define the maximal function:
\begin{equation}\label{maximal function def}
    M_{r}(u)(x) = \sup_{t\leq r}\fint_{f(Q_{x,r})}|u|d\mu.
\end{equation}
Then it is direct to check that (see Lemma 5.15 in \cite{
HK-1998} for example), there exists a constant $C=C(\gamma,\lambda_0)$  such that for any $y,z\in f(Q_{x,r})$, we have
\begin{equation}\label{ from maximal function to lip}
    |u(x)-u(y)|\leq C\left(M_{2r}\left(|\nabla u|^{\frac{4q+2}{4q+1}}\right  )(x) +M_{2r}\left(|\nabla u|^{\frac{4q+2}{4q+1}}\right)(y)\right)^{\frac{4q+1}{4q+2}}|x-y|.
\end{equation}
By a standard covering argument,  we can deduce for any $s>1$, $Q_{x,8r}\subset D(x, 8r)\subset D_{1-\psi}$.
\begin{equation}\label{maximal function bound}
    \int_{f(Q_{x,r})} \left(M_{2r}(u)(y)\right)^sdy\leq C(\lambda,\gamma_0, s)\int_{f(Q_{x,8r})}|u(y)|^s dy.
\end{equation}

Now we choose
\[
	F_{t}=: \left\{ z\in Q_{x,r} :  M\left( \left| \nabla f \right|  \right)(z) \geq ta_{x,r}\right\}
,\]
and
\[
	G_t=: \left\{ x\in f(Q_{x,r}):  M\left( \left| \nabla f^{-1} \right|  ^{\frac{2q+2}{2q+1}}\right)(x) \geq (ta^{-1})^{\frac{2q+2}{2q+1}}\right\}
.\]
Then by the Chebyshev inequality,   the estimates \eqref{inverse holder in large lip},\eqref{john-nireberg variant in large lip}, \eqref{volume  holder under conforml map} and \eqref{maximal function bound},  we have
\begin{equation}
    \mathcal{H}^2(F_t) +\mathcal{H}^2(f^{-1}(G_t))\leq  C t^{-2q}r^2
\end{equation}
Defing $E_t: = F_t \cup f^{-1}(G_t)$ and applying \eqref{volume  holder under conforml map} again, we obtain
\begin{equation}
\mathcal{H}^2(E_t)+\mathcal{H}^2(f(E_t))\leq C t^{-q}.
\end{equation}
By the choosing of $E_t$ and \eqref{ from maximal function to lip}, both $f|_{Q_{x,r}\backslash E_t}$ and $f^{-1}|_{f(Q_{x,r})\setminus f(E_t)}$ are $Ct$-Lipschitz. This finish this proof.
\end{proof}
\section{Lipschitz estimates}\label{Lipschitz estimates section}

In this section, we finish the proof of Theorem \ref{main}.
We first give a strict derivation of the mean curvature equation \eqref{mean curvature equation}.

Assume that $X\in Lip_c\left( \mathbb{R}^n, \mathbb{R}^n \right) $ is a vector field with compact support, and $\varphi_t : \mathbb{R}^n \rightarrow \mathbb{R}^n$ is the flow generated by $X$. Then noticing $\mu = \mathcal{H}^2\llcorner \Sigma$ by Remark \ref{theta=1},  we know from the definition of mean curvature  :
\[
	\frac{d}{dt}\mathcal{H}^2\left( \varphi_t\circ f\left( D \right)  \right)|=-\int_{f\left( D \right)}H\cdot Xd\mu
\]
at $t=0$.
By  the area formula , we have
\[
	\mathcal{H}^2\left( \varphi_t\circ f(D) \right)=\int_D \sqrt{\left|\left(  \varphi_t\circ f\right)_1   \right|^2\left| \left( \varphi_t\circ f  \right)_2\right|^2- \left| \left( \varphi_t\circ f \right)_1\cdot \left( \varphi_t\circ f \right)_2 \right|^2   }
.\]

Noting that $f$ is conformal, differentiate the above equation at $t=0$,
\[
	\int_D\nabla f\cdot\nabla\left( X\circ f \right)dx=-\int_{f(D)}H\cdot Xd\mu=-\int_D\left( H\circ f \right) \cdot\left( X\circ f \right) e^{2w}dx
.\]

To proceed, we need the following density result to get the equation of $f$.

\begin{lemma}\label{well W12}
 Assume $\lambda\in(0,1)$ and  $V=\underline{v}(\Sigma,\theta)$ is a chord-arc $2$-varifold in $B_1$ with constant $\gamma\leq \gamma_0$ for some small $\gamma_0\ll 1$ such that $0\in \Sigma$. Let $f$ be the conformal parameterization of $\Omega_{\lambda}$ chosen in Proposition \ref{existence of conformal parameterization}.

Then there exists a positive function $\psi=\psi(\gamma|\lambda)$ satisfying $\lim_{\gamma\to 0}\psi(\gamma|\lambda) =0$ , such that for any $x=(x_1, x_2)\in\mathbb{R}^2$, $r, t\in\mathbb{R}^+$, if the square domain \[Q_{x, r}:= \left[x_1-\frac{r}{2}, x_1+\frac{r}{2}\right]\times \left[x_2-\frac{r}{2}, x_2+\frac{r}{2}\right]\subset D(x, 8r)\subset D_{1-\psi},\]
then the function space
\begin{equation}
	\mathcal{F}_{x,r}:=\left\{ v\circ f: v\in \emph{Lip}\left( f(Q_{x, r})\right) \right\}
\end{equation}
is dense subset in $W^{1,2}\left( Q_{x,r} \right) $.
\end{lemma}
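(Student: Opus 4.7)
The plan is to approximate any $u \in W^{1,2}(Q_{x,r})$ by elements of $\mathcal{F}_{x,r}$ using the ``large piece'' bi-Lipschitz structure of $f$ provided by Corollary \ref{large piece}, together with a McShane extension and a truncation to produce genuine Lipschitz functions on the target $f(Q_{x,r})$; the error on the exceptional set is then absorbed by the higher integrability of $|\nabla f|$ coming from Lemma \ref{inverse holder lemma} and Proposition \ref{BMO estimate prop}. Since bounded Lipschitz functions are dense in $W^{1,2}(Q_{x,r})$, it suffices to approximate an arbitrary bounded $L$-Lipschitz function $u$.

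For each $t>0$, apply Corollary \ref{large piece} to obtain a set $E_t \subset Q_{x,r}$ with
\[ \mathcal{H}^2(E_t) + a_{x,r}^{-2}\mathcal{H}^2(f(E_t)) \le Ct^{-q} r^2, \]
on whose complement $f^{-1}$ is $Cta_{x,r}^{-1}$-Lipschitz. Set $w_t := u \circ f^{-1}$ on $f(Q_{x,r}) \setminus f(E_t)$, which is $LCta_{x,r}^{-1}$-Lipschitz. Extend $w_t$ by the McShane theorem to a Lipschitz function on all of $f(Q_{x,r})$ with the same Lipschitz constant, then truncate at level $\pm\|u\|_{L^\infty}$ to obtain $v_t \in \emph{Lip}(f(Q_{x,r}))$ satisfying $v_t = u\circ f^{-1}$ on $f(Q_{x,r}) \setminus f(E_t)$ and $\|v_t\|_{L^\infty} \le \|u\|_{L^\infty}$. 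By construction $v_t \circ f \in \mathcal{F}_{x,r}$ and $v_t \circ f = u$ on $Q_{x,r} \setminus E_t$, so all error terms are supported in $E_t$.

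The remaining task is to show $v_t \circ f \to u$ in $W^{1,2}(Q_{x,r})$ as $t \to \infty$. The $L^2$ part is immediate: $\|v_t \circ f - u\|_{L^2}^2 \le 4\|u\|_{L^\infty}^2 \mathcal{H}^2(E_t) \to 0$. For the gradient, since $|\nabla(v_t \circ f)| \le LCta_{x,r}^{-1}|\nabla f|$ and the difference vanishes outside $E_t$,
\[ \int_{E_t} |\nabla(v_t \circ f)|^2 dx \le (LCta_{x,r}^{-1})^2 \int_{E_t} |\nabla f|^2 dx, \]
and the term $\int_{E_t}|\nabla u|^2 \le L^2\mathcal{H}^2(E_t)$ is harmless. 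Iterating the reverse H\"older inequality of Lemma \ref{inverse holder lemma} together with the John--Nirenberg estimate from Proposition \ref{BMO estimate prop} yields $\fint_{Q_{x,r}}|\nabla f|^{2q} \le C a_{x,r}^{2q}$ for some $q=q(\gamma,\lambda) \to \infty$ as $\gamma \to 0$ (this is exactly the bound used in the proof of Corollary \ref{large piece}). H\"older inequality then gives
\[ \int_{E_t} |\nabla f|^2 dx \le \mathcal{H}^2(E_t)^{1-1/q} \left( \int_{Q_{x,r}} |\nabla f|^{2q} dx \right)^{1/q} \le Ct^{-(q-1)} a_{x,r}^2 r^2, \]
so that $\int_{E_t}|\nabla(v_t\circ f)|^2 dx \le CL^2 t^{3-q} r^2 \to 0$ as $t\to\infty$, once $\gamma_0$ is so small that $q>3$.

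The main obstacle here is that the Lipschitz constant of the McShane extension $v_t$ blows up like $t$, contributing a factor $t^2$ to the gradient error; this must be defeated by the smallness of $\int_{E_t}|\nabla f|^2$ together with the smallness of $\mathcal{H}^2(E_t)$. The balance works precisely because the higher integrability exponent $q$ can be made arbitrarily large by shrinking $\gamma$, which is the analytic payoff of the BMO estimate on $\log|\nabla f|$.
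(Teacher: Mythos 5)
Your proof is correct and follows essentially the same route as the paper: both use the large-piece sets $E_t$ from Corollary \ref{large piece}, a Lipschitz extension of $u\circ f^{-1}$ to all of $f(Q_{x,r})$ (McShane versus Kirszbraun, equivalent for scalar-valued functions), and the higher integrability $\fint_{Q_{x,r}}|\nabla f|^{2q}\le Ca_{x,r}^{2q}$ to absorb the factor $t^2$ coming from the extension's Lipschitz constant. Your explicit truncation at $\pm\|u\|_{L^\infty}$ to control the $L^2$ part of the $W^{1,2}$ norm is a small but welcome addition, since the paper only estimates the gradient term.
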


\begin{proof}
We only need to show $\forall$ 1-Lipschitz function $u\in W^{1,2}\left( Q_{x,r} \right) $ and $\forall \delta \in \mathbb{R}^+$, we can find a function $v\in \emph{Lip}\left( f(Q_{x,r}) \right) $ such that
	\begin{equation}\label{density}
		\|v\circ f-u\|_{W^{1,2}\left( D \right) }\leq \delta.	
	\end{equation}

	By Corollary \ref{large piece}, $f^{-1}$ is a $Cta_{x,r}^{-1}$-Lipschitz function on $\Omega\backslash f\left( E_t \right) $ with $   \left| E_t \right|  + \mathcal{H}^2 \left( f\left( E_T \right)  \right)  \leq C t^{-q}r^2$.
	Thus $u\circ f^{-1}$ is a $Cta_{x,r}^{-1}$-Lipschitz function on $\Omega\backslash f(E_t)$. We can extension to a $Cta_{x,r}^{-1}$-Lipschitz function $v$ on the whole  $\Omega$ by the Kirszbraun extension theorem.

Now we have
	\begin{equation}
		\begin{split}
		&\int_{Q_{x,r}} \left| \nabla \left( v\circ f - u \right)  \right|^2 dx\\
			=&\int_{Q_{x,r}\backslash  E_t  }\left| \nabla \left( v\circ f - u \right)  \right| ^2 dx
			+\int_{ E_t  } \left| \nabla \left( v\circ f - u \right)  \right|^2 dx\\
			\leq & 0 + 2t^2a_{x,r}^{-2} \int_{ E_t }\left| \nabla f \right|^2dx + 2\int_{ E_t  }\left| \nabla u \right| ^2dx\\
			\leq &  Ct^2 a_{x,r}^{-2}  \|f\|_{L^q\left( Q_{x,r}\right) }^{\frac{2}{q}} \left| E_t  \right|^{1-\frac{2}{q}} + 2 \mathcal{H}^2\left( f^{-1}\left( E_t \right)  \right) \\
			\leq &  C\left( t^{4-q} + t^{-q}\right)r^2 .
		\end{split}
	\end{equation}
	Then we can choose $q>4$ and  $t$ large enough such that ~\eqref{density} holds.
\end{proof}

Lemma \ref{well W12} guarantee that
\begin{equation}\label{mean curvature equation in proof}
	\Delta f(x)= \left( H\circ f \right)(x)e^{2w}(x), \qquad  x\in D_{1-\psi},
\end{equation}
holds in distribution sense. Here $w=\log|\det\nabla f|$.

\begin{proof}[Proof of Theorem \ref{main}]

We first observe that
\[
	\int_{D_{1-\psi}} \left| \Delta f \right|^2e^{-2w}dx
=\int_{D_{1-\psi}}\left| H\circ f \right|^2e^{2w}dx\leq\int_{\Sigma}\left| H \right| ^2d\mu\leq \varepsilon^2.\]
Then by Proposition \ref{BMO estimate prop} and the John-Nirenberg inequality, we have
\[
\fint_{Q_{x,r}}e^{2w}dx\fint_{Q_{x,r}}e^{-2w}dx\leq C
,\]
    for some uninversal constant $C$ and any $Q_{x,r}\subset D(x, 8r)\subset D_{1-\psi}$. This implies $e^{-2w}$ is an $A_2$-weight in the sense of Muckenhoupt (\cite{M-1972}, \cite[Section 2.5]{MS-2013}). Then by the estimate of Coifman-Fefferman (\cite{CF-1974},\cite[Theorem 7.21]{MS-2013} ), we have
\begin{equation}\label{W22 estimate}
	\int_K\left| \nabla^2f \right|^2e^{-2w}dx\leq C\int_D\left( \left| \Delta f \right|^2 + \left| f - \lambda\emph{i}_{\lambda}\right|^2 \right)e^{-2w} dx\leq \psi(\varepsilon|\lambda, K)
\end{equation}
for any compact set $K\subset \mathring{D}_{1-\psi}$.

Inspired by the work of H\'{e}lein \cite{H-2002}, we  construct an orthogonal frame $(e_1, e_2)$:
\[
e_1=\frac{f_1}{\left| f_1 \right| },\qquad\qquad e_2=\frac{f_2}{\left| f_2 \right| }
.\]
A direct computation gives,
\[
		\frac{\partial}{\partial x_j}\left( \frac{f_i}{\left| \nabla f \right| } \right)=\frac{f_{ij}}{\left| \nabla f \right| } -\frac{f_{jk}f_kf_i}{\left| \nabla f \right|^3 }
.\]
So the estimate ~\eqref{W22 estimate} implies
\begin{equation}\label{well frame}
	\|\nabla e_1\|_{L^2(K)}+\|\nabla e_2\|_{L^2(K)}\leq \psi(\varepsilon|\lambda,K),
\end{equation}
for any compact set $K\subset \mathring{D}_{1-\psi}$.

Now we derive a equation of $w$.
We first compute that
\[
	w_1=  \left( \log \left| \nabla f \right|  \right)_1 =\frac{f_2\cdot f_{21}}{\left| f_2 \right|^2} =e_2\cdot (e_1)_2
,\]
and
\[
	w_2 =  \left( \log \left| \nabla f \right|  \right) _2 = \frac{f_1\cdot f_{12}}{\left| f_1 \right| ^2}=-\frac{f_2\cdot f_{11}}{\left| f_1 \right| ^2} =-e_2\cdot(e_1)_1
.\]
Then we  write above two formula  in a compact form
\begin{equation}
dw=*\left( e_1\cdot de_2 \right) ,
\end{equation}
where '$*$' is the Hodge star.
Since
\[
d\left( e_1\cdot de_2 \right) =de_1\wedge de_2
,\]
we  finally obtain
\begin{equation}\label{gauss curvature equation in proof}
-\Delta w = *\left( de_1\wedge de_2 \right) .
\end{equation}
For equations with this form, the celebrated works of Wente \cite{W-1969} and Coifman-Lions-Meyer-Semmes \cite{CLMS-1993} tell us
\begin{equation}\label{wente estimate in proof}
    \|w-\log{\lambda}\|_{L^\infty(\tilde{K})} + \|\nabla w\|_{L^2(\tilde{K})} + \|\nabla^2 w\|_{L^1(\tilde{K})}\leq
    C(\tilde{K}, K)	\|\nabla e_1\|_{L^2(K)}\|\nabla e_2\|_{L^2(K)}+\psi(\lambda|\gamma)
\end{equation}
Combing \eqref{well frame} and \eqref{wente estimate in proof}, by choosing $\lambda$ be closed to $1$ and $\tilde{K}$ be closed to $D_1$, we finish the proof of item $(3)$ in Theorem \ref{main}.

From item $(3)$, we know $\||\nabla f| -1 \|\leq \psi(\varepsilon)$. Integrating it, we obtain the item $(2)$.
The item $(4)$ is a direct consequence of \eqref{W22 estimate} and item $(3)$.
Finally, the item $(1)$ is a conclusion of Corollary \ref{uniform app}. This finish the proof.
\end{proof}

\begin{proof}[Proof of Theorem \ref{main denstiy}]
Choose a smooth mollifier  $\varphi$ defined in $\mathbb{R}^2$ and let $\varphi_{i}(x)=:i^2\varphi(ix)$. We denote $\Sigma_i=: (f\ast \varphi_i)\left(D_{1-\frac{1}{i}}\right)$. Noting by item $(1)$ in Theorem \ref{main}, $\Sigma_i$ is a smooth embedding surface with a bi-Lipschitz parameterizaiton. Observing the area bound and mean curvature bound follow from item $(2)$ and item $(4)$, we finish this proof.
\end{proof}

\section{Miscellaneous}\label{Miscellaneous}
As an  application of our main Theorem \ref{main} ,  we show any integral $2$-varifolds satisfying the critical Allard condition \eqref{critical allard condition} are  curvature varifolds defined by Hutchinson\cite{H-1986}\cite{M-1996}.

Given an $k$-dimensional vector subspace $P$ of $\mathbb{R}^n$, we can consider the matrix $\{P_{i,j}\}_{1\leq i,j\leq n}\in\mathbb{R}^{n^2}$ of the orthogonal projection over the subspace $P$. So we can think of the Grassmannian $G(n,k)$ of $k$-subspaces in $\mathbb{R}^n$ as a compact subset of $\mathbb{R}^{n^2}$.

For any $\varphi = \varphi(x, P)\in C_{c}^1(\Omega\times\mathbb{R}^{n^2})$, we use $\nabla_j\varphi$ to denote the derivative with respect to $x$, and $\nabla_{jl}^{*}$ to denote the derivative with respect to $P$.

Then we can state the definition of Hutchinsion's curvature varifolds.
\begin{definition}
Let $V=\underline{v}(\Sigma,\theta)$ be a $k$-varifold in open set $\Omega\subset\mathbb{R}^n$. That is, $V$ is a measure defined in $\Omega\times G(n,k)$.
If there exists real-valued functions $B_{ijl}\in L^1_{loc}(V)$ for $1\leq i,j, l\leq k$ such that for all $\varphi =\varphi(x, P)\in C_c^1(\Omega\times \mathbb{R}^{n^2})$,
\[
\int_{U\times G(n,k)} \left(P_{ij}\nabla_j\varphi + \nabla^{*}_{jl}\varphi B_{ijl} +\varphi B_{jij}\right) dV(x,p) = 0
,\]
We say $V$ is a curvature varifold. The matrix-valued function
\[
A_{ij}^l = P_{jm}B_{ilm}
\]
is the called to the generalized second fundamental form of $V$.
\end{definition}

Then we can show the varifolds in Theorem \ref{main} is the curvature varifolds.
\begin{proposition}\label{}
  Let $V=\underline{v}(\Sigma,\theta)$ be an integral $2$-varifold in $U\supset B_1$ with  corresponding Radon measure $\mu=\theta\mathcal{H}^2\llcorner \Sigma$. Assume  $\theta(x)\ge 1$ for $\mu$-a.e. $x\in U$  and $0\in\Sigma=spt\mu$. If $V$ admits generalized mean curvature $H\in L^2(d\mu)$ such that
     \begin{align}\label{density condition in proof}
     \frac{\mu(B_1)}{\pi}\le 1+\varepsilon
     \end{align}
     and
     \begin{equation}\label{mean curvature bound in proof}
          \int_{B_1}|H|^2d\mu\le \varepsilon.
     \end{equation}
    Then $V$ is a curvature varifold in $\mathring{B}_{1-\psi}$ with 
    \[
    \int_{1-\psi}|A|^2d\mu\leq \psi(\varepsilon),
    \]
    where $\psi=\psi(\varepsilon)$ is a positive function such that $\lim_{\varepsilon\to 0}\psi(\varepsilon)=0$.
\end{proposition}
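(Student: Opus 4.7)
The plan is to read off the curvature varifold structure directly from the bi-Lipschitz conformal $W^{2,2}$ parameterization $f:D_1\to\Sigma$ given by Theorem \ref{main}, and then verify Hutchinson's integration-by-parts identity by smooth mollification of $f$. By Remark \ref{theta=1} the density $\theta\equiv 1$, so $\mu=\mathcal{H}^2\llcorner\Sigma$. For $x=f(y)\in\Sigma\cap\mathring{B}_{1-\psi}$ the approximate tangent plane $T_x\Sigma$ is spanned by $\partial_1f(y),\partial_2f(y)$, and by conformality its orthogonal projection admits the explicit expression
\[
P_{jk}(f(y))=e^{-2w(y)}\bigl(\partial_1 f^j\partial_1 f^k+\partial_2 f^j\partial_2 f^k\bigr)(y),
\]
with $e^{2w}$ the conformal factor from item (3) of Theorem \ref{main}. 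Since $f\in W^{2,2}$ and $w\in L^\infty\cap W^{1,2}$ (items (3)--(4)), $P\circ f\in W^{1,2}(D_1,\mathbb{R}^{n^2})$, and the tensor $B_{ijk}:=P_{im}\partial_m P_{jk}$ (the tangential derivative of $P$) is well-defined in $L^2(d\mu)$.

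For the curvature bound, conformality gives $A_{\alpha\beta}=(I-P)f_{\alpha\beta}$ for the pulled-back second fundamental form, so
\[
\int_\Sigma|A|^2\,d\mu=\int_{D_1}e^{-2w}\sum_{\alpha,\beta}\bigl|(I-P)f_{\alpha\beta}\bigr|^2\,dx\le C\int_{D_1}|\nabla^2 f|^2\,dx\le\psi(\varepsilon),
\]
using $\|w\|_{L^\infty}\le\psi$ and item (4) of Theorem \ref{main}. Since $A^l_{ij}=P_{jm}B_{ilm}$ and $|B|$ is bounded by $|A|$ up to contributions from $\nabla P$ which are already in $L^2$, one similarly obtains $\int|B|^2\,d\mu\le\psi(\varepsilon)$.

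To establish Hutchinson's identity I mollify: let $\rho_\epsilon$ be a standard mollifier and put $f_\epsilon=f*\rho_\epsilon$ on $D_{1-\epsilon}$. By the uniform bi-Lipschitz bound of item (2), $f_\epsilon$ is a smooth embedding for small $\epsilon$, so $\Sigma_\epsilon=f_\epsilon(D_{1-\epsilon})$ is a classical smooth surface, hence a curvature varifold satisfying
\[
\int\bigl(P^\epsilon_{ij}\nabla_j\varphi+\nabla^*_{jl}\varphi\cdot B^\epsilon_{ijl}+\varphi\cdot B^\epsilon_{jij}\bigr)\,dV_\epsilon=0,\qquad\forall\,\varphi\in C_c^1(\mathring{B}_{1-\psi}\times\mathbb{R}^{n^2}).
\]
Standard mollification yields $f_\epsilon\to f$ in $W^{2,2}_{\mathrm{loc}}$, hence $P^\epsilon\to P$ in $W^{1,2}_{\mathrm{loc}}$ and $B^\epsilon\to B$ in $L^2_{\mathrm{loc}}$, while the area formula and uniform convergence of $f_\epsilon$ give $V_\epsilon\to V$ as varifolds in $\mathring{B}_{1-\psi}$. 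Rewriting each integral as an integral over $D_{1-\epsilon}$ through $f_\epsilon$ and passing to the limit yields Hutchinson's identity for $V$.

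The main obstacle is the limit in the term $\nabla^*_{jl}\varphi(x,P)\cdot B^\epsilon_{ijl}$, which couples a test function depending on the tangent-plane variable with an $L^2$ tensor: its validity rests on the joint convergence $(f_\epsilon,P^\epsilon,B^\epsilon)\to(f,P,B)$ in the appropriate combined topology. Here the Trudinger--Moser embedding in dimension two (so $W^{1,2}\hookrightarrow L^q$ for every $q<\infty$), together with the uniform Lipschitz control on $f_\epsilon$, should make this step essentially routine.
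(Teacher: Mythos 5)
Your construction of $P$, $B_{ijl}$ and the $L^2$ bound on $A$ is exactly the paper's: both read everything off the conformal parameterization $f$ of Theorem \ref{main}, using $P\circ f=e^{-2w}\,\partial_\alpha f\otimes\partial_\alpha f$ and the $W^{2,2}$ estimate. Where you diverge is the verification of Hutchinson's identity. The paper does not mollify: it observes that, pulled back through $f$, the integrand $P_{ij}\nabla_j\varphi+\nabla^*_{jl}\varphi\,B_{ijl}+\varphi B_{jij}$ becomes the exact divergence $\partial_m\bigl(\langle h_i,e_m\rangle\,\varphi(f(x),P(x))\bigr)$ with $e_m=f_m/|f_m|$, so the identity is just the chain rule for Sobolev functions plus integration by parts of a compactly supported $W^{1,1}$ function on $D_1$. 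This is shorter and avoids approximating the surface altogether. Your route (approximate by $\Sigma_\epsilon=f_\epsilon(D_{1-\epsilon})$, use the classical identity, pass to the limit) also works, and the convergences $P^\epsilon\to P$ in $W^{1,2}_{loc}$, $B^\epsilon\to B$ in $L^2_{loc}$ do go through since $\nabla f_\epsilon$ is uniformly bounded and $\nabla^2 f_\epsilon\to\nabla^2f$ in $L^2$; contrary to your closing remark, the coupling term $\nabla^*_{jl}\varphi\cdot B^\epsilon_{ijl}$ is not the delicate point.

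The step you should not wave through is ``by the uniform bi-Lipschitz bound of item (2), $f_\epsilon$ is a smooth embedding.'' The bi-Lipschitz inequality controls $|f_\epsilon(x)-f_\epsilon(y)|$ only for $|x-y|\gtrsim\epsilon$; at scales below $\epsilon$ the difference quotient of $f_\epsilon$ is an average of unit-length vectors $\bigl(f(x-z)-f(y-z)\bigr)/|x-y|$ whose mutual distances are bounded by $C\sqrt{\psi}\,\sqrt{1+|z-z'|^2/|x-y|^2}$, which is useless as $|x-y|\to0$. To conclude that $\nabla f_\epsilon$ is nondegenerate (hence that $f_\epsilon$ is an immersion, and then locally injective) one needs a quantitative rigidity input, e.g.\ F.~John's theorem that a $(1+\psi)$-bi-Lipschitz map has gradient within $C\psi$ of a single linear isometry in $BMO$, applied at scale $\epsilon$; the paper makes the same unproved assertion in the proof of Theorem \ref{main denstiy}, so this is a shared debt rather than an error peculiar to you, but as written your justification is insufficient. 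You should also note that the classical identity on $\Sigma_\epsilon$ holds only for test functions supported away from $f_\epsilon(\partial D_{1-\epsilon})$, so the radius $1-\psi$ in the statement must be chosen below $\inf_{|x|=1-\epsilon}|f_\epsilon(x)|$, which items (1)--(2) of Theorem \ref{main} do guarantee. If you adopt the paper's direct weak computation instead, both issues disappear.
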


\begin{proof}
 Let $\{h_{i}\}_{1\leq i \leq n}$ be a orthogonal basis of $\mathbb{R}^n$ and $(e_1, e_2)$ be the orthogonal frame chosen in the proof of Theorem \ref{main}.
 Then by definition,
\[
P_{ij} = <h_i, e_m><e_m, h_j>
.\]
We define
\[
    A_{ij}^l = <h_i, e_m><h_j, e_n>e^{-2w}\left( f_{mn}^l-<f_{mn}^l, e_s>e_s\right)
,\]
and
\[
B_{ijl}=A_{ij}^l + A_{il}^j.
\]

Define $P(x): D_1\rightarrow \mathbb{R}^n$ by
\[
(P(x))_{ij} = P_{ij}\circ f
.\]
A direct computation gives
\[
e^{-2w}\left(\frac{\partial}{\partial x_m} P\right)_{jl} = <e_m, h_i>B_{ijl}\circ f
.\]
By the item $(1)$,$(3)$ and $(4)$ in Theorem \ref{main}, we know $B_{ijl}\in L^2(B_{1-\psi}\cap \Sigma)$. We then can  compute that
\begin{equation}
    \begin{split}
        &\int_{f(D_1)}\left(P_{ij}\nabla_{j}\varphi + \nabla_{jl}^{*}\varphi B_{ijl} + \varphi B_{jij} \right) d\mathcal{H}^2\\
        =&\int_{D_1}\left(<h_i, e_m>\left(\varphi(f(x), P_{}(x))\right)_m +\varphi B_{jij}\right)dx\\
        =&\int_{D_1}\left(<h_i, e_m> \varphi\right)_m dx =0.
    \end{split}
\end{equation}
This finishes the proof.
\end{proof}

It has been seen that the bi-$W^{1,p}$ parameterization Theorem \ref{W1p para}  plays important role in  choosing good domain $\Omega\subset\Sigma$ such that Lytchak\& Wenger's theorem can be applied. Recently, Naber and Valtorta got the $W^{1,p}$ Reifenberg theorem\cite{NV} under a Carleson condition on the Jones' $\beta$-number, which is introduced to study the uniformly rectifiability of a set by David and Semmes\cite{DS-1991} \cite{DS-1993}. For more relation between Jones' $\beta$ number and rectifiability of Radon measure, we refer to  \cite{ENV-2019}, \cite{T-2015}, \cite{AT-2015}, \cite{T-2019}. In the following proposition, we verifies this Carleson condition  for rectifiable $2$-varifolds with the critical Allard condition directly by the estimates in section \ref{basic}, which leads to an another $W^{1,p}$ parameterization of such varifold by applying Naber \& Valtorta's result. We think it is of independent interest.
\begin{proposition} Under the same assumption of Lemma \ref{lem:ahlfors regularity}, for any $\alpha\in(0,1)$, there exists a positive function $\psi(\delta|\alpha)$ such that $\lim_{\delta\to 0}\psi(\delta|\alpha)=0$ and the following holds:

For  $\xi \in B(0,1-2\alpha)\cap spt\mu$ and any $\sigma\le\frac{ 1-\alpha-|\xi|}{3}$,  the Jones' $\beta$ number induces a Carleson measure $\beta^2(y,s)\frac{ds}{s}d\mathcal{H}^2\llcorner\Sigma(y)$ with small constant, i.e.,
\begin{align*}
\tilde{E} (\xi, \sigma):=\int_{B(\xi,\sigma)}\int_{0}^{\sigma}\beta^2(y,s)\frac{ds}{s}d\mathcal{H}^2\llcorner\Sigma
\le \psi(\varepsilon|\alpha)\pi \sigma^2,
\end{align*}
where
$
\beta^2(y,s):=\inf_{L\subset \mathbb{R}^{n}}s^{-4}\int_{B(y,s)}d^2(z,L)d\mathcal{H}^2\llcorner \Sigma(z)
$
and the infimum is taken over all two dimensional affine planes in $\mathbb{R}^{n}$.
Moreover, there exists a mapping $\phi:\mathbb{R}^2\to \mathbb{R}^{n}$ which is an $1+\psi(\varepsilon|\alpha)$ bi-$W^{1,p}$ map onto its image and such that $\phi(0)=\xi$ and
$$ B(\xi,\frac{\sigma}{12})\cap\Sigma =\phi(D(0,\frac{\sigma}{12}))\cap B(\xi,\frac{\sigma}{12}).$$
\end{proposition}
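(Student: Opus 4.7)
The plan is to follow the commented-out sketch in the paper: first bound $\beta(y,s)$ pointwise by taking the approximate tangent plane as a competitor, then use a swap-of-variables and dyadic trick to reduce the Carleson integral to a Simons-type tilt integral which is controlled by the monotonicity formula. Since $V$ is rectifiable and $\theta\ge 1$, the approximate tangent plane $T_y\Sigma$ exists for $\mu$-a.e.\ $y$ and $\mu\ge\mathcal{H}^2\llcorner\Sigma$, so using $L_y=T_y\Sigma+y$ as a competitor gives
\[
\beta^2(y,s)\le s^{-4}\int_{B(y,s)}|p^{\bot}_{T_y\Sigma}(z-y)|^2\,d\mu(z)=s^{-4}\int_{B(y,s)}r_z^2(y)|D^{\bot}r_z(y)|^2\,d\mu(z),
\]
where $r_z(y)=|y-z|$.

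Next I would apply Fubini to swap the order of the $y$ and $z$ integrations (noting $y\in B(z,s)$ iff $z\in B(y,s)$), bound $r_z(y)^2\le s^2$, and use a dyadic decomposition on $s$ to obtain
\[
\tilde E(\xi,\sigma)\le 2\int_{B(\xi,2\sigma)}\int_{B(z,\sigma)}\frac{|D^{\bot}r_z(y)|^2}{r_z^2(y)}\,d\mu(y)\,d\mu(z).
\]
The inner tilt-type integral is exactly what Simon's monotonicity equality \eqref{monotonicity equality} controls: rearranging the identity at the point $z$ on the ball $B(z,\sigma)\subset B(0,1-\alpha)$ and completing the square yields
\[
\int_{B(z,\sigma)}\Bigl|\frac{H}{4}+\frac{D^{\bot}r_z}{r_z}\Bigr|^2\,d\mu\le \Bigl(1+\tfrac{1}{16}\Bigr)\int_{B_1}|H|^2\,d\mu+\pi\bigl(\tfrac{\mu(B(z,\sigma))}{\pi\sigma^2}-\Theta(\mu,z)\bigr)+\text{boundary terms},
\]
where the boundary terms are small by Cauchy--Schwarz and the choice $\sigma\le\tfrac{1-\alpha-|\xi|}{3}$. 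Using $\Theta(\mu,z)\ge 1$ and the Ahlfors regularity (Lemma~\ref{lem:ahlfors regularity}) which gives $\frac{\mu(B(z,\sigma))}{\pi\sigma^2}\le 1+\psi(\varepsilon|\alpha)$, this quantity is $\le\psi(\varepsilon|\alpha)$, and then the triangle inequality $|D^{\bot}r_z/r_z|^2\le 2|H/4+D^{\bot}r_z/r_z|^2+\frac{1}{8}|H|^2$ plus Ahlfors regularity once more yield the desired $\tilde E(\xi,\sigma)\le\psi(\varepsilon|\alpha)\pi\sigma^2$.

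For the parameterization statement, I would combine the Reifenberg condition from Lemma~\ref{lem:height bound} (which gives smallness of the Jones' $\beta$-number at every fixed scale) with the Carleson estimate just proved and apply Theorem~3.2 of Naber--Valtorta~\cite{NV}. After rescaling to the ball $B(\xi,\sigma/2)$, all hypotheses of their $W^{1,p}$ Reifenberg theorem are satisfied with constant $\psi(\varepsilon|\alpha)$, producing a map $\phi:\mathbb R^2\to\mathbb R^n$ with $\phi(0)=\xi$, $1+\psi$ bi-$W^{1,p}$ bounds, and $\phi(D(0,\sigma/12))\cap B(\xi,\sigma/12)=\Sigma\cap B(\xi,\sigma/12)$.

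The main obstacle is bookkeeping rather than conceptual: one must be careful that the dyadic decomposition produces the factor $r_z^{-2}$ (not a worse singularity) and that the ``boundary terms'' in the monotonicity identity at the intermediate scale can be absorbed into $\psi(\varepsilon|\alpha)$ uniformly in $\sigma$. Once these are controlled using Ahlfors regularity and the $L^2$-smallness of $H$, the rest is a direct application of existing machinery.
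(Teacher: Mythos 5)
Your proposal is correct and follows essentially the same route as the paper's own proof: the approximate tangent plane as competitor for $\beta(y,s)$, Fubini plus a dyadic decomposition in $s$ to reduce to $\int_{B(z,\sigma)}|D^{\bot}r_z|^2/r_z^2\,d\mu$, the rearranged Simon monotonicity identity with Ahlfors regularity and Cauchy--Schwarz (with parameter $\delta=\varepsilon$) to make that tilt integral small, and finally Lemma \ref{lem:height bound} together with \cite[Theorem 3.2]{NV} for the bi-$W^{1,p}$ parameterization. No gaps.
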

\begin{proof}
Since $V=\underline{v}(\Sigma,\theta)$ is rectifiable, we know the approximate tangent space $T_y\Sigma$ exists for $\mu-$ a.e. $y\in spt\mu$. Letting $L_y=T_y\Sigma+y$, then
\begin{align*}
d(z,L_y)=|p_{T_y\Sigma}^{\bot}(z-y)|=r_z(y)|D^{\bot}r_z(y)|,
\end{align*}
where $r_z(\cdot)=|\cdot -z|$ is the distance function.

Noting $\theta\ge 1$, we know $\mu=\theta\mathcal{H}^2\llcorner \Sigma\ge \mathcal{H}^2\llcorner\Sigma$ and hence
\begin{align*}
\tilde{E} (\xi, \sigma)
&=\int_{B(\xi,\sigma)}\int_0^\sigma\beta^2(y,s)\frac{ds}{s}d\mathcal{H}^2\llcorner\Sigma\\
&\le \int_{B(\xi,\sigma)}\int_0^\sigma\frac{1}{s^4}\int_{B(y,s)}d^2(z,L_y)d\mu(z)\frac{ds}{s}d\mu(y)\\
&=\int_0^\sigma\int_{B(\xi,\sigma)}\frac{1}{s^5}\int_{B(y,s)}r_z^2(y)|D^{\bot}r_z(y)|^2d\mu(z)d\mu(y)ds\\
&\le \int_0^\sigma\int_{B(\xi,2\sigma)}\int_{B(z,s)}\frac{|r_z(y)|^2}{s^5}|D^\bot r_z(y)|^2d\mu(y)d\mu(z)ds\\
&\le \int_{B(\xi,2\sigma)}\underbrace{\int_0^\sigma\int_{B(z,s)}\frac{|D^{\bot}r_z(y)|^2}{s^3}d\mu(y)ds}_{=:I(z,\sigma)}d\mu(z).
\end{align*}
Noting
\begin{align*}
I(z,\sigma)&\le \sum_{i=0}^{+\infty}\int_{2^{-(i+1)\sigma}}^{2^{-i}\sigma}\frac{1}{s^3}ds\int_{B(z,2^{-i}\sigma)}|D^{\bot}r_z(y)|^2d\mu(y)\\
&=\int_{B(z,\sigma)}\sum_{i=0}^{+\infty}(2^{-i}\sigma)^{-2}\chi_{B(z,2^{-i}\sigma)}(y)|D^{\bot}r_z(y)|^2d\mu(y)\\
&\le 2\int_{B(z,\sigma)}\frac{|D^\bot r_z(y)|^2}{r_z^2(y)}d\mu(y),
\end{align*}
we have
\begin{align*}
\tilde{E} (\xi, \sigma)\le \int_{B(\xi,2\sigma)}I(z,\sigma)d\mu(z)\le 2\int_{B(\xi,2\sigma)}\int_{B(z,\sigma)}\frac{|D^{\bot}r_z(y)|^2}{r_z^2(y)}d\mu(y)d\mu(z).
\end{align*}
Moreover, by the monotonicity formula (\ref{monotonicity equality}), we have
\begin{align*}
\Theta(\mu,z)=\frac{\mu(B(z,\sigma))}{\pi \sigma^2}+\frac{1}{16\pi}\int_{B(z,\sigma)}|H|^2d\mu(z)
&-\frac{1}{\pi}\int_{B(z,\sigma)}|\frac{H}{4}+\frac{D^{\bot}r_z}{r_z}|^2d\mu(y)\\
&+\frac{1}{2\pi\sigma^2}\int_{B(z,\sigma)}\langle r_zD^{\bot}r_z,H\rangle d\mu(y).
\end{align*}
Since $\sigma\le \frac{1}{3}(1-2\alpha-|\xi|)$, we know $B(z,\sigma)\subset B_{1-\alpha}(0)$ and hence (by lemma \ref{lem:ahlfors regularity})
\begin{align*}
\frac{1}{\pi}\int_{B(z,\sigma)}|\frac{H}{4}+\frac{D^{\bot}r_z}{r_z}|^2d\mu(y)
\le \frac{1}{16\pi}\int_{B(z,\sigma)}&|H|^2d\mu(y)+\psi(\varepsilon|\alpha)\\
&+\delta \frac{\mu(B(z,\sigma))}{\pi \sigma^2}+\frac{1}{4\delta}\int_{B(z,\sigma)}|H|^2d\mu(y).
\end{align*}
Choosing $\delta=\varepsilon$, we get
\begin{align*}
\int_{B(z,\sigma)}|\frac{H}{4}+\frac{D^{\bot}r_z}{r_z}|^2d\mu(y)
\le \frac{\varepsilon^2}{16}+\psi(\varepsilon|\alpha)+(1+\psi(\varepsilon|\alpha))\pi\varepsilon+\frac{\varepsilon}{4}.
\end{align*}
So, we have
\begin{align*}
\int_{B(z,\sigma)}\frac{|D^{\bot}r_z(y)|^2}{r_z^2(y)}d\mu(y)\le \frac{1}{8}\int_{B(z,\sigma)}|H|^2d\mu(y)+2\int_{B(z,\sigma)}|\frac{H}{4}+\frac{D^{\bot}r_z}{r_z}|^2d\mu(y)\le \psi(\varepsilon|\alpha)
\end{align*}
and hence(by Lemma \ref{lem:ahlfors regularity} again)
\begin{align*}
\tilde{E}(\xi,\sigma)\le 2\int_{B(\xi,\sigma)}\int_{B(z,\sigma)}\frac{|D^{\bot}r_z(y)|^2}{r_z^2(y)}d\mu(y)d\mu(z)
\le 2\psi(\varepsilon|\alpha)\mu(B(\xi,\sigma))\le \psi(\varepsilon|\alpha)\pi \sigma^2.
\end{align*}
Finally, by Lemma \ref{lem:height bound} and \cite[Theorem 3.2]{NV}, we know there exists a mapping $\phi:\mathbb{R}^2\to \mathbb{R}^n$ which is an $1+\psi(\varepsilon|\alpha)$ bi-$W^{1,p}$ map onto its image and such that $\phi(0)=\xi$ and
$$ B(\xi,\frac{\sigma}{12})\cap\Sigma=\phi(D(0,\frac{\sigma}{12}))\cap B(\xi,\frac{\sigma}{12}).$$
\end{proof}

\section{Proof of Theorem \ref{W1p para scaling}}\label{proof of w1p parameterization}
 In this section, we adapt  Semmes' generation algorithm to give the $W^{1,p}$ estimate of a chord-arc varifold in the non-smooth and higher co-dimension setting. Especially, we want to highlight the role of the no-hole property coming from Reifenberg's topological disk theorem, which is not emphasized in the smooth setting. Since we consider higher co-dimension case, there is no canonical conception of unit normal vector. To deal with this, we construct the  approximating normal bundle and approximating normal exponential map with lipschitz regularity. The next theorem is the main result of this section, and Theorem \ref{W1p para scaling} follows from it directly.

\begin{theorem}\label{W1p param}
Assume $V=\underline{v}(\Sigma,\theta)$ is a chord-arc $m$-varifold in $B_1$ with small constant $\gamma$ such that $0\in \Sigma$. Then, for $\delta_0(x)=\frac{1-|x|}{100}$ defined on $B_1$, there exists a properly embedded $m$-dimensional submanifold $\Sigma_0\subset B_1$ and a positive function $\phi=\psi(\gamma)$ satifiying $\lim_{\gamma\to 0}\psi(\gamma)=0$ such that 
\begin{enumerate}
\item $d(0,\Sigma_0)\le C\gamma^{\frac{1}{2}}$;
\item For any $x\in \Sigma_0$, $B(x,(\frac{1}{2}-C\gamma^{\frac{1}{2}})\delta_0(x))\cap\Sigma_{0}$ is a Lipschitz graph over a plane with Lipschitz constant $\le C\gamma^{\frac{1}{2}}$;
\item $B_{1-\psi}\cap \Sigma_0$ is a Lipschitz graph over a plane with Lipschitz constant $\le  C\gamma^{\frac{1}{4}}$;
\item there exists a homeomorphism $\tau:\Sigma_0\to \Sigma$ such that $\tau$ and $\sigma=\tau^{-1}:\Sigma\to \Sigma_0$ satisfies
    \begin{align*}
(1-C\gamma^{\frac{1}{4}})|x-y|^{1+\psi(\gamma)}\le |\tau_{0,\infty}(x)-\tau_{0,\infty}(y)|\le (1+C\gamma^{\frac{1}{4}})|x-y|^{1-\psi(\gamma)},
\end{align*}
where $\psi(\gamma)\to 0$ as $\gamma\to 0$.
\end{enumerate}
Moreover, for $\sigma^*,\sigma_*$ defined by
\begin{align*}
\sigma^*(x):=\sup_{y\in \Sigma}\frac{|\sigma(x)-\sigma(y)|}{|x-y|}
\end{align*}
and
\begin{align*}
\sigma_*(x):=\inf_{y\in \Sigma}\frac{|\sigma(x)-\sigma(y)|}{|x-y|},
\end{align*}
 and $\tau^
 *,\tau_*$ defined similarly,  for any $u\in \Sigma$ and $p< p(\gamma)=\frac{\log{4}}{\log{(1+C\gamma^{\frac{1}{4}})}}\to +\infty$, there holds
\begin{align}\label{taustar}
 \int_{B(u,100\delta_0(u))\cap \Sigma}(\sigma^*)^p+\sigma_{*}^{-p}d\mu
 &\le C_p \delta_0^m(u)\nonumber\\
 \int_{B(u,100\delta_0(u))\cap \Sigma_0}(\tau^*)^p+(\tau_{*})^{-p}d\mathcal{H}^m
 &\le C_p\delta^m_0(u),
 \end{align}
 and
 \begin{align}\label{tauminusidentity}
 \int_{ B(u,100\delta_0(u))\cap\Sigma}((\sigma-id_{\Sigma})^*)^pd\mu&\le C_p\gamma^{\frac{p}{4}}\delta^m_0(u)\nonumber\\
 \int_{B(u,100\delta_0(u))\cap \Sigma_0}((\tau-id_{\Sigma_0})^*)^pd\mathcal{H}^m&\le C_p\gamma^{\frac{p}{4}}\delta^m_0(u),
 \end{align}
 where $id_\Sigma$ and $id_{\Sigma_0}$ are the including map of $\Sigma\subset B_1$ and $\Sigma_0\subset B_1$ respectively. 

\end{theorem}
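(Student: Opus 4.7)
The plan is to adapt Semmes' generation algorithm from \cite{S-1991b} to the non-smooth, higher-codimension setting by building $\Sigma_0$ as the limit of a sequence of smoothed approximations at dyadic scales, then constructing $\tau$ by a ``no-hole'' matching that exploits Proposition \ref{pro:conclude}. Fix $\Sigma_{-1} := T_{0,1}$ and, for each integer $k\ge 0$, pick a maximal $2^{-k}$-separated subset $\{\xi_{k,i}\}\subset \Sigma\cap B_{1-10\cdot 2^{-k}}$. At each $\xi_{k,i}$ the chord-arc condition (Definition \ref{chord arc varifold}) provides the plane $T_{\xi_{k,i},2^{-k}}$; by a triangle-inequality argument in the Grassmannian analogous to the one at the end of the proof of Corollary \ref{cor:excess estimate}, neighbouring planes in the same generation differ by $O(\gamma^{1/2})$, as do planes linked by the passage from generation $k$ to generation $k+1$.

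In the hypersurface case treated by Semmes the ``normal'' direction is canonical, whereas in codimension $n-m\ge 2$ I must construct an approximating normal bundle. Using a smooth partition of unity $\{\varphi_{k,i}\}$ at scale $2^{-k}$, I set $P_k(x):=\sum_i \varphi_{k,i}(x)\,p_{T_{\xi_{k,i},2^{-k}}}$ and let $\tilde P_k(x)$ be the nearest $m$-plane in the Grassmannian; the smallness of $\gamma$ keeps $\tilde P_k$ well-defined with $|\nabla \tilde P_k|\le C\gamma^{1/2}\,2^k$, which produces a Lipschitz normal bundle $\tilde P_k^\perp$ and a smooth normal exponential map $\exp_k$. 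I then inductively define $\Sigma_k$ as the graph over $\Sigma_{k-1}$, via $\exp_{k-1}$, of the smooth function that shifts $\Sigma_{k-1}$ onto the planes assigned in generation $k$. The graph constant at scale $2^{-k}$ is $C\gamma^{1/2}$, and telescoping all generations yields a global Lipschitz graph over $T_{0,1}$ with constant $C\gamma^{1/4}$; the $1/2 \to 1/4$ loss is the standard loss in summing scale-invariant quantities weighted by a square root and matches items (1)--(3). The limit $\Sigma_0:=\lim_k \Sigma_k$ is properly embedded, and the restriction of $\Sigma_0$ to $B(x,c\delta_0(x))$ uses only finitely many generations, giving item (2).

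To build $\tau:\Sigma_0\to \Sigma$ together with its inverse $\sigma$, I use the no-hole property of Proposition \ref{pro:conclude}: for each $x\in \Sigma_0$ and each scale $2^{-k}$ with $2^{-k}\ll \delta_0(x)$, I select $z_k(x)\in \Sigma$ whose orthogonal projection onto $\tilde P_k(x)$ coincides with $x$. The same telescoping that controls the graph constants shows $\{z_k(x)\}$ is Cauchy with modulus $C\gamma^{1/2}2^{-k}$ at scale $2^{-k}$, so $\tau(x):=\lim_k z_k(x)$ is well-defined. A winding-number argument identical in spirit to the one in the proof of Proposition \ref{top fractal} shows $\tau$ is a homeomorphism, and the bi-H\"older estimate in item (4) follows from the same telescoping.

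The main obstacle is the $W^{1,p}$ estimates \eqref{taustar} and \eqref{tauminusidentity}. Define the maximal quotients $\tau^*,\tau_*,\sigma^*,\sigma_*$ as in the statement. The key observation is that $\log\sigma^*$ is, up to a bounded error at each point, the sum over scales of the angular changes between consecutive approximating planes in the generation, and by Definition \ref{chord arc varifold} these changes are controlled in $L^2$ by the tilt-excess $E(\cdot,2^{-k},T_{\cdot,2^{-k}})\le \gamma^2$. Summing dyadic contributions yields a uniform bound
\[
\|\log \sigma^*\|_{\mathrm{BMO}(\Sigma)} + \|\log \tau^*\|_{\mathrm{BMO}(\Sigma_0)}\le C\gamma^{1/4},
\]
on the Ahlfors-regular spaces $\Sigma$ and $\Sigma_0$. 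The John--Nirenberg inequality on these spaces, exactly as used to pass from Proposition \ref{BMO estimate prop} to Corollary \ref{large piece}, then gives the exponential integrability
\[
\fint_{B(u,100\delta_0(u))\cap\Sigma}\exp\bigl(p\,|\log \sigma^*|\bigr)\,d\mu\le C_p
\]
for every $p < p(\gamma) = \log 4 / \log(1+C\gamma^{1/4})$, which produces \eqref{taustar}. Estimate \eqref{tauminusidentity} is obtained by combining the pointwise bound $|\tau(x)-x|\le C\gamma^{1/4}\delta_0(x)$ coming from item (3) with the $W^{1,p}$ control of the derivatives of $\tau-\mathrm{id}_{\Sigma_0}$, whose BMO norm is again $\le C\gamma^{1/4}$ by the same dyadic argument.
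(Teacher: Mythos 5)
Your overall architecture (smoothing at a hierarchy of scales, an approximating normal bundle built from a partition of unity of the chord-arc planes, a limit map $\tau$ obtained by chaining the corrections) is in the right spirit, but two of your key quantitative steps fail because you iterate at \emph{uniform dyadic scales} $2^{-k}$ rather than with a stopping-time scale function. First, you repeatedly "telescope over all generations" contributions of size $C\gamma^{1/2}$ per scale — for the global Lipschitz graph constant, for the Cauchy property of $z_k(x)$, and for $\log\sigma^*$ — but $\sum_k C\gamma^{1/2}$ diverges. The chord-arc hypothesis only gives a \emph{uniform} bound $E(\xi,\sigma,T_{\xi,\sigma})\le\gamma^2$ at every scale; it does not give a Carleson/square-function condition making the angular changes summable over scales (that summability is proved in Section \ref{Miscellaneous} only under the additional $L^2$ mean curvature bound, which is not a hypothesis of Theorem \ref{W1p param}). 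Consequently your asserted bound $\|\log\sigma^*\|_{\mathrm{BMO}}\le C\gamma^{1/4}$ is unjustified, and the John--Nirenberg route to \eqref{taustar} collapses. The paper avoids this entirely: the scale function is $\delta_{j+1}(x)=\min\{d(x,F_j),d(x)\}$, which \emph{vanishes} on the good set $F_j$, so a point in the stratum $F_{k+1}\setminus F_k$ receives only about $k$ nontrivial corrections, giving $\sigma^*\le(1+C\gamma^{1/4})^{k}$ there, while $\mu(\Sigma\setminus F_k)\lesssim 10^{-k}\delta_0^m$ by a maximal-function (weak $(1,1)$) estimate. The $L^p$ bounds \eqref{taustar}--\eqref{tauminusidentity} then follow from summing the layer-cake series $\sum_k(1+C\gamma^{1/4})^{kp}10^{-k}$, which converges exactly for $p<p(\gamma)$ — no BMO estimate for $\log\sigma^*$ is ever proved or needed.

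Second, you define $\Sigma_0:=\lim_k\Sigma_k$, but the all-scales limit of the smoothings is (bi-H\"older equivalent to) $\Sigma$ itself and is \emph{not} a Lipschitz submanifold; only the stage-zero smoothing $\Sigma_{\delta_0}$ is. In the paper $\Sigma_0$ is that first smoothing, items (2)--(3) are proved for it by gluing finitely many Lipschitz graphs (with a chaining argument over $N(\kappa)$ balls responsible for the $\gamma^{1/2}\to\gamma^{1/4}$ loss, not an infinite telescoping), and the limit of the iteration is the homeomorphism $\tau_{0,\infty}:\Sigma_0\to\Sigma$, whose target is the rough set. You would need to reverse the roles of the smooth object and the limit, and replace the uniform dyadic iteration by the stopping-time construction, to make the argument close.
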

Assuming Theorem \ref{W1p param} holds, we give the proof of Theorem \ref{W1p para scaling}.
\begin{proof}[Proof of Theorem \ref{W1p para scaling}]
Since a chord-arc $m$-varifold in $B_1$ is also a chord-arc $m$-varifold in $B(\xi,\sigma)$ for any $\xi\in \Sigma$ and $B(\xi,\sigma)\subset B_1$ with the same constant,we only need to verify the case $B(\xi,\sigma)=B_1$.  By translating and composition of $\tau$ with the graph representation  of $\Sigma_0$,  Theorem \ref{W1p para scaling} follows from item $(1),(3),(4)$ and \eqref{taustar} and \eqref{tauminusidentity} of Theorem \ref{W1p param}.
\end{proof}

The proof of Theorem \ref{W1p param} is divided to be two subsections.
\subsection{Smoothing up $\Sigma$}
Assume $\delta:\Sigma=spt\mu_V\to [0,+\infty)$ is a $1$-Lipschitz function (i.e., $|\delta(x)-\delta(y)|\le |x-y|,\forall x,y\in \Sigma$) such that $\delta(x)\le \frac{1}{100}(1-|x|)$. In this subsection, we want to approximate $\Sigma$ by some more regular surface $\Sigma_\delta$, which is a Lipschitz graph  around each $x$ at the scale of $\frac{1}{2}\delta(x)$. In later application, $\delta$ will be chosen to be $\delta_0(x)=\frac{1-|x|}{100}$ or $\delta_{j+1}(x)=\min\{d(x,F_j),\frac{1-|x|}{100}\}$, where $F_j
$ is some good part(e.g., the big piece of Lipschitz graph) of $\Sigma$.
Since $\delta$ is $1-$Lipschitz, $\delta$ is almost constant in local. More precisely, for any $y\in B(x,\frac{\delta(x)}{2})$, we have $|\delta(y)-\delta(x)|\le |x-y|\le \frac{1}{2}\delta(x)$ and hence
\begin{align*}
\frac{1}{2}\delta(x)\le\delta(y)\le \frac{3}{2}\delta(x).
\end{align*}
The following $(\nu,\delta)$-fine set $F_\delta$ is defined to be the point in $\Sigma$ such that the local maximal function of the Gaussian map is $\nu$-small in $\delta$ size.
\begin{defi}\label{def:fine set}
 Assume $V=\underline{v}(\Sigma, \theta)$ is a chord-arc $m$-varifold in $B_1$ with constant $\gamma$.  For any $x\in \Sigma$ and $r\in (0,3\delta(x))$, we define the local maximal function  on  $B(x,\delta(x))$ by
$$T^{*}_{x,r}(y):=\sup_{0<s<r}\frac{1}{\mu(B(y,s))}\int_{B(y,s)}|p_{T_z}-p_{T_{x,2\delta(x)}}|d\mu(z), \forall y\in B(x,\delta(x)),$$
where $T_{z}$ is the approximating tangent space of $V$ at $z$ and $T_{x,2\delta(x)}$ is the plane occurred in the  definition of chord-arc varifold. For $r=0$, define $T_{x,r}^{*}=0$.  For $\nu>0$, the $(\nu,\delta)$-fine set $F_\delta$ is defined by
\begin{align*}
F_\delta:=\{x\in \Sigma: T^{*}_{x,2\delta(x)}(x)\le \nu\}.
\end{align*}
\end{defi}
\begin{rem}
 $\mathcal{Z}_\delta:=\{x\in \Sigma:\delta(x)=0\}\subset F_\delta$.
\end{rem}
Roughly speaking, the construction of $\Sigma_\delta$ is by gluing small good pieces coming from the Lipschitz approximation onto the fine set. The first observation is the fine set is large enough.
\begin{lem}\label{fine set large} There exists a constant $C>0$ such that for any  chord-arc $m$-varifold $V=\underline{v}(\Sigma,\theta)$ in $B_1$ with small constant $\gamma$ and  $x\in \Sigma$,  there holds
\begin{align*}
\frac{\mu((\Sigma\backslash F_{\delta})\cap B(x,\frac{1}{2}\delta(x)))}{\mu(B(x,\frac{\delta(x)}{2}))}\le C \frac{\gamma}{\nu}.
\end{align*}
In particular, if $\nu^{-1}\gamma$ is small enough(e.g, $\nu=\sqrt{\gamma}\ll 1$), then $d(x,F_\delta)\le 10^{-10}\delta(x)$.
\end{lem}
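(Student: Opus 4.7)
The plan is to bound the bad set $(\Sigma \setminus F_\delta) \cap B(x, \tfrac{1}{2}\delta(x))$ via a weak-type $(1,1)$ maximal-function argument, using the tilt-excess from Definition \ref{chord arc varifold} as the $L^1$ input. Writing $f(z) := |p_{T_z} - p_{T_{x, 2\delta(x)}}|$, a point $y \in B(x,\tfrac12\delta(x))$ lies outside $F_\delta$ precisely when, for some radius $s_y \in (0, 2\delta(x))$, the average of $f$ over $B(y, s_y)$ exceeds $\nu$. I would apply Vitali's $5r$-covering lemma to the family $\{B(y, s_y)\}$ over such bad $y$ to extract a disjoint subfamily $\{B(y_i, s_i)\}$ whose $5$-fold enlargements still cover the bad set. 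Since every selected ball lies inside $B(x, \tfrac{5}{2}\delta(x))$, Ahlfors regularity (applied both to enlargements and to the outer ball) combined with the defining inequality $\int_{B(y_i, s_i)} f\, d\mu > \nu\, \mu(B(y_i, s_i))$ yields
\[
\mu\bigl((\Sigma \setminus F_\delta) \cap B(x, \tfrac{1}{2}\delta(x))\bigr) \le \frac{C}{\nu} \int_{B(x, \frac{5}{2}\delta(x))} f\, d\mu.
\]

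To estimate the right-hand side, I would invoke Cauchy--Schwarz together with Ahlfors regularity to reduce to an $L^2$ bound for $f$. The $L^2$ norm is controlled by the tilt-excess at scale $\tfrac{5}{2}\delta(x)$ after paying a term for the plane comparison $|p_{T_{x, 2\delta(x)}} - p_{T_{x, \frac{5}{2}\delta(x)}}|$; this comparison itself is $O(\gamma)$ by the same triangle-inequality argument used in the proof of Lemma \ref{cor:excess estimate}, since both planes are Reifenberg-close to $\Sigma$ inside a common ball. Combining gives $\|f\|_{L^2}^2 \le C \gamma^2 \delta(x)^m$, and Cauchy--Schwarz then produces the claimed $\frac{C\gamma}{\nu} \mu(B(x, \tfrac{1}{2}\delta(x)))$.

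For the final assertion, once $\gamma/\nu$ is sufficiently small the bad set has measure strictly less than $\mu(B(x, 10^{-10}\delta(x)))$, the latter being a fixed dimensional fraction of $\mu(B(x, \tfrac{1}{2}\delta(x)))$ by Ahlfors regularity. Hence $F_\delta \cap B(x, 10^{-10}\delta(x))$ is non-empty, which gives $d(x, F_\delta) \le 10^{-10}\delta(x)$. The only non-routine ingredient is the plane comparison between scales $2\delta$ and $\tfrac{5}{2}\delta$; if the sharp linear-in-$\gamma$ comparison proves delicate, a weaker bound such as $C\gamma^{1/2}$ is still enough to recover the qualitative second conclusion via the choice $\nu = \sqrt{\gamma}$ that will be used in later sections.
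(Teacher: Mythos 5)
Your proposal is correct and follows essentially the same route as the paper: a weak-type $(1,1)$ argument via Vitali's $5r$-covering applied to the balls witnessing $T^*_{x,2\delta(x)}>\nu$, followed by Cauchy--Schwarz, the tilt-excess hypothesis, a plane comparison between nearby scales (the paper uses $2\delta$ versus $3\delta$ and obtains the $O(\gamma)$ comparison by averaging $|p_{T_z}-p_{T}|$ over the common ball rather than via Reifenberg closeness, but both give $C\gamma$ here since the chord-arc constant controls both quantities), and Ahlfors regularity for the final non-emptiness claim.
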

\begin{proof}
This is the weak $(1,1)$ property for maximal functions on metric measure spaces with doubling condition.  More precisely, letting
\begin{align*}
\mathcal{A}=\{y\in B(x,\frac{\delta(x)}{2})\cap \Sigma: T_{x,2\delta(x)}^{*}(y)\ge \nu\},
\end{align*}
then for any $y\in \mathcal{A}$, there exists $r_y\in(0,2\delta(x))$ such that
\begin{align*}
\int_{B(y,r_y)}|p_{T_z}-p_{T_{x,2\delta(x)}}|d\mu(z)\ge \nu \mu(B(y,r_y)).
\end{align*}
Since $\mathcal{A}\subset \cup_{y\in \mathcal{A}}B(y,r_y)$, by Vitali five-times covering theorem, we know there exists countably many $\{y_i\}\subset \mathcal{A}$ such that $B(y_i,r_{y_i})\cap B(y_j,5r_{y_j})$ for $y_i\neq y_j$ and $\mathcal{A}\subset \cup_{i}B(y_i,5r_{y_i})$. Noting each $y_i\in\Sigma$, $B(y_i,5r_{y_i})\subset B(x,11\delta(x))\subset B_1$, by the Ahlfors regularity, we have
\begin{align*}
\mu(\mathcal{A})&\le \sum_{i}\mu(B(y_i,5r_{y_i}))
\le (5^m+1) \sum_{i}\mu(B(y_i,r_{y_i}))\le \frac{5^m+1}{\nu}\int_{\cup B(y_i,r_{y_i})}|p_{T_z}-p_{T_{x,2\delta(x)}}|d\mu\\
&\le\frac{5^m+1}{\nu}\int_{B(x,3\delta(x))}|p_{T_z}-p_{T_{x,2\delta(x)}}|d\mu\\
&\le \frac{C(m)}{\nu}\big(\frac{1}{\mu(B(x,3\delta(x)))}\int_{B(x,3\delta(x))}|p_{T_z}-p_{T_{x,2\delta(x)}}|^2d\mu\big)^{\frac{1}{2}}\mu(B(x,3\delta(x))).
\end{align*}
By the  Ahlfors regularity and excess estimate
, we have
\begin{align*}
\big(&\frac{1}{\mu(B(x,3\delta(x)))}\int_{B(x,3\delta(x))}|p_{T_z}-p_{T_{x,2\delta(x)}}|d\mu\big)^{\frac{1}{2}}\\
&\le CE(x,3\delta(x),T_{x,3\delta(x)})^{\frac{1}{2}}+|p_{T_{x,3\delta(x)}}-p_{T_{x,2\delta(x)}}|\\
&\le C E(x,3\delta(x),T_{x,3\delta(x)})^{\frac{1}{2}}
+\frac{C}{\delta(x)}[\big(\int_{B(x,2\delta(x))}|p_{T_z}-p_{T_{x,2\delta(x)}}|d\mu\big)^{\frac{1}{2}}
+\big(\int_{B(x,2\delta(x))}|p_{T_z}-p_{T_{x,3\delta(x)}}|d\mu\big)^{\frac{1}{2}}]\\
&\le C (E(x,3\delta(x),T_{x,3\delta(x)})^{\frac{1}{2}}+ E(x,2\delta(x),T_{x,2\delta(x)})^{\frac{1}{2}})\\
&\le C\gamma.
\end{align*}
So, we get
\begin{align*}
\mu(\mathcal{A})\le \frac{C\gamma}{\nu}\mu(B(x,3\delta(x)))\le C\frac{\gamma}{\nu}\delta^m(x)\le C\frac{\gamma}{\nu}\mu(B(x,\frac{\delta(x)}{2})).
\end{align*}
Finally, if $\nu^{-1}\gamma$ is small enough(e.g, $\nu=\sqrt{\gamma}\ll 1$), then the estimate  $d(x,F_\delta)\le 10^{-10}\delta(x)$ follows from the Ahlfors regularity.
\end{proof}

The next lemma shows the fine set defined above is a Lipschitz graph. 
\begin{lem}[Big piece of Lipschitz graph]\label{lem:lip graph}
Assume  $V=\underline{v}(\Sigma,\theta)$ is a chord-arc $m$-varifold in $B_1$ with small constant $\gamma$ and $\delta$ is an $1$-Lipschitz function on $\Sigma$ with $\delta(x)\le \frac{1}{100}(1-|x|)$. Let $\sqrt{\gamma}\le \nu\ll 1$, $F_\delta$ constructed as in Definition \ref{def:fine set} and $x\in \Sigma$ with $\delta(x)>0$. Then, for any $y\in F_\delta\cap B(x,\frac{\delta(x)}{2})$ and $z\in \Sigma\cap B(x,\frac{\delta(x)}{2})$, there holds
\begin{align}
|p^{\bot}_{T_{x,2\delta(x)}}(y)-p^{\bot}_{T_{x,2\delta(x)}}(z)|\le C\nu |p_{T_{x,2\delta(x)}}(y)-p_{T_{x,2\delta(x)}}(z)|.
\end{align}
Especially, there exists Lipschitz function  $f^x:x+T_{x,2\delta(x)}\to T_{x,2\delta(x)}^{\bot}$ with $f^x(x)=0$ and $Lip f^x\le C\nu$ such that
\begin{align*}
F_\delta\cap B(x,\frac{\delta(x)}{2})\subset graph f^x:=\{w+f^x(w)| w\in B(x,\frac{\delta(x)}{2})\cap (x+T_{x,2\delta(x)})\}.
\end{align*}
\end{lem}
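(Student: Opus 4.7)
The plan is to establish the cone inequality $|p^\bot_T(y-z)| \le C\nu |p_T(y-z)|$ with $T := T_{x, 2\delta(x)}$ for every $y \in F_\delta \cap B(x, \delta(x)/2)$ and $z \in \Sigma \cap B(x, \delta(x)/2)$. Once this cone condition is in hand, $p_T$ is injective with bi-Lipschitz inverse on $F_\delta \cap B(x, \delta(x)/2)$, so this set is contained in the graph of a $C\nu$-Lipschitz function defined over $p_T(F_\delta \cap B(x, \delta(x)/2))$; the Kirszbraun extension theorem then produces the desired $f^x$ on the whole affine plane $x + T$.

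The first step is to align the chord-arc reference planes at $y$ with $T$. For $y \in F_\delta$ and $s \in (0, 2\delta(y))$, writing $p_{T_{y,s}} - p_T = \fint_{B(y,s)}[(p_{T_{y,s}} - p_{T_w})+(p_{T_w} - p_T)] d\mu(w)$ and applying the triangle inequality give
\[
|p_{T_{y,s}} - p_T| \le \fint_{B(y,s)} |p_{T_{y,s}} - p_{T_w}| d\mu(w) + \fint_{B(y,s)} |p_{T_w} - p_T| d\mu(w).
\]
The first term is bounded by Cauchy--Schwarz and the tilt-excess estimate from Definition \ref{chord arc varifold} by $\gamma$; the second is bounded by $\nu$ via the defining property of $F_\delta$ in Definition \ref{def:fine set}. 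Since $\gamma \le \nu^2 \le \nu$ by hypothesis, this yields $|p_{T_{y,s}} - p_T| \le C\nu$.

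Next we derive the height control. For $z \in \Sigma \cap B(y, s)$, the Reifenberg condition in Definition \ref{chord arc varifold} gives $|p^\bot_{T_{y,s}}(z-y)| \le \gamma s$, and combining with the plane alignment above yields
\[
|p^\bot_T(z-y)| \le |p^\bot_{T_{y,s}}(z-y)| + |p^\bot_{T_{y,s}} - p^\bot_T| \cdot |z-y| \le \gamma s + C\nu |z-y|.
\]
Since $\delta$ is $1$-Lipschitz and $|y-x| \le \delta(x)/2$ we have $\delta(y) \ge \delta(x)/2$, so $|y-z| \le \delta(x) \le 2\delta(y)$. Choosing $s := \min\{2|y-z|, 2\delta(y)\}$, in both subcases one checks that $\gamma s \le 2\gamma |y-z|$, so $|p^\bot_T(y-z)| \le C\nu |y-z|$. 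The Pythagorean identity then forces $|p_T(y-z)| \ge |y-z|/2$, which converts the height bound into the stated cone inequality.

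Finally, the cone inequality shows that $F_\delta \cap B(x, \delta(x)/2)$ is the graph of a $C\nu$-Lipschitz function over its image in $x + T$, and the Kirszbraun extension theorem extends this to a $C\nu$-Lipschitz map $f^x : x + T \to T^\bot$. If $x \in F_\delta$ then $f^x(x) = 0$ already; otherwise Lemma \ref{fine set large} provides $y_0 \in F_\delta$ within $10^{-10}\delta(x)$ of $x$, and adding the constant vector $-f^x(p_T(x))$ (equivalently, reparameterizing) enforces $f^x(x) = 0$ without changing the Lipschitz constant. The main obstacle I anticipate is the alignment step: the definition of $F_\delta$ only gives an \emph{averaged} bound on tangent planes against the reference $T$, and converting this into a pointwise bound on the chord-arc plane $T_{y,s}$ at every scale $s < 2\delta(y)$ is what drives the whole argument; the constraint $s < 2\delta(y)$ versus the possibly larger separation $|y-z|$ is then resolved using that $\delta$ is $1$-Lipschitz.
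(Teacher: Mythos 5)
Your proposal is correct and follows essentially the same route as the paper: averaged tilt control from the definition of $F_\delta$ plus the tilt-excess and Ahlfors regularity to align $T_{y,s}$ with $T_{x,2\delta(x)}$ at every scale $s<2\delta(y)$, the Reifenberg condition at scale $\approx|y-z|$ for the height bound, Pythagoras for the cone condition, and a Lipschitz extension. The only gloss is in your alignment step: the defining property of $F_\delta$ bounds $\fint_{B(y,s)}|p_{T_w}-p_{T_{y,2\delta(y)}}|\,d\mu$ rather than $\fint_{B(y,s)}|p_{T_w}-p_{T_{x,2\delta(x)}}|\,d\mu$, so you must first insert the comparison $|p_{T_{y,2\delta(y)}}-p_{T_{x,2\delta(x)}}|\le C\gamma$ (obtained from the excess estimates on the overlapping balls, exactly as in the paper's first display), which costs only an extra $C\gamma\le C\nu$.
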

\begin{proof}
For any $y\in F_\delta\cap B(x,\frac{\delta(x)}{2})$, we know $|x-y|\le \frac{\delta(x)}{2}$ and hence $\frac{1}{2}\le \frac{\delta(y)}{\delta(x)}\le \frac{3}{2}$. By the Ahlfors regularity and tilt-excess estimate, similar argument as in proof of Lemma \ref{fine set large}, we know
\begin{align*}
|p_{T_{x,2\delta(x)}}-p_{T_{y,2\delta(y)}}|
\le C\big(E^{\frac{1}{2}}(x,2\delta(x),T_{x,2\delta(x)})&+E^{\frac{1}{2}}(y,3\delta(y),T_{y,2\delta(y)})
\\
&+E^{\frac{1}{2}}(y,2\delta(y),T_{y,2\delta(y)})\big)\le C\gamma.
\end{align*}
Now, for $z\in B(x,\frac{\delta(x)}{2})\cap\Sigma$, we know $\sigma:=|y-z|\le \delta(x)\le 2\delta(y)$. By $T^{*}_{y,2\delta(y)}\le \nu$, we get
\begin{align*}
\frac{1}{\mu(B(y,\sigma))}\int_{B(y,\sigma)}|p_{T_z}-p_{T_{y,2\delta(y)}}|d\mu(z)\le \nu.
\end{align*}
By the Ahlfors regularity and tilt-excess estimate again, we know
\begin{align}\label{small tilt}
|p_{T_{x,2\delta(x)}}-p_{T_{y,\sigma}}|&\le \frac{1}{\mu(B(y,\sigma))}\int_{B(y,\sigma)}(|p_{T_z}-p_{T_{y,\sigma}}|
+|p_{T_{z}}-p_{T_{y,2\delta(y)}}|+|p_{T_{x,2\delta(x)}}-p_{T_{y,2\delta(y)}}|)d\mu\nonumber\\
&\le C(\nu+\gamma).
\end{align}
By the Reifenberg condition, we know
$\frac{1}{\sigma}d_{\mathcal{H}}(B(y,\sigma)\cap \Sigma, B(y,\sigma)\cap (T_{y,\sigma}+y))\le \gamma$, which implies
\begin{align*}
|p^{\bot}_{T_{y,\sigma}}(z-y)|\le \gamma \sigma=\gamma |z-y|.
\end{align*}
So, by \eqref{small tilt},  we get
\begin{align*}
|p^{\bot}_{T_{x,2\delta(x)}}(z-y)|\le |p^{\bot}_{T_{x,2\delta(x)}}-p^{\bot}_{T_{y,\sigma}}||z-y|+|p^{\bot}_{T_{y,\sigma}}(z-y)|\le C\nu|z-y|.
\end{align*}
This implies
\begin{align*}
|p^{\bot}_{T_{x,2\delta(x)}}(z-y)|\le\frac{C\nu}{\sqrt{1-(C\nu)^2}}|p_{T_{x,2\delta(x)}}(z-y)|,
\end{align*}
which means $p_{T_{x,2\delta(x)}}:F_\delta\cap B(x,\frac{\delta(x)}{2})\to T_{x,2\delta(x)}$ is injective. Moreover, letting $\Omega_0=x+p_{x,2\delta(x)}(F_\delta\cap B(x,\frac{\delta(x)}{2}))$, there exists a Lipschitz function $\tilde{f}^x:\Omega_0\to T^{\bot}_{x,2\delta(x)}$ with  $Lip \tilde{f}^x\le C\nu$ such that
$$F_\delta\cap B(x,\frac{\delta(x)}{2})=graph \tilde{f}^x=\{w+\tilde{f}^x(w)| w\in \Omega_0\}.$$
Finally, by extending $\tilde{f}^x$ to a Lipscitz function $f^x$ defined on $x+T_{x,2\delta(x)}$ and translating such that $f^{x}(x)=0$, we get the conclusion.
\end{proof}

Lemma \ref{fine set large} and Lemma \ref{lem:lip graph} means the set $F_\delta$ accounts for a large part of each ball $B(x,\frac{\delta(x)}{2})$, and it is fine in the sense of been written as a Lipschitz graph, but there may be some small holes of $\Sigma$ not in $F_\delta$. To construct $\Sigma_\delta$, Semmes' idea is to fill in these small holes by the extension of the Lipschitz graphs in small balls. But different extended graphs may intersect with each other such that their union is not a surface.   To deal with this, Semmes first separates the balls to finite many groups such that the balls in each group are disjoint and fill in one group of holes at a time.
 Then, the key is to show that the Lipschitz graphs corresponding to balls in different groups near a fixed point can in fact be written as a single graph, thus they can be glued together to shape a surface which can be written as Lipschitz graphs without holes in each ball  $B(x,\frac{\delta(x)}{2})$.

 The following is the main result of this subsection.
\begin{pro}\label{modify}Let $0<\gamma\ll 1$ and $\nu=\sqrt{\gamma}$. Assume $V=\underline{v}(\Sigma, \theta)$ is a chord-arc $m$-varifold in $B_1$ with constant $\gamma$ and $\delta$ is an $1$-Lipschitz function on $\Sigma$ with $\delta(x)\le \frac{1}{100}(1-|x|)$. Then, there exist a set $\Sigma_\delta$ such that
\begin{enumerate}
\item
    $
    F_\delta\subset \Sigma_\delta;
    $
\item $\Sigma_\delta\subset \mathcal{Z}_\delta\cup \cup_{x\in \Sigma}B(x,C\nu\delta(x))$ and if $x\in \Sigma_\delta$, then
    \begin{align*}
    d(x,\Sigma)\le C\nu d(x,F_\delta);
    \end{align*}
\item if $x\in F_\delta, T_0=T_{x,2\delta(x)}$, then
\begin{align*}
\Sigma_\delta\cap B(x,\frac{1}{2}\delta(x))\text{ is a Lipschitz graph over }T_0 \text{ with Lipschitz constant } \le C\nu.
\end{align*}
\end{enumerate}
\end{pro}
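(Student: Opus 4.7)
The plan is to adapt Semmes' generation (smoothing) algorithm to our non-smooth, higher codimension setting. Since $F_\delta$ is already the desired object near $\mathcal{Z}_\delta$ and inside balls centered at fine points (Lemma \ref{lem:lip graph}), the task is to fill in the ``holes'' $\Sigma \setminus F_\delta$ by suitable Lipschitz graphs so that the result is both consistent with $F_\delta$ and globally well-defined as a set.

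First I would produce a covering of $\Sigma \setminus F_\delta$ by balls $\{B(x_i, r_i)\}$ with $x_i \in \Sigma \setminus F_\delta$ and $r_i$ comparable to $\delta(x_i)$ (for instance $r_i = c\nu\delta(x_i)$ with $c$ small), chosen via a Vitali-type argument so that nearby balls have comparable radii. By Lemma \ref{fine set large}, each $x_i$ lies within $10^{-10}\delta(x_i)$ of some point of $F_\delta$, so the balls we need to use are small compared to $\delta(x_i)$. A standard Besicovitch-type argument lets me partition the collection into finitely many subfamilies $\mathcal{G}_1,\dots,\mathcal{G}_N$, with $N=N(n)$, such that each $\mathcal{G}_k$ consists of pairwise disjoint balls with bounded-overlap neighborhoods.

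Next I would run the generation algorithm. Set $\Sigma^{(0)} = \Sigma$. Given $\Sigma^{(k-1)}$, for each $B(x_i,r_i) \in \mathcal{G}_k$ choose a nearby point $y_i \in F_\delta$ (Lemma \ref{fine set large}) and replace $\Sigma^{(k-1)} \cap B(x_i, r_i)$ by the graph of the Lipschitz extension $f^{y_i}$ over $T_{y_i, 2\delta(y_i)}$ from Lemma \ref{lem:lip graph}, suitably cut off so as to patch into the part outside. Because the balls in $\mathcal{G}_k$ are disjoint, the replacements in one stage do not interact. Define $\Sigma_\delta := \Sigma^{(N)}$. Properties (1) and (2) then follow almost tautologically: no point of $F_\delta$ is ever in one of the filling balls, and any point of $\Sigma_\delta$ is either in $\Sigma$ or lies on a graph piece inside some $B(x_i,r_i)$, hence within $O(\nu \delta(x_i))$ of $\Sigma$ and within $O(\nu d(\,\cdot\,, F_\delta))$ of the fine set.

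The main obstacle is verifying property (3): that for $x \in F_\delta$ the entire set $\Sigma_\delta \cap B(x,\tfrac12\delta(x))$ is a single Lipschitz graph over $T_0 := T_{x,2\delta(x)}$ with constant $\le C\nu$, even though it is assembled from $F_\delta$ and from graphs over possibly \emph{different} planes $T_{y_i, 2\delta(y_i)}$. The crucial point is that for any $y_i$ used in a ball meeting $B(x,\tfrac12\delta(x))$, the argument in Lemma \ref{lem:lip graph} gives
\[
|p_{T_{x,2\delta(x)}} - p_{T_{y_i,2\delta(y_i)}}| \le C(\nu + \gamma) \le C\nu,
\]
so each $f^{y_i}$, rewritten as a graph over $T_0$, still has Lipschitz constant $\le C\nu$. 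Combined with the no-hole property from the Reifenberg condition (Proposition \ref{pro:conclude}), which guarantees that the projection to $T_0$ remains surjective on $B(x,\tfrac12\delta(x))$, and with the injectivity of the projection on each graph piece, one concludes that the assembled object is the graph over $T_0$ of a single Lipschitz function with constant $\le C\nu$. Checking injectivity requires that on the overlap of two replacement pieces, or on the overlap of a replacement piece with $F_\delta$, the two Lipschitz representations agree up to vertical error $o(\nu)$, which is where the ordering of the groups $\mathcal{G}_k$ and the careful choice of cutoffs enter. Once this compatibility is secured, the remaining items (1)--(3) are straightforward bookkeeping.
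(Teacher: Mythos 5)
Your overall strategy (cover the bad set, split the covering into finitely many disjoint groups, fill the holes with the Lipschitz extensions from Lemma \ref{lem:lip graph}, and compare the reference planes via the tilt-excess estimate) matches the paper's, but the way you assemble the pieces has a genuine gap. The paper does \emph{not} start from $\Sigma^{(0)}=\Sigma$ and perform surgery with cutoffs; it starts from $\Sigma_{\delta,0}=F_\delta$ and at stage $k$ adjoins, for each center $u\in L_k$, the graph of a single Lipschitz function $g^u$ restricted to $B(u,2\cdot 10^{-3}\delta(u))$, where $g^u$ is chosen so that its graph contains \emph{both} $F_\delta\cap B(u,\tfrac12\delta(u))$ \emph{and} the previously constructed pieces $\Sigma_{\delta,k-1}\cap B(u,7\cdot 10^{-3}\delta(u))\cap V_{k-1}$ (Lemma \ref{induction}). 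Because each new piece is a common extension, it literally coincides with the old pieces on overlaps, so no interpolation is needed and no second sheet can appear. In your scheme the replacement graph $f^{y_i}$ extends only $F_\delta$, so over a hole of $F_\delta$ two replacement graphs from different groups are two $C\nu$-Lipschitz functions that merely agree on $F_\delta$; they can differ vertically by $\sim C\nu\, d(\cdot,F_\delta)>0$ there, and their union over such a region is two sheets, not a graph. Your proposed fix --- that the representations ``agree up to vertical error $o(\nu)$'' plus cutoffs --- does not repair this: $C^0$-closeness never turns a union of two graphs into one graph, and interpolating between two functions with vertical discrepancy $\epsilon$ over a transition zone of width $w$ costs $\epsilon/w$ in the Lipschitz constant, which you have not controlled. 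Moreover, at stage $k$ the set just outside $B(x_i,r_i)$ may still contain unreplaced bad points of $\Sigma$, so there is nothing Lipschitz to ``patch into''.

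The second missing ingredient is the actual criterion for the assembled set to be one graph. What must be verified is the uniformly conic condition $|p^{\bot}_{T_0}(z)-p^{\bot}_{T_0}(w)|\le C\nu\,|p_{T_0}(z)-p_{T_0}(w)|$ for $z,w$ taken from \emph{different} pieces (one in $F_\delta$ and one on a filled-in graph, or on two distinct filled-in graphs). Proving this is the technical heart of the paper's argument: one uses the induction hypothesis that every constructed point $w$ satisfies $d(w,\Sigma)\le C\nu\, d(w,F_\delta)$ (item $(2)$ of Lemma \ref{induction}, itself obtained from the no-hole property as in Proposition \ref{pro:conclude}), replaces $w$ by a nearby point of $\Sigma$, and then applies Lemma \ref{lem:lip graph} to that point. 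Your proposal never formulates this condition, and the estimate $|p_{T_{x,2\delta(x)}}-p_{T_{y_i,2\delta(y_i)}}|\le C\nu$ alone is not enough: it shows each piece is separately a $C\nu$-graph over $T_0$, not that their union is. Finally, a small error: fine points certainly can lie in your filling balls $B(x_i,r_i)$, since a bad point $x_i$ may be arbitrarily close to $F_\delta$; item $(1)$ must instead come from the fact that each replacement graph contains $F_\delta$ locally, not from the (false) claim that the balls avoid $F_\delta$.
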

\begin{rem}\label{Lipschitz surface} By item $(2),(3)$ of the above proposition and Lemma \ref{fine set large}, for any $x\in \Sigma_\delta\backslash \mathcal{Z}_\delta$, there exists $y\in F_\delta$ such that $|x-y|\le C\nu\delta(y)$ and $\Sigma_\delta\cap B(x,(\frac{1}{2}-C\nu)\delta(y))$ is a Lipschitz graph. This implies $\Sigma_\delta\backslash \mathcal{Z}_\delta$ is a Lipschitz manifold.  Especially, when $\delta>0$ on $\Sigma$, we know $\Sigma_\delta$ is a Lipschitz manifold.
\end{rem}
\begin{proof}
By item $(2)$, there exists $y_1\in \Sigma$ such that $|x-y_1|\le C\nu\delta(y_1)$. In case $\delta(y_1)=0$, we know $x=y_1\in \mathcal{Z}_\delta$, contradiction to the assumption $x\notin\mathcal{Z}_\delta$. So, we know $\delta(y_1)>0$ and hence by Lemma \ref{fine set large}, there exists $y\in B(y_1,C\nu \delta(y_1))\cap F_\delta$. Thus we  know $\delta(y)\ge (1-C\nu)\delta(y_1)>0$ and $|x-y|\le |x-y_1|+|y_1-y|\le 2C\nu\delta(y_1)\le C\nu \delta(y)$. Now, the conclusion follows by item $(3)$ and $B(y,\frac{1}{2}\delta(y))\supset B(x,(\frac{1}{2}-C\nu)\delta(y))$.
\end{proof}
\begin{proof}[Proof of Proposition \ref{modify}]
Let $\Omega=\Sigma\backslash \mathcal{Z}_\delta=\{x\in \Sigma:\delta(x)>0\}$ and $L$ be a maximal subset of $\Omega$ such that for any $x\neq y\in L$, we have
\begin{align}\label{disjoint}
|x-y|\ge \frac{1}{2}\cdot 10^{-3}\delta(x).
\end{align}
For the existence of such $L$, we use Zorn's lemma. More precisely, if $\{L_\alpha\subset \Omega\}_{\alpha\in \Lambda}$ is a chain in the sense either $L_{\alpha}\subset L_{\beta}$ or $L_{\beta}\subset L_\alpha$ such that (\ref{disjoint}) holds for $x,y\in L_{\alpha}$, then for any $x,y\in L_{\Lambda}:
=\cup_{\alpha\in \Lambda}L_\alpha$, there exists $\alpha,\beta\in \Lambda$ such that $x\in L_{\alpha}$ and $y\in L_\beta$, W.L.O.G., we can assume $L_\alpha\subset L_\beta$ and get (\ref{disjoint}) holds for $x,y$. This verified any chain $\{L_\alpha\}_{\alpha\in \Lambda}$ has an upper bound $L_\Lambda$. So, Zorn's lemma applies. By the maximality $L$, we know
\begin{align*}
\Omega\subset \cup_{y\in L} B(y, 10^{-3}\delta(y)).
\end{align*}
Otherwise, there exists $x\in \Omega\backslash \cup_{y\in L} B(y,10^{-3}\delta(y))$. The maximality of $L$ implies there exists $y\in L$ such that either
$|x-y|\le \frac{1}{2}\cdot 10^{-3}\delta(x)$ or $|x-y|\le \frac{1}{2}\cdot 10^{-3}\delta(y)$. Noting $|\delta(x)-\delta(y)|\le |x-y|$, we know in both case, there holds
$|x-y|\le \frac{3}{4}\cdot 10^{-3}\delta(y)$, which contradicts to $x\in\Omega\backslash \cup_{y\in L} B(y, 10^{-3}\delta(y))$. Similarly, there exists a constant $Q\le 10^{5m+1}$ such that $L=\cup_{j=1}^{Q}L_j$ and for any $x\neq y\in L_j$, there holds
\begin{align}\label{disjointt}
|x-y|\ge \frac{1}{10}\delta(y).
\end{align}
Firstly,  choose the maximal $L_1\subset L$ such that the above inequality holds. Then, choose $L_2\subset L\backslash L_1$ such that the inequality holds. After $Q$ steps, if $L_Q\neq \emptyset$, then there exists $x\in L_Q$ and for any $i\le Q-1$, there exists $y_i\in L_i$ such that $|x-y_i|\le \frac{1}{10}\max\{\delta(x),\delta(y)\}$. Since $\delta$ is $1$-Lipschitz,  in both cases, we know $|y_i-x|\le \frac{1}{5}\delta(x)$ and hence $\delta(y_i)\ge \frac{4}{5}\delta(x)$. Moreover, $y_i\neq y_j\in L$ implies $|y_i-y_j|\ge 10^{-3}\delta(y_i)\ge 8\cdot 10^{-4}\delta(x)$.  So,
\begin{align*}
\{B(y_i,4\cdot 10^{-4}\delta(x))\}_{i=1}^{Q-1} \text{ is a family of disjoint balls contained in } B(x,\delta(x)).
\end{align*}
Thus Ahlfors' regularity guarantees $10^{-5m}\omega_m\delta^m(x)(Q-1)\le \mu(B(x,\delta(x)))\le 2\omega_m\delta^m(x)$, which means $Q\le 10^{5m+1}$.
Next, we follows Semmes' argument to construct $\{\Sigma_{\delta,k}\}_{k=0}^{Q}$ inductively  such that the following holds.
\begin{lem}\label{induction} Under the assumption of Propositon \ref{modify}, there exists $\{\Sigma_{\delta,k}\}_{k=0}^{Q}$ such that
\begin{align*}
F_\delta=:\Sigma_{\delta,0}\subset \Sigma_{\delta,1}\subset \ldots \subset \Sigma_{\delta, Q}=:\Sigma_{\delta}
\end{align*}
and $\Sigma_{\delta,k}$ satisfying
\begin{enumerate}
\item $\Sigma_{\delta,k}\subset F_\delta\cup \cup_{j=1}^{k}V_j$, where
\begin{align*}
V_j=\cup_{y\in L_j}B(y,2\cdot 10^{-3}\delta(y)).
\end{align*}
\item $\Sigma_{\delta,k}$ is close to $\Sigma$, i.e.,
\begin{align*}
\Sigma_{\delta,k}\subset \mathcal{Z}_\delta\cup \cup_{x\in \Sigma}B(x,C\nu\delta(x)).
\end{align*}
More precisely, we have
$$d(x,\Sigma)\le C\nu d(x,F_\delta), \forall x\in \Sigma_{\delta,k}.$$
\item Suppose $x\in \Sigma_{\delta,k}$, choose $y\in \Sigma$ such that $|x-y|\le 2\cdot 10^{-3}\delta(y)$. Then
    \begin{align*}
    \Sigma_{\delta,k}\cap B(x,10^{-2}\delta(y))\cap \cup_{j=1}^{k}V_j
    =B(x,10^{-2}\delta(y))\cap (\cup_{j=1}^kV_j)\cap Graph f,
    \end{align*}
    where $f:T_{y,2\delta(y)}\to T_{y,2\delta(y)}^{\bot}$ is a Lipschitz map with $Lip f\le C\nu$.
\end{enumerate}
\end{lem}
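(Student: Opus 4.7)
The plan is to proceed by induction on $k$, with base case $\Sigma_{\delta,0}=F_\delta$. In the base case items (1) and (2) are immediate ($\mathcal{Z}_\delta\subset F_\delta$ and $d(x,F_\delta)=0$ for $x\in F_\delta$), while item (3) has no content when $\cup_{j=1}^{0}V_j=\emptyset$; the Lipschitz-graph property of $F_\delta$ supplied by Lemma \ref{lem:lip graph} is what will be propagated through the induction. For the inductive step, assuming $\Sigma_{\delta,k-1}$ has been built, I would treat each $y\in L_k$ by first invoking Lemma \ref{fine set large} to select $y^*\in F_\delta$ with $|y-y^*|\le 10^{-10}\delta(y)$ (so $\delta(y^*)$ is comparable to $\delta(y)$), and then Lemma \ref{lem:lip graph} at $y^*$ to produce a $C\nu$-Lipschitz extension $f^{y^*}\colon T_{y^*,2\delta(y^*)}\to T_{y^*,2\delta(y^*)}^{\bot}$ whose graph contains $F_\delta\cap B(y,2\cdot 10^{-3}\delta(y))$. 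I would then set
\[
\Sigma_{\delta,k}:=\Sigma_{\delta,k-1}\cup\bigcup_{y\in L_k}\bigl(\mathrm{Graph}(f^{y^*})\cap B(y,2\cdot 10^{-3}\delta(y))\bigr),
\]
noting that within the single layer $L_k$ the balls are pairwise disjoint by \eqref{disjointt}.

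Items (1) and (2) should then fall out quickly. Item (1) is built into the definition. For item (2), a freshly added point $x$ lies in $B(y,2\cdot 10^{-3}\delta(y))$ with $y\in L_k\subset\Sigma$, so $d(x,\Sigma)$ is already bounded by $\delta(y)$; the stronger claim $d(x,\Sigma)\le C\nu\,d(x,F_\delta)$ comes from observing that $x$ sits on a $C\nu$-Lipschitz graph extending $F_\delta$, so in a ball of radius $r=d(x,F_\delta)$ around $x$ the Ahlfors regularity together with Lemma \ref{fine set large} forces a genuine point of $\Sigma$ within distance $C\nu r$ of $x$.

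The real work lies in item (3), which demands that the graph fragments added in different generations $j\le k$ assemble into a single $C\nu$-Lipschitz graph over $T_{y,2\delta(y)}$. The strategy rests on two observations. First, any two reference planes $T_{y_1^*,2\delta(y_1^*)}$ and $T_{y_2^*,2\delta(y_2^*)}$ attached to points $y_1^*,y_2^*$ lying in $B(x,10^{-2}\delta(y))$ are within $C\nu$ of each other in the Grassmannian; this follows by iterating the tilt-excess comparison used inside the proof of Lemma \ref{lem:lip graph}, since all the relevant balls sit inside one of comparable radius where the excess is $O(\nu^2)$. Consequently each local graph $\mathrm{Graph}(f^{y_j^*})$ re-expresses as a graph over $T_{y,2\delta(y)}$ with Lipschitz constant inflated only by a factor $1+O(\nu)$. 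Second, by Lemma \ref{fine set large} the set $F_\delta$ occupies a $(1-C\nu)$-fraction of every relevant ball, so the Lipschitz extensions $f^{y^*}$ have very little freedom across holes; choosing them in a canonical Whitney-type manner (depending only on the $F_\delta$-data in the ambient ball) would make the different local graphs coincide exactly on their overlaps, so that the pointwise union is really the graph of a single $C\nu$-Lipschitz function.

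The main obstacle I anticipate is arranging the extensions so that the consistency in item (3) is exact rather than merely approximate: two arbitrary $C\nu$-Lipschitz extensions of the same boundary data on $F_\delta$ need not agree pointwise, so the extensions must be chosen functorially in the base point $y^*$, and one must verify that changing $y^*$ within the relevant range does not change the graph in the overlap region. Once this combinatorial extension bookkeeping is carried out, the inductive step closes and we set $\Sigma_\delta:=\Sigma_{\delta,Q}$.
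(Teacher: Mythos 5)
Your skeleton (induct over the layers $L_1,\dots,L_Q$, fill each ball $B(u,2\cdot 10^{-3}\delta(u))$ with a piece of the Lipschitz graph supplied by Lemma \ref{lem:lip graph}) matches the paper, but two steps are genuinely incomplete. First, for item $(2)$ your mechanism --- ``Ahlfors regularity together with Lemma \ref{fine set large} forces a point of $\Sigma$ within $C\nu\, r$ of $x$'' --- does not deliver the conclusion: those facts only locate points of $\Sigma$ (indeed of $F_\delta$) at distance comparable to $r=d(x,F_\delta)$, not $C\nu r$. The paper instead uses the no-hole property from Reifenberg's topological disk theorem (Proposition \ref{pro:conclude}) to produce $x'\in\Sigma$ with $p_{T_0(u)}(x')=p_{T_0(u)}(x)$, and then applies the cone condition of Lemma \ref{lem:lip graph} to the pair $(x',y)$ with $y\in F_\delta$ realizing $d(x,F_\delta)$, so that $|x-x'|=|p^{\bot}_{T_0(u)}(x)-p^{\bot}_{T_0(u)}(x')|\le C\nu|x-y|$. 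Without this topological input the factor $C\nu$ is not obtainable.

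Second, and more seriously, the difficulty you flag for item $(3)$ --- forcing the independently chosen extensions $f^{y^*}$ to agree exactly on overlaps by a ``canonical Whitney-type'' choice --- is real and your proposal does not resolve it; Lipschitz extensions of the same data on $F_\delta$ taken relative to different base points and different reference planes will in general disagree on the holes, and a Whitney construction depends on the decomposition, hence on the base point. The paper avoids this entirely by a different idea: when treating a new center $u\in L_k$, it first proves that the set $X\cup Y$, where $X=F_\delta\cap B(u,\tfrac12\delta(u))$ and $Y$ consists of \emph{all previously added pieces} near $u$, satisfies a single uniform cone condition over $T_{u,2\delta(u)}$ (using the induction hypothesis $(2)$ to push points of $Y$ onto $\Sigma$ with error $C\nu\, d(\cdot,F_\delta)$ and then Lemma \ref{lem:lip graph}); hence $X\cup Y\subset \mathrm{Graph}\,g^u$ for one Lipschitz $g^u$ with $\mathrm{Lip}\,g^u\le C\nu$, and the piece added at $u$ is taken from $\mathrm{Graph}\,g^u$ itself. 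Consistency on overlaps is then automatic by construction, not by uniqueness of extensions. (One also needs the separation \eqref{disjointt} to ensure at most one ball of the current layer meets a given $B(x,10^{-2}\delta(y))$, which reduces the verification of item $(3)$ to a single new piece at a time; this bookkeeping is absent from your outline.)
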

\begin{rem}
In the statement of item $(3)$ in Lemma \ref{induction}, the choice of $y\in \Sigma$ to replace $x\in \Sigma_{\delta,k}$ is only to avoid the undefined notation $\delta(x)$ in case $x\notin \Sigma$.
\end{rem}

\begin{proof}[Proof of Lemma \ref{induction}]
Step 1. $k=0$, construct $\Sigma_{\delta,1}$ from $\Sigma_{\delta,0}=F_\delta$.
For any $u\in L_1,$ denote  $T_0=T_{u,2\delta(u)}$. By Lemma \ref{lem:lip graph}, we know
\begin{align*}
Y:=F_\delta\cap B(u,7\cdot 10^{-3}\delta(u))\subset X:=F_\delta\cap B(u,\frac{1}{2}\delta(u))\subset Graph f^{u},
\end{align*}
where $f^u: T_0(u)\to T_0^{\bot}(u)$ is a Lipschitz map with $Lip f^{u}\le C\nu$.

Let
\begin{align*}
\Sigma_{\delta,1}:=F_\delta\cup\cup_{u\in L_1}(Graph f^{u}\cap B(u,2\cdot 10^{-3}\delta(u))).
\end{align*}
we are going to check $\Sigma_{\delta,1}$ satisfies the three items in Lemma \ref{induction}.  Item $(1)$ holds by the construction. For item $(2)$, for any $x\in \Sigma_{\delta,1}$, by the construction of $\Sigma_{\delta,1}$, there exists $u\in \Sigma$ such that $x\in B(u,\frac{\delta(u)}{2})$. In case $\delta(u)=0$, we know $x=u\in \mathcal{Z}_\delta$. So, we can assume $\delta(u)>0$ with out loss of generality.   By Lemma \ref{fine set large}, we know $d(x,F_\delta)\le 10^{-10}\delta(u)$. So, it is enough to check that  there holds
\begin{align*}
d(x,\Sigma)\le C\nu d(x,F_\delta).
\end{align*}
 For this, letting $y\in F_\delta$ such that
\begin{align*}
d(x,y)=d(x,F_\delta).
\end{align*}
By Reifenberg's topological disk theorem $($\cite{R60,LS96}, see also  Proposition \ref{pro:conclude} for the statement$)$, we know there exists $x'\in \Sigma$ such that
\begin{align*}
p_{T_0(u)}(x)=p_{T_0(u)}(x'),
\end{align*}
where $T_0(u)=T_{u,2\delta(u)}$ is the plane occurred in the tilt-excess estimate. By Lemma \ref{lem:lip graph}, since $x'\in \Sigma$ and $y\in F_\delta$, we know
\begin{align*}
|p^{\bot}_{T_0(u)}(x')-p^{\bot}_{T_0(u)}(y)|\le C\nu |p_{T_0(u)}(x')-p_{T_0(u)}(y)|.
\end{align*}
This implies
\begin{align*}
d(x,\Sigma)&\le d(x,x')\\
&=\sqrt{|p^{\bot}_{T_0(u)}(x)-p^{\bot}_{T_0(u)}(x')|^2+\underbrace{|p_{T_0(u)}(x)-p_{T_0(u)}(x')|^2}_{=0}}\\
&\le \underbrace{|p^{\bot}_{T_0(u)}(x)-p^{\bot}_{T_0(u)}(y)|}_{
=|f^u(p_{T_0}(x))-f^u(p_{T_0}(y))|}+|p^{\bot}_{T_0(u)}(x')-p^{\bot}_{T_0(u)}(y)|\\
&\le C\nu |p_{T_0(u)}(x)-p_{T_0(u)}(y)|+C\nu |\underbrace{p_{T_0(u)}(x')}_{=p_{T_0(u)}(x)}-p_{T_0(u)}(y)|\\
&\le C\nu |p_{T_0(u)}(x)-p_{T_0(u)}(y)|\\
&\le C\nu |x-y|=C\nu d(x,F_\delta),
\end{align*}
where in the first line and in the last line, we use $p_{T_0(u)}(x)=p_{T_0(u)}(x')$ and in the third line, we use the fact $x,y\in F_\delta\cap B(u,7\cdot 10^{-3}\delta(u))\subset Graph f^u$. 

Next, we prove item $(3)$ of Lemma \ref{induction}.  For any $x\in \Sigma_{\delta,1}$ and $y\in \Sigma$ such that $|x-y|\le 2\cdot 10^{-3}\delta(y)$, if $\Sigma_{\delta,1}\cap B(x,10^{-2}\delta(y))\cap (\cup_{u\in L_1}B(u,2\cdot 10^{-3}\delta(u)))$ is empty, there is nothing to prove. Thus we can assume
\begin{align*}z\in&\Sigma_{\delta,1}\cap B(x,10^{-2}\delta(y))\cap (\cup_{u\in L_1}B(u,2\cdot 10^{-3}\delta(u)))\\
&=B(x,10^{-2}\delta(y))\cap (\cup_{u\in L_1}B(u,2\cdot 10^{-3}\delta(u))\cap Graph f^u)\neq \emptyset.
\end{align*}
Thus there exists  $u\in \Gamma_1$ such that $|x-z|\le 10^{-2}\delta(y)$ and $|u-z|\le 2\cdot 10^{-3}\delta(u)$.
This implies
\begin{align*}
|x-u|&\le 2\cdot 10^{-3}\delta(u)+10^{-2}\delta(y)\\
|y-u|&\le 2\cdot 10^{-3}\delta(y)+10^{-2}\delta(y)+2\cdot 10^{-3}\delta(u)\le 2\cdot 10^{-3}\delta(u)+2\cdot 10^{-2}\delta(y).
\end{align*}
Thus  we have $|\delta(y)-\delta(u)|\le |y-u|\le 2\cdot 10^{-3}\delta(u)+2\cdot 10^{-2}\delta(y)$, which further implies
\begin{align*}
1-10^{-2}\le \frac{\delta(u)}{\delta(y)}\le \frac{1}{1-3\cdot 10^{-2}}
\end{align*}
and hence $|y-u|\le 3\cdot 10^{-2}\delta(u)$. So, we know
\begin{align*}
B(x,2\cdot 10^{-2}\delta(y))\subset B(y,4\cdot 10^{-2}\delta(y))\subset B(u,8\cdot 10^{-2}\delta(u)).
\end{align*}
Thus we know
\begin{align*}
\Sigma_{\delta,1}\cap B(x,2\cdot 10^{-2}\delta(y))\cap V_1\subset \Sigma_{\delta,1}\cap B(u,8\cdot 10^{-2}\delta(u))\subset Graph f^u.
\end{align*}
By the Ahlfors regularity and tilde-excess estimate,  we have
\begin{align*}
|p_{T_0(u)}-p_{T_0(y)}|&\le \frac{20}{\omega_m (2\delta(u))^m}(\int_{B(u,2\delta(u))}|p_{T_0(u)}-p_{T(x)}|d\nu+\int_{B(y,2\delta(y))}|p_{T_0(y)}-p_{T(x)}|d\mu)\\
&\le C (E(u,2\delta(u),T_0(u))+E(y,2\delta(y),T_0(y)))\le C\nu.
\end{align*}
Thus the above graph can also be written as a graph $\tilde{f}^y$ over $T_0(y)=T_{y,2\delta(y)}$ with  $Lip \tilde{f}^y\le C\nu$.

Step 2. $k=1$, construct $\Sigma_{\delta,2}$ by gluing $\Sigma_{\delta,1}$ with some small pieces in $B(u,2\cdot 10^{-3}\delta(u))$ for each $u\in L_2$. More precisely, for each $u\in L_2$, we know
\begin{align*}
X:=F_\delta\cap B(u,\frac{1}{2}\delta(u))\subset Graph f^{u},
\end{align*}
where $f^u: T_0(u)\to T_0^{\bot}(u)$ is a Lipschitz map with $Lip f^{u}\le C\nu$.
To glue these pieces to $\Sigma_{\delta, 1}$, we consider $V_1=\cup_{v\in L_1}B(v,2\cdot 10^{-3}\delta(v))$ and
\begin{align*}
Y=\Sigma_{\delta, 1}\cap B(u,7\cdot 10^{-3}\delta(u))\cap V_1.
\end{align*}
The goal is to show $Y$ is also a Lipschitz graph and $X\cup Y$ is in a same graph with small Lipschitz constant.  In fact, once $Y\neq \emptyset$, there exist $v\in L_1$ and  $x\in B(u,7\cdot 10^{-3}\delta(u))\cap B(v,2\cdot 10^{-3}\delta(v))\neq \emptyset$. For any such $v$ and $x$, we know
\begin{align*}
|u-v|\le |x-u|+|x-v|\le 7\cdot 10^{-3}(\delta(u)+\delta(v)).
\end{align*}
The $1$-Lipschitz property of $\delta$ implies $|u-v|\le 2\cdot 10^{-2}\min\{\delta(u),\delta(v)\}$ and hence $B(u,7\cdot 10^{-3}\delta(u))\subset B(v,2\cdot 10^{-2}\delta(v))$. Thus by the third item on Step 1(i.e., the induction hypothesis), we know
\begin{align*}
Y=\Sigma_{\delta,1}\cap B(u,7\cdot 10^{-3}\delta(u))\cap V_1\subset \Sigma_{\delta,1}\cap B(v,2\cdot 10^{-2}\delta(v))\subset Graph f^{v},
\end{align*}
where $f^v: T_0(v)\to T_0^{\bot}(v)$ is a Lipschitz map with $Lip f^{v}\le C\nu$. Moreover, since $|u-v|\le 2\cdot 10^{-2}\min\{\delta(u),\delta(v)\}$, by the Ahlfors regularity and tilt-excess estimate,  we have
\begin{align}\label{tilt}
|p_{T_0(u)}-p_{T_0(v)}|&\le \frac{C}{\omega_m (2\delta(u))^m}(\int_{B(u,2\delta(u))}|p_{T_0(u)}-p_{T(x)}|d\mu+\int_{B(v,2\delta(v))}|p_{T_0(v)}-p_{T(x)}|d\mu)\nonumber\\
&\le C (E(u,2\delta(u),T_0(u))+E(v,2\delta(v),T_0(v)))\le C\nu.
\end{align}
So, W.L.O.G., the above $f^v$ can be chosen as $\tilde{f}^v:T_0(u)\to T_0^{\bot}(u)$ with  $Lip \tilde{f}^{v}\le C\nu$.

To show $X\cup Y$ is in a same graph, it is enough to check the uniformly conic condition
\begin{align*}
|p_{T_0(u)}^{\bot}(x)-p_{T_0(u)}^{\bot}(y)|\le C\nu |p_{T_0(u)}(x)-p_{T_0(u)}(y)|, \forall x,y\in X\cup Y.
\end{align*}
Since both $f^u$ and $\tilde{f}^v$ are already Lipschitz function over $T_0(u)$, we only need to consider the case $x\in X, y\in Y$. By the second item of Step 1.(also induction hypothesis), we know $d(y,\Sigma)\le C\nu d(y,F_\delta)\le C\nu|y-x|$, and hence there exists $z\in \Sigma$ such that $|p_{T_{0}(u)}(y-z)|\le |y-z|\le C\nu|y-x|$ and
$$|z-u|\le |z-y|+|y-u|\le C\nu|x-y|+7\cdot 10^{-3}\delta(u)\le 10^{-2}\delta(u).$$
 Since $x\in F_\delta\cap B(u,\frac{\delta(u)}{2})$ and $z\in \Sigma\cap B(u,10^{-2}\delta(u))$, again by Lemma \ref{lem:lip graph}, we know
\begin{align*}
|p_{T_0(u)}^{\bot}(x)-p_{T_0(u)}^{\bot}(z)|\le 2\nu|x-z|&\le 2\nu(|x-y|+|y-z|)\\
&\le 2\nu(1+C\nu)|x-y|\le 3\nu|x-y|.
\end{align*}
Thus by $|y-z|\le C\nu|y-x|$ again, we know
\begin{align*}
|p_{T_0(u)}^{\bot}(x)-p_{T_0(u)}^{\bot}(y)|\le C\nu |x-y|+|p_{T_0(u)}^{\bot}(x)-p_{T_0(u)}^{\bot}(z)|\le C\nu|x-y|,\forall x\in X,y\in Y.
\end{align*}
By $|x-y|^2=|p_{T_0(u)}^{\bot}(x-y)|^2+|p_{T_0(u)}(x-y)|^2$ and $\nu\ll 1$, we get
\begin{align*}
|p_{T_0(u)}^{\bot}(x)-p_{T_0(u)}^{\bot}(y)|\le C\nu |p_{T_0(u)}(x)-p_{T_0(u)}(y)|.
\end{align*}
 As a result, $X\cup Y\subset Graph g^u$ for a Lipschitz function $g^u:T_0(u)\to T_0^{\bot}(u)$ with $Lip g^u\le C\nu$. This means, the old patch $\Sigma_{\delta,1}\cap B(v,2\cdot 10^{-3}\delta(v))$ constructed in the former steps are also graphs over the new approximate plane $T_0(u)$ corresponding to the center $u\in L_2$ coming from later steps. Thus  when filling in the hole $B(u,2\cdot 10^{-3}\delta(u))$ of $X=F_{\delta}\cap B(u,\frac{\delta(u)}{2})$ by $Graph f^u$  and gluing it onto $Y$, the two (old and new) graphs must coincide on the overlap domain and hence will not shape two sheets. More precisely, letting
\begin{align*}
\Sigma_{\delta,2}=\Sigma_{\delta,1}\cup \cup_{u\in L_2}(Graph g^u\cap B(u,2\cdot 10^{-3}\delta(u))),
\end{align*}
we are going to show the three items holds for $\Sigma_{\delta,2}$. Similar to Step 1., the first item holds by construction and the second item holds by applying Reifenberg condition and By Lemma \ref{lem:lip graph}. For the third item, we need to show that for any $x\in \Sigma_{\delta,2}$ and $y\in \Sigma$ with $|x-y|\le 2\cdot 10^{-3} \delta(y)$, $\Sigma_{\delta,2}\cap B(x, 10^{-2}\delta(y))\cap \cup_{j=1}^2V_j$ can be written as a graph over $T_{0}(y)$. For this, we discuss the following two cases:
\begin{enumerate}
\item Case $1$,  $B(x,10^{-2}\delta(y))\cap B(u,2\cdot 10^{-3}\delta(u))=\emptyset$ for all $u\in L_2$,
\item Case $2$, there exists $u\in L_2$ such that $B(x,10^{-2}\delta(y))\cap B(u,2\cdot 10^{-3}\delta(u))\neq \emptyset$.
\end{enumerate}
In Case $1$, by the construction of $\Sigma_{\delta, 2}$, we know $x\in \Sigma_{\delta,1}$ and
$$\Sigma_{\delta,2}\cap B(x,10^{-2}\delta(y))\cap \cup_{j=1}^2V_j\subset \Sigma_{\delta,1}\cap B(x,10^{-2}\delta(y))\cap V_1.$$
So, the conclusion follows from the third item of Step 1.(the induction hypothesis).

In Case $2$, we claim there exists only one $u\in L_2$ such that
\begin{align*}
B(x,10^{-2}\delta(y))\cap B(u,2\cdot 10^{-3}\delta(u))\neq \emptyset.
\end{align*}
In fact, for any such $u\in L_2$, there holds $|u-x|\le 2\cdot 10^{-3}\delta(u)+10^{-2}\delta(y)$. Since $|x-y|\le 2\cdot 10^{-3} \delta(y)$ and $\delta$ is $1$-Lipschitz, we know $\delta(y)\le \frac{1+2\cdot 10^{-3}}{1-10^{-2}-2\cdot 10^{-3}}\delta(u)$ and hence
\begin{align*}
B(x,10^{-2}\delta(y))\subset B(u,3\cdot 10^{-2}\delta(u)).
\end{align*}
Thus if there is another $u'\in L_2$ satisfying the same property as $u$, then
\begin{align*}
|u-u'|\le |u-x|+|u'-x|\le 3\cdot 10^{-2}(\delta(u)+\delta(u')).
\end{align*}
Again by $Lip \delta\le 1$, we know $\delta(u)\approx \delta(u')$ and $|u-u'|<10^{-1}\delta(u)$. By \eqref{disjointt}, we know $u'=u$. Thus the $u\in L_2$ occurs in Case $2$ is unique and in this case we have,
\begin{align*}
\Sigma_{\delta,2}\cap B(x,&10^{-2}\delta(y))\cap \cup_{j=1}^2V_j\\
&\subset \big(\Sigma_{\delta,1}\cup (Graph g^u\cap B(u,2\cdot 10^{-3}\delta(u))\big)\cap B(x,10^{-2}\delta(y))\\
&\subset (F_\delta\cap B(x,10^{-2}\delta(y)))\cup(\Sigma_{\delta,1}\cap V_1)\cup (Graph g^u\cap B(u,2\cdot 10^{-3}\delta(u)))\\
&\subset (F_\delta\cap B(u,3\cdot 10^{-2}\delta(u)))\cup (Graph g^u\cap(V_1\cup  B(u,2\cdot 10^{-3}\delta(u))))\\
&=:\tilde{X}\cup \tilde{Y},
\end{align*}
where $\tilde{X}=F_\delta\cap B(u,3\cdot 10^{-2}\delta(u))$ and $\tilde{Y}=Graph g^u\cap(V_1\cup  B(u,2\cdot 10^{-3}\delta(u)))$ are both graphs over $T_0(u)$ with Lipschitz constants $\le C\nu$.
Since $|y-u|\le |y-x|+|x-u|\le 2\cdot 10^{-3}\delta(y)+2\cdot 10^{-3}\delta(u)+10^{-2}\delta(y)$, same as \eqref{tilt}, by the Ahlfors regularity and the tilt-excess estimate, we know
$$|T_0(y)-T_0(u)|\le C\nu.$$
Moreover, by item $(2)$, we know points in $\tilde{Y}$ are closed to $\Sigma$ with error less than $C\nu\delta(\cdot, F_\delta)$. Thus by similar argument as in proving $X\cup Y$ is a single graph, we know $\tilde{X}\cup \tilde{Y}$ is contained in a single graph
over $T_0(y)$ with Lipschitz constant.  More precisely, we get
\begin{align*}
\Sigma_{\delta,2}\cap B(x,10^{-2}\delta(y))\cap \cup_{j=1}^2V_j=B(x,10^{-2}\delta(y))\cap \cup_{j=1}^2V_j\cap Graph f^y,
\end{align*}
where $f^y:T_0(y)\to T^\bot_0(y)$ is a Lipschitz function with  Lipschitz constant $\le C\nu$.

By induction argument, after $Q\le 10^{2m+1}$ steps, we finish the construction of $\{\Sigma_{\delta,k}\}_ {k=1}^Q$.
\end{proof}
\textit{ Proof of Proposition \ref{modify} continued.}

Letting $\Sigma_\delta=\Sigma_{\delta,Q}$ constructed in Lemma \ref{induction}, we are going to show $\Sigma_\delta$  satisfy the three items of Proposition \ref{modify}.

 The first two items comes directly from the first two items of Lemma \ref{induction} and we only need to verify the third item. For this,  take any $x\in F_\delta$ and make the decomposition
  \begin{align*}
  \Sigma_{\delta}\cap B(x,\frac{\delta(x)}{2})=(F_\delta\cap B(x,\frac{\delta(x)}{2}))\cup (\Sigma_\delta\cap B(x,\frac{1}{2}\delta(x))\cap (\cup_{j=1}^QV_j))=:X(x)\cup Y(x),
  \end{align*}
  where $X(x)=F_\delta\cap B(x,\frac{1}{2}\delta(x))$ and $Y(x)=\cup_{u\in L}Y(u):=\cup_{u\in L}\Sigma_\delta\cap B(x,\frac{1}{2}\delta(x))\cap B(u,2\cdot 10^{-3}\delta(u)).$
   By Lemma \ref{lem:lip graph} and item (3) of Lemma \ref{induction}, we know all the $Y(u)$ and $X(x)$ are Lipschitz graphs over $T_0(u)$ or $T_0(x)$ respectively.

   Now, for any $u\in L$ with $B(x,\frac{\delta(x)}{2})\cap B(u,2\cdot 10^{-3}\delta(u))\neq \emptyset$, we have
  $|x-u|\le \frac{\delta(x)}{2}+2\cdot 10^{-3}\delta(u)$. So,
  $\frac{\delta(x)}{2(1+2\cdot 10^{-3})}\le \delta(u)\le \frac{3}{2(1-2\cdot 10^{-3})}\delta(x)$ and
  \begin{align*}
  B(x,\frac{3}{4}\delta(x))\supset B(u,\frac{1}{7}\delta(u))\supset B(u,2\cdot 10^{-3}\delta(u)).
  \end{align*}
  So, by the Ahlfors regularity and tilt-excess estimate, we know
  \begin{align*}
  |p_{T_0(x)}-p_{T_0(u)}|\le C\nu, \forall u\in L \text{ with } B(x,\frac{\delta(x)}{2})\cap B(u,2\cdot 10^{-3}\delta(u))\neq \emptyset
  \end{align*}
  This implies all the $Y(u)$ and $X(x)$ are Lipschitz graphs with Lipschitz norm $\le C\nu$ over the same space $T_0(x)$. Thus to show $X(x)\cup Y(x)$ is a single graph, we only need to check the uniformly conic condition
  \begin{align}\label{uniform cone}
  |p^{\bot}_{T_0(x)}(z)-p^{\bot}_{T_0(x)}(w)|\le C\nu |p_{T_0(x)}(z)-p_{T_0(x)}(w)|
  \end{align}
  in the following three cases:
  \begin{enumerate}
  \item Case 1, $z\in F_\delta\cap B(x,\frac{1}{2}\delta(x))$ and $w\in Y(u)\subset B(x,\frac{\delta(x)}{2})$;
  \item Case 2, $z\in Y(u_1)\backslash F_\delta$ and $w\in Y(u_2)\backslash F_\delta$ with $u_1, u_2\in L$ such that $|z-w|< 10^{-2}\max\{\delta(u_1),\delta(u_2)\}$;
  \item Case 3, $z\in Y(u_1)\backslash F_\delta$ and $w\in Y(u_2)\backslash F_\delta$ with $u_1, u_2\in L$ such that $|z-w|\ge 10^{-2}\max\{\delta(u_1),\delta(u_2)\}.$
  \end{enumerate}
 In case 1, by item (2) of  Lemma \ref{induction}, there exists $w_1\in \Sigma$ with $$|w_1-w|=d(w,\Sigma)\le C\nu d(w,F_\delta)\le C\nu |w-z|.$$
  By Lemma \ref{lem:lip graph}, we know $|p^{\bot}_{T_0(x)}(z)-p^{\bot}_{T_0(x)}(w_1)|\le C\nu|z-w_1|$ and hence
 \begin{align*}
 |p^{\bot}_{T_0(x)}(z)-p^{\bot}_{T_0(x)}(w)|&\le |p^{\bot}_{T_0(x)}(z)-p^{\bot}_{T_0(x)}(w_1)|+|w-w_1|\\
 &\le C\nu|z-w_1|+|w-w_1|\\
 &\le C\nu |z-w|+(1+C\nu)|w-w_1|\\
 &\le C\nu (2+C\nu)|z-w|.
 \end{align*}
  Then, the  triangle inequality implies \eqref{uniform cone}.

 In case 2, either $|z-w|< 10^{-2}\delta(u_1)$ or $|z-w|<10^{-2}\delta(u_2)$. W.L.O.G., we assume $|z-w|\le 10^{-2}\delta(u_1)$, i.e.,  $w\in B(z,10^{-2}\delta(u_1)).$
  By item (3) of Lemma \ref{induction}, we know both $\Sigma_\delta\cap B(z,10^{-2}\delta(u_1))$  over $T_0(u_1)$  with Lipschitz constant $\le C\nu$. Noting  $|p_{T_0(x)}-p_{T_0(u_1)}|\le C\nu$, we get \eqref{uniform cone}.

  In case 3, we know $|z-w|\ge 10^{-2}\delta(u_1)$. By item(2) of Lemma \ref{induction}, there exists $z_1\in \Sigma\cap B(x,(\frac{1}{2}+C\nu)\delta(x))$ such that $|z-z_1|\le C\nu\delta(z_1)$. By Lemma \ref{fine set large}, there exists $z_2\in F_\delta\cap B(x,\frac{\delta(x)}{2})$ such that $|z_1-z_2|\le C\nu \delta(x)$. So, by $\frac{\delta(x)}{2(1+2\cdot 10^{-3})}\le \delta(u)$ for any $u\in L$ satisfying $B(x,\frac{\delta(x)}{2})\cap B(u,2\cdot 10^{-3}\delta(u))\neq \emptyset$, we know
  \begin{align*}
  |z-z_2|\le C\nu(\delta(z_1)+\delta(x))\le C\nu(\frac{3}{(2-C\nu)}+1)\delta(x)\le C\nu \delta(u_1)\le C\nu|z-w|
  \end{align*}
  Moreover, $z_2\in F_\delta\cap B(x,\frac{\delta(x)}{2})$ and the discussion of case 1 implies
  \begin{align*}
  |p_{T_0(x)}^{\bot}(z_2)-p_{T_0(x)}^{\bot}(w)|\le C\nu |z_2-w|.
  \end{align*}
  As a result, we get
  \begin{align*}
  |p_{T_0(x)}^{\bot}(z)-p_{T_0(x)}^{\bot}(w)|&\le |p_{T_0(x)}^{\bot}(z_2)-p_{T_0(x)}^{\bot}(w)|
  +|z-z_2|\\
  &\le C\nu |z-w|+(1+C\nu)|z-z_2|\\
  &\le  C\nu |z-w|.
  \end{align*}
  Then, again, the  triangle inequality implies \eqref{uniform cone}.
  Combining the above discussions for the three cases together, we know
  \begin{align*}
\Sigma_\delta\cap B(x,\frac{1}{2}\delta(x))\text{ is a Lipschitz graph over }T_0(x) \text{ with Lipschitz constant } \le C\nu.
\end{align*}
\end{proof}
\subsection{Iteration}
As Semmes did, in this subsection, we will prove the $W^{1,p}$ parameterization results of $\Sigma$.  The idea is to project $\Sigma_\eta$ to $\Sigma_{\delta}$ for different scales $\delta$ and $\eta$ and argue by iteration. The first step is to get a bi-Lipschitz parameterization of a neighborhood of $\Sigma_\delta$ by its normal exponential map. Different to Semmes' setting, since we are dealing with non-smoothing object with higher co-dimension, we need to construct some kind of approximate normal bundle and approximate normal exponential map.  For this, extend $\delta$ to be a nonnegative $1$-Lipschitz function on $B_1$ which vanishing on $\cup_{x\in \Sigma}B(x,\frac{1}{2}\delta(x))$. Choose maximal subset $\{x_j\}_{j\in J}$ of $\Sigma\backslash \mathcal{Z}_\delta$ such that for any $x_i\neq x_j$, we have $|x_i-x_j|\ge \frac{\delta(x_i)}{10}$.  Then $\Sigma\backslash \mathcal{Z}_\delta\subset \cup_{j\in J}B(x_j,\frac{1}{5}\delta(x_j))$ and there is a  partition of unity $\{\theta_j\}_{j\in J}$ such that $0\le \theta_j\le 1, supp\theta_j\subset B(x_j,\frac{1}{2}\delta(x_j))$, $|\nabla \theta_j|\le \frac{C}{\delta(x_j)}$ and $\sum_{j\in J}\theta_j(x)=1$ for $x\in \cup_{y\in \Sigma}B(y,\frac{\delta(y)}{10})$.
Especially, for  any $x\in \Sigma_\delta$, by Proposition \ref{modify}, we know there exists $y\in \Sigma$ such that $d(x,y)\le C\nu\delta(y)$. Thus for  $\nu$ small enough, we have
\begin{align*}
\sum_{j\in J}\theta_j(x)=1, \forall x\in \Sigma_\delta\backslash \mathcal{Z}_\delta.
\end{align*}
To construct the approximating normal exponential map, we first need to construct the approximate normal bundle.
\begin{pro}\label{lip normal}
For $\{x_j\}_{j\in J}$  constructed above, let $T_{x_j,2\delta(x_j)}$ be the $m$-dimensional subspace occurred in the definition of chord-arc $m$-varifold and $p_j^{\bot}:\mathbb{R}^{n}\to T^{\bot}_{x_j,2\delta(x_j)}$ be the corresponding normal projection. Put
$$\tilde{p}^{\bot}(x)=\sum_{j\in J}\theta_j(x)p_j^{\bot}\in Hom(\mathbb{R}^{n}, \mathbb{R}^{n}).$$
Then,
$d(\tilde{p}^\bot(x),G(n,n-m))\le C\nu$
and
$|\tilde{p}^{\bot}(z)-\tilde{p}^{\bot}(y)|\le \frac{C\nu}{\delta(x)}|y-z|,\forall y,z\in B(x,\frac{2\delta(x)}{3}).$

Moreover, denoting $\pi:Hom(\mathbb{R}^{n},\mathbb{R}^{n})\to G(n,n-m)$ be the nearest point projection and
$
p^{\bot}(x)=\pi (\tilde{p}^{\bot}(x)),
$
then for $\nu$ small enough, we have
$$|p^{\bot}(z)-p^{\bot}(y)|\le \frac{C\nu}{\delta(x)}|y-z|,\forall y,z\in B(x,\frac{2\delta(x)}{3}).$$
\end{pro}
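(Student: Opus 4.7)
My plan has four steps, and the heart of the argument is a standard partition-of-unity computation combined with the tilt-excess estimate to show that all the local projections $p_j^\bot$ appearing in a neighborhood of a fixed point are $C\nu$-close to one another.

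First, I will verify a \emph{locality and comparability} statement: for $y\in B(x,\tfrac{2}{3}\delta(x))$ the sum defining $\tilde p^\bot(y)$ involves only indices $j$ with $|x_j-x|\le \delta(x)$, and for every such index the $1$-Lipschitz property of $\delta$ gives $\delta(x_j)\simeq \delta(x)$. In particular the bounded-overlap constant of $\{\theta_j\}$ is universal, and the balls $B(x_j,2\delta(x_j))$ of the relevant indices are all comparable to $B(x,3\delta(x))$. Applying the Ahlfors regularity and the tilt-excess estimate exactly as in Lemma~\ref{cor:excess estimate}, Lemma~\ref{lem:lip graph} and the comparison step used around~\eqref{tilt}, I obtain for any two relevant indices $i,j$
\[
|p_i^\bot-p_j^\bot|\le C\bigl(E(x_i,2\delta(x_i),T_{x_i,2\delta(x_i)})^{1/2}+E(x_j,2\delta(x_j),T_{x_j,2\delta(x_j)})^{1/2}\bigr)\le C\gamma\le C\nu.
\]

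Second, using $\sum_j\theta_j\equiv 1$ on a neighborhood of $\Sigma\backslash\mathcal Z_\delta$ I fix one relevant index $j_0$ and write
\[
\tilde p^\bot(y)-p_{j_0}^\bot=\sum_j\theta_j(y)\bigl(p_j^\bot-p_{j_0}^\bot\bigr),
\]
which is bounded by $C\nu$ because the sum has boundedly many nonzero terms each of size $C\nu$. Since $p_{j_0}^\bot\in G(n,n-m)$ this already gives $d(\tilde p^\bot(y),G(n,n-m))\le C\nu$. Differentiating the identity (or directly estimating the difference quotient) and using again $\sum_j\nabla\theta_j\equiv 0$,
\[
\tilde p^\bot(z)-\tilde p^\bot(y)=\sum_j\bigl(\theta_j(z)-\theta_j(y)\bigr)\bigl(p_j^\bot-p_{j_0}^\bot\bigr),
\]
and the pointwise bounds $|\theta_j(z)-\theta_j(y)|\le \frac{C}{\delta(x_j)}|z-y|\le \frac{C}{\delta(x)}|z-y|$ together with the bounded-overlap property produce the claimed Lipschitz estimate
\[
|\tilde p^\bot(z)-\tilde p^\bot(y)|\le \frac{C\nu}{\delta(x)}|z-y|,\qquad y,z\in B(x,\tfrac{2}{3}\delta(x)).
\]

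Third, for the projection $p^\bot=\pi\circ\tilde p^\bot$, I will invoke the standard fact that the Grassmannian $G(n,n-m)$, viewed as the subset of $Hom(\mathbb{R}^n,\mathbb{R}^n)$ consisting of orthogonal projections of rank $n-m$, is a smooth compact submanifold, and the nearest-point projection $\pi$ is well-defined and Lipschitz on a tubular neighborhood $\{A:d(A,G(n,n-m))<\eta_0\}$ with a universal constant $L=L(n,m)$. Choosing $\nu$ small enough that $C\nu<\eta_0$, the image of $\tilde p^\bot$ lies in that tubular neighborhood by the first estimate, so
\[
|p^\bot(z)-p^\bot(y)|\le L|\tilde p^\bot(z)-\tilde p^\bot(y)|\le \frac{C\nu}{\delta(x)}|z-y|,
\]
which is exactly the conclusion. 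The only real obstacle is the bookkeeping in Step~1 (making sure every constant depends only on $n,m$ and not on the specific covering), since everything else is a clean partition-of-unity calculation reducing to $|p_i^\bot-p_j^\bot|\le C\nu$, which is already the content of the earlier excess estimates.
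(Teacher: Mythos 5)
Your proposal is correct and follows essentially the same route as the paper: localize the sum to boundedly many indices $j$ with $\delta(x_j)\simeq\delta(x)$, use the Ahlfors regularity and tilt-excess estimate to get $|p_i^\bot-p_j^\bot|\le C\gamma$, subtract a fixed $p_{j_0}^\bot$ via $\sum_j\theta_j\equiv 1$ to obtain both the distance to $G(n,n-m)$ and the Lipschitz bound, and finish by the Lipschitz nearest-point projection onto the Grassmannian as a compact submanifold of $Hom(\mathbb{R}^n,\mathbb{R}^n)$. No gaps beyond those already present in the paper's own argument.
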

\begin{proof}
Since $G(n,n-m)\subset Hom(\mathbb{R}^{n},\mathbb{R}^{n})$ is a submanifold, we only need to prove the property of $\tilde{p}^{\bot}$ and the proposition follows. Set $\alpha=\frac{2}{3}$ and $\beta=\frac{1}{2}$.  For any $y\in B(x,\alpha\delta(x))$, if $\theta_i(y)\neq 0$, then we know
\begin{align*}
|x-x_i|\le |y-x_i|+|x-y| \le \beta \delta(x_i)+\alpha \delta(x) \text{ for  } \beta=\frac{1}{2}.
\end{align*}
Since $\delta$ is $1$-Lipschitz, we know $\frac{1-\alpha}{1+\beta}\le \frac{\delta(x_i)}{\delta(x)}\le \frac{1+\alpha}{1-\beta}$ and hence
$
x_i\in B(x,\frac{\alpha+\beta}{1-\beta}\delta(x)).
$
The same property holds for any $j\in J$ such that $\theta_j(z)\neq 0$ for $z\in B(x,\alpha\delta(x))$.  Thus we know
\begin{align*}
\frac{1}{15}=\frac{(1-\alpha)(1-\beta)}{(1+\alpha)(1+\beta)}\le \frac{\delta(x_i)}{\delta(x_j)}\le \frac{(1+\alpha)(1+\beta)}{(1-\alpha)(1-\beta)}=15
\end{align*}
and
\begin{align*}
|x_i-x_j|\le |x_i-x|+|x_j-x|\le \frac{2(\alpha+\beta)}{1-\beta}\delta(x)\le \frac{2(\alpha+\beta)(1+\beta)}{(1-\alpha)(1-\beta)}\delta(x_i)=21\delta(x_i).
\end{align*}
By the Ahlfors regularity and Excess estimate, we know
\begin{align*}
|p_i^\bot-p_j^\bot|=|p_i-p_j|
&\le C\frac{1}{\delta^2(x_i)}\int_{B_{2\delta(x_i)(x_i)}}(|p_{T_z}-p_i|+|p_{T_z}-p_j|)d\mu\\
&\le C(\gamma+\frac{1}{\delta^2(x_i)}\int_{B_{2\delta(x_i)+21\delta(x_j)}(x_j)}|p_{T_z}-p_j|d\mu)\\
&\le C(\gamma+C\frac{1}{\delta^2(x_i)}\int_{B_{100\delta(x_j)}(x_j)}|p_{T_z}-p_{T_{x_j,100\delta{x_j}}}|d\mu
+|p_j-p_{T_{x_j,100\delta{x_j}}}|)\\
&\le C\gamma.
\end{align*}
 Especially, for any fixed $x\in \Sigma_\delta$ with $\delta(x)>0$, since $\sum_j\theta_j(x)=1$, there exists $j_0$ such that
$\theta_{j_0}(x)\neq 0$.
Thus for any $j\in J$ such that $\theta_j(y)\neq 0$ for some $y\in B(x,\alpha \delta(x))$, there holds
\begin{align*}
|p_j^{\bot}-p_{j_0}^{\bot}|\le C\nu.
\end{align*}
So, by $\tilde{p}^{\bot}(y)=\sum_{j}\theta_j(y)(p_j^{\bot}-p_{j_0}^\bot)+p_{j_0}^\bot$, we know
\begin{align*}
d(\tilde{p}^{\bot}(y),p_{j_0}^{\bot})
\le |\tilde{p}^{\bot}(y)-p_{j_0}^\bot|\le \sum_{j}\theta_j(y)\sup |p_j^{\bot}-p_{j_0}^\bot|\le C\nu.
\end{align*}
Noting $|x_i-x_j|\ge \frac{1}{10}\delta(x_i)$ for $x_i\neq x_j$, we also know $\sharp\{j\in J, \theta_j(y)\neq 0\}\le C$ and hence
\begin{align*}
|\nabla \tilde{p}^\bot(y)|\le \sup |\nabla \theta_j(y)|\sum_{j\in J, \theta_j(y)\neq 0}|p_j^{\bot}-p_{j_0}^\bot|\le \frac{C\nu}{\delta(x)}.
\end{align*}
As a result, we know
\begin{align*}
|\tilde{p}^\bot(y)-\tilde{p}^\bot(z)|\le \frac{C\nu}{\delta(x)}|y-z|,\forall y,z\in B(x,\alpha \delta(x)).
\end{align*}
\end{proof}
\begin{defi}[Approximating normal bundle and approximating normal exponential map]\label{def:approximating exponential} We call
\begin{align*}
T_\delta^{\bot}\Sigma_{\delta}:=\cup_{x\in \Sigma_\delta}\underbrace{p_x^{\bot}(\mathbb{R}^{n}}_{=:\mathcal{N}_x})\subset \Sigma_\delta\times \mathbb{R}^{n}
\end{align*}
and
\begin{align*}
\rho: T_\delta^{\bot}\Sigma_{\delta}\to \mathbb{R}^{n}, \rho(x,v)=x+v
\end{align*}
the approximating normal bundle and approximating normal exponential map of $\Sigma_\delta$ respectively.  Moreover, for any $\beta\in (\sqrt{\nu},\frac{1}{10})$, call
$$
\hat{\Sigma}^{\beta}_\delta=\{(x,v)\in T^\bot_\delta\Sigma_\delta| x\in \Sigma_\delta, v=0 \text{ or } v\in \mathcal{N}_x \text{ with} \mathcal |v|<\beta\delta(x)\}.
$$
the $\beta-$disk bundle of the approximating normal bundle and define distance on it by
 \begin{align*}
 d(X_1,X_2)=\sqrt{(x_1-x_2)^2+(v_1-v_2)^2}, \forall X_1=(x_1,v_1), X_2=(x_2,v_2).
 \end{align*}
 \end{defi}
 \begin{rem}[Local trivialization]\label{local trivialization} For any $x\in \Sigma\backslash \mathcal{Z}$, there holds
  $$|p^{\bot}(y)-p^{\bot}_{T_{x,2\delta(x)}}|\le C\gamma, \forall y\in \Sigma_\delta\cap B(x,\frac{\delta(x)}{2}).$$
  Moreover, the map  $\varphi_x: T_\delta^{\bot}\Sigma_\delta|_{B(x,\frac{\delta(x)}{2})}\to (\Sigma_\delta\cap B(x,\frac{\delta(x)}{2}))\times T_{x,2\delta(x)}^{\bot}$ defined by $\varphi_x(y,v)=(y,p^{\bot}_{T_{x,2\delta(x)}}(v))$ is bi-Lipschitz
 \begin{align*}
 (1-C\nu)|Y_1-Y_2|\le |\varphi_x(Y_1)-\varphi_x(Y_2)|\le (1+C\nu)|Y_1-Y_2|, \forall Y_1,Y_2\in T_\delta^{\bot}\Sigma_\delta|_{B(x,\frac{\delta(x)}{2})}.
 \end{align*}
 By Proposition \ref{modify}, we know $(\Sigma_\delta\cap B(x,\frac{\delta(x)}{2}))\times T_{x,2\delta(x)}^{\bot}$ is a local trivialization of the vector bundle $T_\delta^{\bot}\Sigma_\delta|_{\Sigma_\delta\backslash \mathcal{Z}_\delta}$.
 \end{rem}
 \begin{proof}For any $y\in B(x,\frac{\delta(x)}{2})$, the same argument as in the proof of Proposition \ref{lip normal} shows $|p^{\bot}(u_i)-p^{\bot}_{T_{x,2\delta(x)}}|\le C\gamma$ for any $i$ such that $\theta_i(y)\neq 0$. So,
 \begin{align*}
 |p^{\bot}(y)-p^{\bot}_{T_{x,2\delta(x)}}|\le \sum_{i}\theta_i(y)|p^{\bot}(u_i)-p^{\bot}_{T_{x,2\delta(x)}}|\le C\gamma.
 \end{align*}
 Thus
 \begin{align*}
 (1-C\gamma)|v|\le |p^{\bot}_{T_{x,2\delta(x)}}(v)|\le (1+C\gamma)|v|,\forall v\in \mathcal{N}_y,y\in \Sigma_\delta\cap B(x,\frac{\delta(x)}{2}),
 \end{align*}
 and the bi-Lipschitz estimate follows.
 \end{proof}
\begin{lem}\label{lem:bilip exponential} For any $\beta\in (\sqrt{\nu},\frac{1}{10})$, the restriction of the approximating normal exponential map $\rho:\hat{\Sigma}^{\beta}_\delta\to \rho(\hat{\Sigma}^{\beta}_\delta)$  on the $\beta-$disk bundle is a bi-Lipschitz map  such that
\begin{align}\label{global lip}
\frac{\alpha-\beta}{\alpha+\beta}|X_1-X_2|\le |\rho(X_1)-\rho(X_2)|\le (1+\frac{\beta}{\alpha})|X_1-X_2|,\forall X_1,X_2\in \hat{\Sigma}^{\beta}_\delta
\end{align}
and
\begin{align*}
\mathcal{Z}_\delta\cup \cup_{x\in \Sigma}B(x,a\delta(x))\subset \rho(\hat{\Sigma}^{\beta}_\delta)\subset \mathcal{Z}\cup \cup_{x\in \Sigma}B(x,\frac{1}{2}\delta(x)),
\end{align*}
where $a=(1-C\nu)\min\{\frac{10^{-3}}{4},\frac{\beta}{2}\})$.
Moreover, for $\{X_i=(x_i,v_i)\}_{i=1,2}\subset \hat{\Sigma}^{\beta}_\delta$ with $|x_1-x_2|\le \alpha(\delta(x_1)+\delta(x_2))$, there holds
\begin{align}\label{local lip}
(1-C\nu)|X_1-X_2|\le |\rho(X_1)-\rho(X_2)|\le (1+C\nu)|X_1-X_2|.
\end{align}
\end{lem}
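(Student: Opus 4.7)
The plan is to separate the argument into a \emph{local} regime where $|x_1-x_2|\le \alpha(\delta(x_1)+\delta(x_2))$ (covering \eqref{local lip} as well as the ``close'' part of \eqref{global lip}) and a \emph{far} regime where the separation exceeds this threshold (handling the remaining part of \eqref{global lip}). The local regime will exploit the Lipschitz graph structure of $\Sigma_\delta$ from Proposition \ref{modify}(3) together with the bi-Lipschitz trivialization of the approximating normal bundle from Remark \ref{local trivialization}; the far regime is essentially a one-line triangle inequality using $|v_i|\le \beta\delta(x_i)$.

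For \eqref{local lip}, when $|x_1-x_2|\le \alpha(\delta(x_1)+\delta(x_2))$ the $1$-Lipschitz property of $\delta$ forces $\delta(x_1)\approx \delta(x_2)$, so both $x_1,x_2\in \Sigma_\delta\cap B(x_1,\tfrac12\delta(x_1))$. In the trivialization $\varphi_{x_1}$ of Remark \ref{local trivialization}, writing $T_0:=T_{x_1,2\delta(x_1)}$, the set $\Sigma_\delta$ becomes the graph of a $C\nu$-Lipschitz map over $T_0$, while $\mathcal{N}_{x_i}$ is within $C\nu$ of $T_0^\perp$ by Proposition \ref{lip normal}. Decomposing $x_1-x_2=u+\epsilon_1$ with $u\in T_0$ and $|\epsilon_1|\le C\nu|u|$, and $v_1-v_2=w+\epsilon_2$ with $w\in T_0^\perp$ and $|\epsilon_2|\le C\nu(|v_1|+|v_2|)\le C\nu\beta\delta(x_1)\le C\nu|X_1-X_2|$ (the last bound using $\beta\le C|X_1-X_2|/\delta(x_1)$ in the nontrivial case), the orthogonal decomposition $\mathbb{R}^n=T_0\oplus T_0^\perp$ gives
\[
|\rho(X_1)-\rho(X_2)|^2=|u+w|^2+O(\nu)(|u|^2+|w|^2)=(1\pm C\nu)\bigl(|x_1-x_2|^2+|v_1-v_2|^2\bigr),
\]
which yields \eqref{local lip}. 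Since $C\nu\le C\sqrt{\gamma}\le \tfrac{\beta}{\alpha}$ for the ranges stipulated, this bound is stronger than the global one and contributes the local half of \eqref{global lip}.

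For the far regime $|x_1-x_2|>\alpha(\delta(x_1)+\delta(x_2))$, we simply bound
\[
|\rho(X_1)-\rho(X_2)|\ge |x_1-x_2|-|v_1-v_2|\ge |x_1-x_2|\Bigl(1-\tfrac{\beta}{\alpha}\Bigr),\qquad |X_1-X_2|\le |x_1-x_2|\Bigl(1+\tfrac{\beta}{\alpha}\Bigr),
\]
which combine to give the lower bound in \eqref{global lip}; the matching upper bound $(1+\beta/\alpha)|X_1-X_2|$ comes from $|\rho(X_1)-\rho(X_2)|\le |x_1-x_2|+|v_1-v_2|\le (1+\beta/\alpha)|x_1-x_2|\le (1+\beta/\alpha)|X_1-X_2|$. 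Combining this with the local step finishes \eqref{global lip}.

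For the image statement, the easier inclusion $\rho(\hat{\Sigma}^\beta_\delta)\subset\mathcal{Z}\cup\bigcup_{x\in\Sigma}B(x,\tfrac12\delta(x))$ follows from Proposition \ref{modify}(2): for $(x,v)\in\hat{\Sigma}^\beta_\delta$ with $\delta(x)>0$, pick $x'\in\Sigma$ with $|x-x'|\le C\nu\delta(x)$; then $\delta(x')\ge(1-C\nu)\delta(x)$ and $|\rho(x,v)-x'|\le \beta\delta(x)+C\nu\delta(x)\le \tfrac12\delta(x')$ when $\beta\le\tfrac1{10}$. The substantive inclusion is the other one: given $y\in B(x_0,a\delta(x_0))$ with $x_0\in\Sigma$, use Lemma \ref{fine set large} to pick $x_0'\in F_\delta\subset\Sigma_\delta$ with $|x_0-x_0'|\le 10^{-10}\delta(x_0)$, so $y\in B(x_0',\tfrac12\delta(x_0'))$; in the trivialization $\varphi_{x_0'}$ the map $\rho$ takes the form $(w,f(w),v)\mapsto (w,f(w))+v$ for a $C\nu$-Lipschitz $f$ over $T_{x_0',2\delta(x_0')}$ and $v$ within $C\nu$ of $T_{x_0',2\delta(x_0')}^\perp$, which is a bi-Lipschitz perturbation of the identity on $\mathbb{R}^n=T_{x_0',2\delta(x_0')}\oplus T_{x_0',2\delta(x_0')}^\perp$. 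The bi-Lipschitz estimate \eqref{local lip} already shows $\rho$ is injective and open on $\hat{\Sigma}^\beta_\delta\cap B(x_0',\tfrac12\delta(x_0'))$, and a Brouwer-degree/winding-number argument on the boundary sphere (identical in spirit to the one in the proof of Proposition \ref{pro:conclude}) shows the image covers $B(x_0',a\delta(x_0'))$ for $a=(1-C\nu)\min\{10^{-3}/4,\beta/2\}$.

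The main obstacle I anticipate is the surjectivity step in the last paragraph: while the bi-Lipschitz estimate immediately gives injectivity and openness, pinning down the precise ball covered by $\rho$ requires coupling the local trivialization with a quantitative degree computation that must account for the $C\nu$ tilt of $\mathcal{N}_{\cdot}$ away from $T_{x_0',2\delta(x_0')}^\perp$ and for the $C\nu$ Lipschitz graph perturbation. The Reifenberg-type ``no hole'' property of $\Sigma_\delta$ encoded in Proposition \ref{modify}(3) is what makes this degree computation go through; without it the trivialized domain could have gaps and the winding-number conclusion would fail.
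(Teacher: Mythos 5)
Your overall architecture (near/far dichotomy, triangle inequality in the far regime, graph structure plus bundle trivialization in the near regime, and a degree/invariance-of-domain argument for the inclusion $B(x,a\delta(x))\subset\rho(\hat{\Sigma}^\beta_\delta)$) is the same as the paper's, and the far regime and the two image inclusions are handled correctly. However, there is a genuine gap in your local estimate: you bound the tangential leakage of the fiber difference by $|\epsilon_2|\le C\nu(|v_1|+|v_2|)\le C\nu\beta\delta(x_1)$ and then claim this is $\le C\nu|X_1-X_2|$ ``using $\beta\le C|X_1-X_2|/\delta(x_1)$ in the nontrivial case.'' No such inequality holds: take $x_1=x_2$ (or $x_1,x_2$ extremely close) and $v_1,v_2$ both of size comparable to $\beta\delta(x_1)$ with $|v_1-v_2|$ tiny; then $|X_1-X_2|$ is arbitrarily small while $\beta\delta(x_1)$ is fixed, so your error term $O(\nu\beta^2\delta(x_1)^2)$ swamps $|X_1-X_2|^2$ and \eqref{local lip} does not follow.

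The missing idea is to split $v_1-v_2$ using the pointwise projections rather than estimating $p_{T_0}(v_1)$ and $p_{T_0}(v_2)$ separately: since $v_i=p^{\bot}(x_i)(v_i)$, one has
\begin{align*}
v_1-v_2=p^{\bot}(x_1)(v_1-v_2)+\bigl(p^{\bot}(x_1)-p^{\bot}(x_2)\bigr)(v_2),
\end{align*}
so the part of $v_1-v_2$ that escapes the fiber direction at $x_1$ is exactly $R:=(p^{\bot}(x_1)-p^{\bot}(x_2))(v_2)$, and the Lipschitz estimate of Proposition \ref{lip normal} gives $|R|\le \frac{C\nu}{\delta(x_1)}|x_1-x_2|\,|v_2|\le C\nu\beta|x_1-x_2|\le C\nu|X_1-X_2|$, while $p_{T_0}\circ p^{\bot}(x_1)$ has norm $\le C\nu$ so the first summand contributes only $C\nu|v_1-v_2|$ tangentially. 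This is precisely the $H+V+R$ decomposition in the paper's proof, and with it your orthogonal-splitting computation closes; without it, the step as written fails. (A cleaner variant, which the paper uses, is to take the graph over $T(x_1)=\mathcal{N}_{x_1}^{\bot}$ instead of $T_{x_1,2\delta(x_1)}$, so that the graph part and $p^{\bot}(x_1)(v_1-v_2)$ are exactly orthogonal and $R$ is the only error.)
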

\begin{proof}
Since $|v|\le \beta\delta(x)$ for any $X=(x,v)\in \hat{\Sigma}^{\beta}_\delta$, we know
$|\rho(X)-x|\le \beta \delta(x).$
Thus for $\alpha\in (\beta,1)$ and
$X_1=(x_1,v_1), X_2=(x_2,v_2)\in \hat{\Sigma}^{\beta}_\delta$,  if $|x_1-x_2|\ge \alpha(\delta(x_1)+\delta(x_2))$, then
\begin{align*}
\big||\rho(X_1)-\rho(X_2)|-|x_1-x_2|\big|\le \beta(\delta(x_1)+\delta(x_2))\le \frac{\beta}{\alpha}|x_1-x_2|.
\end{align*}
This implies
\begin{align*}
(1-\frac{\beta}{\alpha})|x_1-x_2|\le |\rho(X_1)-\rho(X_2)|\le (1+\frac{\beta}{\alpha})|x_1-x_2|, \forall |x_1-x_2|\ge \alpha (\delta(x_1)+\delta(x_2)).
\end{align*}
Similarly, $\forall |x_1-x_2|\ge \alpha(\delta(x_1)+\delta(x_2))$, we have
\begin{align*}
|x_1-x_2|\le |X_1-X_2|=\sqrt{|x_1-x_2|^2+|v_1-v_2|^2}\le (1+\frac{\beta}{\alpha})|x_1-x_2|.
\end{align*}
Combining the above two estimate together, we get
\begin{align*}
\frac{\alpha-\beta}{\alpha+\beta}|X_1-X_2|\le |\rho(X_1)-\rho(X_2)|\le (1+\frac{\beta}{\alpha})|X_1-X_2|,\forall |x_1-x_2|\ge \alpha(\delta(x_1)+\delta(x_2)).
\end{align*}
So, for the bi-Lipschitz property of $\rho$, we only need to consider $|x_1-x_2|\le \alpha(\delta(x_1)+\delta(x_2))$. Since $\delta$ is $1$-Lipschitz, this means $\delta(x_2)\le \frac{1+\alpha}{1-\alpha}\delta(x_1)$ and hence $x_2\in B(x_1,\frac{2\alpha}{1-\alpha}\delta(x_1))$. Since $\alpha\le \frac{1}{5}$, we know $\frac{2\alpha}{1-\alpha}<\frac{1}{2}$. By Proposition \ref{modify}, there exits $y_1\in \Sigma$ such that $|y_1-x_1|\le C\nu\delta(y_1)$ and $\Sigma_\delta\cap B(x_1,\frac{2\alpha}{1-\alpha}\delta(x_1))$ can be written as a Lipschitz graph over $T_{y_1}=T_{y_1,2\delta(y_1)}$ with small Lipschitz norm. Define $p^{\bot}_{y_1}:\mathbb{R}^{n}\to T_{y_1}^{\bot}$ to be the orthogonal projection.   Noting that for any $j\in J$ such that $\theta_j(x_1)\neq 0$, we know $|x_1-x_j|\le \frac{1}{2}\delta(x_j)$ and hence
\begin{align*}
|y_1-x_j|\le \frac{1}{2}\delta(x_j)+C\nu\delta(y_1).
\end{align*}
This implies $|p_j^{\bot}-p^{\bot}_{y_1}|\le C\nu$ and hence
\begin{align*}
|p^{\bot}(x_1)-p^{\bot}_{y_1}|
&\le |p^{\bot}(x_1)-\tilde{p}^{\bot}(x_1)|+|\sum_i\theta_i(x_1)p_i^\bot-p^{\bot}_{y_1}|\\
&\le C\nu+\sup_{i\in J,\theta_i(x_1)\neq 0}|p^{\bot}_i-p^{\bot}_{y_1}|\le C\nu.
\end{align*}
So, $\Sigma_\delta\cap B(x_1,\frac{2\alpha}{1-\alpha}\delta(x_1))$ can also be written as a Lipschitz graph over $T(x_1)=\mathcal{N}_{x_1}^{\bot}$ with small norm, more precisely, there exists Lipschitz function $f:T(x_1)\to \mathcal{N}_{x_1}$ with $Lip f\le C\nu$ such that
$\Sigma_\delta\cap B(x_1,\frac{2\alpha}{1-\alpha}\delta(x_1))\subset Graph f\cap B(x_1,\frac{2\alpha}{1-\alpha}\delta(x_1)).$
Especially, there exists $z_1,z_2\in T(x_1)$ such that
\begin{align*}
x_1=(z_1,f(z_1)) \text{ and } x_2=(z_2,f(z_2)).
\end{align*}
$Lip f\le C\nu$ implies $|z_1-z_2|\le |x_1-x_2|\le (1+C\nu)|z_1-z_2|$.
Decompose
\begin{align*}
\rho(X_1)-\rho(X_2)=(z_1-z_2,f(z_1)-f(z_2))+(v_1-v_2)=H+V+R,
\end{align*}
where
\begin{align*}
H=(z_1-z_2,0), \quad V=(0,f(z_1)-f(z_2))+p^{\bot}(x_1)(v_1-v_2) \text{ and } R=(p^{\bot}(x_1)-p^{\bot}(x_2))(v_2).
\end{align*}
Since $|x_1-x_2|\le \alpha(\delta(x_1)+\delta(x_2))$ and $|v_2|\le \beta \delta(x_2)$, by Proposition \ref{lip normal}, we get the first estimate
\begin{align*}
|R|\le \frac{C\nu}{\delta(x_1)}|x_1-x_2||v_2|\le C\nu |x_1-x_2|.
\end{align*}
This implies
\begin{align*}
|(v_1-v_2)-p^{\bot}(x_1)(v_1-v_2)|=|R|\le C\nu|x_1-x_2|,
\end{align*}
and hence
\begin{align*}
|V|^2&=|p^\bot(x_1)(v_1-v_2)+f(z_1)-f(z_2)|^2\\
&=|v_1-v_2|^2+|f(z_1)-f(z_2)-R|^2+2\langle v_1-v_2,f(z_1)-f(z_2)-R\rangle.
\end{align*}
Noting $|f(z_1)-f(z_2)|+|R|\le C\nu|x_1-x_2|$, we have
\begin{align*}
(1-C\nu)|v_1-v_2|^2-C\nu|x_1-x_2|^2\le |V|^2\le (1+C\nu)|v_1-v_2|^2+C\nu|x_1-x_2|^2.
\end{align*}
Noting $H\bot V$, we have
\begin{align*}
|\rho(X_1)-\rho(X_2)|&\ge \sqrt{|H|^2+|V|^2}-R\\
&\ge(1-C\nu)\sqrt{|x_1-x_2|^2+|v_1-v_2|^2}-C\nu|x_1-x_2|\\
&\ge (1-C\nu)|X_1-X_2|.
\end{align*}
For the same reason, we have
\begin{align*}
|\rho(X_1)-\rho(X_2)|\le (1+C\nu)|X_1-X_2|.
\end{align*}
So, $\rho:\hat{\Sigma}^{\beta}_\delta\to\rho( \hat{\Sigma}^{\beta}_\delta)$ is bi-Lipschitz satisfying \eqref{global lip} and \eqref{local lip}.  Finally, we only need to prove
\begin{align*}
\rho(\hat{\Sigma}^{\beta}_\delta)\supset \mathcal{Z}_\delta\cup \cup_{x\in \Sigma}B(x,a\delta(x)).
\end{align*}
By the construction of $\Sigma_\delta$, for any  $x\in \Omega=\Sigma\backslash \mathcal{Z}_\delta\subset \cup_{u\in L}B(y, 10^{-3}\delta(u))$, there exists $u\in L$ such that $|x-u|\le  10^{-3}\delta(u)$ and $ f^u: T_0(u)\to T_0^{\bot}(u), f^u(0)=0, Lipf^u\le C\nu$ such that
\begin{align*}
\Sigma_\delta\cap B(u,2\cdot 10^{-3}\delta(u))=B(u,2\cdot 10^{-3}\delta(u)) \cap(u+ Graph f^u).
\end{align*}
By Proposition \ref{lip normal}, we know
\begin{align*}
|p^{\bot}(x)-p^{\bot}(u)|\le \frac{C\nu}{\delta(u)}|x-u|\le C\nu.
\end{align*}
So, by letting $T(x)=\mathcal{N}_x^{\bot}$, we know $\Sigma_\delta\cap B(x,\frac{1}{2}\cdot 10^{-3}\delta(x))$ is also a Lipschitz graph $Graph f\cap B(x,\frac{1}{2}\cdot 10^{-3}\delta(x))$ for some Lipschitz function $f$ over $T(x)$ with domain $\supset D(z,\frac{1}{4}\cdot 10^{-3}\delta(x))\subset T(x)$, where $z\in T(x)$ is the point such that $x=(z,f(z))$. By the above local bi-Lipschitz estimate \eqref{local lip}, we know on the domain
$$\Omega_1(x):=\underbrace{\{(y,v)| y\in Graph f|_{D(z,\frac{1}{4}\cdot 10^{-3}\delta(x))}, v\in \mathcal{N}_y, |v|\le \frac{\beta}{2}\delta(x)\}}_{\text{homeomorphic to } D(z,\frac{1}{4}\cdot 10^{-3}\delta(x))\times B^k(0,\frac{\beta}{2}\delta(x))}\subset \hat{\Sigma}^{\beta}_\delta= Dom \rho,$$ there holds
\begin{align*}
\rho(x,0)=x \text{ and } d(\rho(\partial(\Omega_1)),x)\ge (1-C\nu)\min\{\frac{ 10^{-3}}{4},\frac{\beta}{2}\}\delta(x).
\end{align*}
In fact, for any $(y,v)\in \partial \Omega_1(x)$, either $y=(\omega,f(\omega))$ for some $\omega\in \partial D(z,\frac{1}{4}\cdot 10^{-3}\delta(x))$ or $|v|=\frac{\beta}{2}\delta(x)$. In either case,
\begin{align*}
|\rho((y,v))-x|=|\rho((y,v))-\rho((x,0))|&\ge (1-C\nu)\max\{|y-x|,|v|\}\\
&\ge (1-C\nu)\max\{|w-z|,|v|\}\\
&\ge (1-C\nu)\min\{\frac{10^{-3}}{4},\frac{\beta}{2}\}\delta(x).
\end{align*}
Noting $\Omega_1(x)$ is a topological disk, the invariance of domain implies $\rho(\Omega_1(x))\supset B(x,a\delta(x))$, where $a=(1-C\nu)\min\{\frac{10^{-3}}{4},\frac{\beta}{2}\})$. We complete the proof of the lemma.
\end{proof}

For $F_\delta=\{x\in \Sigma| T^{\ast}_{x,2\delta(x)}\le \nu\}$,  setting $\eta(x)=d(x,F_\delta)$, then by Lemma \ref{fine set large}, we know
$$
\eta(x)\le C\frac{\gamma}{\nu} \delta(x)\le C\nu \delta(x)\le \frac{1}{100}(1-|x|).
$$
So, $\eta$ is also a $1$-Lipschitz function on $\Sigma$ with $\eta\le \frac{1}{100}(1-|x|)$.  Thus by Proposition \ref{modify}, there exists a set $\Sigma_\eta$  such that
\begin{enumerate}
\item For $F_\eta:=\{x\in \Sigma| T^{*}_{x,2\eta(x)}(x)\le \nu\}$ and $\mathcal{Z}_\eta:=\{x\in \Sigma|\eta(x)=0\}$, there holds
    $$
    F_\delta=\mathcal{Z}_\eta\subset F_\eta \subset \Sigma_\eta;
    $$
\item $\Sigma_\eta\subset \mathcal{Z}_\eta\cup \cup_{x\in \Sigma}B(x,C\nu\eta(x))$ and if $x\in \Sigma_\eta$, then
    \begin{align*}
    d(x,\Sigma)\le C\nu d(x,F_\eta);
    \end{align*}
\item If $x\in F_\eta, T_0=T_{x,2\eta(x)}$, then
\begin{align*}
\Sigma_\eta\cap B(x,\frac{1}{2}\eta(x))\text{ is a Lipschitz graph over }T_0 \text{ with Lipschitz constant } \le C\nu.
\end{align*}
\end{enumerate}
Moreover, by the construction in this subsection(Proposition \ref{lip normal}, Definition \ref{def:approximating exponential} and  Lemma \ref{lem:bilip exponential}  for $\delta$ replaced by $\eta$), there exists a bi-Lipschitz map $$\rho_\eta:\hat{\Sigma}^{\beta}_\eta\to \rho_\eta(\hat{\Sigma}^{\beta}_\eta)\supset \mathcal{Z}_\eta\cup \cup_{x\in \Sigma}B(x,a\eta(x)),$$
where $
\hat{\Sigma}^{\beta}_\eta=\{(x,v)\in T^\bot_\eta\Sigma_\eta| x\in \Sigma_\eta, v=0 \text{ or } v\in \mathcal{N}^{\eta}_x \text{ with} \mathcal |v|<\beta\eta(x)\}.$
\begin{rem}In the construction of $\hat{\Sigma}^\beta_\eta$, we need to extend $\eta$ to be a $1$-Lipschitz function on $B_1$. Since $d(x,F_\delta)$ is already defined on $B_1$, we can choose the extension(still denoted by $\eta$) such that $\eta(x)\le \min\{d(x,F_\delta),\frac{1-|x|}{100}\}$ for any $x\in B_1)$.
\end{rem}
In the following, we are going to construct a bi-Lipschitz projection from $\Sigma_\delta$ to $\Sigma_\eta$ through the approximating exponential $\rho_\eta$ of $\Sigma_\eta$. The following lemma shows $\Sigma_\delta$ is in a neighborhood of $\Sigma_\eta$.

\begin{lem}\label{in neighborhood} For any $\nu\ll 1$ and $\beta\in (\sqrt{\nu},\frac{1}{10})$, $\Sigma_\delta\subset \rho_{\eta}(\hat{\Sigma}_\eta^{\beta})$.
\end{lem}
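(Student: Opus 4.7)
The plan is a direct case analysis on whether the point of $\Sigma_\delta$ lies in the fine set $F_\delta=\mathcal{Z}_\eta$ or not, using the inclusion for the image of $\rho_\eta$ established via Lemma~\ref{lem:bilip exponential} applied to $\eta$ in place of $\delta$. Recall that this inclusion gives
\[
\rho_\eta(\hat{\Sigma}_\eta^\beta)\supset \mathcal{Z}_\eta\cup\bigcup_{x\in\Sigma}B(x,a\eta(x)),\qquad a=(1-C\nu)\min\{10^{-3}/4,\beta/2\}.
\]

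First I would dispose of the trivial case: if $z\in F_\delta$, then $\eta(z)=d(z,F_\delta)=0$, so $z\in\mathcal{Z}_\eta$ and automatically $z\in\rho_\eta(\hat{\Sigma}_\eta^\beta)$ (since the zero section of the approximating normal bundle sits inside $\hat{\Sigma}_\eta^\beta$ and $\rho_\eta$ restricted to the zero section is the identity). Next, for $z\in\Sigma_\delta\setminus F_\delta$, I would apply item~(2) of Proposition~\ref{modify} to produce $x\in\Sigma$ with
\[
|z-x|\le C\nu\, d(z,F_\delta).
\]
This step is the heart of the argument and uses exactly the quantitative `closeness to $\Sigma$' built into the construction of $\Sigma_\delta$.

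Then I would compare $\eta(x)=d(x,F_\delta)$ with $d(z,F_\delta)$ via the triangle inequality: since $F_\delta\subset\Sigma$,
\[
\eta(x)=d(x,F_\delta)\ge d(z,F_\delta)-|z-x|\ge (1-C\nu)\, d(z,F_\delta).
\]
Combining the two displays yields $|z-x|\le \frac{C\nu}{1-C\nu}\eta(x)\le 2C\nu\,\eta(x)$ when $\nu\ll 1$. Since $\beta>\sqrt{\nu}$, we have $a\ge (1-C\nu)\beta/2>\sqrt{\nu}/4$, so for $\nu$ small enough $2C\nu<a$ and hence $z\in B(x,a\eta(x))\subset\rho_\eta(\hat{\Sigma}_\eta^\beta)$, as required.

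The only real obstacle is bookkeeping of the universal constants: we must choose $\nu$ small enough (depending only on absolute constants) that $2C\nu<(1-C\nu)\beta/2$ uniformly over $\beta\in(\sqrt{\nu},1/10)$, which reduces to $\nu^{1/2}\ll 1$ and is already subsumed in the standing assumption $\nu=\sqrt{\gamma}\ll 1$. No further geometric input is needed beyond Proposition~\ref{modify}(2) and Lemma~\ref{lem:bilip exponential} applied to $\eta$.
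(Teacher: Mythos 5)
Your proposal is correct and follows essentially the same route as the paper: reduce to $\Sigma_\delta\setminus F_\delta$ using $F_\delta=\mathcal{Z}_\eta$, invoke Proposition~\ref{modify}(2) to find $x\in\Sigma$ with $|z-x|\le C\nu\, d(z,F_\delta)$, compare $\eta(x)$ with $d(z,F_\delta)$ by the $1$-Lipschitz property of the distance function, and conclude $|z-x|\le \frac{C\nu}{1-C\nu}\eta(x)<a\eta(x)$ since $a\gtrsim\beta/2>\sqrt{\nu}/2\gg\nu$. The constant bookkeeping at the end matches the paper's as well.
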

\begin{proof}
By Lemma \ref{lem:bilip exponential} for $\delta$ replaced by $\eta$, we know
$$\rho_\eta(\hat{\Sigma}^{\beta}_\eta)\supset \mathcal{Z}_\eta\cup \cup_{x\in \Sigma}B(x,a\eta(x)).$$
Noting $F_\delta\subset \Sigma_\delta$ and $F_\delta=\mathcal{Z}_\eta$, we only need to show  $\Sigma_\delta\backslash F_\delta\subset \cup_{x\in \Sigma}B(x,a\eta(x))$.  For any $y\in \Sigma_\delta\backslash F_\delta$, by Proposition \ref{modify}, there exists $x\in \Sigma$ such that
\begin{align}\label{xy}
|y-x|=d(y,\Sigma)\le C\nu d(y,F_\delta).
\end{align}
By \eqref{xy} and $|\eta(x)-d(y,F_\delta)|=|d(x,F_\delta)-d(y,F_\delta)|\le |x-y|$, we know
$|\eta(x)-d(y,F_\delta)|\le  C\nu d(y, F_\delta)$ and hence
\begin{align}\label{yF}
d(y,F_\delta)\le \frac{1}{1-C\nu}\eta(x).
\end{align}
Combining \eqref{xy} and \eqref{yF}, we get $|y-x|\le \frac{C\nu}{1-C\nu}\eta(x).$ So, for $\beta\ge\sqrt{\nu}$, we know $C\nu\le \min\{\frac{1}{4}\cdot 10^{-3},\frac{\beta}{2}\}=a$ and hence
\begin{align*}
y\in B(x,C\nu \eta(x))\subset B(x,a\eta(x)).
\end{align*}
This completes the proof.
\end{proof}

\begin{defi}Define $Q:\hat{\Sigma}^{\beta}_\eta\to \Sigma_\eta, Q(x,v)=x$ be the bundle projection of the approximating normal bundle. Since Lemma \ref{in neighborhood} implies $\rho_{\eta}^{-1}(\Sigma_\delta)\subset \hat{\Sigma}^{\beta}_\eta$. We get a well-defined projection from $\Sigma_\delta$ to $\Sigma_\eta$ given by
\begin{align*}
\tau=Q\circ \rho_{\eta}^{-1}: \Sigma_\delta\to \Sigma_\eta.
\end{align*}
\end{defi}

Next, we are going to show $\sigma:\Sigma_\eta\to \Sigma_\delta$ is bi-Lipschitz.  Before this, we need the following lemma.
\begin{lem}\label{big size graph}
For any $x\in \Sigma\backslash \mathcal{Z}_\delta$, $\Sigma_\eta\cap B(x,\frac{\delta(x)}{2})$ is a Lipschitz graph over $T_{x, 2\delta(x)}$ with Lipschitz const $\le C\nu$.
\end{lem}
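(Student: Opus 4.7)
The plan is to establish the cone condition
\[
|p^{\bot}_{T_{x,2\delta(x)}}(y-z)| \le C\nu\, |p_{T_{x,2\delta(x)}}(y-z)|, \qquad \forall\, y,z\in \Sigma_\eta\cap B(x,\tfrac{\delta(x)}{2}),
\]
and then deduce the no-hole conclusion by the same winding-number/Reifenberg argument already used in Proposition \ref{pro:conclude} and in the proof of Proposition \ref{modify}, so that the projection $p_{T_{x,2\delta(x)}}$ maps $\Sigma_\eta\cap B(x,\delta(x)/2)$ bijectively onto a full disk and yields the Lipschitz function $f:T_{x,2\delta(x)}\to T^{\bot}_{x,2\delta(x)}$.

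The first ingredient is a close-companion statement: every $y\in\Sigma_\eta\cap B(x,\delta(x)/2)$ has some $y_*\in F_\delta$ with $|y-y_*|\le C\nu\,\delta(x)$. This I would obtain by combining item $(2)$ of Proposition \ref{modify} applied to $\eta$ (producing $y_1\in\Sigma$ with $|y-y_1|\le C\nu\,\eta(y_1)$) with Lemma \ref{fine set large} (producing $y_*\in F_\delta$ close to $y_1$ at the scale provided by the weak-type estimate), and absorbing the constants using $\nu=\sqrt{\gamma}$. The second ingredient is a tilt comparison across scales: for every $y'\in F_\eta\cap B(x,\delta(x))$ one has
\[
|p_{T_{y',2\eta(y')}}-p_{T_{x,2\delta(x)}}|\le C\nu.
\]
This follows by picking $y^*\in F_\delta$ near $y'$, using the defining bound $T^{*}_{y^*,2\delta(y^*)}(y^*)\le\nu$ together with the plane-change estimate $|p_{T_{y^*,2\delta(y^*)}}-p_{T_{x,2\delta(x)}}|\le C\nu$ from the proof of Lemma \ref{lem:lip graph} to obtain $\fint_{B(y',2\eta(y'))}|p_{T_z}-p_{T_{x,2\delta(x)}}|\,d\mu\le C\nu$, and then passing to the plane $T_{y',2\eta(y')}$ via the tilt-excess machinery (Lemma \ref{lem:Cacciopolli} and the proof of Lemma \ref{cor:excess estimate}).

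With these two ingredients in hand, I would verify the cone inequality for $y,z\in\Sigma_\eta\cap B(x,\delta(x)/2)$ by splitting into three regimes. If both $y,z$ belong to $F_\delta$, the estimate is exactly Lemma \ref{lem:lip graph}. If $|y-z|$ is much larger than $\eta(y)+\eta(z)$, then the first ingredient lets me replace $y,z$ by $y_*,z_*\in F_\delta$ at distance $\le C\nu\,\delta(x)\ll|y-z|$ and transfer the cone bound from Lemma \ref{lem:lip graph} with only a $C\nu$ loss. If $|y-z|\lesssim \eta(y)+\eta(z)$, I use item $(3)$ of Proposition \ref{modify} applied to $\eta$ to represent $\Sigma_\eta$ as a Lipschitz graph over $T_{y',2\eta(y')}$ in $B(y',\eta(y')/2)$ for a nearby $y'\in F_\eta$, and then convert this to a Lipschitz graph over $T_{x,2\delta(x)}$ with at most a $C\nu$ increase in Lipschitz constant via the second ingredient.

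The main obstacle will be this last small-scale regime. When $|y-z|\lesssim\eta$ the approximation of $y,z$ by points of $F_\delta$ has error of the same order as the separation, so one truly must work at the $\eta$-scale Lipschitz-graph description; getting the sharp $C\nu$ cone constant there (rather than the weaker bound one would obtain by invoking tilt-excess without exploiting the $F_\delta$ average over $T_{x,2\delta(x)}$) is precisely what forces the second ingredient above and justifies the uniform $C\nu$ Lipschitz constant claimed in the lemma.
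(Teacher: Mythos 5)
Your overall skeleton matches the paper's: split $\Sigma_\eta\cap B(x,\tfrac{\delta(x)}{2})$ according to whether points lie in $F_\delta=\mathcal{Z}_\eta$ or not, verify a uniform cone condition over $T_{x,2\delta(x)}$, and use the cross-scale tilt comparison $|p_{T_{y_1,2\eta(y_1)}}-p_{T_{x,2\delta(x)}}|\le C\nu$. Your ``second ingredient'' is exactly the paper's key estimate \eqref{different scale}, derived the same way (nearest point $y_2\in F_\delta$, the defining bound $T^{*}_{y_2,2\delta(y_2)}(y_2)\le\nu$, Ahlfors regularity and tilt-excess). The additional winding-number step you propose is harmless but not needed: as in Lemma \ref{lem:lip graph}, the conclusion used later is only that the set is \emph{contained in} a Lipschitz graph.

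There is, however, a genuine gap in your middle regime. Your ``first ingredient'' produces a companion $y_*\in F_\delta$ only at distance $|y-y_*|\approx(1+C\nu)\eta$ (the bound $C\nu\delta(x)$ you quote is even coarser and is certainly not $\ll|y-z|$ in general). When $|y-z|$ is merely \emph{comparable} to $\eta(y)+\eta(z)$ --- say $\tfrac12\eta\le|y-z|\le M\eta$ --- the replacement error has the same order as the separation, so the naive transfer gives $|p^{\bot}_{T_{x,2\delta(x)}}(y-z)|\le C\nu|y_*-z_*|+|y-y_*|+|z-z_*|\lesssim|y-z|$, not $C\nu|y-z|$. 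This band is also outside the reach of item $(3)$ of Proposition \ref{modify}, which only describes $\Sigma_\eta$ as a graph in $B(y_1,\tfrac{\eta(y_1)}{2})$. The fix --- and this is what the paper's Case 2 does --- is to split the displacement to the $F_\delta$-companion into two pieces with \emph{different} smallness mechanisms: first $y-y_1$ with $y_1\in\Sigma$ and $|y-y_1|\le C\nu\,\eta(y_1)$ (small in length, by item $(2)$ of Proposition \ref{modify} applied to $\eta$), and then $y_1-y_2$ with $y_2\in F_\delta$ the nearest point, for which $|y_1-y_2|=\eta(y_1)$ is \emph{not} small relative to $|y-z|$ but satisfies $|p^{\bot}_{T_{x,2\delta(x)}}(y_1-y_2)|\le C\nu|y_1-y_2|$ by Lemma \ref{lem:lip graph} (small only after projection, since $y_1\in\Sigma$ and $y_2\in F_\delta$). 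Summing, the perpendicular error is $\le C\nu\,\eta(y_1)\le C\nu|y-z|$ as soon as $|y-z|\ge(\tfrac12-C\nu)\eta(y_1)$, which together with the local graph over $T_{y_1,2\eta(y_1)}$ (converted to $T_{x,2\delta(x)}$ by your second ingredient) covers all separations. Without this two-step decomposition your dichotomy leaves the range $\tfrac12\eta\lesssim|y-z|\lesssim\nu^{-1}\eta$ unproved.
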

\begin{proof} Noting $\mathcal{Z}_\eta=F_\delta$, we know
\begin{align*}
\Sigma_\eta\cap B(x,\frac{\delta(x)}{2})=\underbrace{[(\Sigma_\eta\backslash \mathcal{Z}_\eta)\cap B(x,\frac{\delta(x)}{2})]}_{=:X_\eta}\cup\underbrace{[F_\delta\cap B(x,\frac{\delta(x)}{2})]}_{=:Y_\eta}.
\end{align*}
By Lemma \ref{lem:lip graph}, we know $Y_\eta$ is a Lipschitz graph over $T_{x,2\delta(x)}$ with Lipschitz constant $\le C\nu$. Furthermore, for any $y\in \Sigma_\eta\backslash \mathcal{Z}_\eta$, by Proposition \ref{modify} for $\delta$ replaced by $\eta$, there exists $y_1\in F_\eta\backslash \mathcal{Z}_\eta$ such that $|y-y_1|\le C\nu\eta(y_1)$ and $\Sigma_\eta\cap B(y_1,\frac{\eta(y_1)}{2})$ is a Lipschitz graph over $T_{y_1,2\eta(y_1)}$ with Lipschitz constant $\le C\nu$.  The key fact we need to prove is
\begin{align}\label{different scale}
|p_{T_{y_1,2\eta(y_1)}}-p_{T_{x,2\delta(x)}}|\le C\nu.
\end{align}
For this, we argue by following.  Since $y_1\in F_\eta\backslash \mathcal{Z}_\eta\subset \Sigma\backslash F_\delta$, we know there exists a $y_2\in F_\delta$ such that
\begin{align*}
|y_1-y_2|=d(y_1,F_\delta)=\eta(y_1)>0.
\end{align*}
Since $B(y_1,2\eta(y_1))\subset B(y_2,3\eta(y_1))$,  by the Ahlfors regularity and tilt-excess estimate, we know
\begin{align}\label{one}
|p_{T_{y_1,2\eta(y_1)}}-p_{T_{y_2,3\eta(y_1)}}|\le C(E(y_1,2\eta(y_1),T_{y_1,2\eta(y_1)})+E(y_2,3\eta(y_1),T_{y_2,3\eta(y_1)}))\le C\gamma.
\end{align}
By Lemma \ref{fine set large}, we know $\eta(y_1)=d(y_1,F_\delta)\le C\nu \delta(y_1)$. This implies $|\delta(y_2)-\delta(y_1)|\le |y_1-y_2|=\eta(y_1)\le C\nu \delta(y_1)$ and hence
\begin{align*}
r:=3\eta(y_1)\le 3C\nu\delta(y_1)\le \frac{3C\nu}{1-C\nu}\delta(y_2)\ll 2\delta(y_2).
\end{align*}
So, by $y_2\in F_\delta$, we know $T^{*}_{y_2,2\delta(y_2)}(y_2)\le \nu$ and hence
\begin{align*}
\frac{1}{\mu(B(y_2,r))}\int_{B(y_2,r)}|p_{T_z}-p_{T_{y_1,2\delta(y_1)}}|d\mu(z)\le \nu.
\end{align*}
Combining with the tilt-excess estimate, we get
\begin{align}\label{two}
|p_{T_{y_2,r}}-p_{T_{y_1,2\delta(y_1)}}|\le C(E(y_2,r,T_{y_2,r})+\frac{1}{\mu(B(y_2,r))}\int_{B(y_2,r)}|p_{T_z}-p_{T_{y_1,2\delta(y_1)}}|d\mu)\le C(\gamma+\nu).
\end{align}
Moreover, since $|y_1-y|\le C\nu\eta(y_1)\le C\nu^2\delta(y_1)$, we know $|\delta(y_1)-\delta(x)|\le|y_1-x|\le |y_1-y|+|y-x|\le C\nu^2\delta(y_1)+\frac{1}{2}\delta(x)$ and hence $|y_1-x|\le (\frac{1}{2}+\frac{3C\nu^2}{2(1-C\nu^2)})\delta(x)$. Again by the Ahlfors regularity and tilt-excess estimate, we know
\begin{align}\label{three}
|p_{T_{y_1,2\delta(y_1)}}-p_{x,2\delta(x)}|\le C\gamma.
\end{align}
Combining \eqref{one}, \eqref{two} and \eqref{three} together, we get
\begin{align*}
|p_{T_{y_1,2\eta(y_1)}}-p_{T_{x,2\delta(x)}}|\le C\gamma +C(\gamma+\nu)+C\gamma\le C\nu.
\end{align*}
This verifies \eqref{different scale} and hence we know $\Sigma_\eta\cap B(y_1,\frac{\eta(y_1)}{2})$ is also a Lipschitz graph over $T_{x,2\delta(x)}$ with Lipschitz constant $\le C\nu$. Finally, to see the union $X_\eta\cup Y_\eta$ is a graph, we need to check the uniformly conic condition
\begin{align*}
|p^{\bot}_{T_{x,2\delta(x)}}(y)-p^{\bot}_{T_{x,2\delta(x)}}(w)|\le C\nu|p_{T_{x,2\delta(x)}}(y)-p_{T_{x,2\delta(x)}}(w)|, \forall y,w\in X_\eta\cup Y_\eta.
\end{align*}
There are only two cases we need to check.
\begin{enumerate}
\item Case 1, $y\in X_\eta, w\in Y_\eta$;
 \item Case 2, $y,w\in X_\eta$ such that $w\notin B(y_1,\frac{\eta(y_1)}{2})$.
\end{enumerate}
In case 1, noting $w\in F_\delta\subset F_\eta$ by Proposition\ref{modify}, there exists $y_3\in \Sigma$ such that
$$|y-y_3|=d(y,\Sigma)\le C\nu d(y,F_\eta)\le C\nu |y-w|.$$
By Lemma \ref{lem:lip graph}, we know $|p_{T_{x,2\delta(x)}}^{\bot}(y_3-w)|\le C\nu |y_3-w|$ and hence
\begin{align*}
|p_{T_{x,2\delta(x)}}^{\bot}(y_3-w)|\le |y-y_3|+C\nu|y_3-w|\le (1+C\nu)|y-y_3|+C\nu|y-w|\le C\nu|y-w|.
\end{align*}

In case 2, we know
\begin{align}\label{far}
|w-y|\ge |w-y_1|-|y_1-y|\ge (\frac{1}{2}-C\nu)\eta(y_1).
\end{align}
On the other hand, noting $y_2\in F_\delta$ and $y_1\in \Sigma$, by the discussion of Case 1 and Lemma \ref{lem:lip graph}, we know
\begin{align*}
|p_{T_{x,2\delta(x)}}^{\bot}(w-y_2)|\le C\nu|w-y_2| \text{ and }
|p_{T_{x,2\delta(x)}}^{\bot}(y_1-y_2)|\le C\nu|y_1-y_2|.
\end{align*}
Since $|y-y_1|\le C\nu\eta(y_1)$ and $|y_1-y_2|=\eta(y_1)$, we get
\begin{align*}
|p_{T_{x,2\delta(x)}}^{\bot}(w-y)|&\le |p_{T_{x,2\delta(x)}}^{\bot}(w-y_2)|+|p_{T_{x,2\delta(x)}}^{\bot}(y_1-y_2)|+|y_1-y|\\
&\le C\nu (|w-y_2|+|y_2-y_1|)+|y_1-y|\\
&\le C\nu(|w-y|+2|y_2-y_1|)+(1+C\nu)|y_1-y|\\
&\le C\nu|w-y|+C\nu\eta(y_1)\\
&\le C\nu|y-w|,
\end{align*}
where in the last line we use \eqref{far}.
Combining the above discussions for the two cases together, by the triangle inequality, we get
\begin{align*}
|p^{\bot}_{T_{x,2\delta(x)}}(y)-p^{\bot}_{T_{x,2\delta(x)}}(w)|\le C\nu|p_{T_{x,2\delta(x)}}(y)-p_{T_{x,2\delta(x)}}(w)|, \forall y,w\in X_\eta\cup Y_\eta.
\end{align*}
This completes the proof.
\end{proof}
\begin{lem}\label{lem:nearby step}For $\beta=\sqrt{\nu}\ll 1$, $\tau:\Sigma_\delta\to \Sigma_\eta$ is bi-Lipschitz  and satisfies
\begin{enumerate}
\item $\tau(x)=x, \forall x\in F_\delta=\mathcal{Z}_\eta$;
\item For any $x\in \Sigma_\delta$, $|\tau(x)-x|\le C\sqrt{\nu} d(x,\mathcal{Z}_\eta)$;
\item for any $x_1,x_2\in \Sigma_\delta$,
\begin{align*}
\frac{|(\tau(x_1)-x_1)-(\tau(x_2)-x_2)|}{|x_1-x_2|}\le C\sqrt{\nu}.
\end{align*}
\end{enumerate}
\end{lem}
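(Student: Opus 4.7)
Items (1) and (2) fall out quickly from the construction. For (1), $x\in F_\delta=\mathcal{Z}_\eta\subset\Sigma_\eta$ yields $(x,0)\in\hat{\Sigma}^\beta_\eta$ with $\rho_\eta(x,0)=x$; injectivity of $\rho_\eta$ on the $\beta$-disk bundle (Lemma \ref{lem:bilip exponential}) then forces $\rho_\eta^{-1}(x)=(x,0)$, so $\tau(x)=x$. For (2), write $\rho_\eta^{-1}(x)=(\tau(x),v)$. The defining condition of $\hat{\Sigma}^\beta_\eta$ gives $|v|\le \beta\eta(\tau(x))=\sqrt{\nu}\,\eta(\tau(x))$; combining with the $1$-Lipschitz estimate $\eta(\tau(x))\le \eta(x)+|v|$ and the extension property $\eta(x)\le d(x,F_\delta)=d(x,\mathcal{Z}_\eta)$ yields $|\tau(x)-x|=|v|\le \frac{\sqrt{\nu}}{1-\sqrt{\nu}}\,d(x,\mathcal{Z}_\eta)\le C\sqrt{\nu}\,d(x,\mathcal{Z}_\eta)$.

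The technical heart is item (3); once it is established, the bi-Lipschitz character of $\tau$ follows from the decomposition $x_1-x_2=(\tau(x_1)-\tau(x_2))+(v_1-v_2)$ combined with $|v_1-v_2|\le C\sqrt{\nu}\,|x_1-x_2|$, giving $(1-C\sqrt{\nu})|x_1-x_2|\le |\tau(x_1)-\tau(x_2)|\le (1+C\sqrt{\nu})|x_1-x_2|$. Setting $y_i=\tau(x_i)$ and $v_i=x_i-y_i$, I split into two regimes. In the large-scale case $\max(\eta(y_1),\eta(y_2))\le |x_1-x_2|$, item (2) directly yields $|v_i|\le \sqrt{\nu}\,\eta(y_i)\le \sqrt{\nu}\,|x_1-x_2|$, and the triangle inequality closes. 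In the small-scale case $\max(\eta(y_1),\eta(y_2))>|x_1-x_2|$, the $1$-Lipschitz property of $\eta$ and the local bi-Lipschitz estimate \eqref{local lip} force $\eta(y_1)\sim\eta(y_2)=:r$ with $|x_1-x_2|\lesssim r$. Since $r=\eta(y_1)=d(y_1,F_\delta)$, pick $z\in F_\delta$ with $|z-y_1|\sim r$; Lemma \ref{fine set large} (applied at the $\delta$-scale) then guarantees $\delta(z)\gtrsim r/\nu\gg r$, so all four points $x_1,x_2,y_1,y_2$ lie inside $B(z,\delta(z)/2)$.

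In this common ball, Proposition \ref{modify}(3) and Lemma \ref{big size graph} realize both $\Sigma_\delta$ and $\Sigma_\eta$ as $C\nu$-Lipschitz graphs over the plane $T=T_{z,2\delta(z)}$, say $x_i=(u_i,f(u_i))$ and $y_i=(u_i',g(u_i'))$. The decisive analytic input is that $\mathcal{N}^\eta_{y_i}$ lies $C\nu$-close to $T^\bot$; this is the $\eta$-scale analogue of Remark \ref{local trivialization}, obtained from the tilt-excess comparison \eqref{different scale} between the $\delta$- and $\eta$-scale tangent planes. Consequently $|p_T(v_i)|\le C\nu|v_i|$, and the telescoping identity $p_T(v_1)-p_T(v_2)=p_T(p^\bot(y_1)-p^\bot(y_2))(v_1)+p_T p^\bot(y_2)(v_1-v_2)$ together with the Lipschitz variation of $p^\bot$ from Proposition \ref{lip normal} gives $|p_T(v_1-v_2)|\le C\nu^{3/2}|x_1-x_2|+C\nu|v_1-v_2|$. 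The normal part is handled by the graph structure: $|p_{T^\bot}(v_1-v_2)|\le |f(u_1)-f(u_2)|+|g(u_1')-g(u_2')|\le C\nu|x_1-x_2|$. Combining the two components via $|v_1-v_2|^2=|p_T(v_1-v_2)|^2+|p_{T^\bot}(v_1-v_2)|^2$ and absorbing the $O(\nu^2)|v_1-v_2|^2$ term on the left yields the even stronger bound $|v_1-v_2|\le C\nu|x_1-x_2|$ in the small-scale case, which certainly satisfies the $C\sqrt{\nu}$ target.

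The main obstacle is precisely this coordination in Case B of three tangent plane approximations at nested scales $|x_1-x_2|\ll r\ll \delta(z)$: one must simultaneously align the $\delta$-scale plane $T_{z,2\delta(z)}$, the $\eta$-scale planes $T_{y_i,2\eta(y_i)}$, and the approximating normal fibers $\mathcal{N}^\eta_{y_i}$, and verify that the tilting of $\mathcal{N}^\eta$ cooperates with both graph structures so that the apparent contributions from $p_T(v_i)\neq 0$ remain controlled after differencing. This is the higher-codimension, non-smooth analogue of Semmes' estimates from \cite{S-1991b}, and it relies crucially on the approximating normal bundle introduced in Definition \ref{def:approximating exponential} together with its Lipschitz continuity from Proposition \ref{lip normal}.
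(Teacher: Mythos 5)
Your treatment of items (1) and (2), and your overall two--regime strategy for item (3), coincide with the paper's proof: a crude triangle--inequality bound when the two points are far apart relative to $\eta$, and, when they are close, a common Lipschitz--graph argument over $T_{z,2\delta(z)}$ combined with the approximating normal bundle (tangential part of $v_1-v_2$ controlled by the Lipschitz variation of $p^{\bot}_\eta$ from Proposition \ref{lip normal} and the $C\nu$-tilt of $\mathcal{N}^\eta_{y_i}$ against $T^{\bot}$, normal part by the two graph functions, then absorption of $|y_1-y_2|$ into $|x_1-x_2|$). That near-case computation is essentially verbatim the paper's.

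There is, however, a genuine gap in your case split. In your small-scale case $\max(\eta(y_1),\eta(y_2))>|x_1-x_2|$ you assert that the $1$-Lipschitz property of $\eta$ and \eqref{local lip} ``force $\eta(y_1)\sim\eta(y_2)$.'' This is false: take $y_2\in F_\delta$ (so $\eta(y_2)=0$, $v_2=0$, $x_2=y_2$) and $y_1$ with $\eta(y_1)=r>0$ and $|x_1-x_2|$ slightly below $r$. In such configurations $|y_1-y_2|$ is comparable to $\eta(y_1)$, so $y_1$ and $y_2$ need not lie in a common ball $B(w,\tfrac{2}{3}\eta(w))$ --- which is exactly where Proposition \ref{lip normal} at the $\eta$-scale supplies the bound $|p^{\bot}_\eta(y_1)-p^{\bot}_\eta(y_2)|\le \frac{C\nu}{\eta}|y_1-y_2|$ that your telescoping identity uses --- and the $\eta$-scale graph structure of $\Sigma_\eta$ containing both $y_i$ is likewise only available at radius $\tfrac{1}{2}\eta(\tilde y_1)$. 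The paper sidesteps this by splitting on $|y_1-y_2|$ versus $\tfrac{1}{10}(\eta(y_1)+\eta(y_2))$ instead of on $|x_1-x_2|$ versus $\max_i\eta(y_i)$: in the far case the crude bound $|v_1-v_2|\le\beta(\eta(y_1)+\eta(y_2))\le\frac{\beta}{1/10-\beta}|x_1-x_2|\le C\sqrt{\nu}\,|x_1-x_2|$ already closes, while in the near case the $1$-Lipschitz property of $\eta$ genuinely gives $\eta(y_1)\approx\eta(y_2)$ and places $x_1,x_2,y_1,y_2$ in a single $\eta$-scale ball where both graph structures and the Lipschitz estimate for $p^{\bot}_\eta$ apply. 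Your argument is repaired by adopting that dichotomy (or by inserting the intermediate regime $|x_1-x_2|<\max_i\eta(y_i)\le C|y_1-y_2|$, where the crude bound still suffices); as written, the small-scale analysis does not cover all the configurations it is assigned.
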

\begin{proof}
For $x\in \Sigma_\delta$, assume $y=\tau(x)=Q\circ\rho_\eta^{-1}(x)$. By Lemma \ref{in neighborhood}, $(z,v)=\rho_\eta^{-1}(x)\in \hat{\Sigma}^{\beta}_\eta$. So, we know there exists $z\in \Sigma_\eta$ and $v\in \mathcal{N}^\eta_z$ such that $|v|\le \beta\eta(z)$ and $y=Q(z,v)=z$. This means $v\in \mathcal{N}^\eta_y$ with $|v|\le \beta\eta(y)$ and
 $$x=\rho_\eta(z,v)=y+v.$$

In the case $x\in F_\delta$, we know $\eta(x)=0$ and hence $v=0$. So, $\tau(x)=y=x$.

For general $x\in \Sigma_\delta$, from $|y-x|=|v|\le \beta\eta(y)$, we know $\eta(y)\le \frac{1}{1-\beta}\eta(x)$ and hence
\begin{align*}
|\tau(x)-x|=|y-x|\le \frac{\beta}{1-\beta}\eta(x)\le 2\sqrt{\nu}\eta(x).
\end{align*}
For item $(3)$, setting $\rho_\eta^{-1}(x_i)=(y_i,v_i), i=1,2$, we know
\begin{align*}
x_i=y_i+v_i \text{ and } |v_i|\le \beta \eta(y_i), \text{ for } i=1,2.
\end{align*}
To estimate $|(\tau(x_1)-x_1)-(\tau(x_2)-x_2)|=|v_1-v_2|$,  we discuss the following two cases:
\begin{enumerate}
\item Case 1, $|y_1-y_2|\ge \frac{1}{10}(\eta(y_1)+\eta(y_2))$;
\item Case 2,  $|y_1-y_2|< \frac{1}{10}(\eta(y_1)+\eta(y_2))$.
\end{enumerate}
In case 1,  we know
\begin{align*}
|x_1-x_2|\ge |y_1-y_2|-|v_1|-|v_2|\ge (\frac{1}{10}-\beta)(\eta(y_1)+\eta(y_2))
\end{align*}
and hence
\begin{align*}
|v_1-v_2|\le |v_1|+|v_2|\le \beta(\eta(y_1)+\eta(y_2))\le \frac{\beta}{\frac{1}{10}-\beta}|x_1-x_2|\le 20\sqrt{\nu}|x_1-x_2|.
\end{align*}

In case 2, if  $\eta(y_1)=\eta(y_2)=0$, then item $(1)$ implies there is nothing to prove. So, we can assume $\eta(y_1)>0$ without loss of generality.  Still by Proposition \ref{modify} and Lemma \ref{fine set large} for $\delta$ replaced by $\eta$, there exists $\tilde{y}_1\in F_\eta$ such that $|y_1-\tilde{y}_1|\le C\nu\eta(\tilde{y}_1)$ and $\Sigma_\eta\cap B_{\frac{1}{2}\eta(\tilde{y}_1)}(\tilde{y}_1)$ is a Lipschitz graph over $T_{\tilde{y}_1,2\eta(\tilde{y}_1)}$.
For convenience, denote $\alpha=\frac{1}{10}$. Then,  we know $\eta(y_2)\le \frac{1+\alpha}{1-\alpha}\eta(y_1)$, $\eta(y_1)\le (1+C\nu)\eta(\tilde{y}_1)$ and hence
\begin{align*}
|y_2-\tilde{y}_1|&\le |y_1-\tilde{y}_1|+|y_1-y_2|\le \alpha(\eta(y_1)+\eta(y_2))+C\nu\eta(\tilde{y}_1)\\
&\le [\frac{2\alpha}{1-\alpha}(1+C\nu)+C\nu]\eta(\tilde{y}_1)\le \frac{1}{3}\eta(\tilde{y}_1).
\end{align*}
This implies $y_1,y_2\in B(\tilde{y}_1,\frac{1}{3}\eta(\tilde{y}_1))\cap \Sigma_\eta$, which is a Lipszhitz graph over $T_{\tilde{y}_1,2\eta(\tilde{y}_1)}$ with Lipschitz constant $\le C\nu$.
Moreover, since $\eta(\tilde{y}_1)=d(\tilde{y}_1,F_\delta)\le C\nu\delta(\tilde{y}_1)$(by Lemma \ref{fine set large}), we know
\begin{align*}
|x_i-\tilde{y}_1|\le |x_i-y_i|+|y_i-\tilde{y}_1|\le [\beta\frac{1+\alpha}{1-\alpha}(1+C\nu)+\frac{1}{3}]\eta(\tilde{y}_1)\le C\nu\delta(\tilde{y}_1).
\end{align*}
This implies $x_1,x_2\in \Sigma_\delta\cap B(\tilde{y}_1,C\nu\delta(\tilde{y}_1))$, which is  a Lipszhitz graph over $T_{\tilde{y}_1,2\delta(\tilde{y}_1)}$ with Lipschitz constant $\le C\nu$.

Moreover, by \eqref{different scale} in the proof of Lemma \ref{big size graph}, we know
\begin{align*}
|p_{T_{\tilde{y}_1,2\eta(\tilde{y}_1)}}-p_{T_{\tilde{y}_1,2\delta(\tilde{y}_1)}}|\le C\nu.
\end{align*}
By Remark \ref{local trivialization} for $\delta$ replaced by $\eta$, we know
$$|p_{T_{\tilde{y}_1,2\eta(\tilde{y}_1)}}-p_\eta(y_1)|\le C\gamma,$$
where by $p_\eta$ we mean the map constructed in Lemma \ref{lip normal} for $\delta$ replaced by $\eta$. The above two inequalities implies $|p_{T_{\tilde{y}_1,2\eta(\tilde{y}_1)}}-p_\eta(y_1)|+|p_{T_{\tilde{y}_1,2\delta(\tilde{y}_1)}}-p_\eta(y_1)|\le C\nu$ and hence both $\Sigma_\eta\cap B(\tilde{y}_1,\frac{\eta(\tilde{y}_1)}{2})$ and  $\Sigma_\delta\cap B(\tilde{y}_1,\frac{\delta(\tilde{y}_1)}{2})$ are Lipschitz graphs over $T(y_1)=\mathcal{N}^{\bot}_{y_1}$ with Lipschitz constant $\le C\nu$. More precisely, there exist Lipschitz functions $f^\sigma,f^\eta: T(y_1)\to \mathcal{N}_{y_1}$  and $\{z_i,w_i\}_{i=1,2}\in T(y_1)$ such that
\begin{align*}
x_i=z_i+f^{\sigma}(z_i) \text{ and } y_i=w_i+f^{\eta}(w_i), i=1,2.
\end{align*}
So, we know $\mathcal{N}_{y_i}\ni v_i=x_i-y_i=z_i-w_i+f^{\sigma}(z_i)-f^\eta(w_i), i=1,2$ and hence
\begin{align*}
0=p_\eta(y_1)(v_1)=z_1-w_1\text{ and } p_\eta(y_1)(v_2)=z_2-w_2.
\end{align*}
This further implies $z_1=w_1$ and $z_2-w_2=p_\eta(y_1)(v_2)=(p_\eta(y_1)-p_\eta(y_2))(v_2)$. By Proposition \ref{lip normal} for $\delta$ replaced by $\eta$, we know $|p_\eta(y_1)-p_\eta(y_2)|\le \frac{C\nu}{\eta(\tilde(y)_1)}|y_1-y_2|$ and hence
\begin{align*}
|z_2-w_2|\le \frac{C\nu}{\eta(\tilde{y}_1)}|y_1-y_2||v_2|&\le \frac{C\nu}{\eta(\tilde{y}_1)}|y_1-y_2|\beta\eta(y_2)\\
&\le C\nu\beta\frac{1+\alpha}{1-\alpha}(1+C\nu)|y_1-y_2|\\
&\le C\nu\beta |y_1-y_2|.
\end{align*}
So, by $z_i\perp f^{\sigma}(z_i), w_i\perp f^\eta(w_i)$, we know $|z_1-z_2|\le |x_1-x_2|$, $|w_1-w_2|\le |y_1-y_2|$ and hence
\begin{align*}
|(x_1-x_2)-(y_1-y_2)|&=|v_1-v_2|=|(f^\sigma(z_1)-f^\eta(w_1))-(z_2-w_2+f^\sigma(z_2)-f^\eta(w_2))|\\
&\le |f^\sigma(z_1)-f^\sigma(z_2)|+|f^\eta(w_1)-f^\eta(w_2)|+|z_2-w_2|\\
&\le C\nu|z_1-z_2|+C\nu|w_1-w_2|+C\nu\beta|y_1-y_2|\\
&\le C\nu(|x_1-x_2|+|y_1-y_2|).
\end{align*}
This implies $|y_1-y_2|\le \frac{1+C\nu}{1-C\nu}|x_1-x_2|$ and hence
\begin{align*}
|v_1-v_2|=|(x_1-x_2)-(y_1-y_2)|\le \frac{2C\nu}{1-C\nu}|x_1-x_2|\le C\nu|x_1-x_2|.
\end{align*}
 Combining the discussions of the above two cases together, we get
\begin{align*}
\frac{|(\tau(x_1)-x_1)-(\tau(x_2)-x_2)|}{|x_1-x_2|}\le C\sqrt{\nu}, \forall x_1,x_2\in \Sigma_\delta.
\end{align*}
\end{proof}

Next is the iteration step. By Lemma \ref{lem:nearby step}, the following proposition holds. 

\begin{proposition}\label{one step} Let $\delta_0(x)=\frac{d(x)}{100}$, where $d(x)=1-|x|$ for $|x|\le 1$ and $d(x)=0$ for $|x|>1$. Define $F_j=F_{\delta_j}$ and $\delta_{j+1}=\min\{d(x,F_j),d(x)\}$ inductively for any $j\ge 0$. Let $\tau_j:\Sigma_j\to \Sigma_{j+1}$ be as in Lemma \ref{lem:nearby step} for $\Sigma_\delta=\Sigma_j$ and $\Sigma_\eta=\Sigma_{j+1}$. Defining $\sigma_j=\tau_j^{-1}:\Sigma_{j+1}\to \Sigma_j$ and $\mathcal{Z}_j=\{x\in \Sigma|\delta_j(x)=0\}$, then both $\tau_j$ and $\sigma_j$ satisfy
\begin{enumerate}
\item $\sigma_j(x)=x, \forall x\in F_j=\mathcal{Z}_{j+1}$;
\item For any $x\in \Sigma_{j+1}$ and $x'=\sigma_j(x)\in \Sigma_j$, $|x-\sigma_j(x)|=|\tau_j(x')-x'|\le C\sqrt{\nu} d(x,F_j)$;
\item for any $x,y\in \Sigma_{j+1}$ and $x'=\sigma_j(x), y'=\sigma_j(y)\in \Sigma_j$,
\begin{align*}
\frac{|(x-\sigma_j(x))-(y-\sigma_j(y))|}{|x-y|}+\frac{|(x'-\tau_j(x'))-(y'-\tau_j(y'))|}{|x'-y'|}\le C\sqrt{\nu}.
\end{align*}
This also implies $\|\sigma_j-id\|_{Lip(\Sigma_{j+1})}+\|\tau_j-id\|_{Lip(\Sigma_j)}\le C\sqrt{\nu}$.
\end{enumerate}

\end{proposition}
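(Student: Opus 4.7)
\textbf{Proof plan for Proposition \ref{one step}.} The statement is essentially a book-keeping consequence of Lemma \ref{lem:nearby step}; the main task is to verify that the sequence $\{\delta_j\}$ is indeed admissible at every step, and then to translate the conclusions of Lemma \ref{lem:nearby step} from the language of the forward map $\tau$ into the symmetric statement for both $\tau_j$ and $\sigma_j=\tau_j^{-1}$.

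First I would argue by induction that each $\delta_j$ is a $1$-Lipschitz function on $B_1$ satisfying $\delta_j(x)\leq (1-|x|)/100$, so that Proposition \ref{modify} applies to produce $\Sigma_j=\Sigma_{\delta_j}$ and Lemma \ref{lem:nearby step} applies to produce $\tau_j:\Sigma_j\to\Sigma_{j+1}$ with $\eta=\delta_{j+1}$. The $1$-Lipschitz property is immediate since $d(\cdot,F_j)$ and $d(\cdot)=1-|\cdot|$ are both $1$-Lipschitz. For the size bound, the base case $\delta_0=d/100$ is clear; for the inductive step, Lemma \ref{fine set large} with $\nu=\sqrt{\gamma}\ll1$ yields $d(x,F_j)\leq 10^{-10}\delta_j(x)$, so
\[
\delta_{j+1}(x)\leq d(x,F_j)\leq 10^{-10}\delta_j(x)\leq (1-|x|)/100.
\]
In particular, $\delta_{j+1}$ vanishes precisely on $F_j$, giving the identification $\mathcal{Z}_{j+1}=F_j$ used throughout the statement.

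Next I would apply Lemma \ref{lem:nearby step} with $(\delta,\eta)=(\delta_j,\delta_{j+1})$ to obtain the bi-Lipschitz $\tau_j:\Sigma_j\to\Sigma_{j+1}$. The three conclusions of that lemma already cover the $\tau_j$-half of items (1)--(3) of the proposition verbatim. It only remains to transport them across the inverse. For item (1), if $x\in F_j=\mathcal{Z}_{j+1}$ then $\tau_j(x)=x$, so applying $\sigma_j$ to both sides gives $\sigma_j(x)=x$. For item (2), write $x'=\sigma_j(x)$ so that $|x-\sigma_j(x)|=|\tau_j(x')-x'|\leq C\sqrt{\nu}\,d(x',F_j)$ from the lemma; then the triangle inequality $d(x',F_j)\leq d(x,F_j)+|x-x'|\leq d(x,F_j)+C\sqrt{\nu}\,d(x',F_j)$ gives $d(x',F_j)\leq (1-C\sqrt{\nu})^{-1}d(x,F_j)$, and absorbing the constant produces $|x-\sigma_j(x)|\leq C\sqrt{\nu}\,d(x,F_j)$. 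For item (3), the inequality for $\tau_j$ is supplied directly by Lemma \ref{lem:nearby step}(3); for $\sigma_j$, noting $x-\sigma_j(x)=\tau_j(x')-x'$ and likewise for $y$, the lemma gives
\[
|(\tau_j(x')-x')-(\tau_j(y')-y')|\leq C\sqrt{\nu}\,|x'-y'|,
\]
and the same estimate forces $|x-y|=|\tau_j(x')-\tau_j(y')|\geq (1-C\sqrt{\nu})|x'-y'|$, so dividing and absorbing yields the desired bound $\leq C\sqrt{\nu}$.

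There is no serious analytical obstacle here since all the geometric work was already done in Lemma \ref{lem:nearby step}; the only place where one must be slightly careful is the first step, namely confirming via Lemma \ref{fine set large} that the recursively defined $\delta_j$ never exceed the threshold $(1-|x|)/100$ required by Proposition \ref{modify}. Once this is in place, the three items are a routine unpacking of $\tau_j\circ\sigma_j=\mathrm{id}$ combined with the Lipschitz-type estimates of the lemma.
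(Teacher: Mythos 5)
Your proposal is correct and follows exactly the route the paper intends: the paper offers no written proof beyond the remark that the proposition follows from Lemma \ref{lem:nearby step}, and your write-up supplies precisely the missing book-keeping (admissibility of the recursively defined $\delta_j$ via Lemma \ref{fine set large}, and the transfer of the estimates from $\tau_j$ to $\sigma_j=\tau_j^{-1}$ by the triangle inequality and absorption of $(1-C\sqrt{\nu})^{-1}$ factors). No gaps.
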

\begin{rem}\label{topological manifold} Since $\delta_0(x)>0$ for any $|x|<1$, by Remark \ref{Lipschitz surface} and Lemma \ref{lem:nearby step}, we know all the $\Sigma_j\cap \mathring{B}_1(0)$ constructed above are Lipschitz manifolds in $\mathring{B}_1(0)$.Especially, we have the following structure of $\Sigma_0$.
\end{rem}
\begin{lem}\label{Sigma0} If $0\in\Sigma$, then $\Sigma_0$ satisfies the following properties.
\begin{enumerate}
\item $d(0,\Sigma_0)\le C\nu$;
\item For all $x\in \Sigma_0$, $\Sigma_0\cap B(x,(1-C\nu)\delta(x))$ is a Lipschitz graph over $T(x)=\mathcal{N}^{\bot}_x$ with Lipschitz constant $\le C\nu$;
\item There exists $\kappa=\kappa(\nu)\to 0$ as $\nu\to 0$ such that $\Sigma_0\cap B_{1-\kappa}$ is a Lipschitz graph over a plane with Lipschitz constant $\le  C\sqrt{\nu}$.
\end{enumerate}
\end{lem}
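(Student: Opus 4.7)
The plan is to establish the three items sequentially, using the Reifenberg and tilt-excess control of $V$ together with the local structure of $\Sigma_0$ supplied by Proposition \ref{modify}.

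For item (1), if $0\in F_{\delta_0}$ then $0\in\Sigma_0$ and there is nothing to show, so assume $0\notin F_{\delta_0}$. From the covering step in the proof of Proposition \ref{modify} some $u\in L$ satisfies $|u|\le 10^{-3}\delta_0(u)$, and the Lipschitz graph of $f^u$ over $u+T_{u,2\delta_0(u)}$ lies in $\Sigma_0\cap B(u,2\cdot 10^{-3}\delta_0(u))$. The Reifenberg condition at $u$ at scale $2\delta_0(u)$ combined with $0\in\Sigma\cap B(u,2\delta_0(u))$ puts $0$ within $2\gamma\delta_0(u)$ of this plane. Orthogonally projecting $0$ onto $u+T_{u,2\delta_0(u)}$ yields a base point $p$ with $|p-u|\le|u|\le 10^{-3}\delta_0(u)$, and then $p+f^u(p-u)\in\Sigma_0$ lies within $2\gamma\delta_0(u)+C\nu|p-u|\le C\nu$ of the origin.

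For item (2), the case $x\in F_{\delta_0}$ is exactly item (3) of Proposition \ref{modify}. For $x\in\Sigma_0\setminus F_{\delta_0}$, Remark \ref{Lipschitz surface} furnishes $y\in F_{\delta_0}$ with $|x-y|\le C\nu\delta_0(y)$ such that $\Sigma_0\cap B(x,(\tfrac12-C\nu)\delta_0(y))$ is a Lipschitz graph over $T_{y,2\delta_0(y)}$ with Lipschitz constant $\le C\nu$. The $1$-Lipschitz property of $\delta_0$ gives $\delta_0(x)\approx\delta_0(y)$, while Remark \ref{local trivialization} shows $|p^\bot(x)-p^\bot_{T_{y,2\delta_0(y)}}|\le C\gamma$, so the same graph can be rewritten over $T(x)=\mathcal{N}_x^{\bot}$ with essentially the same Lipschitz constant.

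The principal obstacle is item (3), which requires a single plane as graph base over the entire $B_{1-\kappa}$. The plan is to fix the reference plane $T_\star:=T_{0,2\delta_0(0)}$ given by the chord-arc assumption and verify
\[
|p_{T(x)}-p_{T_\star}|\le C\sqrt{\nu},\qquad\forall\,x\in\Sigma_0\cap B_{1-\kappa}.
\]
Since $\delta_0(x)\ge\kappa/100$ on $B_{1-\kappa}$, the plane $T_{x,2\delta_0(x)}$ can be compared to $T_\star$ by chaining the tilt-excess comparisons (as in \eqref{tilt} and \eqref{different scale}) over $O(\log(1/\kappa))$ doubling scales, each step contributing $C\nu$; choosing $\kappa=\kappa(\nu)$ so that $\nu\log(1/\kappa)\le C\sqrt{\nu}$ --- for instance $\kappa=e^{-1/\sqrt{\nu}}$ --- keeps the accumulated error below $C\sqrt{\nu}$. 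Combined with Remark \ref{local trivialization} this yields the displayed estimate. Given this uniform plane control, the local graph pieces from item (2) glue into a single Lipschitz graph over $T_\star$: the acceptable Lipschitz cone around $T_\star$ is preserved pointwise, and the no-hole property coming from Reifenberg's topological disk theorem (Proposition \ref{pro:conclude}, applied at scale $2\delta_0(0)$) ensures that the orthogonal projection $p_{T_\star}$ restricted to $\Sigma_0\cap B_{1-\kappa}$ is surjective onto the appropriate disk, ruling out multi-sheeting and missing fibres.
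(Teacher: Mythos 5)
Items (1) and (2) are essentially fine. For (1) the paper argues more directly: since $F_{\delta_0}\subset\Sigma_0$, Lemma \ref{fine set large} together with the Ahlfors regularity already produces a fine point $x_0$ with $|x_0-0|\le C\nu\delta_0(x_0)\le C\nu$, so the detour through the Reifenberg condition and the graph of $f^u$ (which for $u\in L_j$, $j\ge 2$, is really the modified graph $g^u$ of Lemma \ref{induction}, a point your argument elides) is unnecessary, though not wrong. For (2) your argument coincides with the paper's; note that what both arguments actually deliver is a graph on $B(x,(\tfrac12-C\nu)\delta_0(x))$, consistent with item (2) of Theorem \ref{W1p param} --- the radius $(1-C\nu)\delta(x)$ in the statement of the lemma appears to be a typo, and your proof quietly proves the correct version.

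The genuine gap is in item (3), in the passage from the pointwise plane comparison to a single Lipschitz graph. The bound $|p_{T(x)}-p_{T_\star}|\le C\sqrt{\nu}$ for every $x$ is a statement about reference planes attached to points; a Lipschitz graph over $T_\star$ requires the two-point cone condition $|p^{\bot}_{T_\star}(y-z)|\le C\sqrt{\nu}\,|p_{T_\star}(y-z)|$ for \emph{all} pairs $y,z\in\Sigma_0\cap B_{1-\kappa}$, in particular for pairs lying in different local charts at mutual distance large compared with $\min\{\delta_0(y),\delta_0(z)\}$. Surjectivity of $p_{T_\star}$ (the no-hole property) only rules out missing fibres; ruling out multi-sheeting requires injectivity of $p_{T_\star}$, which is exactly the content of the two-point estimate, so your final sentence is circular at the decisive point. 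This mixed-pair cone condition is where the paper spends its effort: it builds the graph outward annulus by annulus, verifying at each stage the cone condition for pairs straddling the old and new pieces by the case analysis of Lemma \ref{big size graph} and Proposition \ref{modify} (approximate each point of $\Sigma_0$ by a nearby point of $\Sigma$ and then by a fine point, and apply Lemma \ref{lem:lip graph} at the scale $|y-z|$), accumulating a Lipschitz constant $C\nu(1+\sum_k r_k^{-1})=C\nu S(\kappa)$ and only then choosing $\kappa(\nu)=S^{-1}(\nu^{-1/2})$. Separately, your chaining for the pointwise comparison needs a small repair: for $x$ near $\partial B_1$ the doubled balls $B(x,2^j\cdot 2\delta_0(x))$ leave $B_1$ after one or two steps, so the tilt comparisons must be chained along a path from $x$ toward $0$ with slowly growing scales and moving centers, as in \eqref{tilt}; this still takes $O(\log(1/\kappa))$ steps of cost $C\gamma=C\nu^2$ each, so your choice of $\kappa$ survives, but the argument as written does not literally apply.
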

\begin{proof}
Since $0\in \Sigma$, by Lemma\ref{fine set large}, we know there exists $x_0\in F_{\delta_0}\subset \Sigma_0$ such that $|x_0-0|\le C\nu\delta_0(x_0)\le C\nu$. So, $d(0,\Sigma_0)\le |x_0-0|\le C\nu$.

For any $x\in \Sigma$, by Proposition \ref{modify} and Lemma \ref{fine set large}, we know there exists $x_1\in \Sigma$ and $x_2\in F_{\delta_0}$ such that $|x-x_1|\le C\nu\delta(x_1)$ and $|x_1-x_2|\le C\nu\delta(x_2)$ and $\Sigma_0\cap B(x_2,\frac{\delta(x_2)}{2})$ is a Lipschitz graph over $T_{x_2,\delta(x_2)}$ with Lipschitz constant $\le C\nu$. This implies $\delta(x_2)\le \frac{\delta(x)}{(1+C\nu)^2}$ and $|x-x_2|\le C\nu\delta(x)$. So, $B(x_2,\frac{\delta(x_2)}{2})\supset B(x,(1-C\nu)\delta(x))$. By Remark \ref{local trivialization}, we know $|p_{T(x)}-p_{T_{x_2,2\delta_0(x_2)}}|\le C\nu$ and hence item $(2)$ of the conclusion holds.

Especially, since $\delta_0(0)=\frac{1}{100}$,  we know $\Sigma_0\cap B_{\frac{1}{200}-C\nu}$ is a Lipschitz graph over $T:=T(0)$ with Lipschitz constant $\le C\nu$. Set $r_0=\frac{1}{200}-C\nu$.  Choosing finite many points  $\{x_i\}_{i\in I_1}\subset \partial B_{\frac{1}{200}-C\nu}\cap \Sigma_0$, noting $r_1=\min_{i\in I_1}\{\delta_0(x_i)\}\ge \frac{1}{200}(1-r_0)$, by item $(2)$ we know each $\Sigma_0\cap B(x_i,r_1-C\nu)$ is a Lipschitz graph over $T(x_i)$ with Lipschitz constant $\le C\nu$. By Remark \ref{local trivialization}, we know $|p_{T(0)}-p_{T_{x_i}}|\le \frac{C\nu}{r_0}$. So, similar argument as in proof of Lemma \ref{big size graph} implies $\Sigma_0\cap B(0,r_1+r_2-C\nu)$ is a Lipschitz graph over $T=T(0)$ with Lipschitz constant $\le C\nu(1+\frac{1}{r_0})$. Inductively, there exists a sequence of $\{r_k\}_{k\ge 0}$ such that $r_k\ge \frac{1}{200}(1-r_{k-1})$  and $\Sigma_0\cap B(0,\sum_{k\le N}r_k-C\nu)$ is a Lipschitz graph over $T$ with Lipschitz constant $\le C\nu(1+\sum_{0\le k\le N}\frac{1}{r_k})=:L_N$.
Noting $r_k\ge \frac{1}{200}(1-r_{k-1})$, we know for any $\kappa>C\nu>0$, there exists a smallest $N(\kappa)>0$ such that
\begin{align*}
\sum_{k=0}^{N(\kappa)}r_i-C\nu\ge 1-\kappa.
\end{align*}
Noting $N(\kappa)$ is decreasing on $\kappa$, we know $S(\kappa):=1+\sum_{k\le N(\kappa)\frac{1}{r_k}}$ is also decreasing on $\kappa$. So, to require  the Lipschitz constant
$
L_{N(\kappa)}=C\nu S(\kappa) \le C \sqrt{\nu}
$
 is equivalent to require $S(\kappa)\le \nu^{-\frac{1}{2}}$, i.e., $\kappa\ge \kappa(\nu):=S^{-1}(\nu^{-\frac{1}{2}})$. The decreasing of $S$ implies $\lim_{\nu\to0}\kappa(\nu)=0$. So, by taking $\kappa=\kappa(\nu)$, we know $\Sigma_0\cap B(0,1-\kappa(\nu))$ is a graph over $T$ with Lipschitz constant $\le C\sqrt{\nu}$.
\end{proof}
\begin{lem} For any $0\le j<k<+\infty$, define
\begin{align*}
\sigma_{j,k}=\sigma_j\circ \sigma_{j+1}\circ \ldots \circ \sigma_{k-1}: \Sigma_k\to \Sigma_{j} \text{ and }
\tau_{j,k}=\sigma_{j,k}^{-1}:\Sigma_j\to \Sigma_k.
\end{align*} Then $\sigma_{j,k}$ and  $\tau_{j,k}$ satisfy
\begin{enumerate}
\item $\sigma_{j,k}(x)=x=\tau_{j,k}(x), \forall x\in F_j, \forall k>j.$
\item
For any $i\ge l\ge j$,
\begin{align}\label{long time estimate}
d(x,F_{l+1})\le \frac{1}{3}d(x,F_l), \forall x\in \Sigma_i.
\end{align}
\item $\sigma_{j,k}$ and $\tau_{j,k}$ are both close to the identity map:
\begin{align}
|\sigma_{j,k}(x)-x|&\le C\sqrt{\nu} d(x,F_j), \forall x\in \Sigma_k,\label{jk-dis for sigma}\\
|\tau_{j,k}(x)-x|&\le C\sqrt{\nu} d(x, F_j), \forall x\in \Sigma_j \label{jk-dis}.
\end{align}
\item $\sigma_{j,k}$ and $\tau_{j,k}$ are bi-Lipschitz map with estimate:
\begin{align}\label{jk-bilip}
(1-C\sqrt{\nu})^{k-j}&\le \frac{|\tau_{j,k}(x)-\tau_{j,k}(y)|}{|x-y|}\le (1+C\sqrt{\nu})^{k-j},\forall x,y\in \Sigma_j,\\
(1-C\sqrt{\nu})^{k-j}&\le \frac{|\sigma_{j,k}(x)-\sigma_{j,k}(y)|}{|x-y|}\le (1+C\sqrt{\nu})^{k-j},\forall x,y\in \Sigma_k\nonumber,
\end{align}
and
\begin{align}\label{jk-lip for sigma}
\frac{|(x-\sigma_{j,k}(x))-(y-\sigma_{j,k}(y))|}{|x-y|}\le (1+C\sqrt{\nu})^{k-j}-1,\forall x,y\in \Sigma_{k},
\end{align}
\begin{align}\label{jk-lip}
\frac{|(x-\tau_{j,k}(x))-(y-\tau_{j,k}(y))|}{|x-y|}\le (1+C\sqrt{\nu})^{k-j}-1,\forall x,y\in \Sigma_{j}.
\end{align}
\item $\tau_{0,\infty}(x)=\lim_{k\to\infty}\tau_{0,k}(x)\in\Sigma$ is well-defined with
 \begin{align}\label{distance}
|\tau_{0,\infty}(x)-\tau_{0,l}(x)|\le (C\sqrt{\nu})^{l+1}.
\end{align}
\end{enumerate}
\end{lem}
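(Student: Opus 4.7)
The plan is to verify the five items by a bootstrap on $k-j$, with item (2) serving as the key geometric improvement that powers the iteration.

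For item (1), I will induct on $k-j$. The case $k = j+1$ is item (1) of Proposition \ref{one step}. For the inductive step, I need $F_j \subset F_{j+1} \subset F_{j+2} \subset \cdots$; this chain follows because $\delta_{j+1}(x) = \min\{d(x,F_j), d(x)\}$ vanishes on $F_j$, giving $F_j \subset \mathcal{Z}_{j+1}\subset F_{j+1}$ (the last inclusion is built into Definition \ref{def:fine set}). Consequently each $\sigma_l$ fixes $F_j$, and the composition $\sigma_{j,k}$ does too.

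For item (2), I will split the argument in two steps. First, for $x\in\Sigma_i$ with $i\ge l$, Proposition \ref{modify} (applied at level $i$) yields $d(x,\Sigma)\le C\nu\, d(x,F_i)$; combined with the monotonicity $F_l\subset F_i$, this gives $y\in\Sigma$ with $|x-y|\le C\nu\, d(x,F_l)$. Second, on $\Sigma$ we can invoke Lemma \ref{fine set large} at scale $\delta_{l+1}(y)\le d(y,F_l)\le (1+C\nu)d(x,F_l)$, producing $z\in F_{l+1}$ with $|y-z|\le 10^{-10}\delta_{l+1}(y)$. Adding the two distances and choosing $\nu$ small yields $|x-z|\le \tfrac13 d(x,F_l)$, which is the desired ratio.

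For items (3) and (4), I will telescope. Setting $x_l=\sigma_{l,k}(x)$ and using item (2) of Proposition \ref{one step} together with the geometric decay $d(x_{l+1},F_l)\le 3^{j-l}d(x_{l+1},F_j)$ from item (2), I obtain
\[
  |x_l-x_{l+1}|\le C\sqrt{\nu}\,3^{j-l}\,d(x_{l+1},F_j),
\]
and summing the geometric series gives $|x-\sigma_{j,k}(x)|\le C\sqrt{\nu}\,d(x,F_j)$, which is \eqref{jk-dis for sigma}; the $\tau$-version is analogous. The bi-Lipschitz estimate \eqref{jk-bilip} is immediate from composing $k-j$ maps each with factor $(1\pm C\sqrt{\nu})$, provided by item (3) of Proposition \ref{one step}. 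The Lipschitz estimate \eqref{jk-lip} for $x-\tau_{j,k}(x)$ follows by the decomposition $x-\tau_{j,k}(x) = \bigl(x-\tau_j(x)\bigr) + \bigl(\tau_j(x)-\tau_{j+1,k}(\tau_j(x))\bigr)$, induction, and combining the Lipschitz bounds of $\tau_j-\mathrm{id}$ with the bi-Lipschitz bound of $\tau_j$, which yields the multiplicative constant $(1+C\sqrt{\nu})^{k-j}-1$.

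For item (5), the combination of item (3) of Proposition \ref{one step} with item (2) of this lemma yields
\[
  |\tau_{0,l+1}(x)-\tau_{0,l}(x)|=|\tau_l(\tau_{0,l}(x))-\tau_{0,l}(x)|\le C\sqrt{\nu}\,d(\tau_{0,l}(x),F_l),
\]
and iterating the inequality $d(\tau_{0,l+1}(x),F_{l+1})\le\tfrac{1+C\sqrt{\nu}}{3}\,d(\tau_{0,l}(x),F_l)$ gives geometric decay $d(\tau_{0,l}(x),F_l)\lesssim 3^{-l}d(x,F_0)$. Summing the resulting telescoping bounds for $m\ge l$ shows $\{\tau_{0,l}(x)\}$ is Cauchy and the limit $\tau_{0,\infty}(x)$ exists with the claimed geometric estimate \eqref{distance}; membership in $\Sigma$ follows from Proposition \ref{modify} item (2) since $d(\tau_{0,l}(x),\Sigma)\le C\nu\, d(\tau_{0,l}(x),F_l)\to 0$. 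The main obstacle is item (2): one must carefully combine the closeness estimate $d(\cdot,\Sigma)\le C\nu\, d(\cdot,F_i)$ on $\Sigma_i$ with the density of the fine set in $\Sigma$ at the reduced scale $\delta_{l+1}$, and verify that the constants $C\nu$ and $10^{-10}$ combine to give the uniform ratio $1/3$ independently of $i$ and $l$.
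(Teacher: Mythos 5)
Your overall strategy (iterate the one-step estimates of Proposition \ref{one step} and telescope) matches the paper's, and items (1), (2) and (4) are essentially fine. Your proof of item (2) is in fact a cleaner route than the paper's: you project a point of $\Sigma_i$ onto $\Sigma$ using item (2) of Proposition \ref{modify} and then invoke the density of the fine set (Lemma \ref{fine set large}) at scale $\delta_{l+1}$, whereas the paper derives \eqref{long time estimate} as a byproduct of chasing the point backwards through every intermediate level $\Sigma_s$ via \eqref{nearby point}. Two small points of rigor: in item (3) your bound $|x_l-x_{l+1}|\le C\sqrt{\nu}\,3^{j-l}d(x_{l+1},F_j)$ is in terms of $d(x_{l+1},F_j)$, which itself differs from $d(x,F_j)$ by the very sum you are trying to control; this closes with a short bootstrap (e.g.\ $\max_s d(x_s,F_j)\le 2d(x,F_j)$ for $\nu$ small, since $\sum 3^{j-s}\le 3/2$), but you should say so --- the paper sidesteps this by iterating \emph{forwards} along the $\tau$-orbit, where the decay \eqref{decay} is unconditional.

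The genuine gap is in item (5). The claimed estimate \eqref{distance} is $|\tau_{0,\infty}(x)-\tau_{0,l}(x)|\le (C\sqrt{\nu})^{l+1}$, i.e.\ the per-step contraction factor must be $C\sqrt{\nu}$. Your argument feeds the $1/3$ contraction of item (2) into the telescoping sum and therefore only produces a rate of order $C\sqrt{\nu}\cdot 3^{-l}$, which is much weaker than $(C\sqrt{\nu})^{l+1}$ for small $\nu$ and does not prove \eqref{distance} as stated. The paper obtains the stronger rate from the forward-orbit decay \eqref{decay}, $d(x_s,F_s)\le (C\sqrt{\nu})^{s-l-1}d(x_{l+1},F_{l+1})$, whose engine is the inequality $d(x_{s+1},F_{s+1})\le C\sqrt{\nu}\,d(x_{s+1},F_s)$; this in turn rests on the quantitative density of the fine set, $d(\cdot,F_{\delta})\le C\sqrt{\nu}\,\delta(\cdot)$ (coming from the measure bound $C\gamma/\nu=C\nu$ in Lemma \ref{fine set large} together with Ahlfors regularity), rather than the crude $10^{-10}$ (respectively $1/3$) constant you use. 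You should either prove this sharpened per-step contraction and rerun your telescoping with it, or accept the weaker $3^{-l}$ rate and check that the downstream bi-H\"older argument still yields an exponent tending to $1$ (it does, with $\varepsilon_0\sim \log(1+C\sqrt{\nu})/\log 3$), but as written your proposal does not establish \eqref{distance}.
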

\begin{proof}
Since $\delta_{j+1}(x)=\min\{d(x,F_j),1-|x|\}$, we know $F_j=\mathcal{Z}_{j+1}\subset F_{j+1}$, which implies once $x\in F_j$, then $x\in F_l$ for any $l\ge j$. So, by item $(1)$  of Proposition \ref{one step}, we know
\begin{align*}
\sigma_{j,k}(x)=x=\tau_{j,k}(x), \forall x\in F_j, \forall k>j.
\end{align*}
More general,  by item $(2)$ of Proposition \ref{one step}, we know for any $s\ge l$, there holds
\begin{align*}
d(x_{s},\tau_s(x_s))\le C\sqrt{\nu} d(x_s,F_s),\forall x_s\in \Sigma_s.
\end{align*}
For any $i\ge l$ and $x_i\in \Sigma_i$, letting $x_l=\sigma_{l,i}(x_i)\in \Sigma_l$ and  $x_{s}=\tau_{l,s}(x)\in \Sigma_s$, then
\begin{align}\label{nearby point}
d(x_s,x_{s+1})=d(x_s,\tau_s(x_s))\le C\sqrt{\nu} d(x_s,F_s)=C\sqrt{\nu}\delta_{s+1}(x_s),\forall s\ge l.
\end{align}
Since $F_{s+1}=F_{\delta_{s}}$, by the construction of $\Sigma_{s+1}$(Lemma \ref{modify}) and (\ref{nearby point}) ,we know
\begin{align*}
d(x_{s+1},F_{s+1})\le C\sqrt{\nu} d(x_{s+1},F_s)\le C\sqrt{\nu}(d(x_s,x_{s+1})+d(x_s,F_s))\le C\sqrt{\nu} d(x_s,F_s).
\end{align*}
Iterating this, we get
\begin{align}\label{decay}
d(x_s,F_s)\le [C\sqrt{\nu}]^{s-l-1}d(x_{l+1},F_{l+1}), \forall s\ge l+1.
\end{align}
Summing \eqref{nearby point} over $l+1\le s\le i-1$ and using \eqref{decay}, we get
\begin{align*}
d(x_{l+1},x_i)\le C\sqrt{\nu}\sum_{s={l+1}}^{i-1}d(x_s,F_s)\le C\sqrt{\nu} d(x_{l+1},F_{l+1}).
\end{align*}
This implies $d(x_{l+1},x_i)\le \frac{C\sqrt{\nu}}{1-C\sqrt{\nu}}d(x_i,F_{l+1})\le C\sqrt{\nu} d(x_i,F_l)$( Noting $F_{l+1}\supset F_l$) and hence
\begin{align*}
d(x_i,F_{l+1})&\le d(x_i,x_{l+1})+d(x_{l+1},F_{l+1})\\
&\le (1+C\sqrt{\nu})d(x_{l+1},F_{l+1})\\
&\le (1+C\sqrt{\nu})C\sqrt{\nu} d(x_{l+1},F_l)\\
&\le C\sqrt{\nu}(1+C\sqrt{\nu}) (d(x_{l+1},x_i)+d(x_i,F_l))\\
&\le C\sqrt{\nu}(1+C\sqrt{\nu})^2d(x_i,F_l).
\end{align*}
Since $x_i\in \Sigma_i$ is arbitrary and $C\sqrt{\nu}(1+C\sqrt{\nu})^2\le \frac{1}{3}$, we know (\ref{long time estimate}) holds.  Also by \eqref{nearby point} and \eqref{decay}, we have
\begin{align*}
|\tau_{j,k}(x)-x|\le\sum_{s=j}^{k-1}|x_{s+1}-x_s|\le C\sqrt{\nu}\sum_{s=j}^{k-1}(C\sqrt{\nu})^{s-j}\le  C\sqrt{\nu} d(x, F_j), \forall x\in \Sigma_j.
\end{align*}
This verifies \eqref{jk-dis}. For the same reason,  \eqref{jk-dis for sigma} holds, i.e.,
$$|\sigma_{j,k}(x)-x|\le C\sqrt{\nu} d(x,F_j), \forall x\in \Sigma_k.$$
Furthermore, by item $(3)$ of Proposition \ref{one step}, we know $\sigma_{j,k}$ and $\tau_{j,k}$ are bi-Lipschitz with constant $\le (1+C\sqrt{\nu})^{k-j}$ such that $\|\sigma_{j,k}-id\|_{Lip}+\|\tau_{j,k}-id\|_{Lip}\le (1+C\sqrt{\nu})^{k-j}-1$. More precisely, we have
\begin{align*}
(1-C\sqrt{\nu})^{k-j}|x-y|&\le |\tau_{j,k}(x)-\tau_{j,k}(y)|\le (1+C\sqrt{\nu})^{k-j}|x-y|,\forall x,y\in \Sigma_j,\\
(1-C\sqrt{\nu})^{k-j}|x-y|&\le |\sigma_{j,k}(x)-\sigma_{j,k}(y)|\le (1+C\sqrt{\nu})^{k-j}|x-y|,\forall x,y\in \Sigma_k\nonumber,
\end{align*}
and
\begin{align*}
\frac{|(x-\sigma_{j,k}(x))-(y-\sigma_{j,k}(y))|}{|x-y|}&\le (1+C\sqrt{\nu})^{k-j}-1,\forall x,y\in \Sigma_{k},\\
\frac{|(x-\tau_{j,k}(x))-(y-\tau_{j,k}(y))|}{|x-y|}&\le (1+C\sqrt{\nu})^{k-j}-1,\forall x,y\in \Sigma_{j},
\end{align*}
where, for example, for the last inequality, we argue by
\begin{align*}
|\tau_{j,k}(x)-\tau_{j,k}(y)-(x-y)|
&\le \sum_{i=j+1}^{k}|\tau_{i}(\tau_{j,i-1}(x))-\tau_{i}(\tau_{j,i-1}(y))-(\tau_{j,i-1}(x)-\tau_{j,i-1}(y))|\\
&\le C\sqrt{\nu} \sum_{i=j+1}^{k}|\tau_{j,i-1}(x)-\tau_{j,i-1}(y)|\\
&\le C\sqrt{\nu}\sum_{i=j+1}^{k}(1+C\sqrt{\nu})^{i-j-1}|x-y|\\
&=((1+C\sqrt{\nu})^{k-j}-1)|x-y|.
\end{align*}
Finally, for any $x\in \Sigma_j$ and $k\ge l\ge j\ge 0$, by (\ref{jk-dis}) and (\ref{decay}), we know
 $$|\tau_{0,k}(x)-\tau_{0,l}(x)|\le C\sqrt{\nu} d(x_l,F_l)\le C\sqrt{\nu} (C\sqrt{\nu})^{l-j-1}d(x_j,F_j)\to 0\text{ as } l\to +\infty.$$ So, $\{\tau_{j,k}(x)\}_{k=j}^{\infty}$ is a Cauchy sequence and for any $x\in \Sigma_0$, $\tau_{0,\infty}(x)=\lim_{k\to\infty}\tau_{0,k}(x)$ is well-defined with
 \begin{align*}
|\tau_{0,\infty}(x)-\tau_{0,l}(x)|\le (C\sqrt{\nu})^ld(x,F_0)\le (C\sqrt{\nu})^{l+1}.
\end{align*}
Noting $\tau_{0,l}(x)\in \Sigma_l$, by Proposition \ref{modify} and \eqref{decay}, we know $\tau_{0,\infty}(x)=\lim_{l\to\infty}\tau_{0,l}(x)\in \Sigma$.
\end{proof}

\begin{lem}[bi-H\"older estimate and big piece Lipschitz]\label{biholder}There exits $\varepsilon_0(\nu)\to 0$ as $\nu \to 0$ such that $\tau_{0,\infty}:\Sigma_0\cap\mathring{B} \to \Sigma\cap\mathring{B}$ is a bi-H\"older homeomorphism satisfying
\begin{align}\label{bi-Holder}
(1-C\sqrt{\nu})|x-y|^{1+\varepsilon_0}\le |\tau_{0,\infty}(x)-\tau_{0,\infty}(y)|\le (1+C\sqrt{\nu})|x-y|^{1-\varepsilon_0}.
\end{align}
 Moreover, there holds
\begin{align}\label{one point on F_k estimate for tau}
(1+C\sqrt{\nu})^{-k-1}\le \frac{|\tau_{0,\infty}(x)-\tau_{0,\infty}(y)|}{|x-y|}\le (1+C\sqrt{\nu})^{1+k},\forall x\in \sigma_{0,k}(F_k), y\in \Sigma_0.
\end{align}
\end{lem}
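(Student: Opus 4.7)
The plan is to prove both halves by the same telescoping trick: for $x,y\in\Sigma_0$, compare $\tau_{0,\infty}$ with a finite iterate $\tau_{0,k}$ at a well-chosen level $k$, and control the tail $|\tau_{0,k}(\cdot)-\tau_{0,\infty}(\cdot)|$ using \eqref{jk-dis}, \eqref{distance} and the geometric decay \eqref{long time estimate}.

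For the bi-Hölder part \eqref{bi-Holder}, set $r=|x-y|$ and pick the integer $k=k(r)$ with $3^{-k}\le r<3^{-(k+1)}$. The triangle inequality gives
\[
|\tau_{0,\infty}(x)-\tau_{0,\infty}(y)|\le|\tau_{0,k}(x)-\tau_{0,k}(y)|+|\tau_{0,\infty}(x)-\tau_{0,k}(x)|+|\tau_{0,\infty}(y)-\tau_{0,k}(y)|,
\]
and by \eqref{jk-bilip} and \eqref{distance} the right-hand side is at most $(1+C\sqrt{\nu})^{k}r+2(C\sqrt{\nu})^{k+1}$. With $\varepsilon_0:=\log(1+C\sqrt{\nu})/\log 3$, which tends to $0$ as $\nu\to 0$, the choice of $k$ gives $(1+C\sqrt{\nu})^{k}\le r^{-\varepsilon_0}$, while the tail term $(C\sqrt{\nu})^{k+1}$ is dominated by $r^{\log((C\sqrt{\nu})^{-1})/\log 3}$, a much higher power of $r$ than $r^{1-\varepsilon_0}$. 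The lower bound is analogous, using the reverse triangle inequality with the lower estimate in \eqref{jk-bilip}. Injectivity (hence the homeomorphism property) is immediate from the quantitative lower bound.

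For \eqref{one point on F_k estimate for tau}, the key observation is that if $x\in\sigma_{0,k}(F_k)$ then $\tau_{0,k}(x)\in F_k\subset F_l$ for every $l\ge k$, so item (1) of the preceding lemma forces $\tau_{k,l}(\tau_{0,k}(x))=\tau_{0,k}(x)$ for all $l\ge k$; passing to the limit yields $\tau_{0,\infty}(x)=\tau_{0,k}(x)$. Writing
\[
|\tau_{0,\infty}(x)-\tau_{0,\infty}(y)|\le|\tau_{0,k}(x)-\tau_{0,k}(y)|+|\tau_{0,k}(y)-\tau_{0,\infty}(y)|,
\]
the first term is at most $(1+C\sqrt{\nu})^{k}|x-y|$ by \eqref{jk-bilip}. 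For the second, I would sum \eqref{jk-dis} along $l\ge k$ to obtain
\[
|\tau_{0,k}(y)-\tau_{0,\infty}(y)|\le C\sqrt{\nu}\sum_{l\ge k}d(\tau_{0,l}(y),F_l),
\]
and use \eqref{long time estimate} together with the bound $d(\tau_{0,k}(y),F_k)\le|\tau_{0,k}(y)-\tau_{0,k}(x)|\le(1+C\sqrt{\nu})^k|x-y|$ (since $\tau_{0,k}(x)\in F_k$) to conclude
\[
|\tau_{0,k}(y)-\tau_{0,\infty}(y)|\le C\sqrt{\nu}\,(1+C\sqrt{\nu})^k|x-y|.
\]
Adding these gives the upper bound $(1+C\sqrt{\nu})^{k+1}|x-y|$. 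The lower bound follows analogously from the reverse triangle inequality, after possibly enlarging the constant $C$ in the final statement.

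The most delicate point will be the lower bound in \eqref{one point on F_k estimate for tau}: the naïve estimate yields $\bigl[(1-C\sqrt{\nu})^k-C\sqrt{\nu}(1+C\sqrt{\nu})^k\bigr]|x-y|$, which matches $(1+C'\sqrt{\nu})^{-(k+1)}|x-y|$ only for $k$ in a range of the form $k\lesssim\nu^{-1/2}\log(\nu^{-1/2})$. For larger $k$ one has to exploit the geometric decay in \eqref{long time estimate} more carefully, together with the fact that $\sigma_{0,k}(F_k)\subset\sigma_{0,k+1}(F_{k+1})$, so that any $x$ lying deep in some $\sigma_{0,K}(F_K)$ can be treated at the minimal such level $K$; in particular one can always reduce to the good range of $k$ by using a smaller $k$ when the stated one would give a useless bound, and conclude with a possibly enlarged constant $C$ in the statement.
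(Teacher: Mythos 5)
Your overall route coincides with the paper's: a telescoping comparison of $\tau_{0,\infty}$ with $\tau_{0,k}$ at a stopping level $k$ adapted to $|x-y|$ for \eqref{bi-Holder} (the paper stops at the smallest $k$ with $(C\sqrt{\nu})^{k-1}\le |x-y|$ rather than at powers of $3$, but this is cosmetic), and for \eqref{one point on F_k estimate for tau} the observation that $\tau_{0,k}(x)\in F_k$ is fixed by all later maps, combined with \eqref{jk-bilip}. Two points, however, need attention.

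First, the lower bound in \eqref{one point on F_k estimate for tau} is not actually delicate, but your proposed repair does not work while the correct one is simpler than what you wrote. The failure of your ``na\"ive'' bound $\bigl[(1-C\sqrt{\nu})^k-C\sqrt{\nu}(1+C\sqrt{\nu})^k\bigr]|x-y|$ for large $k$ comes from converting both terms to $|x-y|$ too early. Instead, keep everything relative to $x_k=\tau_{0,k}(x)\in F_k$ and $y_k=\tau_{0,k}(y)$: since $d(y_k,F_k)\le|y_k-x_k|$, one has $|\tau_{k,\infty}(y_k)-y_k|\le C\sqrt{\nu}\,|x_k-y_k|$, hence $|\tau_{0,\infty}(x)-\tau_{0,\infty}(y)|\ge(1-C\sqrt{\nu})|x_k-y_k|\ge(1-C\sqrt{\nu})^{k+1}|x-y|$, uniformly in $k$; this is $\ge(1+C'\sqrt{\nu})^{-(k+1)}|x-y|$ for $C'=2C$. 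This is exactly how the paper argues. Your alternative --- passing to a smaller level $K$ --- is not available, because there is no reason for a given $x\in\sigma_{0,k}(F_k)$ to lie in $\sigma_{0,K}(F_K)$ for any $K$ in your ``good range''.

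Second, you assert that injectivity gives ``the homeomorphism property,'' but the statement claims $\tau_{0,\infty}$ maps $\Sigma_0\cap\mathring{B}$ \emph{onto} $\Sigma\cap\mathring{B}$, and surjectivity does not follow from the two-sided H\"older bounds. The paper proves it separately: both $\Sigma_0\cap\mathring{B}$ and $\Sigma\cap\mathring{B}$ are topological $m$-manifolds (Reifenberg), so by invariance of domain the image is open in $\mathring{\Sigma}$; if it were proper, a boundary point $y$ of the image in $\mathring{\Sigma}$ would be a limit of $\tau_{0,\infty}(x_i)$, the lower H\"older bound forces $x_i$ to converge to some $x\in\Sigma_0$, and one checks $x\notin\partial B_1$ (else $\delta_0(x)=0$ and $y=x\in\partial B_1$), so $y=\tau_{0,\infty}(x)$ lies in the (open) image, a contradiction. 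This step is missing from your proposal and must be supplied.
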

\begin{proof}
    By \eqref{jk-bilip} and\eqref{distance}, we know
\begin{align*}
|\tau_{0,\infty}(x)-\tau_{0,\infty}(y)|
&\le|\tau_{0,k}(x)-\tau_{0,k}(y)|+|\tau_{0,k}(x)-\tau_{0,\infty}(x)| +|\tau_{0,k}(y)-\tau_{0,\infty}(y)|\\
&\le (1+C\sqrt{\nu})^{k}|x-y|+(C\sqrt{\nu})^k\\
&= (1+C\sqrt{\nu})^k[|x-y|+(C\sqrt{\nu})^k].
\end{align*}
Taking the smallest $k\ge 2$ such that $(C\sqrt{\nu})^{k-1}\le |x-y|$, then
\begin{align*}
|\tau_{0,\infty}(x)-\tau_{0,\infty}(y)|\le (1+C\sqrt{\nu})^{k+1}|x-y|.
\end{align*}

If $k=2$,i.e., $|x-y|\ge C\sqrt{\nu}$, then we have
\begin{align*}
|\tau_{0,\infty}(x)-\tau_{0,\infty}(y)|\le (1+7C\sqrt{\nu})|x-y|.
\end{align*}

 If $k\ge 3$, since $k$ is the smallest one such that $(C\sqrt{\nu})^{k-1}\le |x-y|$, we know $|x-y|<(C\sqrt{\nu})^{k-2}$ and hence $k+1\le \frac{\ln{|x-y|}}{\ln{C\sqrt{\nu}}}+3$ and
\begin{align*}
(1+C\sqrt{\nu})^{k+1}|x-y|\le (1+C\sqrt{\nu})^3|x-y|^{1+\frac{\ln{(1+C\sqrt{\nu})}}{\ln{C\sqrt{\nu}}}}.
\end{align*}
Noting $\frac{\ln{(1+C\sqrt{\nu})}}{\ln{C\sqrt{\nu}}}\to 0$ as $\nu\to 0$, we know
\begin{align*}
|\tau_{0,\infty}(x)-\tau_{0,\infty}(y)|\le (1+C\sqrt{\nu})|x-y|^{\alpha},
\end{align*}
where $\alpha(\nu)=1+\frac{\ln{(1+C\sqrt{\nu})}}{\ln{C\sqrt{\nu}}}<1$ and $\alpha(\nu)\to 1$ as $\nu\to 0$.
For the lower bound, by (\ref{jk-bilip})
we have
\begin{align*}
|\tau_{0,\infty}(x)-\tau_{0,\infty}(y)|
&\ge|\tau_{0,k}(x)-\tau_{0,k}(y)|-|\tau_{0,k}(x)-\tau_{0,\infty}(x)|-
|\tau_{0,k}(y)-\tau_{0,\infty}(y)|\\
&\ge (1-C\sqrt{\nu})^k|x-y|-2(C\sqrt{\nu})^{k}\\
&=(1-C\sqrt{\nu})^k\big[|x-y|-2(\frac{C\sqrt{\nu}}{1-C\sqrt{\nu}})^{k}\big].
\end{align*}
Taking the smallest $k$ such that
$$\big(\frac{C\sqrt{\nu}}{1-C\sqrt{\nu}}\big)^{k-1}\le |x-y|,$$
then we have
\begin{align*}
|\tau_{0,\infty}(x)-\tau_{0,\infty}(y)|\ge (1-C\sqrt{\nu})^k\frac{1-3C\sqrt{\nu}}{1-C\sqrt{\nu}}|x-y|.
\end{align*}

If $k=1$, i.e., $|x-y|\ge 1$, then
$
|\tau_{0,\infty}(x)-\tau_{0,\infty}(y)|\ge (1-10C\sqrt{\nu})|x-y|.
$

If $|x-y|<1$, then $k\ge 2$ and $\big(\frac{C\sqrt{\nu}}{1-C\sqrt{\nu}}\big)^{k-2}> |x-y|$, which implies
$k<2+\frac{\ln{|x-y|}}{\ln{(\frac{C\sqrt{\nu}}{1-C\sqrt{\nu}})}}$ and hence
\begin{align*}
|\tau_{0,\infty}(x)-\tau_{0,\infty}(y)|
&\ge \frac{(1-C\sqrt{\nu})^2(1-3C\sqrt{\nu})}{1-C\sqrt{\nu}}|x-y|^{1+\frac{\ln{(1-C\sqrt{\nu})}}{\ln{(\frac{C\sqrt{\nu}}{1-C\sqrt{\nu}})}}}\\
&\ge (1-C\sqrt{\nu})|x-y|^{\alpha_2(\nu)},
\end{align*}
where $\alpha_2(\nu)=1+\frac{\ln{(1-C\sqrt{\nu})}}{\ln{(\frac{C\sqrt{\nu}}{1-C\sqrt{\nu}})}}>1$ and $\alpha_2(\nu)\to 1$ as $\nu\to 0$.

As a conclusion, there exits $\varepsilon_0(\nu)>0$ and $\varepsilon_0(\nu)\to 0$ as $\nu\to 0$ such that $\tau_{0,\infty}:\Sigma_0\to \Sigma$ satisfies
\begin{align*}
(1-C\sqrt{\nu})|x-y|^{1+\varepsilon_0}\le |\tau_{0,\infty}(x)-\tau_{0,\infty}(y)|\le (1+C\sqrt{\nu})|x-y|^{1-\varepsilon_0}.
\end{align*}
This implies $\tau_{0,\infty}:\Sigma_0\to\Sigma$ is injective. We are going to show it is also surjective.

By Remark \ref{topological manifold} and Reifenberg's topological disk theorem, we know both $\Sigma_0\cap \mathring{B}$ and $\Sigma\cap \mathring{B}$ are topologically manifolds of dimension two. So, by the invariance of domain, we know $\tau_{0,\infty}(\mathring{\Sigma}_0)$ is open in $\mathring{\Sigma}$. If $\mathring{\Sigma}\backslash \tau_{0,\infty}(\mathring{\Sigma}_0)\neq \emptyset$, then there exists a point $y\in \mathring{\Sigma}\cap \partial(\tau_{0,\infty}(\mathring{\Sigma}_0))\backslash \tau_{0,\infty}(\mathring{\Sigma}_0)$. So, there exists $x_i\in \mathring{\Sigma}_0$ such that $\tau_{0,\infty}(x_i)=y_i\to y$. Since $(1-C\sqrt{\nu})|x_i-x_j|^{1+\varepsilon_0}\le |y_i-y_j|$, we know $\{x_i\}_{i\ge 1}$  is a Cauchy sequence and hence $x_i\to x\in \Sigma_0$ and $\tau_{0,\infty}(x)=y$. If $x\in \partial \Sigma_0\subset \partial B_1$, then $\delta_0(x)=1-|x|=0$ implies $y=\tau_{0,\infty}(x)=x\in \partial B_1$, which contradicts to $y\in \mathring{\Sigma}$.
So, we know $x\in \mathring{\Sigma}_0$ and
$y=\tau_{0,\infty}(x)\in \tau_{0,\infty}(\mathring{\Sigma}_0),$
which contradicts to $y_0\notin \tau_{0,\infty}(\mathring{\Sigma}_0)$.
This contradiction implies $\mathring{\Sigma}=\tau_{0,\infty}(\mathring{\Sigma}_0)$.

So, $\tau_{0,\infty}:\mathring{\Sigma}_0\to \mathring{\Sigma}$ is a bi-H\"older homeomorphism.
Moreover, by (\ref{jk-dis}), we know that for any $k\ge j$ and $x_j\in F_j,y_j\in \Sigma_j$, there holds
$$\tau_{j,k}(x_j)=x_j \text{ and  } |\tau_{j,k}(y_j)-y_j|\le C\sqrt{\nu} d(y_j,F_j)\le C\sqrt{\nu} |y_j-x_j|.$$
So, we get
\begin{align*}
&|\tau_{j,k}(x_j)-\tau_{j,k}(y_j)|\le |x_j-y_j|+|\tau_{j,k}(y_j)-y_j|\le (1+C\sqrt{\nu})|x_j-y_j|\\
&|\tau_{j,k}(x_j)-\tau_{j,k}(y_j)|\ge |x_j-y_j|-|\tau_{j,k}(y_j)-y_j|\ge (1-C\sqrt{\nu})|x_j-y_j|.
\end{align*}
Letting $k\to \infty$, we have
\begin{align}\label{one point on F_k}
(1-C\sqrt{\nu})|x_j-y_j|\le |\tau_{j,\infty}(x_j)-\tau_{j,\infty}(y_j)|\le (1+C\sqrt{\nu})|x_j-y_j|.
\end{align}
So, for any $x\in \sigma_{0,k}(F_k)\subset \Sigma_0$ and $y\in \Sigma_0$, letting $x_k=\tau_{0,k}(x)\in F_k$ and $y_k=\tau_{0,k}(y)\in \Sigma_k$, by (\ref{jk-bilip}) and (\ref{one point on F_k}), we get
\begin{align*}
(1+C\sqrt{\nu})^{-k-1}\le \frac{|\tau_{0,\infty}(x)-\tau_{0,\infty}(y)|}{|x-y|}\le (1+C\sqrt{\nu})^{1+k}.
\end{align*}
\end{proof}

\begin{cor} $\sigma_{0,\infty}=\tau_{0,\infty}^{-1}:\Sigma\to \Sigma_0$ is well-defined and satisfies
\begin{align}\label{one point on F_k estimate for sigma}
(1+C\sqrt{\nu})^{-k-1}\le\frac{ |\sigma_{0,\infty}(x)-\sigma_{0,\infty}(y)|}{|x-y|}\le (1+C\sqrt{\nu})^{1+k},\forall x\in F_k, y\in \Sigma.
\end{align}
Moreover, $\sigma_{0,\infty}^*$ and $\sigma_{0,\infty,*}$ are well-defined on $\cup_{k\ge 0}F_k$ and satisfy
\begin{align}\label{upper bound}
\sigma_{0,\infty}^*(x):=\sup_{y\in \Sigma}\frac{|\sigma_{0,\infty}(x)-\sigma_{0,\infty}(y)|}{|x-y|}\le (1+C\sqrt{\nu})^{k},\forall x\in F_k
\end{align}
 and
 \begin{align}\label{lower bound}
 \sigma_{0,\infty*}=\inf_{y\in \Sigma}\frac{|\sigma_{0,\infty}(x)-\sigma_{0,\infty}(y)|}{|x-y|}\ge (1-C\sqrt{\nu})^{k}, \forall x\in F_k.
 \end{align}
\end{cor}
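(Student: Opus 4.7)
My plan is to obtain the Corollary as a direct consequence of Lemma \ref{biholder}, especially the estimate \eqref{one point on F_k estimate for tau}, combined with the fact that $\tau_{j,\infty}$ fixes $F_j$. The well-definedness of $\sigma_{0,\infty}$ as a bi-Hölder homeomorphism from $\Sigma \cap \mathring{B}$ onto $\Sigma_0 \cap \mathring{B}$ is already contained in Lemma \ref{biholder}, so nothing extra is needed there. The content of the Corollary is really the refinement \eqref{one point on F_k estimate for sigma}--\eqref{lower bound} on points coming from the fine sets $F_k$.

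The key observation I would isolate first is the identification
\[
\sigma_{0,\infty}(x)=\sigma_{0,k}(x),\qquad \forall x\in F_k,
\]
and correspondingly $\sigma_{0,\infty}(x)\in \sigma_{0,k}(F_k)\subset \Sigma_0$. Indeed, $F_k\subset \Sigma_k$ by Proposition \ref{modify}, so $\sigma_{0,k}(x)$ makes sense; moreover item $(1)$ of the previous lemma gives $\tau_{k,l}(x)=x$ for every $l\ge k$ and every $x\in F_k$, and passing to the limit $l\to\infty$ yields $\tau_{k,\infty}(x)=x$. Composing, $\tau_{0,\infty}\big(\sigma_{0,k}(x)\big)=\tau_{k,\infty}\big(\tau_{0,k}(\sigma_{0,k}(x))\big)=\tau_{k,\infty}(x)=x$, whence the claim by injectivity of $\tau_{0,\infty}$.

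Given this identification, for $x\in F_k$ and arbitrary $y\in \Sigma$, I set $x'=\sigma_{0,\infty}(x)\in \sigma_{0,k}(F_k)$ and $y'=\sigma_{0,\infty}(y)\in \Sigma_0$. Then $x=\tau_{0,\infty}(x')$, $y=\tau_{0,\infty}(y')$, and \eqref{one point on F_k estimate for tau} applied to the pair $(x',y')$ reads
\[
(1+C\sqrt{\nu})^{-k-1}\le \frac{|x-y|}{|x'-y'|}\le (1+C\sqrt{\nu})^{k+1},
\]
which is exactly \eqref{one point on F_k estimate for sigma} after inverting. Taking supremum and infimum over $y\in\Sigma$ in the respective sides then yields \eqref{upper bound} and \eqref{lower bound} (absorbing the extra factor $(1+C\sqrt{\nu})$ by slightly enlarging the constant $C$, so that the exponent can be written as $k$ in place of $k+1$).

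There is essentially no obstacle here beyond bookkeeping: once one sees that $\sigma_{0,\infty}$ restricted to $F_k$ coincides with the bi-Lipschitz finite-step map $\sigma_{0,k}$, the estimates follow by taking the inverse of \eqref{one point on F_k estimate for tau}. The only point that deserves a brief justification is that $F_k\subset \Sigma_k$ (so that $\sigma_{0,k}$ is defined on $F_k$) and that $\tau_{k,\infty}(x)=x$ on $F_k$ (so that $\sigma_{0,\infty}$ and $\sigma_{0,k}$ agree there), both of which are immediate from the construction and the inclusion $F_k\subset F_{k+1}\subset\cdots$.
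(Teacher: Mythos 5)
Your proposal is correct and follows essentially the same route as the paper: the paper's proof likewise reduces everything to the identity $\sigma_{0,\infty}|_{F_k}=\sigma_{0,k}|_{F_k}$ (which you justify in more detail via $\tau_{k,\infty}|_{F_k}=\mathrm{id}$ and injectivity of $\tau_{0,\infty}$) and then inverts the two-sided estimate \eqref{one point on F_k estimate for tau}. The only loose point — the exponent $k$ versus $k+1$ in \eqref{upper bound} and \eqref{lower bound} — is handled by your remark on enlarging the generic constant $C$, which matches the paper's own (implicit) convention.
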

\begin{proof}
Since $\tau_{0,\infty}:\Sigma_0\to \Sigma$ is homeomorphism, we know $\sigma_{0,\infty}=\tau_{0,\infty}^{-1}$ is well-defined. Noting $\sigma_{0,\infty}|_{F_{k}}=\sigma_{0,k}|_{F_{k}}$ by the construction of $\sigma_{0,\infty}$, by \eqref{one point on F_k estimate for tau}, we get \eqref{one point on F_k estimate for sigma}.  Other conclusions follows directly.
\end{proof}

The following lemma is the decay of the measure of bad set.
\begin{lem}For any $u\in \Sigma$, there holds
\begin{align}\label{decay of bad measure}
\mu((\Sigma\backslash F_k)\cap \mathring{B}(u,100\delta_0(u)))
\le \frac{1}{10}\mu((\Sigma\backslash F_{k-1})\cap \mathring{B}(u,100\delta_0(u))),
\end{align}
\end{lem}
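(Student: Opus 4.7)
\medskip
\noindent
\textbf{Proof plan.} The plan is to combine a Vitali covering at the scale $\delta_k$ with the weak-type bound of Lemma~\ref{fine set large}. The key input beyond the lemma is the inclusion $F_{k-1}\subset F_k$, which follows since $\mathcal{Z}_k=F_{k-1}$ by construction of $\delta_k$. Consequently, if I set
\[
A_k:=(\Sigma\setminus F_k)\cap \mathring{B}(u,100\delta_0(u)),\qquad B_{k-1}:=(\Sigma\setminus F_{k-1})\cap \mathring{B}(u,100\delta_0(u)),
\]
then $A_k\subset B_{k-1}$, and for every $x\in A_k$ one has $x\notin F_{k-1}$, so $\delta_k(x)=\min\{d(x,F_{k-1}),\delta_0(x)\}>0$. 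I will produce a disjoint family of balls inside $B_{k-1}$ whose masses control $\mu(A_k)$ up to a factor $C\sqrt{\gamma}$, which can be made $\le\frac{1}{10}$ since $\nu=\sqrt\gamma\ll 1$.

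\medskip
\noindent
First, I will apply the Vitali $5r$-covering theorem to the family $\{B(x,r_x)\}_{x\in A_k}$ with $r_x=\tfrac{1}{10}\delta_k(x)$, extracting a disjoint subfamily $\{B(x_i,r_i)\}$ with $r_i=\tfrac{1}{10}\delta_k(x_i)$ and
\[
A_k\subset \bigcup_i B(x_i,5r_i)=\bigcup_i B(x_i,\tfrac{1}{2}\delta_k(x_i)).
\]
Applying Lemma~\ref{fine set large} to the $1$-Lipschitz function $\delta_k$ at each $x_i$, with $\nu=\sqrt\gamma$, yields
\[
\mu\bigl((\Sigma\setminus F_k)\cap B(x_i,\tfrac{1}{2}\delta_k(x_i))\bigr)\le C\sqrt{\gamma}\,\mu\bigl(B(x_i,\tfrac{1}{2}\delta_k(x_i))\bigr),
\]
and by Ahlfors regularity the right side is $\le C\cdot 5^m\sqrt{\gamma}\,\mu(B(x_i,r_i))$. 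The $1$-Lipschitz property of $\delta_k$ then shows that for every $y\in B(x_i,r_i)$ we have $d(y,F_{k-1})\ge\delta_k(x_i)-r_i=\tfrac{9}{10}\delta_k(x_i)>0$, so $B(x_i,r_i)\cap\Sigma\subset \Sigma\setminus F_{k-1}$. Summing the $\mu$-estimates over $i$ and exploiting disjointness gives
\[
\mu(A_k)\le \sum_i \mu\bigl((\Sigma\setminus F_k)\cap B(x_i,\tfrac{1}{2}\delta_k(x_i))\bigr)\le C\sqrt{\gamma}\sum_i\mu(B(x_i,r_i))\le C\sqrt{\gamma}\,\mu\!\Bigl(\bigsqcup_i B(x_i,r_i)\Bigr).
\]

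\medskip
\noindent
The remaining step is to place the disjoint union $\bigsqcup_i B(x_i,r_i)$ inside $\mathring{B}(u,100\delta_0(u))$ so that the last factor is bounded by $\mu(B_{k-1})$. Here I use $\delta_k\le \delta_0$ together with the fact that $\delta_0$ is $\tfrac{1}{100}$-Lipschitz: for $x_i\in A_k\subset \mathring{B}(u,100\delta_0(u))$,
\[
r_i=\tfrac{1}{10}\delta_k(x_i)\le \tfrac{1}{10}\delta_0(x_i)\le \tfrac{1}{10}\bigl(\delta_0(u)+\tfrac{|x_i-u|}{100}\bigr)<\tfrac{1}{5}\delta_0(u),
\]
so each $B(x_i,r_i)$ is well inside $\mathring{B}(u,100\delta_0(u))$ up to a negligible boundary layer (of width $\lesssim \delta_0(u)$, which can be absorbed either by shrinking the Vitali radii by a small universal factor or by a Whitney-type cutoff near $\partial \mathring{B}(u,100\delta_0(u))$). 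This gives $\mu(A_k)\le C\sqrt\gamma\,\mu(B_{k-1})$, and finally choosing $\gamma$ small so that $C\sqrt{\gamma}\le \tfrac{1}{10}$ yields \eqref{decay of bad measure}.

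\medskip
\noindent
The main obstacle is the boundary bookkeeping in the last paragraph: the Vitali cover naturally produces balls $B(x_i,r_i)$ contained in a slightly enlarged ball $\mathring{B}(u,(100+\epsilon)\delta_0(u))$ rather than in $\mathring{B}(u,100\delta_0(u))$ itself. All other steps are standard Hardy–Littlewood type arguments. The resolution relies crucially on the hierarchical bound $\delta_k\le \delta_0$ baked into the definition of $\delta_k$, which forces the Vitali radii to be an order of magnitude smaller than the radius of the ambient ball; a small shrinking of the covering radius (and a correspondingly larger Ahlfors constant) makes the enlargement harmless.
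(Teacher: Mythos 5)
Your argument follows the paper's proof essentially step for step: a Vitali $5r$-covering at scale comparable to $\delta_k$, the weak-$(1,1)$ density bound of Lemma~\ref{fine set large} applied to the $1$-Lipschitz function $\delta_k$ on each covering ball, and the observation that $B(y,\tfrac12\delta_k(y))\cap\Sigma\subset\Sigma\setminus F_{k-1}$ because $\delta_k(y)\le d(y,F_{k-1})$. All of those steps are correct as written.

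The one genuine gap is precisely the boundary issue you flag at the end, and your first proposed repair does not work. The balls $B(x_i,r_i)$ with $r_i=\tfrac1{10}\delta_k(x_i)$ can protrude outside $\mathring{B}(u,100\delta_0(u))$, and no uniform shrinking factor prevents this: for $k\ge 1$ one has $\delta_k(x)=\min\{d(x,F_{k-1}),1-|x|\}$, which does not tend to zero as $x$ approaches the ``inner'' portion of $\partial B(u,100\delta_0(u))$ (the part away from $\partial B_1$), so centers arbitrarily close to that sphere carry radii bounded below and $c\,r_i$ still sticks out for every fixed $c>0$. Enlarging the ambient ball is not an option either, since $B(u,100\delta_0(u))=B(u,1-|u|)$ already touches $\partial B_1$; what your argument rigorously yields is the estimate with $(\Sigma\setminus F_{k-1})\cap B(u,(100+C\nu)\delta_0(u))\cap B_1$ on the right-hand side, not \eqref{decay of bad measure} itself, and the stated form is what the iteration in the proof of Theorem~\ref{W1p param} uses. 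The correct repair is your second suggestion, implemented as in the paper: truncate the covering radius to $r_k(y)=\tfrac12\min\{\delta_k(y),\,100\delta_0(u)-|u-y|\}$, which keeps every covering ball inside $\mathring{B}(u,100\delta_0(u))$ while still guaranteeing $B(y,r_k(y))\cap\Sigma\subset\Sigma\setminus F_{k-1}$, and run Vitali on $\{B(y,\tfrac15 r_k(y))\}$. If you do this, be aware that the density estimate must then be invoked on the possibly much smaller ball $B(y_i,r_k(y_i))$ rather than on $B(y_i,\tfrac12\delta_k(y_i))$, so you should check that the conclusion of Lemma~\ref{fine set large} localizes to that scale rather than quoting it verbatim.
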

\begin{proof}
For any $u\in \Sigma$, since $\delta_0(u)=\frac{d(x)}{100}=\frac{1-|x|}{100}$, we know $B(u,100\delta_0(u))\subset B_1$. For any $y\in (\Sigma\backslash F_k)\cap \mathring{B}(u,100\delta_0(u))$, we know $0<\delta_k(y)=\min\{d(y,F_{k-1}), d(y)\}$ and hence $B(y,\frac{1}{2}\delta_k(y))\subset \Sigma\backslash F_{k-1}$. Letting $r_k(y)=\frac{1}{2}\min\{\delta_k(y),100\delta_0(u)-|x-y|\}$, then we know $B(y,r_k(y))\subset (\Sigma\backslash F_k)\cap \mathring{B}(x,100\delta_0(x))$.
By five times covering theorem, there exists $\{y_i\}$ such that $\{B(y_i,\frac{1}{5}r_k(y_i))\}_i$ is a disjoint family of balls with $(\Sigma\backslash F_k)\cap \mathring{B}(u,100\delta_0(u))\subset \cup_{i}B(y_i,r_k(y_i))$.
So, by Lemma \ref{fine set large} and the Ahlfors regularity,  we have
\begin{align*}
\mu((\Sigma\backslash F_k)\cap \mathring{B}(u,100\delta_0(u)))
&\le\sum_i \mu((\Sigma\backslash F_k)\cap \mathring{B}(u,100\delta_0(u))\cap B(y_i,r_k(y_i)))\\
&\le C\sqrt{\nu} \sum_i \mu(B(y_i,r_k(y_i)))\\
&\le 100C\sqrt{\nu} \sum_{i}\mu(B(y_i,\frac{1}{5}r_k(y_i)))\\
&\le C\sqrt{\nu} \mu((\Sigma\backslash F_{k-1})\cap \mathring{B}(u,100\delta_0(u))),
\end{align*}
which implies \eqref{decay of bad measure} for $\sqrt{\nu}$ small.
\end{proof}

\begin{proof}[Proof of Theorem \ref{W1p param}]For any $u\in \Sigma$, we know $B(u,100\delta_0(u))\subset B_1$. For covenience, we denote $F_{-1}=\emptyset.$ By iterating \ref{decay of bad measure}, we get  $\mu((\Sigma\backslash F_k)\cap \mathring{B}(u,100\delta_0(u)))\le \frac{C}{10^k}\delta_0^m(x)$. Especially, there holds
\begin{align*}
\mu((F_{k+1}\backslash F_k)\cap \mathring{B}(u,100\delta_0(u)))\le \frac{C}{10^k}\delta^m_0(x)
\end{align*}
and
\begin{align*}
\mu((\Sigma\backslash \cup_kF_k)\cap \mathring{B}(u,100\delta_0(u)))=0.
\end{align*}

 Thus by \eqref{decay of bad measure}, we know $\sigma_{0,\infty}^{*}$ and $\sigma_{0,\infty,*}$ are both well-defined in a full measure set $\cup_kF_k$ of $\Sigma$. By
 \eqref{upper bound} and \eqref{lower bound}, we get
 \begin{align*}
 \int_{B(u,100\delta_0(u))\cap \Sigma}(\sigma_{0,\infty}^*)^p+(\sigma_{0,\infty,*})^{-p}d\mathcal{H}^m
 &\le \sum_{k=-1}^{\infty}\int_{B(u,100\delta_0(u))\cap \Sigma\cap (F_{k+1}\backslash F_k)}(\sigma_{0,\infty}^*)^p+(\sigma_{0,\infty *})^{-p}d\mathcal{H}^m\\
 &\le C\sum_{k=-1}^{\infty}(1+C\sqrt{\nu})^{(k+1)p}\cdot 10^{-k}\delta_0(u)^m\\
 &\le C_p \delta_0^m(u)
 \end{align*}
for $p<p(\nu)=\frac{\log{10}}{\log{(1+C\sqrt{\nu})}}\to +\infty$ as $\sqrt{\nu}\to 0$.

Moreover, for any $k\ge 0$ and $x\in F_k$, we know $\sigma_{k,\infty}(x)=x$. For any $y\in \cup_{k\ge 0}F_k$, deonting $y_k=\sigma_{k,\infty}(y)$. Noting $\sigma_{0,\infty}=\sigma_{0,k}\circ \sigma_{k,\infty}$, we know $\sigma_{0,\infty}(x)=\sigma_{0,k}(x)$ and $\sigma_{0,k}(y_k)$. So,
 by (\ref{jk-dis for sigma}) , (\ref{jk-lip for sigma}) we have
 \begin{align*}
 |\sigma_{k,\infty}(y)-y|\le  C\sqrt{\nu} d(y,F_k),
 \end{align*}

 \begin{align*}
 |\sigma_{0,\infty}(x)-x-(\sigma_{0,\infty}(y)-y)|
 &\le |(\sigma_{0,k}(x)-x)-(\sigma_{0,k}(y_k)-y_k)|+|\sigma_{k,\infty}(y)-y|\\
 &\le [(1+C\sqrt{\nu})^{k}-1]|x-y_k|+C\sqrt{\nu}d(y,F_k)\\
 &\le C\sqrt{\nu}k(1+C\sqrt{\nu})^{k-1}(|x-y|+|y-\sigma_{k,\infty}(y)|)+C\sqrt{\nu}|x-y|\\
 &\le C\sqrt{\nu} k(1+C\sqrt{\nu})^k|x-y|.
 \end{align*}
 Noting $\sigma_{0,\infty}|_{F_0}=id|_{F_0}$, we know
 \begin{align*}
 \int_{B(u,100\delta_0(u))\cap \Sigma}((\sigma_{0,\infty}-id)^*)^p d\mathcal{H}^m
 &\le \sum_{k=0}^{\infty}\int_{B(u,100\delta_0(u))\cap \Sigma\cap (F_{k+1}\backslash F_k)}((\sigma_{0,\infty}-id)^*)^pd\mathcal{H}^m\\
 &\le  C\cdot (C\sqrt{\nu})^p \sum_{k=0}^{\infty}\frac{k^p(1+C\sqrt{\nu})^{kp}}{10^k}\delta_0^m(u)\\
 &\le C_p(\sqrt{\nu})^{p}\delta_0^m(u),
 \end{align*}
 where $p<p_1(\nu)=\frac{\log{9}}{\log(1+C\sqrt{\nu})}\to +\infty$ as $\nu\to 0$.

 For the estimate of $\tau_{0,\infty}$, similar argument shows $\tau_{0,\infty}^{*}$ and $\tau_{0,\infty,*}$ are both well-defined on $\sigma_{0,\infty}(\cup_{k\ge 0}F_k)$. Now, since $\mu=\theta\mathcal{H}^m\llcorner \Sigma$ with $\theta\ge 1$ and $\mu(\Sigma\backslash \cup_{k\ge 0}F_k)=0$, we know $\mathcal{H}^m(\Sigma\backslash \cup_{k\ge 0}F_k)=0$. Thus for any $\kappa>0$, there exists a covering $\{B(x_i,r_i)\}_{i\ge 1}$ of $\Sigma\backslash \cup_{k\ge 0}F_k$ with $x_i\in \Sigma$ $r_i\le \kappa$ and $\sum_{i=1}^{\infty}r^m_i\le\kappa$. By five times covering lemma, we can assume $B(x_i,\frac{r_i}{5})$ are disjoint with out loss of generality.
 Now, for any $x,y,z\in B(x_i,r_i)$ with $x\in \cup_{k\ge 0}F_k$, we know
 \begin{align*}
 |\sigma_{0,\infty}(y)-\sigma_{0,\infty}(z)|\le \sigma_{0,\infty}^{*}(x)(|x-y|+|x-z|)\le 4r_i\sigma_{0,\infty}^{*}(x).
 \end{align*}
So, by the Ahlfors regularity and $\mu(\Sigma\backslash \cup_{k\ge 0}F_k)=0$, we know
\begin{align*}
	 d_i=Diam\left(  \sigma_{0,\infty}\left( B\left( x_i, r_i \right)  \right)\right)
	 &\leq 4r_i\inf_{x\in B\left( x_i, \frac{r_i}{5} \right) }\sigma_{0,\infty}^{*}\left( x \right)\\
	 &\leq C r_i^{1-\frac{m}{p}}\left( \int_{B\left( x_i, \frac{r_i}{5} \right) }\left( \sigma_{0,\infty}^{*}\left( x \right)  \right)^p d\mu
	 \right) ^{\frac{1}{p}}.
\end{align*}
So, $\{\sigma_{0,\infty}(B(x_i,r_i))\}$ is a covering of $\sigma_{0,\infty}(\Sigma\backslash \cup_{k\ge 0}F_k)$ with diameter $d_i\le C \kappa^{1-\frac{2}{p}}$. This implies
 \begin{equation}
 \begin{split}
	\mathcal{H}^m_{C \kappa^{1-\frac{m}{p}}}\left(  \sigma_{0,\infty}(\Sigma\backslash \cup_{k}F_k)\right) \leq &\sum_{i=1}^\infty \omega_m \frac{d^m_i}{2^m}\\
	\leq C &\sum_{i=1}^\infty  r_i^{2\left( 1-\frac{m}{p} \right) }\left( \int_{B\left( x_i, r_i \right) }\left( \sigma_{0,\infty}^{*}\left( x \right)  \right)^p d\mu\right) ^{\frac{m}{p}}\\
	\leq & C\left( \sum_{i=1}^{\infty} r_i^2 \right)^{1-\frac{m}{p}}\left( \int_{\cup_{i\ge 1}B\left( x_i, \frac{r_i}{5} \right) }\left( \sigma_{0,\infty}^{*}\left( x \right)  \right)^p d\mu\right)^{\frac{m}{p}}\\
\leq&C\kappa^{1-\frac{m}{p}}
\end{split}	
 \end{equation}
Letting $\kappa\to 0$, we get
 \begin{equation}
	 \mathcal{H}^m\left(  \sigma_{0,\infty}(\Sigma\backslash \cup_{k}F_k)\right)=0.
 \end{equation}

Since $\Sigma_0$ is a Lipschitz manifold by Remark\ref{Lipschitz surface}, this means $\tau_{0,\infty}^{*}$ and $\tau_{0,\infty,*}$ are well-defined in a full measure subset $\sigma_{0,\infty}(\cup_{k\ge 0}F_k)\subset \Sigma_0$ with estimate \eqref{one point on F_k estimate for tau}. So, similar argument as above shows
 \begin{align*}
 \int_{\Sigma_0\cap B(u,100\delta_0(u))}(\tau_{0,\infty}^*)^p+(\tau_{0,\infty*})^{-p}d\mathcal{H}^m\le C_p\delta_0^m(u),
 \end{align*}
 and
 \begin{align*}
 \int_{\Sigma_0\cap B(u,100\delta_0(u))}((\tau_{0,\infty}-id)^*)^pd\mathcal{H}^m\le C_p(\sqrt{\nu})^p\delta_0^m(u),
 \end{align*}
 for $p\le p(\sqrt{\nu})=\frac{\log{4}}{\log{(1+C\sqrt{\nu})}}\to +\infty$ as $\sqrt{\nu}\to 0$. Finally, combining with Proposition \ref{modify}, Lemma \ref{Sigma0}  and Lemma \ref{biholder}, we complete the proof of Theorem \ref{W1p param}.
\end{proof}
\begin{bibdiv}

\begin{biblist}

    \bib{A-1972}{article}{
  title={On the first variation of a varifold},
  author={Allard, William K.},
  journal={Annals of mathematics},
  volume={95},
  number={3},
  pages={417--491},
  year={1972},
  publisher={JSTOR}
}
\bib{AHMTT-2002}{article}{
  title={Carleson measures, trees, extrapolation, and T (b) theorems},
  author={Auscher, Pascal},
  author={Hofmann, Steve},
  author={Muscalu, Camil},
  author={Tao, Terence},
  author={Thiele, Christoph},
  journal={Publicacions matematiques},
  pages={257--325},
  year={2002},
  publisher={JSTOR}
}

\bib{AT-2015}{article}{
  title={Characterization of $n$-rectifiability in terms of Jones' square function: Part II},
  author={Azzam, Jonas},
  author={Tolsa, Xavier},
  journal={Geometric and Functional Analysis},
  volume={25},
  number={5},
  pages={1371--1412},
  year={2015},
  publisher={Springer}
}

\bib{B-2009}{article}{
title = {Chord-arc constants for submanifolds of arbitrary codimension},
title = {},
author = {Blatt, Simon},
pages = {271--309},
volume = {2},
number = {3},
journal = {Advances in Calculus of Variations},
doi = {10.1515/ACV.2009.011},
year = {2009},
}

\bib{BK-2002}{article}{
  title={Quasisymmetric parameterizations of two-dimensional metric spheres},
  author={Bonk, Mario},
  author={Kleiner, Bruce},
  journal={Inventiones mathematicae},
  year={2002},
  volume={150},
  pages={127-183}
}

\bib{BV}{article}{
 author={Bourni, Theodora},
 author={Volkmann, Alexander},
   title={An Allard type regularity theorem for varifolds with a H\"{o}lder condition on the first variation},
   journal={Calc. Var. Partial Differential Equations},
   volume={55}
   date={2016},
   number={3},
   pages={},
    issn={},
   review={MR3490535},
   doi={10.1007/s00526-016-0982-y},
   }
\bib{B-1978}{book}{
  title={The motion of a surface by its mean curvature},
  author={Brakke, Kenneth A.},
  publisher={Princeton University Press},
  year={1978},
}
\bib{BZ-2022}{article}{
title={Bi-Lipschitz rigidity for $L^2$-almost CMC surfaces}
author={Bi, Yuchen}
author={Zhou, Jie}
journal={preprint}
}

\bib{CN-2015}{article}{
  title={Regularity of Einstein manifolds and the codimension 4 conjecture},
  author={Cheeger, Jeff},
  author={Naber, Aaron},
  journal={Annals of Mathematics},
  pages={1093--1165},
  year={2015},
  publisher={JSTOR}
}

\bib{CJN-2021}{article}{
  title={Rectifiability of singular sets of noncollapsed limit spaces with Ricci curvature bounded below},
  author={Cheeger, Jeff},
  author={Jiang, Wenshuai},
  author={Naber, Aaron},
  journal={Annals of Mathematics},
  volume={193},
  number={2},
  pages={407--538},
  year={2021},
  publisher={JSTOR}
}

\bib{CF-1974}{article}{
  title={Weighted norm inequalities for maximal functions and singular integrals},
  author={Coifman, Ronald},
  author={Fefferman, Charles},
  journal={Studia Mathematica},
  volume={51},
  pages={241--250},
  year={1974},
  publisher={Instytut Matematyczny Polskiej Akademii Nauk}
}

\bib{CLMS-1993}{article}{
  title={Compensated compactness and Hardy spaces},
  author={Coifman, Ronald},
  author={Lions, Pierre-Louis},
  author={Meyer, Yves},
  author={Semmes, Stephen},
  journal={Journal de Math{\'e}matiques Pures et Appliqu{\'e}es},
  year={1993},
  volume={72},
  pages={247-286}
}

\bib{CM-2011}{inproceedings}{
  title={A Course in Minimal Surfaces},
  author={Colding, Tobias Holck},
  author={Minicozzi, William P.},
  year={2011}
}

 \bib{D-1986}{article}{
 author={Duggan, John P.},
   title={Regularity theorems for varifolds with mean curvature},
   journal={Indiana Univ. Math. J.},
   volume={35},
   date={1986},
   number={1},
   pages={117-144},
    issn={},
   review={MR825631},
   doi={},
   }

\bib{D-1982}{article}{
  title={Courbes corde-arc et espaces de Hardy g{\'e}n{\'e}ralis{\'e}s},
  author={David, Guy},
  journal={Annales de l'institut Fourier},
  volume={32},
  number={3},
  pages={227--239},
  year={1982}
}

\bib{DS-1991}{article}{
  title={Singular integrals and rectifiable sets in Rb: au-del{\`a} des graphes lipschitziens},
  author={David, Guy},
  author={Semmes, Stephen},
  journal={Ast{\'e}risque},
  number={193},
  pages={7--145},
  year={1991}
}

\bib{DS-1993}{book}{
  title={Analysis of and on uniformly rectifiable sets},
  author={David, Guy},
  author={Semmes, Stephen},
  volume={38},
  year={1993},
  publisher={American Mathematical Soc.}
}

\bib{ENV-2019}{article}{
  title={Effective Reifenberg theorems in Hilbert and Banach spaces},
  author={Edelen, Nick},
  author={Naber, Aaron},
  author={Valtorta, Daniele},
  journal={Mathematische Annalen},
  volume={374},
  number={3},
  pages={1139--1218},
  year={2019},
  publisher={Springer}
}

\bib{FS-1972}{article}{
  title={$ H^ p $ spaces of several variables},
  author={Fefferman, Charles},
  author={Stein, Elias M},
  journal={Acta mathematica},
  volume={129},
  pages={137--193},
  year={1972},
  publisher={Institut Mittag-Leffler}
}

\bib{H-2002}{book}{
  title={Harmonic maps, conservation laws and moving frames},
  author={H{\'e}lein, Fr{\'e}d{\'e}ric},
  number={150},
  year={2002},
  publisher={Cambridge University Press}
}

\bib{HK-1998}{article}{
  title={Quasiconformal maps in metric spaces with controlled geometry},
  author={Heinonen, Juha M.},
  author={Koskela, Pekka},
  journal={Acta Mathematica},
  year={1998},
  volume={181},
  pages={1-61}
}
\bib{HKST-2015}{book}{
  title={Sobolev spaces on metric measure spaces},
  author={Heinonen, Juha},
  author={Koskela, Pekka},
  author={Shanmugalingam, Nageswari},
  author={Tyson, Jeremy T.},
  number={27},
  year={2015},
  publisher={cambridge university press}
}

 \bib{H-1986}{article}{
  title={Second fundamental form for varifolds and the existence of surfaces minimising curvature},
  author={Hutchinson, John E.},
  journal={Indiana University Mathematics Journal},
  volume={35},
  number={1},
  pages={45--71},
  year={1986},
  publisher={JSTOR}
}

\bib{HW-2010}{article}{
  title={A new proof of Reifenberg’s topological disc theorem},
  author={Hong, Guanghao},
  author={Wang, Lihe},
  journal={Pacific journal of mathematics},
  volume={246},
  number={2},
  pages={325--332},
  year={2010},
  publisher={Mathematical Sciences Publishers}
}

\bib{Ka-2007}{article}{
  title={Area and coarea formulas for the mappings of Sobolev classes with values in a metric space},
  author={Karmanova, M. B.},
  journal={Siberian Mathematical Journal},
  year={2007},
  volume={48},
  pages={621-628}
}

\bib{K-2009}{inproceedings}{
  title={Lectures on quasiconformal and quasisymmetric mappings},
  author={Koskela, Pekka},
  year={2009}
}

\bib{KM-1998}{article}{
  title={Quasiconformal mappings and Sobolev spaces},
  author={Koskela, Pekka},
  author={Macmanus, Paul},
  journal={Studia Mathematica},
  year={1998},
  volume={131},
  pages={1-17}
}
  \bib{KS}{article}{
  title={Removability of point singularities of Willmore surfaces},
  author={Kuwert, Ernst} ,
  author={Sch{\"a}tzle, Reiner},
  journal={Annals of Mathematics},
  pages={315--357},
  year={2004},
  publisher={JSTOR}
}

  \bib{KSvdM}{article}{
   author={Kolasi\'{n}ski, S{\l}awomir},
   author={Strzelecki, Pawe{\l}},
   author={von der Mosel, Heiko},
   title={Characterizing $W^{2,p}$ submanifolds by p-integrability of global curvatures},
   journal={Geom. Funct. Anal.},
   volume={23},
   date={2013},
   number={3},
   pages={937-984},
    issn={},
   review={MR3061777},
   doi={},
   }
	\bib{LW-2015}{article}{
  title={Regularity of harmonic discs in spaces with quadratic isoperimetric inequality},
  author={Lytchak, Alexander},
  author={Wenger, Stefan},
  journal={Calculus of Variations and Partial Differential Equations},
  year={2015},
  volume={55},
  pages={1-19}
}

\bib{LW-2015b}{article}{
 title={Area Minimizing Discs in Metric Spaces},
  author={Lytchak, Alexander},
  author={Wenger, Stefan},
  journal={Archive for Rational Mechanics and Analysis},
  year={2015},
  volume={223},
  pages={1123-1182}
}

\bib{LW-2017}{article}{
title = {Energy and area minimizers in metric spaces},
title = {},
author = {Lytchak, Alexander},
author = {Wenger, Stefan},
pages = {407--421},
volume = {10},
number = {4},
journal = {Advances in Calculus of Variations},
year = {2017},
}

\bib{LW-2018}{article}{
title = {Intrinsic structure of minimal discs in metric spaces},
title = {},
author = {Lytchak, Alexander},
author = {Wenger, Stefan},
pages = {591--644},
volume = {22},
number = {1},
journal = {Geometry \& Topology},
year = {2018},
}

\bib{LW-2020}{article}{
  title={Canonical parameterizations of metric disks},
  author={Lytchak, Alexander},
  author={Wenger, Stefan},
  journal={Duke Mathematical Journal},
  year={2020}
  pages = {761-797},
  volume = {169},
  number = {4},
}

\bib{M-1996}{article}{
  title={Curvature varifolds with boundary},
  author={Mantegazza, Carlo},
  journal={Journal of Differential Geometry},
  volume={43},
  number={4},
  pages={807--843},
  year={1996},
  publisher={Lehigh University}
}
\bib{M09}{article}{
 author={Menne, Ulrich},
   title={Some applications of the isoperimetric inequality for integral varifolds},
   journal={Adv. Calc. Var.},
   volume={2},
   date={2009},
   number={3},
   pages={247-269},
    issn={},
   doi={},
   }
\bib{M10}{article}{
 author={Menne, Ulrich},
   title={A Sobolev Poincar\'{e} type inequality for integral varifolds},
   journal={Calc. Var. Partial Differential Equations},
   volume={38},
   date={2010},
   number={3-4},
   pages={369-408},
    issn={},
   doi={},
   }

\bib{M-2009}{book}{
  title={Multiple integrals in the calculus of variations},
  author={Morrey Jr, Charles Bradfield},
  year={2009},
  publisher={Springer Science \& Business Media}
}

   \bib{M-1972}{article}{
  title={Weighted norm inequalities for the Hardy maximal function},
  author={Muckenhoupt, Benjamin},
  journal={Transactions of the American Mathematical Society},
  volume={165},
  pages={207--226},
  year={1972}
}

\bib{M-1990}{article}{
  title={Higher integrability of determinants and weak convergence in $L^1$.},
  author={M{\"u}ller, Stefan},
  journal={Journal f{\"u}r die reine und angewandte Mathematik (Crelles Journal)},
  year={1990},
  volume={1990},
  pages={20 - 34}
}

\bib{MS-2013}{book}{
  title={Classical and Multilinear Harmonic Analysis: Volume 1},
      author={Muscalu, Camil},
  author={Schlag, Wilhelm},
  volume={137},
  year={2013},
  publisher={Cambridge University Press}
}

\bib{MS-1995}{article}{
  title={On surfaces of finite total curvature},
  author={M{\"u}ller, Stefan},
  author={{\v{S}}ver{\'a}k, Vladim{\i}r},
  journal={Journal of Differential Geometry},
  volume={42},
  number={2},
  pages={229--258},
  year={1995},
  publisher={Lehigh University}
}

    \bib{NV}{article}{
  title={Rectifiable-Reifenberg and the regularity of stationary and minimizing harmonic maps},
  author={Naber, Aaron},
  author={Valtorta, Daniele},
  journal={Annals of Mathematics},
  volume={185},
  number={1},
  pages={131--227},
  year={2017},
  publisher={Department of Mathematics of Princeton University}
}

\bib{R-2014}{article}{
  title={Uniformization of two-dimensional metric surfaces},
  author={Rajala, Kai},
  journal={Inventiones mathematicae},
  year={2014},
  volume={207},
  pages={1301-1375}
}
\bib{R60}{article}{
 author={Reifenberg, E. R.},
   title={Solution of the Plateau Problem for m-dimensional surfaces of varying topological type},
   journal={Acta Math.},
   volume={104 },
   date={1960},
   number={ },
   pages={1-92},
    issn={},
   review={MR114145 (22\#4972) },
   doi={},
   }

\bib{R-2012}{inproceedings}{
	title={Conformally Invariant Variational Problems},
  author={Rivi\'{e}re, Tristan},
  year={2012}
}

\bib{S-1991}{article}{
  title={Chord-arc surfaces with small constant, I},
  author={Semmes, Stephen},
  journal={Advances in Mathematics},
  year={1991},
  volume={85},
  pages={198-223}
}
\bib{S-1991b}{article}{
  title={Chord-arc surfaces with small constant. II. Good parameterizations},
  author={Semmes, Stephen},
  journal={Advances in Mathematics},
  year={1991},
  volume={88},
  pages={170-199}
}
\bib{S-1991c}{article}{
  title={Hypersurfaces in $\mathbb{R}^n$ whose unit normal has small BMO norm},
  author={Semmes, Stephen},
  journal={Proceedings of the American Mathematical Society},
  volume={112},
  number={2},
  pages={403--412},
  year={1991}
}


        \bib{LS83}{book}{
  title={Lectures on geometric measure theory},
  author={Simon, Leon},
  year={1983},
  publisher={The Australian National University, Mathematical Sciences Institute}
}
 \bib{LS93}{article}{
  title={Existence of surfaces minimizing the Willmore functional},
  author={Simon, Leon},
  journal={Communications in Analysis and Geometry},
  volume={1},
  number={2},
  pages={281--326},
  year={1993},
  publisher={International Press of Boston}
}

  \bib{LS96}{article}{
  title={Reifenberg's Topological Disk Theorem},
  author={Simon, Leon},
  journal={Mathematisches Institut Universit\"{a}t T\"{u}bingen Preprints},
  volume={},
  number={},
  pages={},
  year={1996},
  publisher={International Press of Boston}
}

\bib{SZ-2021}{article}{
  title={Compactness of Surfaces in $\mathbb{R}^n$ with Small Total Curvature},
  author={Sun, Jianxin},
  author={Zhou, Jie},
  journal={The Journal of Geometric Analysis},
  volume={31},
  number={8},
  pages={8238--8270},
  year={2021},
  publisher={Springer}
}

\bib{T-2015}{article}{
  title={Characterization of n-rectifiability in terms of Jones' square function: part I},
  author={Tolsa, Xavier},
  journal={Calculus of Variations and Partial Differential Equations},
  volume={54},
  number={4},
  pages={3643--3665},
  year={2015},
  publisher={Springer}
}

\bib{T-2019}{article}{
  title={Rectifiability of Measures and the $\beta_p $ Coefficients},
  author={Tolsa, Xavier},
  journal={Publicacions Matematiques},
  volume={63},
  number={2},
  pages={491--519},
  year={2019},
  publisher={Universitat Aut{\`o}noma de Barcelona, Departament de Matem{\`a}tiques}
}

\bib{T-2019}{book}{
  title={Brakke's Mean Curvature Flow: An Introduction},
  author={Tonegawa, Yoshihiro},
  year={2019},
  publisher={Springer}
}

\bib{T-1994}{article}{
  title={Surfaces with generalized second fundamental form in $ L^2$ are Lipschitz manifolds},
  author={Toro, Tatiana},
  journal={Journal of Differential Geometry},
  volume={39},
  number={1},
  pages={65--101},
  year={1994},
  publisher={Lehigh University}
}

    \bib{W-1969}{article}{
  title={An existence theorem for surfaces of constant mean curvature},
  author={Wente, Henry C.},
  journal={Journal of Mathematical Analysis and Applications},
  volume={26},
  number={2},
  pages={318--344},
  year={1969},
  publisher={Academic Press}
}

\bib{Z22}{article}{
  title={Topology of surfaces with finite Willmore energy},
  author={Zhou, Jie},
  journal={International Mathematics Research Notices},
  volume={2022},
  number={9},
  pages={7100--7151},
  year={2022},
  publisher={Oxford University Press}
}

\end{biblist}

\end{bibdiv}

\end{document}